\numberwithin{figure}{section}
\newtheorem{theorem}{Theorem}[section]
\newtheorem{lemma}[theorem]{Lemma}
\newtheorem*{decomp-thm-GUT}{Theorem~\ref{decomp-thm-GUT}} 
\newtheorem*{decomp-thm-GU}{Theorem~\ref{decomp-thm-GU}} 
\newtheorem*{decomp-thm-GT}{Theorem~\ref{decomp-thm-GT}} 
\newtheorem*{decomp-thm-GUTcap}{Theorem~\ref{decomp-thm-GUTcap}}
\title{Clique-cutsets beyond chordal graphs}
\author{Valerio Boncompagni\thanks{School of Computing, University of Leeds, Leeds LS2 9JT, UK. Email: \texttt{scvb@leeds.ac.uk}.} \and Irena Penev\thanks{School of Computing, University of Leeds, Leeds LS2 9JT, UK. Email: \texttt{i.penev@leeds.ac.uk}. Partially supported by EPSRC grant EP/N0196660/1.} \and Kristina Vu\v{s}kovi\'{c}\thanks{School of Computing, University of Leeds, Leeds LS2 9JT, UK. Email: \texttt{k.vuskovic@leeds.ac.uk}. Partially supported by EPSRC grants EP/K016423/1 and EP/N0196660/1. Partially supported by Serbian Ministry of Education and Science projects 174033 and III44006.}}
\begin{document}

\maketitle

\begin{abstract}
 
Truemper configurations (thetas, pyramids, prisms, and wheels) have played an important role in the study of complex hereditary graph classes (e.g.\ the class of perfect graphs and the class of even-hole-free graphs), appearing both as excluded configurations, and as configurations around which graphs can be decomposed. In this paper, we study the structure of graphs that contain (as induced subgraphs) no Truemper configurations other than (possibly) universal wheels and twin wheels. We also study several subclasses of this class. We use our structural results to analyze the complexity of the recognition, maximum weight clique, maximum weight stable set, and optimal vertex coloring problems for these classes. Furthermore, we obtain polynomial $\chi$-bounding functions for these classes. 
\end{abstract}

\section{Introduction}\label{sec1}

All graphs in this paper are finite, simple, and nonnull. We say that a graph $G$ {\em contains} a graph $H$ if $H$ is isomorphic to an induced subgraph of $G$; $G$ is {\em $H$-free} if $G$ does not contain $H$. For a family of graphs ${\cal H}$, we say that $G$ is {\em ${\cal H}$-free} if $G$ is $H$-free for every $H \in {\cal H}$. A class of graphs is \emph{hereditary} if for every graph $G$ in the class, all (isomorphic copies of) induced subgraphs of $G$ belong to the class. Note that a class $\mathcal{G}$ is hereditary if and only if there exists a family $\mathcal{H}$ such that $\mathcal{G}$ is precisely the class of $\mathcal{H}$-free graphs (the ``if'' part is obvious; for the ``only if'' part, we can take $\mathcal{H}$ to be the collection of all graphs that do not belong to $\mathcal{G}$, but all of whose proper induced subgraphs do belong to $\mathcal{G}$). 

Configurations known as thetas, pyramids, prisms, and wheels (defined below) have played an important role in the study of such diverse (and important) classes as the classes of regular matroids, balanceable matrices, perfect graphs, and even-hole-free graphs (for a survey, see~\cite{Truemper-survey}). These configurations are also called {\em Truemper configurations}, as they appear in a theorem due to Truemper \cite{truemper} that characterizes graphs whose edges can be labeled so that all induced cycles have prescribed parities. In this paper, we study various classes of graphs that are defined by excluding certain Truemper configurations. 

A {\em hole} is an induced cycle on at least four vertices, and an {\em antihole} is the complement of a hole. The {\em length} of a hole or antihole is the number of vertices that it contains. A hole or antihole is {\em long} if it is of length at least five. A hole or antihole is {\em odd} (resp. {\em even}) if its length is odd (resp. even). For an integer $k \geq 4$, a {\em $k$-hole} (resp. {\em $k$-antihole}) is a hole (resp. antihole) of length $k$. 

A {\em theta} is any subdivision of the complete bipartite graph $K_{2,3}$; in particular, $K_{2,3}$ is a theta. A {\em pyramid} is any subdivision of the complete graph $K_4$ in which one triangle remains unsubdivided, and of the remaining three edges, at least two edges are subdivided at least once. A {\em prism} is any subdivision of $\overline{C_6}$ (where $\overline{C_6}$ is the complement of $C_6$) in which the two triangles remain unsubdivided; in particular, $\overline{C_6}$ is a prism. A {\em three-path-configuration} (or {\em 3PC} for short) is any theta, pyramid, or prism; the three types of 3PC are represented in Figure~\ref{fig:Truemper}. 

\begin{figure} 
\begin{center}
\includegraphics[scale=0.6]{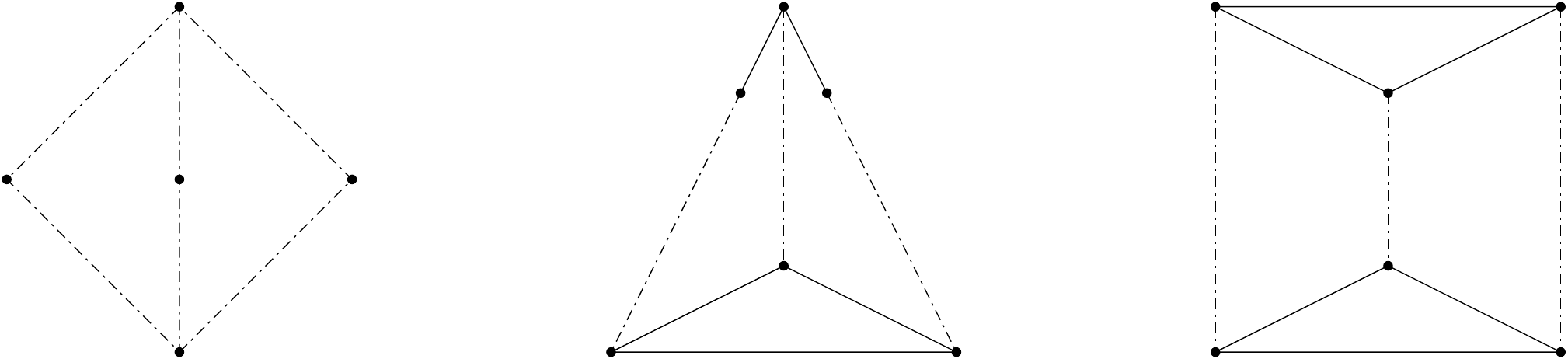}
\end{center} 
\caption{Theta, pyramid, and prism.
(A full line represents an edge, and a dashed line represents a path that has at least one edge.)} \label{fig:Truemper} 
\end{figure}

A {\em wheel} is a graph that consists of a hole and an additional vertex that has at least three neighbors in the hole. If this additional vertex is adjacent to all vertices of the hole, then the wheel is said to be a {\em universal wheel}; if the additional vertex is adjacent to three consecutive vertices of the hole, and to no other vertices of the hole, then the wheel is said to be a {\em twin wheel}. For $k \geq 4$, the universal wheel on $k+1$ vertices is denoted by $W_k$, and the twin wheel on $k+1$ vertices is denoted by $W_k^{\text{t}}$. A {\em proper wheel} is a wheel that is neither a universal wheel nor a twin wheel. Note that every proper wheel has at least six vertices. 

A {\em Truemper configuration} is any 3PC or wheel. Note that every Truemper configuration contains a hole. Note, furthermore, that every prism or theta contains an even hole, and every pyramid contains an odd hole. Thus, even-hole-free graphs contain no prisms and no thetas, and odd-hole-free graphs contain no pyramids. 

As usual, given a graph $G$, we denote by $\chi(G)$ the chromatic number of $G$, by $\omega(G)$ the clique number (i.e.\ the maximum size of a clique) of $G$, and by $\alpha(G)$ the stability number (i.e.\ the maximum size of a stable set) of $G$. A graph $G$ is {\em perfect} if all its induced subgraphs $H$ satisfy $\chi(H) = \omega(H)$. A graph is {\em Berge} if it contains no odd holes and no odd antiholes. The famous Strong Perfect Graph Theorem~\cite{SPGT} states that a graph is perfect if and only if it is Berge. The main ingredient of the proof of the Strong Perfect Graph Theorem is a decomposition theorem for Berge graphs; wheels play a particularly important role (as configurations around which graphs can be decomposed) in the proof of this decomposition theorem. Since perfect graphs are odd-hole-free, we see that perfect graphs contain no pyramids; in fact, detection of pyramids plays an important role in the polynomial time recognition algorithm for Berge (equivalently: perfect) graphs~\cite{BergeRec}. 

A graph is {\em chordal} if it contains no holes. Clearly, every Truemper configuration contains a hole, and consequently, chordal graphs contain no Truemper configurations. A {\em clique-cutset} of a graph $G$ is a (possibly empty) clique $C$ such that $G \setminus C$ is disconnected. 

\begin{theorem}\cite{Dirac61} \label{thm-Dirac61} If $G$ is a chordal graph, then either $G$ is a complete graph, or $G$ admits a clique-cutset. Furthermore, chordal graphs are perfect. 
\end{theorem}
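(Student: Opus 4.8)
The plan is to prove the two assertions of Theorem~\ref{thm-Dirac61} separately. For the first assertion, suppose $G$ is chordal but not complete. Since $G$ is not complete, there exist two nonadjacent vertices in $G$. I would pick two nonadjacent vertices $a,b$ and consider a minimal set $S$ whose removal separates $a$ from $b$ (a minimal $a$-$b$ separator); the goal is to show that such an $S$ is in fact a clique, which makes it a clique-cutset. To this end, let $A$ and $B$ be the connected components of $G\setminus S$ containing $a$ and $b$ respectively. The key observation is that by minimality of $S$, every vertex of $S$ has a neighbor in $A$ and a neighbor in $B$ (otherwise it could be dropped from $S$).

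\smallskip

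The main step is then to show $S$ is a clique. Take any two vertices $u,v\in S$. Using the fact just noted, I would build a path from $u$ to $v$ whose interior lies entirely in $A$, choosing it to be a shortest such path, and similarly a shortest path from $u$ to $v$ with interior in $B$. Concatenating these two internally disjoint paths produces a cycle through $u$ and $v$. If $u$ and $v$ were nonadjacent, this cycle would have length at least four, and the shortest-path (hence induced-path) choices guarantee there are no chords within either side; the only possible chord would be the edge $uv$ itself. Since $G$ is chordal, this cycle must have a chord, forcing $u$ and $v$ to be adjacent. Hence $S$ is a clique, and since $G\setminus S$ is disconnected, $S$ is a clique-cutset, establishing the first assertion.

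\smallskip

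For the second assertion, that chordal graphs are perfect, I would argue by induction on the number of vertices, using the first assertion. Complete graphs are trivially perfect, so assume $G$ is chordal and not complete; by the first assertion $G$ has a clique-cutset $C$, giving a partition of $G\setminus C$ into nonempty parts, say $G_1=G[A\cup C]$ and $G_2=G[V(G)\setminus A]$ with $G_1\cap G_2=G[C]$ a clique and no edges between $A$ and $V(G)\setminus(A\cup C)$. Both $G_1$ and $G_2$ are proper induced subgraphs of the chordal graph $G$, hence chordal, hence perfect by the induction hypothesis. Since perfection is preserved under gluing two perfect graphs along a clique (a standard fact: one can color $G_1$ and $G_2$ optimally and permute colors on the shared clique $C$ to make the colorings agree, yielding $\chi(G)=\max\{\omega(G_1),\omega(G_2)\}=\omega(G)$, and the same applies to every induced subgraph since the class is hereditary), it follows that $G$ is perfect.

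\smallskip

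I expect the main obstacle to be the clique-cutset step, specifically the careful construction of the two internally disjoint induced paths through $u$ and $v$ and the verification that the resulting cycle is chordless apart from the edge $uv$; this is where chordality is genuinely used, and one must be precise about choosing shortest paths and invoking that every vertex of the minimal separator sees both components. The perfection step is comparatively routine given the gluing lemma, though one should state the gluing-along-a-clique argument cleanly so that it applies uniformly to all induced subgraphs.
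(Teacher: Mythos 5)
The paper does not actually prove Theorem~\ref{thm-Dirac61}: it is stated with a citation to Dirac's 1961 paper \cite{Dirac61}, so there is no internal proof to compare against. Your argument is the standard classical proof, and it is correct. The separator step is sound: by minimality every vertex of $S$ has a neighbor in both $A$ and $B$ (a vertex missing a neighbor in $A$ could be dropped from $S$), so for nonadjacent $u,v \in S$ each of the two shortest connecting paths (interior in $A$, resp.\ in $B$) has at least one interior vertex, the two paths are internally disjoint because $A$ and $B$ are anticomplete, shortestness rules out chords within each path, and $A$--$B$ chords are impossible; the resulting cycle of length at least four would be chordless, contradicting chordality. For perfection, your gluing observation is right; the cleanest way to phrase the hereditary part is to run the induction on $|V(G)|$ over the statement ``every chordal graph $H$ satisfies $\chi(H) = \omega(H)$'' and note that chordality is itself hereditary, so perfection of all chordal graphs follows at once --- this is essentially what your final remark says, and it matches the gluing mechanics the paper uses elsewhere (cf.\ the color-permutation argument in the proof of Lemma~\ref{clique-cutset decomposition tree-coloring}). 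One tiny point to make explicit: if $|S| \leq 1$ the clique claim is trivial, and if $G$ is disconnected the empty set already serves as the clique-cutset, so the argument covers all cases.
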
 

A graph $G$ is {\em universally signable} if for every prescription of parities to the holes of $G$, there exists an assignment of zero or one weights to the edges of $G$ such that for each hole, the sum of weights of its edges has prescribed parity, and for every triangle, the sum of weights of its edges is odd. Clearly, every chordal graph is universally signable: we simply assign weight one to each edge. Note, however, that holes are universally signable, and so not all universally signable graphs are chordal, and moreover, not all universally signable graphs are perfect. 

\begin{theorem} \cite{univsign} \label{thm-univ-sign} A graph is universally signable if and only if it contains no Truemper configurations. Furthermore, if $G$ is a universally signable graph, then either $G$ is a complete graph or a hole, or $G$ admits a clique-cutset. 
\end{theorem}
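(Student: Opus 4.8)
The plan is to establish three statements and combine them: (i) if $G$ contains a Truemper configuration, then $G$ is not universally signable; (ii) every graph with no Truemper configuration is complete, is a hole, or admits a clique-cutset; and (iii) every graph with no Truemper configuration is universally signable. Statement (i) together with (iii) gives the ``if and only if,'' and then the ``furthermore'' clause follows from (ii) applied to universally signable (equivalently, by the first part, Truemper-configuration-free) graphs.

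For (i), I would argue over the cycle space of the configuration modulo $2$. Assigning $0/1$ weights $s$ to the edges of $G$, write $s(C)$ for the mod-$2$ sum of weights on a cycle $C$; then the prescribed-parity condition reads $s(C)\equiv\pi_C$ for each hole $C$, and the triangle condition reads $s(T)\equiv 1$ for each triangle $T$. Each Truemper configuration carries an $\mathbb{F}_2$-linear dependency among its holes and triangles: for a theta with branch vertices $a,b$ and internally disjoint paths $P_1,P_2,P_3$, the three holes $P_i\cup P_j$ satisfy $s(P_1\cup P_2)+s(P_1\cup P_3)+s(P_2\cup P_3)\equiv 0$ since each $P_i$ is counted twice, forcing $\pi_{12}+\pi_{13}+\pi_{23}\equiv 0$; for a wheel, summing the cycles formed by the hub with each sector yields the signature of the rim, again forcing one linear relation among the prescribed hole-parities and the (odd) triangle values; pyramids and prisms are analogous. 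In each case a prescription violating the forced relation is unrealizable, so the configuration obstructs universal signability.

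Statement (ii) is the combinatorial heart and the main obstacle. Since a disconnected graph and a graph with a cutvertex admit a clique-cutset (the empty clique, resp.\ a singleton), I may assume $G$ is $2$-connected; aiming for a contradiction, I assume further that $G$ is neither complete nor a hole and admits no clique-cutset, and I try to exhibit a Truemper configuration. If $G$ has no hole it is chordal, and Theorem~\ref{thm-Dirac61} already yields a clique-cutset; so fix a hole $H$. For any component $F$ of $G\setminus V(H)$, its attachment $N(F)\cap V(H)$ cannot be a clique: a clique in a hole has at most two vertices, so such an $N(F)$ would be a clique-cutset separating $F$ from the nonempty set $V(H)\setminus N(F)$. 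Hence some component attaches to two non-adjacent vertices $a,b$ of $H$, and routing a path through $F$ from $a$ to $b$ gives a third $a$--$b$ path internally disjoint from $H$, which together with the two arcs of $H$ is the skeleton of a theta. The delicate part, and the expected obstacle, is the cleanup: an interior vertex of the connecting path may have neighbors on $H$, and a vertex may see an edge of $H$; choosing the connection inclusion-minimal and analyzing these attachments (which, absent a wheel or a vertex with two non-adjacent neighbors on $H$, are forced to be cliques of $H$) yields an induced theta, pyramid, or prism. This case analysis is the technical core.

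For (iii) I would induct on $|V(G)|$ using (ii). A complete graph is signed by giving every edge weight $1$; a hole is a single cycle with no triangles, whose one prescribed parity is matched by choosing its edge-weights directly. If $G$ has a clique-cutset $C$, write $G=G_1\cup G_2$ with $G_1\cap G_2=C$; since $C$ is a clique, no hole or triangle of $G$ uses two non-consecutive vertices of $C$, so every hole and triangle lies inside $G_1$ or inside $G_2$. Sign $G_1$ (by induction) to meet the prescribed parities, obtaining values on the edges of $C$; then sign $G_2$ by induction and correct it on $C$. The key point making the correction possible is that any two signings of the complete graph $G[C]$ with all triangles odd differ by a cut $\delta(S)$, and such a cut extends to $G_2$ as a cut, which is orthogonal to the cycle space and therefore changes no hole or triangle parity while realizing the desired values on $C$. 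Gluing the two corrected signings gives a signing of $G$, completing the induction and hence the theorem.
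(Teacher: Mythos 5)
The paper does not actually prove Theorem~\ref{thm-univ-sign}: it is imported wholesale from \cite{univsign}, so there is no in-paper argument to compare against. Your proposal reconstructs the proof of the cited source, and its architecture is the standard one: part (i) is correct and complete as sketched --- for a theta the three holes cover each path twice, forcing $\pi_{12}+\pi_{13}+\pi_{23}\equiv 0$; for a pyramid the same sum equals the triangle signature, forced odd; for a prism it equals the sum of the two triangle signatures, forced even; for a wheel the sector cycles sum to the rim. In each case the holes involved are distinct induced cycles of $G$ itself, so a prescription violating the relation directly witnesses that $G$ is not universally signable (no appeal to hereditariness is needed). Part (iii) is also sound: a hole meeting both sides of a clique-cut-partition would have a chord inside the cutset, so every hole and triangle lives in $G_1$ or $G_2$; and your key observation --- that two signings of a complete graph with all triangles odd differ by a cut $\delta(S)$ --- is correct (if the zero-difference edges had three equivalence classes, a transversal triangle would have odd difference), and $\delta_{G_2}(S)$ indeed fixes the disagreement on $C$ while preserving all hole and triangle parities, since every cycle crosses a cut evenly.

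The one incomplete component is, as you say yourself, the case analysis in (ii), which you announce but do not execute. It does close along the lines you indicate, and more easily than you may fear: since \emph{all} wheels (including twin and universal wheels) and all thetas are excluded here, every vertex outside a hole $H$ attaches to $H$ in a clique of size at most two --- a strictly stronger conclusion than the paper's Lemma~\ref{lemma-vertex-attach-GUT}, where outcomes (a) and (b) survive because only proper wheels are forbidden in $\mathcal{G}_{\text{UT}}$. Taking a minimal connected induced subgraph of a component of $G\setminus V(H)$ with non-clique attachment then yields a path whose two end-attachments are cliques of size at most two and whose interior is anticomplete to $V(H)$ (by the same minimality argument the present paper runs in claim~(3) of the proof of Lemma~\ref{lemma-ring-or-clique-cut-GUT} and in Lemma~\ref{square-clique-cut-GU}); according to whether each end-attachment is a vertex or an edge of $H$, $G[V(H)\cup V(P)]$ is a theta, pyramid, or prism, and the degenerate configurations where the two attachment cliques share or see a common vertex of $H$ produce a wheel instead, which is equally forbidden. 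So the deferred step is genuinely routine given the attachment lemma you state parenthetically, but as written it remains the one unproven piece of an otherwise correct and well-organized reconstruction.
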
 

In this paper, we are interested in a superclass of universally signable graphs. In particular, we study the class of (3PC, proper wheel)-free graphs; we call this class $\mathcal{G}_{\text{UT}}$. Clearly, the only Truemper configurations that graphs in $\mathcal{G}_{\text{UT}}$ may contain are universal wheels and twin wheels. In view of Theorem~\ref{thm-univ-sign}, we see that the class of universally signable graphs is a proper subclass of the class $\mathcal{G}_{\text{UT}}$. 

We also study three subclasses of the class $\mathcal{G}_{\text{UT}}$. $\mathcal{G}_{\text{U}}$ is the class of all (3PC, proper wheel, twin wheel)-free graphs, and $\mathcal{G}_{\text{T}}$ is the class of all (3PC, proper wheel, universal wheel)-free graphs. Clearly, the only Truemper configurations that graphs in $\mathcal{G}_{\text{U}}$ may contain are universal wheels, and the only Truemper configurations that graphs in $\mathcal{G}_{\text{T}}$ may contain are twin wheels. A {\em cap} is a graph that consists of a hole and an additional vertex that is adjacent to two consecutive vertices of the hole and to no other vertices of the hole. $\mathcal{G}_{\text{UT}}^{\text{cap-free}}$ is the class of all (3PC, proper wheel, cap)-free graphs. Clearly, $\mathcal{G}_{\text{U}},\mathcal{G}_{\text{T}},\mathcal{G}_{\text{UT}}^{\text{cap-free}}$ are all proper subclasses of $\mathcal{G}_{\text{UT}}$. Furthermore, classes  $\mathcal{G}_{\text{U}},\mathcal{G}_{\text{T}},\mathcal{G}_{\text{UT}}^{\text{cap-free}}$ are pairwise incomparable, that is, none of the three classes is included in either of the remaining two. Since every Truemper configuration and every cap contains a hole, we see that the class of chordal graphs is a (proper) subclass of each of our four classes (i.e.\ classes $\mathcal{G}_{\text{UT}},\mathcal{G}_{\text{U}},\mathcal{G}_{\text{T}},\mathcal{G}_{\text{UT}}^{\text{cap-free}}$). Furthermore, by Theorem~\ref{thm-univ-sign}, the class of universally signable graphs is a proper subclass of each of $\mathcal{G}_{\text{UT}},\mathcal{G}_{\text{U}},\mathcal{G}_{\text{T}}$. However, the class of universally signable graphs and the class $\mathcal{G}_{\text{UT}}^{\text{cap-free}}$ are incomparable, that is, neither is a subclass of the other (indeed, caps are universally signable, but do not belong to $\mathcal{G}_{\text{UT}}^{\text{cap-free}}$; on the other hand, universal wheels and twin wheels belong to $\mathcal{G}_{\text{UT}}^{\text{cap-free}}$, but they are not universally signable). 

In subsection~\ref{sec1.1decomp}, we describe our structural results, and in subsection~\ref{sec1.2chialg}, we describe our results that involve $\chi$-boundedness and algorithms. In section~\ref{sec:prelim}, we introduce some terminology and notation (mostly standard) that we use throughout the paper, and we prove a few simple lemmas. In sections~\ref{sec:decompGUT}-\ref{sec:alg}, we prove the results outlined in subsections~\ref{sec1.1decomp} and~\ref{sec1.2chialg}.

\subsection{Results: Decomposition theorems for classes $\mathcal{G}_{\text{UT}},\mathcal{G}_{\text{U}},\mathcal{G}_{\text{T}},\mathcal{G}_{\text{UT}}^{\text{cap-free}}$} \label{sec1.1decomp} 

In this subsection, we state our decomposition theorems for the classes $\mathcal{G}_{\text{UT}},\mathcal{G}_{\text{U}},\mathcal{G}_{\text{T}},\mathcal{G}_{\text{UT}}^{\text{cap-free}}$. We first define classes $\mathcal{B}_{\text{UT}},\mathcal{B}_{\text{U}},\mathcal{B}_{\text{T}},\mathcal{B}_{\text{UT}}^{\text{cap-free}}$, which we think of as ``basic'' classes corresponding to the classes $\mathcal{G}_{\text{UT}},\mathcal{G}_{\text{U}},\mathcal{G}_{\text{T}},\mathcal{G}_{\text{UT}}^{\text{cap-free}}$, respectively. For each of the classes $\mathcal{G}_{\text{UT}},\mathcal{G}_{\text{U}},\mathcal{G}_{\text{T}},\mathcal{G}_{\text{UT}}^{\text{cap-free}}$, we show that every graph in the class either belongs to the corresponding basic class or admits a clique-cutset. We state these theorems in the present subsection, and we prove them in sections~\ref{sec:decompGUT}-\ref{sec:decompGUTcap}. 

The complement of a graph $G$ is denoted by $\overline{G}$. As usual, a {\em component} of $G$ is a maximal connected induced subgraph of $G$. A graph is {\em anticonnected} if its complement is connected. An {\em anticomponent} of a graph $G$ is a maximal anticonnected induced subgraph of $G$. (Thus, $H$ is an anticomponent of $G$ if and only if $\overline{H}$ is a component of $\overline{G}$.) Note that anticomponents of a graph $G$ are pairwise ``complete'' to each other in $G$, that is, all possible edges between each pair of distinct anticomponents of $G$ are present in $G$. A component or anticomponent is {\em trivial} if it has just one vertex, and it is {\em nontrivial} if it has at least two vertices. 

\begin{lemma} \label{lemma-anticomp} Let $G$ and $H$ be graphs, and assume that $H$ is anticonnected. Then $G$ is $H$-free if and only if all anticomponents of $G$ are $H$-free. 
\end{lemma}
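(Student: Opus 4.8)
The plan is to prove the two implications separately, with the reverse implication (the ``if'' direction) forming the real content. For the forward direction, I would simply observe that being $H$-free is a hereditary property: any induced copy of $H$ sitting inside an induced subgraph of $G$ is also an induced copy of $H$ inside $G$. Since each anticomponent of $G$ is by definition an induced subgraph of $G$, it follows at once that if $G$ is $H$-free then so is every anticomponent. Note that this direction does not use the hypothesis that $H$ is anticonnected.

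For the reverse direction, I would argue by contrapositive: assuming $G$ is not $H$-free, I would produce an anticomponent of $G$ that is not $H$-free. So fix a set $S \subseteq V(G)$ with $G[S] \cong H$. The key step is the standard fact that complementation commutes with taking induced subgraphs, that is, $\overline{G[S]} = \overline{G}[S]$ (indeed, two vertices of $S$ are adjacent in one of these graphs if and only if they are non-adjacent in $G$, hence if and only if they are adjacent in the other). Since $H$ is anticonnected, $\overline{H}$ is connected, and therefore $\overline{G}[S] \cong \overline{H}$ is a connected induced subgraph of $\overline{G}$.

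Finally, I would invoke the elementary fact that a connected induced subgraph of a graph lies entirely within a single component: if $S$ met two distinct components of $\overline{G}$, then $\overline{G}[S]$ would contain no edges between the corresponding parts and would thus be disconnected, contradicting the previous step. Hence $S$ is contained in the vertex set of a single component $C$ of $\overline{G}$. By the correspondence noted in the excerpt between components of $\overline{G}$ and anticomponents of $G$, the induced subgraph $A := G[V(C)]$ is an anticomponent of $G$; and since $S \subseteq V(A)$, we have $A[S] = G[S] \cong H$, so $A$ is not $H$-free. This establishes the contrapositive, and hence the reverse direction.

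I do not expect any genuine obstacle in this argument; the only point that requires a little care is the commutation $\overline{G[S]} = \overline{G}[S]$, since it is precisely this identity that lets the anticonnectedness of $H$ be transferred into a connectivity statement about $\overline{G}$, thereby confining the copy of $H$ to a single anticomponent of $G$.
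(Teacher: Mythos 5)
Your proof is correct, and it is simply the fully spelled-out version of the argument the paper leaves implicit (its proof is just ``this follows immediately from the appropriate definitions''): heredity gives the forward direction, and the identity $\overline{G[S]} = \overline{G}[S]$ together with connectedness of $\overline{H}$ confines any induced copy of $H$ to a single anticomponent. No gaps; the one point needing care --- commutation of complementation with induced subgraphs --- is exactly the one you flagged and verified.
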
 
\begin{proof} 
This follows immediately from the appropriate definitions. 
\end{proof} 

For an integer $k \geq 4$, a {\em $k$-hyperhole} (or a {\em hyperhole of length $k$}) is any graph obtained from a $k$-hole by blowing up each vertex to a nonempty clique of arbitrary size. Similarly, a {\em $k$-hyperantihole} (or a {\em hyperantihole of length $k$}) is any graph obtained from a $k$-antihole by blowing up each vertex to a nonempty clique of arbitrary size. A hyperhole or hyperantihole is {\em long} if it is of length at least five. 

A {\em ring} is a graph $R$ whose vertex set can be partitioned into $k \geq 4$ nonempty sets, say $X_1,\dots,X_k$ (with subscripts understood to be in $\mathbb{Z}_k$), such that for all $i \in \mathbb{Z}_k$, $X_i$ can be ordered as $X_i = \{u_1^i,\dots,u_{|X_i|}^i\}$ so that $X_i \subseteq N_R[u_{|X_i|}^i] \subseteq \dots \subseteq N_R[u_1^i] = X_{i-1} \cup X_i \cup X_{i+1}$. Under these circumstances, we say that the ring $R$ is of {\em length} $k$, as well as that $R$ is a {\em $k$-ring}. A ring is {\em long} if it is of length at least five. Furthermore, we say that $(X_1,\dots,X_k)$ is a {\em good partition} of the ring $R$. We observe that every $k$-hyperhole is a $k$-ring. 

Given a graph $G$ and distinct vertices $u,v \in V(G)$, we say that $u$ {\em dominates} $v$ in $G$, or that $v$ is {\em dominated} by $u$ in $G$, provided that $N_G[v] \subseteq N_G[u]$. 

\begin{lemma} \label{lemma-ring-char} Let $G$ be a graph, and let $(X_1,\dots,X_k)$, with $k \geq 4$ and subscripts understood to be in $\mathbb{Z}_k$, be a partition of $V(G)$. Then $G$ is a $k$-ring with good partition $(X_1,\dots,X_k)$ if and only if all the following hold: 
\begin{itemize} 
\item[(a)] $X_1,\dots,X_k$ are cliques; 
\item[(b)] for all $i \in \mathbb{Z}_k$, $X_i$ is anticomplete to $V(G) \setminus (X_{i-1} \cup X_i \cup X_{i+1})$; 
\item[(c)] for all $i \in \mathbb{Z}_k$, some vertex of $X_i$ is complete to $X_{i-1} \cup X_{i+1}$; 
\item[(d)] for all $i \in \mathbb{Z}_k$, and all distinct $y_i,y_i' \in X_i$, one of $y_i,y_i'$ dominates the other. 
\end{itemize} 
\end{lemma}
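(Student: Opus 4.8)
The plan is to prove the two directions of the equivalence separately. The forward direction (ring $\Rightarrow$ (a)--(d)) is the easy one and follows by simply unwinding the definition of a good partition; the real work is in the converse, where from the four local conditions I must reconstruct, for each $i$, an ordering of $X_i$ witnessing the nested closed-neighborhood chain in the definition of a ring.

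For the forward direction, fix a good partition and, for each $i \in \mathbb{Z}_k$, the guaranteed ordering $X_i = \{u_1^i,\dots,u_{|X_i|}^i\}$ with
$$X_i \subseteq N_G[u_{|X_i|}^i] \subseteq \dots \subseteq N_G[u_1^i] = X_{i-1}\cup X_i \cup X_{i+1}.$$
I then read off each of (a)--(d) from this chain. Condition (b) is immediate, because every vertex of $X_i$ has its closed neighborhood inside $N_G[u_1^i]=X_{i-1}\cup X_i\cup X_{i+1}$, so no vertex of $X_i$ has a neighbor outside $X_{i-1}\cup X_i\cup X_{i+1}$. Condition (c) holds with witness $u_1^i$, since $N_G[u_1^i]=X_{i-1}\cup X_i\cup X_{i+1}$ forces $u_1^i$ to be complete to $X_{i-1}\cup X_{i+1}$. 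For (d): whenever $a<b$ the chain gives $N_G[u_b^i]\subseteq N_G[u_a^i]$, i.e.\ $u_a^i$ dominates $u_b^i$; and (a) then follows as a byproduct, since domination puts $u_b^i\in N_G[u_b^i]\subseteq N_G[u_a^i]$, so any two vertices of $X_i$ are adjacent and $X_i$ is a clique.

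For the converse, assume (a)--(d), fix $i$, and build the ordering as follows. By (d) the closed neighborhoods $\{N_G[y]:y\in X_i\}$ are pairwise comparable under inclusion, hence form a chain, so I can list $X_i=\{u_1^i,\dots,u_{|X_i|}^i\}$ with $N_G[u_1^i]\supseteq N_G[u_2^i]\supseteq\dots\supseteq N_G[u_{|X_i|}^i]$ (for instance, by non-increasing size of closed neighborhood). With this ordering I verify the two endpoints of the required chain. The lower endpoint $X_i\subseteq N_G[u_{|X_i|}^i]$ is exactly condition (a): since $X_i$ is a clique, $u_{|X_i|}^i$ is adjacent to (or equal to) every vertex of $X_i$. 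For the upper endpoint I prove $N_G[u_1^i]=X_{i-1}\cup X_i\cup X_{i+1}$ by double inclusion. The inclusion $\subseteq$ comes from (b) together with $u_1^i\in X_i$, which confine the neighbors of $u_1^i$ to $X_{i-1}\cup X_i\cup X_{i+1}$. For $\supseteq$, let $w\in X_i$ be the vertex supplied by (c) that is complete to $X_{i-1}\cup X_{i+1}$; using (a) as well gives $N_G[w]\supseteq X_{i-1}\cup X_i\cup X_{i+1}$, and since $u_1^i$ has the maximal closed neighborhood in the chain we have $N_G[w]\subseteq N_G[u_1^i]$, yielding the desired containment.

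The main obstacle --- really the only nontrivial point --- is the reconstruction of the ordering in the converse: one must notice that (d) is precisely what guarantees that the closed neighborhoods within $X_i$ form a chain (so that a single vertex $u_1^i$ realizes the maximum), and then coordinate (a), (b), (c) to pin down both boundary terms $X_i\subseteq N_G[u_{|X_i|}^i]$ and $N_G[u_1^i]=X_{i-1}\cup X_i\cup X_{i+1}$ simultaneously; everything strictly between them is automatic from the nestedness. I would keep the bookkeeping minimal by phrasing the ordering via non-increasing closed-neighborhood size and invoking the elementary fact that a finite, pairwise $\subseteq$-comparable family of sets has a maximum element.
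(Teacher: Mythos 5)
Your proof is correct, and it is exactly the routine unwinding of the definition that the paper has in mind when it says the lemma ``readily follows from the definition of a ring'' (the paper omits all details). In particular, your key observation for the converse --- that (d) makes the closed neighborhoods within each $X_i$ a chain under inclusion, with (a), (b), (c) pinning down the two endpoints $X_i \subseteq N_G[u_{|X_i|}^i]$ and $N_G[u_1^i] = X_{i-1} \cup X_i \cup X_{i+1}$ --- is precisely the intended argument, so there is nothing to add.
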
 
\begin{proof} 
This readily follows from the definition of a ring. 
\end{proof} 

Let $\mathcal{B}_{\text{UT}}$ be the class of all graphs $G$ that satisfy at least one of the following: 
\begin{itemize} 
\item $G$ has exactly one nontrivial anticomponent, and this anticomponent is a long ring; 
\item $G$ is (long hole, $K_{2,3}$, $\overline{C_6}$)-free; 
\item $\alpha(G) = 2$, and every anticomponent of $G$ is either a 5-hyperhole or a $(C_5,\overline{C_6})$-free graph. 
\end{itemize} 
Note that $\alpha(K_{2,3}) = 3$, and that holes of length at least six have stability number at least three. Thus, graphs of stability number at most two contain no $K_{2,3}$ and no holes of length at least six; consequently, $(C_5,\overline{C_6})$-free graphs of stability number at most two are in fact (long hole, $K_{2,3}$, $\overline{C_6})$-free. 

Let $\mathcal{B}_{\text{U}}$ be the class of all graphs $G$ that satisfy one of the following: 
\begin{itemize} 
\item $G$ has exactly one nontrivial anticomponent, and this anticomponent is a long hole; 
\item all nontrivial anticomponents of $G$ are isomorphic to $\overline{K_2}$. 
\end{itemize} 

Let $\mathcal{B}_{\text{T}}$ be the class of all complete graphs, rings, and 7-hyperantiholes. 

As usual, a graph is {\em bipartite} if its vertex set can be partitioned into two (possibly empty) stable sets. A graph is {\em cobipartite} if its complement is bipartite. A {\em chordal cobipartite graph} is a graph that is both chordal and cobipartite. Let $\mathcal{B}_{\text{UT}}^{\text{cap-free}}$ be the class of all graphs $G$ that satisfy one of the following: 
\begin{itemize} 
\item $G$ has exactly one nontrivial anticomponent, and this anticomponent is a hyperhole of length at least six; 
\item each anticomponent of $G$ is either a 5-hyperhole or a chordal cobipartite graph. 
\end{itemize}
Note that every anticomponent of a complete graph is a chordal cobipartite graph. Thus, complete graphs belong to $\mathcal{B}_{\text{UT}}^{\text{cap-free}}$. Furthermore, if a graph $G$ contains exactly one nontrivial anticomponent, and this anticomponent is a long hyperhole (possibly of length five), then $G \in \mathcal{B}_{\text{UT}}^{\text{cap-free}}$. 

By Lemma~\ref{ring-in-GT}(d) (stated and proven in section~\ref{sec:prelim}), rings are (3PC, proper wheel, universal wheel)-free. Consequently, rings belong to $\mathcal{G}_{\text{T}}$ and to $\mathcal{G}_{\text{UT}}$. Using this fact, we easily obtain the following lemma. 

\begin{lemma} \label{BB-in-GG} $\mathcal{B}_{\text{UT}} \subseteq \mathcal{G}_{\text{UT}}$, $\mathcal{B}_{\text{U}} \subseteq \mathcal{G}_{\text{U}}$, $\mathcal{B}_{\text{T}} \subseteq \mathcal{G}_{\text{T}}$, and $\mathcal{B}_{\text{UT}}^{\text{cap-free}} \subseteq \mathcal{G}_{\text{UT}}^{\text{cap-free}}$. 
\end{lemma}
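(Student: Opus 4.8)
The plan is to reduce all four inclusions to a handful of elementary facts about Truemper configurations, combined with Lemma~\ref{lemma-anticomp} and the ring result Lemma~\ref{ring-in-GT}(d). First I would record three observations, each proved by inspection of the definitions. (i) None of the excluded graphs—3PCs, proper wheels, twin wheels, and caps—has a \emph{universal vertex} (a vertex adjacent to all others), since in each of them the maximum degree is strictly below $|V|-1$ (note that a universal wheel \emph{does} have a universal vertex, which is why the class $\mathcal{G}_{\text{T}}$ must be treated separately). (ii) Every 3PC or proper wheel either contains a long hole or is isomorphic to $K_{2,3}$ or to $\overline{C_6}$: a theta is $K_{2,3}$ or has a path of length $\geq 3$ that closes with a second path into a long hole; a pyramid always has two subdivided legs forming a long hole with a base edge; a prism is $\overline{C_6}$ or has a subdivided rung yielding a long hole; and the rim of a proper wheel is itself a hole of length $\geq 5$. (iii) Caps, long holes, and $\overline{C_6}$ are anticonnected (their complements are connected), whereas $K_{2,3}$ is not. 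From (i) I obtain the reduction principle that adding universal vertices to a (3PC, proper wheel)-free graph—or to one that is in addition twin-wheel-free or cap-free—preserves the property: an excluded configuration meeting an added vertex would have that vertex universal in it, contradicting (i). From (iii) and Lemma~\ref{lemma-anticomp}, cap-freeness reduces to checking each anticomponent.

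For the bullets positing a single nontrivial anticomponent that is a long ring, a long hole, or a hyperhole of length $\geq 6$, the trivial anticomponents are exactly universal vertices, so $G$ is the nontrivial anticomponent with universal vertices attached. That anticomponent is (3PC, proper wheel)-free (trivially for a hole, by Lemma~\ref{ring-in-GT}(d) for a ring or hyperhole), so by the reduction principle $G$ is too; the extra conditions needed for $\mathcal{G}_{\text{U}}$ (twin wheels) and $\mathcal{G}_{\text{UT}}^{\text{cap-free}}$ (caps) follow the same way, cap-freeness of a hyperhole coming from the fact that every hole in a hyperhole uses exactly one vertex per clique (two would create a chord), so every off-hole vertex is complete to \emph{three} consecutive hole-vertices, never exactly two. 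The (long hole, $K_{2,3}$, $\overline{C_6}$)-free bullet of $\mathcal{B}_{\text{UT}}$ is immediate from (ii).

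The three remaining bullets all force $\alpha(G)\leq 2$: a complete multipartite graph with parts of size $\leq 2$ for $\mathcal{B}_{\text{U}}$, and for $\mathcal{B}_{\text{UT}}$ and $\mathcal{B}_{\text{UT}}^{\text{cap-free}}$ because the anticomponents (5-hyperholes, $(C_5,\overline{C_6})$-free graphs, chordal cobipartite graphs) each have stability number $\leq 2$ and are pairwise complete. Here I would classify the Truemper configurations of stability number $\leq 2$: among 3PCs only $\overline{C_6}$; the unique proper wheel is a 5-hole with a vertex adjacent to four of its vertices (call it $W^{\ast}$); and, where relevant, the twin wheels $W_4^{\text{t}},W_5^{\text{t}}$ and universal wheels $W_4,W_5$. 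For $\mathcal{B}_{\text{UT}}$ and $\mathcal{B}_{\text{UT}}^{\text{cap-free}}$ only $\overline{C_6}$ and $W^{\ast}$ are relevant; both are anticonnected, so by Lemma~\ref{lemma-anticomp} it suffices to inspect each anticomponent—5-hyperholes are rings and so contain neither by Lemma~\ref{ring-in-GT}(d), while $(C_5,\overline{C_6})$-free and chordal graphs contain no $\overline{C_6}$ and no $C_5$, hence no $W^{\ast}$ (which contains a $C_5$). Cap-freeness for $\mathcal{B}_{\text{UT}}^{\text{cap-free}}$ again follows from (iii) together with the hole-freeness of chordal graphs. For $\mathcal{B}_{\text{U}}$, $\overline{G}$ has maximum degree $\leq 1$, while each of $\overline{C_6},W^{\ast},W_4^{\text{t}},W_5^{\text{t}}$ has a vertex with at least two non-neighbours, so none embeds.

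The case I expect to be the crux is the 7-hyperantihole in $\mathcal{B}_{\text{T}}$: here the relevant small configurations $W_4,W_5$ are not anticonnected and antiholes are not rings, so neither Lemma~\ref{lemma-anticomp} nor Lemma~\ref{ring-in-GT}(d) applies. Since a 7-hyperantihole has stability number $2$, I must show it contains none of $\overline{C_6},W^{\ast},W_4,W_5$. I would argue in two steps. First, each of these four graphs is twin-free (no two adjacent vertices have the same neighbours outside the pair), so any induced copy inside a clique blow-up uses distinct branch-cliques and therefore projects to an induced copy in $\overline{C_7}$ itself. Second, I verify directly in $\overline{C_7}$ that there is no induced $C_5$ (since $C_5$ is self-complementary, such a copy would be a $C_5$ in $C_7$, but five vertices of $C_7$ induce a union of paths), hence no $W^{\ast}$ and no $W_5$; no induced $\overline{C_6}$ (which is $3$-regular, whereas $\overline{C_7}-v$ is not); and no induced $W_4$ (its complement $2K_2\cup K_1$ has three components, but deleting two vertices from $C_7$ leaves at most two). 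The complete graphs and rings in $\mathcal{B}_{\text{T}}$ are handled by the hole-freeness of complete graphs and by Lemma~\ref{ring-in-GT}(d), respectively, completing the proof.
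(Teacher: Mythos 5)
Your proof is correct and follows essentially the same route as the paper's: the paper's one-paragraph proof invokes Lemma~\ref{ring-in-GT}(d) for rings and hyperholes, the observation that the only non-anticonnected Truemper configurations are $K_{2,3}$, $W_4^{\text{t}}$, and universal wheels, and then Lemma~\ref{lemma-anticomp} plus ``routine checking,'' and your write-up is exactly that argument with the checking carried out in full (your classification of the configurations of stability number at most two is the paper's Lemma~\ref{rmrk-Truemper}, and your universal-vertex reduction is the special case of the anticomponent reduction where all but one anticomponent is trivial). The one place you go beyond the paper's sketch is the 7-hyperantihole, where---correctly noting that $W_4$ and $W_5$ are not anticonnected, so Lemma~\ref{lemma-anticomp} gives nothing there---your twin-free projection onto $\overline{C_7}$ is a clean and valid way of organizing the finite check that the paper leaves implicit.
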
 
\begin{proof}[Proof (assuming Lemma~\ref{ring-in-GT})] 
It follows from Lemma~\ref{ring-in-GT}(d) that rings belong to $\mathcal{G}_{\text{T}}$ and to $\mathcal{G}_{\text{UT}}$. Furthermore, note that the only Truemper configurations that are not anticonnected are the theta $K_{2,3}$, the twin wheel $W_4^{\text{t}}$, and universal wheels. The result now follows from Lemma~\ref{lemma-anticomp} and routine checking. 
\end{proof} 

We now state our decomposition theorems for the classes $\mathcal{G}_{\text{UT}},\mathcal{G}_{\text{U}},\mathcal{G}_{\text{T}},\mathcal{G}_{\text{UT}}^{\text{cap-free}}$. We prove these theorems in sections~\ref{sec:decompGUT}-\ref{sec:decompGUTcap}. 

\begin{theorem} \label{decomp-thm-GUT} Every graph in $\mathcal{G}_{\text{UT}}$ either belongs to $\mathcal{B}_{\text{UT}}$ or admits a clique-cutset. 
\end{theorem}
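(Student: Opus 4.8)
The plan is to prove the contrapositive structure directly: take a graph $G \in \mathcal{G}_{\text{UT}}$ that admits no clique-cutset, and show $G \in \mathcal{B}_{\text{UT}}$. The first reduction I would make is to understand the role of anticomponents. Since the only Truemper configurations that are \emph{not} anticonnected are $K_{2,3}$, $W_4^{\text{t}}$, and universal wheels, the decomposition of $G$ into anticomponents (which are pairwise complete to one another) interacts cleanly with the excluded configurations. The three clauses defining $\mathcal{B}_{\text{UT}}$ strongly suggest a trichotomy based on where the ``interesting'' structure of $G$ lives: either $G$ has a single nontrivial anticomponent that is a long ring, or $G$ is globally tame in the sense of being (long hole, $K_{2,3}$, $\overline{C_6}$)-free, or $G$ has small stability number $\alpha(G)=2$ with a controlled anticomponent structure.

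The core of the argument should be a case analysis driven by whether $G$ contains a long hole. \textbf{Case 1:} $G$ contains a long hole $H$. Here I would use the hypothesis that $G$ contains no proper wheel and no 3PC to force strong restrictions on how vertices outside $H$ attach to $H$. The key structural claim to establish is that the presence of a long hole, together with the absence of a clique-cutset, propagates ring structure throughout $G$: I would try to show that the vertices meeting the hole organize into the cyclic clique-blob pattern $X_1,\dots,X_k$ required by Lemma~\ref{lemma-ring-char}, verifying conditions (a)--(d) one at a time. Universal wheels and twin wheels, being the only permitted Truemper configurations, are exactly what allow a vertex to be complete to one blob and its neighbors (condition (c)) or to dominate another vertex within its blob (condition (d)) without creating a forbidden configuration. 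The no-clique-cutset hypothesis is what forces this ring to be ``global,'' i.e.\ to account for the unique nontrivial anticomponent. \textbf{Case 2:} $G$ contains no long hole. Then $G$ is long-hole-free, and I would further split on whether $G$ contains $K_{2,3}$ or $\overline{C_6}$. If it contains neither, the second clause of $\mathcal{B}_{\text{UT}}$ is satisfied immediately. If it does contain one of these, I expect the clique-cutset-free hypothesis together with the exclusion of 3PCs and proper wheels to push $G$ into the low-stability-number regime $\alpha(G) = 2$, matching the third clause; the parenthetical remark in the excerpt (that $\alpha(G)\le 2$ already forces the absence of $K_{2,3}$ and of holes of length $\ge 6$) is the hint that these clauses are designed to overlap consistently.

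I would organize the verification of the ring structure in Case 1 as a sequence of attachment lemmas: first classify how a single vertex outside $H$ attaches to $H$ (showing its neighborhood on $H$ must be an interval, forced by theta/pyramid/prism-freeness and proper-wheel-freeness), then analyze pairs of such vertices, and finally assemble the global partition. The absence of a clique-cutset would be invoked to rule out the ``degenerate'' attachments that would otherwise split $G$ along a clique.

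The hard part will be Case 1: establishing the full ring structure with all four conditions of Lemma~\ref{lemma-ring-char}, and in particular showing that the ordering-and-domination condition (d) holds \emph{within} each blob $X_i$. Condition (d) is essentially a claim that the closed neighborhoods of vertices in a common blob are nested, and extracting this nesting purely from the absence of 3PCs and proper wheels (while only universal and twin wheels are allowed) is delicate: I anticipate that the main technical obstacle is ruling out two vertices in the same blob whose neighborhoods on the hole are incomparable, since such a pair would have to create a forbidden configuration, and identifying precisely which configuration arises in each sub-case is where the bulk of the casework lies. A secondary difficulty is ensuring, via the no-clique-cutset assumption, that the ring structure is not merely local to a neighborhood of $H$ but exhausts the entire nontrivial anticomponent of $G$.
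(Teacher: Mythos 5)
Your overall architecture (contrapositive, case split on the presence of a long hole, ring structure built around a hole via attachment lemmas, with the no-clique-cutset hypothesis used to globalize) matches the paper's proof in spirit, and your anticipated hard points (the domination/nesting condition (d) of Lemma~\ref{lemma-ring-char}, proved in the paper's Lemma~\ref{lemma-hole-ring-GUT}, and the globalization step, the paper's Lemma~\ref{lemma-ring-or-clique-cut-GUT}) are the right ones. However, there is a genuine structural flaw in how you allocate the three clauses of $\mathcal{B}_{\text{UT}}$ to your two cases. First, your Case~2 subdivision is vacuous: $K_{2,3}$ is a theta and $\overline{C_6}$ is a prism, so \emph{every} graph in $\mathcal{G}_{\text{UT}}$ is automatically $(K_{2,3},\overline{C_6})$-free, and a long-hole-free graph in $\mathcal{G}_{\text{UT}}$ satisfies the second clause immediately --- there is no subcase in which $G$ contains one of these configurations, and hence no route from Case~2 to the $\alpha(G)=2$ clause. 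Second, and more seriously, your Case~1 claim that the absence of a clique-cutset forces the long ring to be ``the unique nontrivial anticomponent'' is false, and the third clause of $\mathcal{B}_{\text{UT}}$ exists precisely to cover the counterexamples: the join of two copies of $C_5$ lies in $\mathcal{G}_{\text{UT}}$, has no clique-cutset, contains long holes, and has \emph{two} nontrivial anticomponents (each a 5-hyperhole, hence a long ring). Following your plan, such a graph would be funneled into Case~1, where you would try and fail to establish the first clause, with no mechanism to land in the third.

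The missing ideas are exactly the ones the paper uses to handle this branch. When $G$ contains a long hole, Lemma~\ref{lemma-ring-or-clique-cut-GUT} yields that \emph{some} anticomponent of $G$ is a long ring (not that $G$ has only one nontrivial anticomponent); if another nontrivial anticomponent exists, $K_{2,3}$-freeness forces $\alpha(G)=2$ (Lemma~\ref{rmrk-K23-anticomp}: two nonadjacent vertices in one nontrivial anticomponent together with a stable set of size three in another would induce a $K_{2,3}$ across the complete join). One then analyzes each anticomponent $H$ separately: if $H$ has a long hole, then since $\alpha(H)\leq 2$, Lemma~\ref{lemma-clique-cut-alpha-2}(a) shows $H$ cannot admit a clique-cutset (a graph with $\alpha\leq 2$ admitting one is cobipartite, hence long-hole-free), so applying Lemma~\ref{lemma-ring-or-clique-cut-GUT} \emph{to $H$} makes $H$ a long ring, and $\alpha(H)\leq 2$ forces $H$ to be a 5-hyperhole; otherwise $H$ is $(C_5,\overline{C_6})$-free. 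Without this anticomponent-by-anticomponent argument in the long-hole case, your plan cannot reach the third clause and the proof does not close. A lesser omission, worth flagging: in the globalization step the paper's contradictions depend on choosing $H$ to be a hole of \emph{maximum length} and, subject to that, maximizing $|V(H^*)|$; several attachment arguments produce a hole of length $k+1$ and would not yield a contradiction without these extremal choices, which your plan does not mention.
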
 

\begin{theorem} \label{decomp-thm-GU} Every graph in $\mathcal{G}_{\text{U}}$ either belongs to $\mathcal{B}_{\text{U}}$ or admits a clique-cutset. 
\end{theorem}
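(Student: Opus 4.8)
The plan is to bootstrap off the already-available decomposition for the larger class $\mathcal{G}_{\text{UT}}$ and then refine it using the extra hypothesis that graphs in $\mathcal{G}_{\text{U}}$ contain no twin wheels. So let $G \in \mathcal{G}_{\text{U}}$ and assume $G$ admits no clique-cutset; I must show $G \in \mathcal{B}_{\text{U}}$. Since $\mathcal{G}_{\text{U}} \subseteq \mathcal{G}_{\text{UT}}$, Theorem~\ref{decomp-thm-GUT} applies, and as $G$ has no clique-cutset we obtain $G \in \mathcal{B}_{\text{UT}}$. It then remains to run through the three defining alternatives of $\mathcal{B}_{\text{UT}}$ and show that each collapses, under twin-wheel-freeness and the absence of a clique-cutset, to one of the two alternatives defining $\mathcal{B}_{\text{U}}$.

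Suppose first that $G$ has a unique nontrivial anticomponent $R$ that is a long ring. Every clique-cutset of $R$ extends to a clique-cutset of $G$ by adjoining the (pairwise complete, universal) vertices of the trivial anticomponents, so $R$ too has no clique-cutset; and $R$, being an induced subgraph of $G$, is twin-wheel-free. The key sublemma is that a long ring that is twin-wheel-free and has no clique-cutset must actually be a long hole: if some clique $X_i$ of a good partition $(X_1,\dots,X_k)$ had two vertices, then the most dominated vertex of $X_i$ would either be simplicial (yielding a clique-cutset) or would, together with the hole formed by the dominant vertices $u_1^1,\dots,u_1^k$, create a twin wheel $W_k^{\text{t}}$ --- exactly as a blow-up of a single hole vertex does. (This is the content of the relevant part of Lemma~\ref{ring-in-GT}.) Hence $R$ is a long hole and $G$ falls into the first alternative of $\mathcal{B}_{\text{U}}$.

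If instead $G$ satisfies the third alternative of $\mathcal{B}_{\text{UT}}$ (so $\alpha(G)=2$ and each anticomponent is a $5$-hyperhole or $(C_5,\overline{C_6})$-free), I first dispose of $5$-hyperhole anticomponents: a $5$-hyperhole with a nontrivial clique blow-up already contains a twin wheel, so such an anticomponent must equal $C_5$; and if a $C_5$-anticomponent coexisted with any other nontrivial anticomponent $B$, then a non-edge of $B$ together with the $C_5$ (to which $B$ is complete) would again produce a $W_4^{\text{t}}$. Thus a $C_5$-anticomponent is the unique nontrivial anticomponent, placing $G$ in the first alternative of $\mathcal{B}_{\text{U}}$. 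In every remaining situation --- namely the second alternative of $\mathcal{B}_{\text{UT}}$, and the $5$-hyperhole-free part of the third --- the graph $G$ is (long hole, $K_{2,3}$, $\overline{C_6}$)-free (using the observation recorded after the definition of $\mathcal{B}_{\text{UT}}$ that $(C_5,\overline{C_6})$-free graphs with $\alpha\le 2$ are long-hole- and $K_{2,3}$-free). For such $G$ I must show every nontrivial anticomponent is isomorphic to $\overline{K_2}$, equivalently that every vertex has at most one non-neighbor.

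This last statement is the crux. If $G$ is chordal it is complete by Theorem~\ref{thm-Dirac61}, and the claim is trivial, so assume $G$ has a hole, which by long-hole-freeness is a $C_4$. Suppose for contradiction some vertex $v$ has two non-neighbors. Using that ``$G$ has no clique-cutset'' is equivalent to ``no minimal separator of $G$ is a clique,'' I would take a minimal separator $S$ between $v$ and one of its non-neighbors, extract a non-adjacent pair $p,q \in S$, and use the neighbors of $p$ and $q$ on the two sides of $G \setminus S$ to route induced paths between $p$ and $q$ through each side; joining these produces either an induced cycle of length at least five (a long hole) or, together with $v$, a $K_{2,3}$ --- in either case contradicting the hypotheses. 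I expect this connectivity/separator argument, together with the bookkeeping needed to guarantee that the routed paths are induced and meet only at $p$ and $q$, to be the main obstacle; by contrast, the ring-to-hole reduction and the twin-wheel extractions in the hyperhole cases are comparatively routine once one has established how a vertex may attach to a hole in a $\mathcal{G}_{\text{U}}$-graph (adjacent to $0$, $1$, two consecutive, or all vertices of the hole).
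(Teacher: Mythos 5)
Your overall strategy (pass through Theorem~\ref{decomp-thm-GUT} and then collapse the three alternatives of $\mathcal{B}_{\text{UT}}$ using twin-wheel-freeness) is a genuinely different route from the paper, and its first two branches are essentially sound: the ring-to-hole reduction is correct --- though it is \emph{not} ``the content of'' Lemma~\ref{ring-in-GT}, which says nothing of the sort; what you actually need, and correctly sketch, is the dichotomy that in a long ring with some $|X_i|\geq 2$, the most dominated vertex $y$ of $X_i$ either has neighbors in both $X_{i-1}$ and $X_{i+1}$ (and is then adjacent to $u_1^{i-1},u_1^i,u_1^{i+1}$ and to nothing else on the hole $u_1^1,\dots,u_1^k$, giving a twin wheel) or is simplicial (giving a clique-cutset, which lifts to $G$ because the trivial anticomponents form a clique complete to the ring). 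Note in passing that the clique-cutset horn of this dichotomy is genuinely needed: a long ring with no twin wheel need not be a hole (it can instead have a simplicial vertex), so your formulation here is actually more careful than a literal reading of the corresponding sentence in the paper's proof. The twin-wheel extractions from a blown-up $5$-hyperhole, and from a $C_5$ anticomponent coexisting with another nontrivial anticomponent, are also correct.

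The genuine gap is the final case, which you yourself flag as the crux: showing that a (long hole, $K_{2,3}$, $\overline{C_6}$)-free graph in $\mathcal{G}_{\text{U}}$ with no clique-cutset has every anticomponent of size at most two. Your separator sketch does not close. Routing induced paths from $p$ to $q$ through the two sides of a minimal separator does produce a hole, but that hole may well be a $4$-hole, and $4$-holes are \emph{not} forbidden here --- graphs in the second alternative of $\mathcal{B}_{\text{U}}$ are full of them --- so no contradiction results in that outcome. Worse, the proposed rescue ``together with $v$, a $K_{2,3}$'' is unsubstantiated: the vertex $v$ and its two non-neighbors play no role in the choice of $S$, $p$, $q$, or the routed paths, so there is no reason $v$ should be adjacent to both $p$ and $q$ and nonadjacent to the path interiors. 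What is missing is precisely the content of the paper's Lemma~\ref{square-clique-cut-GU}: given a $4$-hole $H$ in a clique-cutset-free $G \in \mathcal{G}_{\text{U}}$, one shows $V(G) = V(H) \cup U_H$ by taking a minimal connected subgraph of a component of $G \setminus (V(H) \cup U_H)$ whose attachment is not a clique and extracting a theta, prism, proper wheel, or twin wheel --- i.e., one must exploit the full strength of $\mathcal{G}_{\text{U}}$-membership (via the attachment Lemma~\ref{lemma-vertex-attach-GUT} and its twin-wheel refinement), not merely (long hole, $K_{2,3}$, $\overline{C_6}$)-freeness plus connectivity. Without an argument of this kind, your proof establishes the theorem only in the cases where $G$ contains a long hole or is chordal, and leaves the $4$-hole case open.
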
 

\begin{theorem} \label{decomp-thm-GT} Every graph in $\mathcal{G}_{\text{T}}$ either belongs to $\mathcal{B}_{\text{T}}$ or admits a clique-cutset. 
\end{theorem}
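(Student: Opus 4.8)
The plan is to derive Theorem~\ref{decomp-thm-GT} from the already-established Theorem~\ref{decomp-thm-GUT}. Since every (3PC, proper wheel, universal wheel)-free graph is in particular (3PC, proper wheel)-free, we have $\mathcal{G}_{\text{T}} \subseteq \mathcal{G}_{\text{UT}}$. Hence, if $G \in \mathcal{G}_{\text{T}}$ admits no clique-cutset, Theorem~\ref{decomp-thm-GUT} gives $G \in \mathcal{B}_{\text{UT}}$, so $G$ satisfies one of the three bullets defining $\mathcal{B}_{\text{UT}}$; the goal is to upgrade each of these to membership in $\mathcal{B}_{\text{T}}$ using the one extra hypothesis available in $\mathcal{G}_{\text{T}}$, namely the absence of universal wheels. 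The principle that drives everything is the following: \emph{if $G$ contains a long hole $H$, then $G$ is anticonnected.} Indeed, $H$ is itself anticonnected (the complement of a hole of length at least five is connected), so $H$ lies in a single anticomponent $A$; any other anticomponent would be complete to $A$, and any of its vertices together with $H$ would form a universal wheel, a contradiction. Thus $A = G$, and in particular no vertex is complete to a long hole.

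Suppose first that $G$ contains a long hole. By the principle above, $G$ is anticonnected, hence its own unique anticomponent. Bullet (ii) is then impossible (it requires $G$ to be long-hole-free), so $G$ satisfies bullet (i) or bullet (iii). If bullet (i) holds, $G$ equals its unique nontrivial anticomponent, a long ring. If bullet (iii) holds, then $\alpha(G) = 2$ forces the long hole to be a $C_5$ (holes of length at least six have stability number at least three), so the single anticomponent $G$ is not $(C_5, \overline{C_6})$-free and must be a 5-hyperhole, i.e.\ a 5-ring. In either case $G$ is a ring, hence $G \in \mathcal{B}_{\text{T}}$. There remains the case where $G$ contains no long hole; since $G$ is 3PC-free, it is then automatically $(K_{2,3}, \overline{C_6})$-free as well (because $K_{2,3}$ is a theta and $\overline{C_6}$ is a prism).

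It remains to handle the long-hole-free case, which is the heart of the proof: a (long hole)-free graph $G \in \mathcal{G}_{\text{T}}$ with no clique-cutset must be complete, a 4-ring, or a 7-hyperantihole. If $G$ is chordal, then by Theorem~\ref{thm-Dirac61} and the absence of a clique-cutset, $G$ is complete. Otherwise $G$ contains a hole, which, as there is no long hole, must be a 4-hole $C$. I would first establish a local attachment lemma: for every $v \notin V(C)$, the set $N(v) \cap V(C)$ is empty, a single vertex, an edge, or a path on three consecutive vertices of $C$; two non-consecutive neighbors in $C$ would yield a $K_{2,3}$ (a theta), and four neighbors a universal wheel $W_4$, both forbidden. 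Guided by this constraint and by the domination preorder on vertices with comparable neighborhoods, I would attempt to assemble a partition of $V(G)$ into four cliques and verify conditions (a)--(d) of Lemma~\ref{lemma-ring-char}, invoking the no-clique-cutset hypothesis both to merge vertices of small attachment into the correct clique and to exclude separating cliques.

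The main obstacle is precisely this assembly step: showing that the local period-4 attachment pattern either closes up consistently into a 4-ring, or else is forced into the exceptional period-7 pattern of a blow-up of $\overline{C_7}$. Concretely, one must understand how the 4-holes of $G$ overlap globally, locate where the nested-neighborhood structure of a ring can fail, and prove that, under (long hole, 3PC, proper wheel, universal wheel)-freeness together with the absence of a clique-cutset, the only such failure is a 7-hyperantihole. I expect this exceptional analysis to carry essentially all of the difficulty, and it is plausibly packaged as a separate structural lemma — a classification of the graphs in $\mathcal{G}_{\text{T}}$ whose only holes are 4-holes — proved before the theorem itself.
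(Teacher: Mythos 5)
Your handling of the long-hole case is correct, and it is a mild repackaging of the paper's route rather than a new one: where you deduce from the absence of universal wheels that a graph containing a long hole is anticonnected and then filter the three bullets of $\mathcal{B}_{\text{UT}}$, the paper (Lemma~\ref{lemma-long-hole-GT}) applies Lemma~\ref{lemma-ring-or-clique-cut-GUT} directly and observes that $U_R \neq \emptyset$ for a ring anticomponent $R$ would create a universal wheel, forcing $G = R$. Both arguments are sound and of comparable length, and your reduction of the chordal case via Theorem~\ref{thm-Dirac61} is likewise fine.

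The genuine gap is the long-hole-free, non-chordal case, which you explicitly leave as an ``assembly step'' you ``would attempt'': this is where essentially all of the paper's Section~\ref{sec:decompGT} lives, and nothing in your proposal proves it. Concretely, three arguments are missing. First, the paper disposes of the 7-antihole \emph{before} any growth argument (Lemma~\ref{lemma-7antihole-GT}): if $A$ is a 7-antihole, then $A^*$ is a 7-hyperantihole, every outside vertex attaches to $A^*$ in a clique, and every component of $G \setminus V(A^*)$ attaches in a clique, each step via case analyses using $K_{2,3}$, universal wheels, and long holes. Second, with long holes and 7-antiholes both excluded, the paper chooses a 4-hole $H$ maximizing $|V(H^*)|$, proves $H^*$ is a 4-ring (Lemma~\ref{lemma-4-hole-ring-GT}), and proves that outside vertices attach in cliques (Lemma~\ref{lemma-4-hole-vertex-attach-GT}); the engine is precisely the maximality of $|V(H^*)|$ --- a non-clique attachment yields a new 4-hole $Y$ with $V(H^*) \subsetneqq V(Y^*)$ --- and it is inside that argument (claim (2) of its proof) that a 7-antihole arises as the obstruction, which is exactly why the paper excludes it up front rather than discovering the period-7 pattern as a failure mode of the assembly, as you propose to do. Third, passing from ``each vertex attaches in a clique'' to ``each component of $G \setminus V(H^*)$ attaches in a clique'' (Lemma~\ref{lemma-4-hole-GT}) requires a minimal-connecting-path argument producing long holes or 3PCs and is not automatic. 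Your intuition about where the difficulty sits, and that the 7-hyperantihole is the unique failure of period-4 closure, matches the paper; but as written, the proposal proves only the easy cases and defers the hard one, so it is not a proof of the theorem.
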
 

\begin{theorem} \label{decomp-thm-GUTcap} Every graph in $\mathcal{G}_{\text{UT}}^{\text{cap-free}}$ either belongs to $\mathcal{B}_{\text{UT}}^{\text{cap-free}}$ or admits a clique-cutset. 
\end{theorem}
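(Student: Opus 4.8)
The plan is to reduce to the already-established decomposition for $\mathcal{G}_{\text{UT}}$. Since $\mathcal{G}_{\text{UT}}^{\text{cap-free}} \subseteq \mathcal{G}_{\text{UT}}$, every $G \in \mathcal{G}_{\text{UT}}^{\text{cap-free}}$ lies in $\mathcal{G}_{\text{UT}}$, so by Theorem~\ref{decomp-thm-GUT} either $G$ admits a clique-cutset (and we are done), or $G \in \mathcal{B}_{\text{UT}}$. So I would fix $G \in \mathcal{G}_{\text{UT}}^{\text{cap-free}}$ with no clique-cutset and show $G \in \mathcal{B}_{\text{UT}}^{\text{cap-free}}$ by refining, one at a time, the three defining alternatives of $\mathcal{B}_{\text{UT}}$, using cap-freeness.

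The central new ingredient is the claim that a cap-free ring is a hyperhole. To prove it, let $R$ be a ring with good partition $(X_1,\dots,X_k)$, and for each $i$ pick $a_i \in X_i$ complete to $X_{i-1} \cup X_{i+1}$ (Lemma~\ref{lemma-ring-char}(c)); then $a_1 \cdots a_k$ is a $k$-hole. If $R$ is not a hyperhole, some $X_i$ contains a vertex $w$ not complete to $X_{i-1} \cup X_{i+1}$; by symmetry say $w \not\sim b$ for some $b \in X_{i+1}$. Replacing $a_{i+1}$ by $b$ still yields a $k$-hole $H$ (since $a_i, a_{i+2}$ are complete to $X_{i+1}$), and on $H$ the vertex $w$ is adjacent to exactly the two consecutive vertices $a_{i-1}, a_i$: by Lemma~\ref{lemma-ring-char}(b) the vertex $w$ has no neighbor on $H$ outside $X_{i-1} \cup X_i \cup X_{i+1}$, while $w \sim a_{i-1}$, $w \sim a_i$, and $w \not\sim b$. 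Thus $\{w\} \cup H$ is a cap, a contradiction. Applying this to the first alternative of $\mathcal{B}_{\text{UT}}$, where $G$ has a unique nontrivial anticomponent that is a long ring, this anticomponent is a hyperhole of length at least five; if its length is at least six we land in the first bullet of $\mathcal{B}_{\text{UT}}^{\text{cap-free}}$, and if it has length five it is a $5$-hyperhole, so (the remaining anticomponents being single vertices, hence chordal cobipartite) we land in the second bullet.

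For the remaining two alternatives of $\mathcal{B}_{\text{UT}}$, namely that $G$ is (long hole, $K_{2,3}$, $\overline{C_6}$)-free, or that $\alpha(G)=2$ with all anticomponents $5$-hyperholes or $(C_5,\overline{C_6})$-free, the goal is to show that every anticomponent of $G$ is a $5$-hyperhole or chordal cobipartite, which is exactly the second bullet of $\mathcal{B}_{\text{UT}}^{\text{cap-free}}$. Here I would argue anticomponent by anticomponent, splitting on the number of nontrivial anticomponents. First, if $G$ has two distinct nontrivial anticomponents, then an independent triple inside one of them together with a non-adjacent pair in another forms an induced $K_{2,3}$ (distinct anticomponents being complete to one another), so each nontrivial anticomponent $H$ has $\alpha(H) \leq 2$. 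With $\alpha(H) \leq 2$ (and $C_5$-freeness in the third case, long-hole-freeness in the second), $H$ has no hole of length at least five, so chordality of $H$ reduces to $C_4$-freeness; and $C_4$-freeness I would extract from cap- and $K_{2,3}$-freeness, since a $C_4$ inside an anticonnected graph forces a vertex bridging the two complement-edges, whose attachment to the $C_4$ is, case by case, a cap, the seed of a $K_{2,3}$, or an independent triple contradicting $\alpha(H) \leq 2$. When instead $G$ has a unique nontrivial anticomponent $H$ (the rest forming a clique $K$ complete to $H$), any clique-cutset of $H$ would, together with $K$, be a clique-cutset of $G$; since $G$ has none, $H$ itself has no clique-cutset, and once $H$ is known to be chordal, Theorem~\ref{thm-Dirac61} forces $H$ to be complete, which is impossible for a nontrivial anticomponent, so this configuration is vacuous.

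The step I expect to be the main obstacle is precisely this anticomponent analysis in the last two cases: unlike the ring case, one cannot cleanly separate the local structure (killing $C_4$'s, hence all holes, via cap- and $K_{2,3}$-freeness to obtain chordality) from the global hypothesis (using the absence of a clique-cutset, via Theorem~\ref{thm-Dirac61}, to obtain cobipartiteness). A $C_4$ sitting inside an anticonnected piece is not by itself forbidden, as witnessed by a $C_4$ with a pendant vertex; it becomes contradictory only once one tracks how complement-connectivity forces additional vertices and how these interact with the excluded $K_{2,3}$, $\overline{C_6}$, cap, and long holes. In particular, the hardest sub-case is the unique-nontrivial-anticomponent case with no $\alpha$-bound available, where $C_4$-freeness must be established before Theorem~\ref{thm-Dirac61} can be invoked. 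Making these cases interlock, so that every surviving anticomponent is either a $5$-hyperhole or chordal cobipartite with no stray $C_4$-containing anticonnected piece remaining, is where the real work lies.
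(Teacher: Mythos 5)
Your overall strategy---black-boxing Theorem~\ref{decomp-thm-GUT} and then upgrading each of the three alternatives of $\mathcal{B}_{\text{UT}}$ using cap-freeness---is different from the paper's (which argues anticomponent-by-anticomponent directly from Lemma~\ref{lemma-ring-or-clique-cut-GUT} and two new lemmas), and much of it is sound. Your proof that a cap-free ring is a hyperhole is correct and is essentially the paper's Lemma~\ref{ring-in-GT}(e). Your treatment of the cases with a stability bound also goes through: when $\alpha(H)\leq 2$, the complement $\overline{H}$ is triangle-free, house-freeness of $H$ says $\overline{H}$ is $P_5$-free, and a shortest path in the connected graph $\overline{H}$ between the two complement-edges of a $C_4$ always yields an induced $P_5$ in $\overline{H}$ (your ``bridging vertex'' is the distance-two case; longer bridges give an even longer induced path, so the same contradiction); excluding $C_5$ and using that $\overline{C_{2k+1}}$ contains a house for $2k+1\geq 7$ then gives cobipartiteness as well. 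This is a genuinely different, and arguably cleaner, route to the conclusion that the paper reaches via Lemmas~\ref{rmrk-K23-anticomp} and~\ref{lemma-clique-cut-alpha-2}.

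However, there is a genuine gap, and it sits exactly where you predicted: the case where $G$ is (long hole, $K_{2,3}$, $\overline{C_6}$)-free with a unique nontrivial anticomponent $H$ of stability number at least three. You correctly reduce to showing that $H$ (which inherits the absence of a clique-cutset) contains no $4$-hole, but you offer no argument for this, and none of your local tools suffices: with no bound on $\alpha(H)$, the complement argument collapses ($\overline{H}$ need not be triangle-free), and there really do exist anticonnected, cap-free, $K_{2,3}$-free, $\overline{C_6}$-free, long-hole-free graphs containing a $C_4$ (e.g.\ a $C_4$ with a pendant vertex), so the absence of a clique-cutset must be used in an essential, nonlocal way. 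This missing step is precisely the content of the paper's Lemma~\ref{lemma-domino} (a domino in a long-hole-free graph of the class forces a clique-cutset, proved by exhibiting the cutset $\{a_2,b_2\}\cup S$) and Lemma~\ref{lemma-4-hole-GUTcap} (a $4$-hole in a long-hole-free graph of the class with no clique-cutset forces at least two nontrivial anticomponents, proved by a seven-claim analysis of $V(H^*)\cup U_H$ that invokes the domino lemma), which together constitute the bulk of the paper's work for this theorem. Flagging the obstacle is not the same as overcoming it, so as it stands the proposal is an incomplete proof plan rather than a proof.
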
 

Note that the the clique-cutset decomposition has a natural reverse operation, namely the operation of ``gluing along a clique.'' Let $G_1$ and $G_2$ be graphs, and assume that $C = V(G_1) \cap V(G_2)$ is a (possibly empty) clique. Let $G$ be the graph with vertex set $V(G) = V(G_1) \cup V(G_2)$ and edge set $E(G) = E(G_1) \cup E(G_2)$. Under these circumstances, we say that $G$ is obtained by {\em gluing $G_1$ and $G_2$ along the clique $C$}, or simply that $G$ is obtained from $G_1$ and $G_2$ by {\em gluing along a clique}. 

\begin{lemma} \label{lemma-decomp-to-comp} Let $\mathcal{H}$ be a family of graphs, none of which admits a clique-cutset, and let $\mathcal{G}$ be the class of $\mathcal{H}$-free graphs. Let $\mathcal{B}$ be a subclass of $\mathcal{G}$. Assume that every graph in $\mathcal{G}$ either belongs to $\mathcal{B}$ or admits a clique-cutset. Then a graph belongs to $\mathcal{G}$ if and only if it can be obtained from graphs in $\mathcal{B}$ by repeatedly gluing along cliques. 
\end{lemma}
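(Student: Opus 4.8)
The plan is to prove the two implications separately, and in each case to induct --- on the number of gluing operations for the ``if'' direction, and on the number of vertices for the ``only if'' direction.

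For the ``if'' direction, the heart of the matter is the following claim: if $G_1,G_2 \in \mathcal{G}$ and $G$ is obtained by gluing $G_1$ and $G_2$ along a clique $C = V(G_1) \cap V(G_2)$, then $G \in \mathcal{G}$. To see this, I would suppose for contradiction that some $H \in \mathcal{H}$ is an induced subgraph of $G$, and set $A = V(H) \cap (V(G_1) \setminus C)$, $B = V(H) \cap (V(G_2) \setminus C)$, and $D = V(H) \cap C$. By the definition of gluing, there are no edges of $G$ between $V(G_1) \setminus C$ and $V(G_2) \setminus C$, so $A$ is anticomplete to $B$ in $H$. If $A = \emptyset$ then $H$ is an induced subgraph of $G_2$, and if $B = \emptyset$ then $H$ is an induced subgraph of $G_1$; either way this contradicts $G_1,G_2 \in \mathcal{G}$. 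So $A$ and $B$ are both nonempty, and then $D$ (a clique, being a subset of $C$) is a clique-cutset of $H$ separating $A$ from $B$, contradicting the assumption that no graph in $\mathcal{H}$ admits a clique-cutset. This establishes the claim, and since $\mathcal{B} \subseteq \mathcal{G}$, a straightforward induction on the number of gluings then shows that any graph obtained from graphs in $\mathcal{B}$ by repeatedly gluing along cliques lies in $\mathcal{G}$.

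For the ``only if'' direction, I would argue by induction on $|V(G)|$ that every $G \in \mathcal{G}$ can be built from graphs in $\mathcal{B}$ by repeated gluing. If $G \in \mathcal{B}$, there is nothing to do (it is ``obtained'' from itself via zero gluings). Otherwise the hypothesis of the lemma guarantees that $G$ admits a clique-cutset $C$; since $G \setminus C$ is disconnected, I can partition $V(G) \setminus C$ into two nonempty sets $A$ and $B$ that are anticomplete to each other (taking $A$ to be the union of some, but not all, components of $G \setminus C$). Setting $G_1 = G[A \cup C]$ and $G_2 = G[B \cup C]$, I would check that $V(G_1) \cap V(G_2) = C$ is a clique and that $G$ is exactly the graph obtained by gluing $G_1$ and $G_2$ along $C$ (the equality $E(G) = E(G_1) \cup E(G_2)$ holding because $A$ is anticomplete to $B$). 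Since $\mathcal{G}$ is hereditary, $G_1,G_2 \in \mathcal{G}$; and since $A$ and $B$ are both nonempty, both $G_1$ and $G_2$ have strictly fewer vertices than $G$. The induction hypothesis then expresses each of $G_1,G_2$ as a repeated gluing of graphs in $\mathcal{B}$, and gluing these along $C$ expresses $G$ in the same form.

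The only genuinely delicate point --- and really the conceptual core of the lemma --- is the clique-cutset argument in the ``if'' direction: it is precisely the hypothesis that no member of $\mathcal{H}$ admits a clique-cutset that prevents the gluing operation from creating a forbidden induced subgraph. Everything else is routine bookkeeping (verifying the gluing identity $E(G) = E(G_1) \cup E(G_2)$, the strict decrease of the vertex count, and the two inductions), so I expect no substantive obstacle beyond that central observation.
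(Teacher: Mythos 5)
Your proof is correct, and it is precisely the routine argument the paper has in mind: the published proof of this lemma consists of the single line ``This readily follows from appropriate definitions,'' and your two inductions, together with the key observation that a forbidden graph $H \in \mathcal{H}$ straddling a gluing would inherit a clique-cutset, supply exactly the omitted details. No gaps.
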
 
\begin{proof} 
This readily follows from appropriate definitions. 
\end{proof} 

Since no 3PC and no wheel admits a clique-cutset, Lemmas~\ref{BB-in-GG} and~\ref{lemma-decomp-to-comp} imply that Theorems~\ref{decomp-thm-GUT},~\ref{decomp-thm-GU}, and~\ref{decomp-thm-GT} can readily be converted into composition theorems. On the other hand, every cap admits a clique-cutset, and so Theorem~\ref{decomp-thm-GUTcap} cannot be converted to a composition theorem.

\subsection{Results: $\chi$-Boundedness and algorithms} \label{sec1.2chialg}

In section~\ref{sec:chi}, we study $\chi$-boundedness. A class $\mathcal{G}$ is said to be \emph{$\chi$-bounded} provided that there exists a function $f:\mathbb{N}^+ \rightarrow \mathbb{N}^+$ (called a {\em $\chi$-bounding function} for $\mathcal{G}$) such that for all graphs $G\in\mathcal{G}$, all induced subgraphs $H$ of $G$ satisfy $\chi(H) \leq f(\omega(H))$. Note that a hereditary class $\mathcal{G}$ is $\chi$-bounded if and only if there exists a function $f:\mathbb{N}^+ \rightarrow \mathbb{N}^+$ such that every graph $G \in \mathcal{G}$ satisfies $\chi(G) \leq f(\omega(G))$. $\chi$-Boundedness was introduced by Gy\'{a}rf\'{a}s \cite{Gya87} as a natural generalization of perfection: clearly, the class of perfect graphs is hereditary and $\chi$-bounded by the identity function. It follows from~\cite{KuhnOst} that the class of theta-free graphs is $\chi$-bounded; consequently, our four classes (i.e.\ classes $\mathcal{G}_{\text{UT}},\mathcal{G}_{\text{U}},\mathcal{G}_{\text{T}},\mathcal{G}_{\text{UT}}^{\text{cap-free}}$) are all $\chi$-bounded. Unfortunately, the $\chi$-bounding function from~\cite{KuhnOst} is superexponential. Using our structural results, we obtain polynomial $\chi$-bounding functions for our four classes. In fact, we obtain linear $\chi$-bounding functions for the classes $\mathcal{G}_{\text{U}},\mathcal{G}_{\text{T}},\mathcal{G}_{\text{UT}}^{\text{cap-free}}$; our $\chi$-bounding function for the class $\mathcal{G}_{\text{UT}}$ is a fourth-degree polynomial function. 

Finally, in section~\ref{sec:alg}, we turn to the algorithmic consequences of our structural results. We consider four algorithmic problems: 
\begin{itemize} 
\item the recognition problem, i.e.\ the problem of determining whether an input graph belongs to a given class; 
\item the maximum weight stable set problem (MWSSP), i.e.\ the problem of finding a maximum weight stable set in an input weighted graph (with real weights); 
\item the maximum weight clique problem (MWCP), i.e.\ the problem of finding a maximum weight clique in an input weighted graph (with real weights); 
\item the optimal coloring problem (ColP), i.e.\ the problem of finding an optimal coloring of an input graph. 
\end{itemize} 
We remark that all our algorithms are {\em robust}, that is, they either produce a correct solution to the problem in question for the input (weighted) graph, or they correctly determine that the graph does not belong to the class under consideration. (If the input graph does not belong to the class under consideration, a robust algorithm may possibly produce a correct solution to the problem in question, rather than determine that the input graph does not belong to the class.) 

A summary of our results is given in the table below. As usual, $n$ is the number of vertices and $m$ the number of edges of the input graph. For the sake of compactness, we write $O(nm)$ and $O(n^2m)$ instead of $O(nm+n^2)$ and $O(n^2m+n^3)$, respectively. 

\begin{center}
 \begin{tabular}{|l || c | c | c | c | l |} 
 \hline
  & recognition & MWSSP & MWCP & ColP & $\chi$-bound. \\ [0.5ex] 
 \hline\hline
 $\mathcal{G}_{\text{UT}}$ & $O(n^6)$ & ? & NP-hard & ? & $\chi \leq 2\omega^4$ \\ 
 \hline
 $\mathcal{G}_{\text{U}}$ & $O(nm)$ & $O(nm)$ & $O(nm)$~\cite{ACTV15} & $O(nm)$ & $\chi \leq \omega+1$ \\
 \hline
 $\mathcal{G}_{\text{T}}$ & $O(n^3)$ & $O(n^2m)$ & $O(nm)$ & ? & $\chi \leq \lfloor \frac{3}{2}\omega \rfloor$ \\
 \hline
 $\mathcal{G}_{\text{UT}}^{\text{cap-free}}$ & $O(n^5)$ & $O(n^3)$ & $O(n^3)$ & $O(n^3)$ & $\chi \leq \lfloor \frac{3}{2}\omega \rfloor$ \\
 \hline
\end{tabular} 
\end{center}

Most of our algorithms rely on Theorems~\ref{decomp-thm-GUT},~\ref{decomp-thm-GU},~\ref{decomp-thm-GT}, and~\ref{decomp-thm-GUTcap}. Since all four theorems involve clique-cutsets, most of our algorithms also rely on techniques developed in~\cite{Tarjan} for handling clique-cutsets. 

At this time, we do not know whether rings can be optimally colored in polynomial time, and for this reason, we do not know the complexity of the ColP for the class $\mathcal{G}_{\text{T}}$. 

As shown in the table, an $O(nm+n^2)$ time algorithm solving the MWCP for the class $\mathcal{G}_{\text{U}}$ was given in~\cite{ACTV15}; that algorithm relies on LexBFS~\cite{RTL76}. In the present paper, we give a different algorithm solving the MWCP for the class $\mathcal{G}_{\text{U}}$ (our algorithm has the same complexity as the one from~\cite{ACTV15}, but it relies on our structural results for the class $\mathcal{G}_{\text{U}}$). Further, we note that the complexity of the ColP for the class $\mathcal{G}_{\text{U}}$ was left open in~\cite{ACTV15}; here, we give a polynomial time algorithm that solves this problem. Finally, we note that it was shown in~\cite{ACTV15} that every graph $G \in \mathcal{G}_{\text{U}}$ has a {\em bisimplicial} vertex, i.e.\ a vertex whose neighborhood can be partitioned into two (possibly empty) cliques; this result readily implies that every graph $G \in \mathcal{G}_{\text{U}}$ satisfies $\chi(G) \leq 2\omega(G)-1$. Using our structural results, we obtain a better $\chi$-bounding function for the class $\mathcal{G}_{\text{U}}$ (as shown in the table above). 

The {\em join} of graphs $G_1,\dots,G_k$ on pairwise disjoint vertex sets is the graph $G$ with vertex set $V(G) = V(G_1) \cup \dots \cup V(G_k)$ and edge set $E(G) = E(G_1) \cup \dots \cup E(G_k) \cup \{x_ix_j \mid 1 \leq i < j \leq k, x_i \in V(G_i), x_j \in V(G_j)\}$. 

Note that if $H$ is the join of an odd hole and a complete graph, then $H \in \mathcal{G}_{\text{U}}$ and $\chi(H) = \omega(H)+1$. Further, if $K$ is the join of arbitrarily many copies of $C_5$, then $K \in \mathcal{G}_{\text{UT}}^{\text{cap-free}}$ and $\chi(K) = \lfloor \frac{3}{2} \omega(K) \rfloor$. This proves that our $\chi$-bounding functions for the classes $\mathcal{G}_{\text{U}}$ and $\mathcal{G}_{\text{UT}}^{\text{cap-free}}$ are optimal. We do not know whether our $\chi$-bounding function for the class $\mathcal{G}_{\text{T}}$ is optimal. In section~\ref{sec:chi}, we show that the class $\mathcal{G}_{\text{UT}}$ is $\chi$-bounded by a function of order $\frac{\omega^4}{\log^2\omega}$, and so the $\chi$-bounding function for $\mathcal{G}_{\text{UT}}$ given in the table above is not optimal. In fact, we do not know the order of the optimal $\chi$-bounding function for the class $\mathcal{G}_{\text{UT}}$.

\section{Preliminaries} \label{sec:prelim} 

In this section, we introduce some (mostly standard) terminology and notation that we use throughout the paper. We also prove a few preliminary results.  

\subsection{Terminology and notation}\label{sec:def}

The set of nonnegative integers is denoted by $\mathbb{N}$, and the set of positive integers by $\mathbb{N}^+$. A {\em singleton} is a one-element set. 

The vertex set and edge set of a graph $G$ are denoted by $V(G)$ and $E(G)$, respectively. When no confusion is possible, we write $G$ instead of $V(G)$. 

A graph is {\em trivial} if it has just one vertex; a graph is {\em nontrivial} if it has at least two vertices. For a vertex $x$ of a graph $G$, $N_G(x)$ is the set of all neighbors of $x$ in $G$, $d_G(x)=|N_G(x)|$ is the \emph{degree} of $x$ in $G$, and $N_G[x]=N_G(x)\cup \{x\}$. For a set $S \subseteq V(G)$, $N_G(S)$ is the set of all vertices in $V(G)\setminus S$ that have at least one neighbor in $S$, and $N_G[S]=N_G(S) \cup S$. The maximum degree of $G$ is denoted by $\Delta(G)$, that is, $\Delta(G) = \max\{d_G(x) \mid x \in V(G)\}$. 

For a graph $G$ and a nonempty set $S \subseteq V(G)$, $G[S]$ denotes the subgraph of $G$ induced by $S$. Given vertices $x_1,\dots,x_k \in V(G)$, we often write $G[x_1,\dots,x_k]$ instead of $G[\{x_1,\dots,x_k\}]$. 

For a graph $G$ and a set $S \subsetneqq V(G)$, we set $G \setminus S = G[V(G) \setminus S]$. If $G$ is nontrivial and $x \in V(G)$, we often write $G \setminus x$ instead of $G \setminus \{x\}$. (Since we only deal with nonnull graphs, if $G$ is trival and $x$ is the only vertex of $G$, then $G \setminus x$ is undefined.) 

Given a graph $G$, a vertex $x \in V(G)$, and a set $Y \subseteq V(G) \setminus \{x\}$, we say that $x$ is {\em complete} (resp. {\em anticomplete}) to $Y$ in $G$ provided that $x$ is adjacent (resp. nonadjacent) to every vertex in $Y$. Given disjoint sets $X,Y \subseteq V(G)$, we say that $X$ is {\em complete} (resp. {\em anticomplete}) to $Y$ in $G$ provided that every vertex in $X$ is complete (resp. anticomplete) to $Y$. 

As usual, a \emph{clique} (resp. {\em stable set}) in a graph $G$ is a (possibly empty) set of pairwise adjacent (resp. nonadjacent) vertices of $G$. The \emph{clique number} of $G$, denoted by $\omega(G)$, is the size of a largest clique in $G$; the {\em stability number} of $G$, denoted by $\alpha(G)$, is the size of a largest stable set in $G$. A {\em maximum clique} (resp. {\em maximum stable set}) of $G$ is a clique (resp. stable set) of size $\omega(G)$ (resp. $\alpha(G)$). A \emph{complete graph} is a graph whose vertex set is a clique. The complete graph on $n$ vertices is denoted by $K_n$; $K_3$ is also referred to as a \emph{triangle}. 

A \emph{weighted graph} is an ordered pair $(G,w)$, where $G$ is a graph and $w:V(G) \rightarrow \mathbb{R}$ is a \emph{weight function} for $G$. For a set $S\subseteq V(G)$, the {\em weight} of $S$, denoted by $w(S)$, is the sum of weights of all vertices in $S$, that is, $w(S) = \sum_{v \in S} w(v)$. The \emph{clique number} (resp. \emph{stability number}) of a weighted graph $(G,w)$, denoted by $\omega(G,w)$ (resp. $\alpha(G,w)$), is the maximum weight of a clique (resp. stable set) of $G$. A {\em maximum weight clique} (resp. {\em maximum weight stable set}) of $(G,w)$ is a clique (resp. stable set) of $G$ whose weight is precisely $\omega(G,w)$ (resp. $\alpha(G,w)$). Clearly, if $(G,w)$ is a weighted graph and $H$ is an induced subgraph of $G$, then the restriction of $w$ to $V(H)$, denoted by $w \upharpoonright V(H)$, is a weight function for $H$, and $(H,w \upharpoonright V(H))$ is a weighted graph; to simplify notation, we usually write $(H,w)$ instead of $(H,w \upharpoonright V(H))$. 

For a positive integer $k$, a \emph{$k$-coloring} of a graph $G$ is a function $c:V(G)\rightarrow \{1,\dots,k\}$ such that $c(x)\neq c(y)$ whenever $xy\in E(G)$; elements of $\{1,\dots,k\}$ are called \emph{colors}. A graph is \emph{$k$-colorable} if it admits a $k$-coloring. The \emph{chromatic number} of $G$, denoted by $\chi(G)$, is the smallest integer $k$ such that $G$ is $k$-colorable. 

A {\em path} is a graph $P$ with vertex set $V(P) = \{x_1,\dots,x_k\}$ (where $k \geq 1$) and edge set $E(P) = \{x_1x_2,x_2x_3,\dots,x_{k-1}x_k\}$; under these circumstances, we write that ``$P = x_1,\dots,x_k$ is a path,'' and we say that the {\em length} of the path $P$ is $k-1$ (i.e.\ the length of a path is the number of edges that it contains), that the {\em endpoints} of $P$ are $x_1$ and $x_k$ (if $k = 1$, then the endpoints of $P$ coincide), that $x_2,\dots,x_{k-1}$ are the {\em interior vertices} of the path $P$ (note that $P$ has interior vertices if and only if $k \geq 3$), and that $P$ is a path {\em between} $x_1$ and $x_k$. A {\em path} in a graph $G$ is a subgraph of $G$ that is a path. An {\em induced path} in a graph $G$ is an induced subgraph of $G$ that is a path. 

A {\em cycle} is a graph $C$ with vertex set $V(C) = \{x_1,\dots,x_k\}$ (where $k \geq 3$, and subscripts are understood to be in $\mathbb{Z}_k$) and edge set $E(C) = \{x_1x_2,x_2x_3,\dots,x_{k-1}x_k,x_kx_1\}$; under these circumstances, we write that ``$C = x_1,\dots,x_k,x_1$ is a cycle,'' and we say that the {\em length} of $C$ is $k$. A {\em cycle} in a graph $G$ is a subgraph of $G$ that is a cycle. An {\em induced cycle} in a graph $G$ is an induced subgraph of $G$ that is a cycle. 

A path of length $k$ is denoted by $P_{k+1}$ (note that $P_{k+1}$ has $k+1$ vertices and $k$ edges), and a cycle of length $k$ is denoted by $C_k$ (note that $C_k$ has $k$ vertices and $k$ edges). 

A {\em hole} in a graph $G$ is an induced cycle of length at least four. An {\em antihole} in a graph $G$ is an induced subgraph of $G$ whose complement is a hole in $\overline{G}$. The {\em length} of a hole or antihole is the number of vertices that it contains; a {\em $k$-hole} (resp. {\em $k$-antihole}) is a hole (resp. antihole) of length $k$. A hole or antihole is {\em long} if it is of length at least five. A hole or antihole is {\em odd} (resp. {\em even}) if its length is odd (resp. even). Further, consistently with the notation above, we write ``$H = x_1,\dots,x_k,x_1$ is a hole,'' or simply ``$x_1,\dots,x_k,x_1$ is a hole'' (with $k \geq 4$, and subscripts understood to be in $\mathbb{Z}_k$) when $H = x_1,\dots,x_k,x_1$ is an induced cycle. On the other hand, we write that ``$A = x_1,\dots,x_k,x_1$ is an antihole,'' or simply ``$x_1,\dots,x_k,x_1$ is an antihole'' (with $k \geq 4$, and subscripts understood to be in $\mathbb{Z}_k$) when $\overline{A} = x_1,\dots,x_k,x_1$ is a hole. 


Let $H$ be an induced subgraph of a graph $G$. Two distinct vertices $x,y\in V(G)$ are \emph{twins} with respect to $H$ if $N_G[x] \cap V(H) = N_G[y] \cap V(H)$. Given a vertex $x \in V(G)$, we denote by $X_x^G(H)$ the set consisting of $x$ and all twins of $x$ in $G$ with respect to $H$. The set of all vertices in $V(G)\setminus V(H)$ that are complete to $V(H)$ is denoted by $U_H^G$. When no confusion is possible, we omit the superscript $G$ in $X_x^G(H)$ and $U_H^G$, and instead, we write simply $X_x(H)$ and $U_H$, respectively. Further, we set $H_G^* = G[\bigcup_{x \in V(H)} X_x^G(H)]$; when no confusion is possible, we omit the subscript $G$ and write simply $H^*$. 

A {\em hyperhole} is a graph $H$ whose vertex set can be partitioned into $k \geq 4$ nonempty cliques, call them $X_1,\dots,X_k$ (with subscripts understood to be in $\mathbb{Z}_k$), such that for all $i \in \mathbb{Z}_k$, $X_i$ is complete to $X_{i-1} \cup X_{i+1}$ and anticomplete to $V(H) \setminus (X_{i-1} \cup X_i \cup X_{i+1})$; under these circumstances, we say that the hyperhole $H$ is of {\em length} $k$, and we also write that ``$H = X_1,\dots,X_k,X_1$ is a hyperhole''; furthermore, we say that $(X_1,\dots,X_k)$ is a {\em good partition} of the hyperhole $H$. A {\em $k$-hyperhole} is a hyperhole of length $k$, and a {\em long hyperhole} is a hyperhole of length at least five. Note that if $H$ is a $k$-hyperhole with good partition $(X_1,\dots,X_k)$, then $H$ is a $k$-ring with good partition $(X_1,\dots,X_k)$. 

A {\em hyperantihole} is a graph $A$ whose vertex set can be partitioned into $k \geq 4$ nonempty cliques, call them $X_1,\dots,X_k$ (with subscripts understood to be in $\mathbb{Z}_k$), such that for all $i \in \mathbb{Z}_k$, $X_i$ is anticomplete to $X_{i-1} \cup X_{i+1}$ and complete to $V(A) \setminus (X_{i-1} \cup X_i \cup X_{i+1})$; under these circumstances, we say that the hyperantihole $A$ is of {\em length} $k$, and we also write that ``$A = X_1,\dots,X_k,X_1$ is a hyperantihole''; furthermore, we say that $(X_1,\dots,X_k)$ is a {\em good partition} of the hyperantihole $A$. A {\em $k$-hyperantihole} is a hyperantihole of length $k$, and a {\em long hyperantihole} is a hyperantihole of length at least five. Note that the complement of a hyperantihole need not be a hyperhole. 

A graph is \emph{bipartite} if its vertex set can be partitioned into two (possibly empty) stable sets. A graph is \emph{cobipartite} if its complement is bipartite. A \emph{complete bipartite graph} is a graph whose vertex set can be partitioned into two (possibly empty) stable sets that are complete to each other; $K_{n,m}$ is a complete bipartite graph whose vertex set can be partitioned into two stable sets, one of size $n$ and the other one of size $m$, that are complete to each other. 

A {\em cutset} of a graph $G$ is a (possibly empty) set $C \subsetneqq V(G)$ such that $G \setminus C$ is disconnected. A {\em cut-partition} of a graph $G$ is a partition $(A,B,C)$ of $V(G)$ such that $A$ and $B$ are nonempty and anticomplete to each other (the set $C$ may possibly be empty). Clearly, if $(A,B,C)$ is a cut-partition of $G$, then $C$ is a cutset of $G$; conversely, every cutset of $G$ gives rise to at least one cut-partition of $G$. A {\em clique-cutset} of a graph $G$ is a (possibly empty) clique of $G$ that is also a cutset of $G$. A {\em clique-cut-partition} of a graph $G$ is a cut-partition $(A,B,C)$ of $G$ such that $C$ is a clique. Again, if $(A,B,C)$ is a clique-cut-partition of $G$, then $C$ is a clique-cutset of $G$, and conversely, every clique-cutset of $G$ gives rise to at least one clique-cut-partition of $G$. 

Let $G$ be a 3PC. Then $G$ contains three induced paths, say $P_1=x_1,\ldots,y_1$, $P_2=x_2,\ldots,y_2$, and $P_3=x_3,\ldots,y_3$, such that $V(G) = V(P_1) \cup V(P_2) \cup V(P_3)$, and such that $\{x_1,x_2,x_3\}\cap\{y_1,y_2,y_3\}= \emptyset$, $\{ x_1,x_2,x_3\}$ either induces a triangle or is a singleton (i.e.\ $x_1 = x_2 = x_3$), $\{ y_1,y_2,y_3\}$ either induces a triangle or is a singleton (i.e.\ $y_1 = y_2 = y_3$), and $V(P_i) \cup V(P_j)$ induces a hole for all distinct $i,j \in \{1,2,3\}$. If $x_1 = x_2 = x_3$ and $y_1 = y_2 = y_3$, then we say that $G$ is a $3PC(x_1,y_1)$; in this case, $G$ is a theta. If $\{x_1,x_2,x_3\}$ induces a triangle and $y_1 = y_2 = y_3$, then we say that $G$ is a $3PC(x_1x_2x_3,y_1)$; in this case, $G$ is a pyramid. Finally, if $\{x_1,x_2,x_3\}$ and $\{y_1,y_2,y_3\}$ both induce a triangle, then we say that $G$ is a $3PC(x_1x_2x_3,y_1y_2y_3)$; in this case, $G$ is a prism. When we say that ``$K$ is a 3PC in $G$,'' we always assume that $K$ is an induced subgraph of $G$. 

A {\em wheel} $(H,x)$ is a graph that consists of a hole $H$, called the {\em rim}, and an additional vertex $x$, called the {\em center}, such that $x$ has at least three neighbors in $H$. A {\em universal wheel} is a wheel $(H,x)$ in which $x$ is complete to $V(H)$. A {\em twin wheel} is a wheel $(H,x)$ such that $x$ has precisely three neighbors in $H$, and those three neighbors are consecutive vertices of $H$. A wheel that is neither a universal wheel nor a twin wheel is called a \emph{proper wheel}. When we say that ``$(H,x)$ is a wheel in $G$,'' we always assume that the wheel $(H,x)$ is an induced subgraph of $G$.

\subsection{A few preliminary lemmas} \label{sec:lemmas}

Let $W_5^4$ be the six-vertex wheel consisting of a $C_5$ and a vertex that has precisely four neighbors in the $C_5$. We remind the reader that, for $k \geq 4$, the universal wheel on $k+1$ vertices is denoted by $W_k$, and the twin wheel on $k+1$ vertices is denoted by $W_k^{\text{t}}$.

\begin{lemma} \label{rmrk-Truemper} No Truemper configuration admits a clique-cutset. The only Truemper configurations of stability number two are the prism $\overline{C_6}$, the universal wheels $W_4$ and $W_5$, the twin wheels $W_4^{\text{t}}$ and $W_5^{\text{t}}$, and the proper wheel $W_5^4$; all other Truemper configurations have stability number at least three. The theta $K_{2,3}$, the prism $\overline{C_6}$, the universal wheel $W_4$, and the twin wheel $W_4^{\text{t}}$ are the only Truemper configuration that do not contain a long hole. The only Truemper configurations that are not anticonnected are the theta $K_{2,3}$, the twin wheel $W_4^{\text{t}}$, and universal wheels. 
\end{lemma}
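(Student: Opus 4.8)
The statement comprises four independent assertions, and I would prove each by a case analysis over the four types of Truemper configuration (theta, pyramid, prism, wheel). The organizing device is to parametrize each 3PC by the lengths $\ell_1,\ell_2,\ell_3$ of its three constituent paths (recalling that theta-paths have length at least $2$, pyramid-paths have length at least $1$ with at least two of length at least $2$, and prism-paths have length at least $1$), and to parametrize each wheel by the length $k$ of its rim $H$ together with the set $N=N(x)\cap V(H)$ of rim-neighbors of its center $x$, where $|N|\geq 3$.

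\emph{No clique-cutset.} I would first record that every clique of a Truemper configuration has size at most three, and enumerate the possible cliques in each type: edges along the paths in all cases, the base triangle of a pyramid, the two triangles of a prism, and the triangles $xh_ih_{i+1}$ of a wheel. Given a clique $C$, I would check directly that $G\setminus C$ stays connected. In a 3PC the two ``ends'' (the branch vertices, or the two triangles, or the apex and base) remain joined by at least two of the three internally disjoint paths after deleting any single such clique, so nothing is separated; in a wheel, $C$ meets the rim in at most two consecutive vertices, and the rim minus these is a nonempty path to which every surviving vertex, including the center if it is not in $C$, stays attached. This is routine but must be verified for each location of $C$.

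\emph{Stability number and long holes.} For a 3PC, any two of its three paths, together with the relevant end-structure, induce a hole whose length is a simple function of $\ell_i+\ell_j$: a pyramid contains a hole of length $\ell_i+\ell_j+1$, a prism a hole of length $\ell_i+\ell_j+2$, and a theta a hole of length $\ell_i+\ell_j$. Hence every pyramid and every prism with some $\ell_i\geq 2$ has a long hole, leaving only $\overline{C_6}$ (all $\ell_i=1$) among prisms with no long hole, while a theta avoids long holes exactly when all $\ell_i=2$, i.e.\ it is $K_{2,3}$. For the stability number, taking the three neighbors of a branch or apex vertex (one per path, of which at most one is a base vertex in the pyramid case, since at most one path has length $1$) yields a stable set of size three in every theta and pyramid and in every prism other than $\overline{C_6}$, whereas $\alpha(\overline{C_6})=2$. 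For wheels the rim is itself a hole, so a wheel has a long hole iff $k\geq 5$ and has $\alpha\geq 3$ as soon as $\alpha(C_k)\geq 3$, i.e.\ $k\geq 6$; the cases $k\in\{4,5\}$ I would finish by hand, observing that $\alpha(W)=2$ precisely when $\alpha(H)=2$ and the non-neighbor set $V(H)\setminus N$ is a clique. This singles out $W_4$ and $W_4^{\text{t}}$ for $k=4$, and $W_5$, $W_5^{\text{t}}$, $W_5^4$ for $k=5$; and for long holes, $k=4$ forces $W_4$ or $W_4^{\text{t}}$, both of which I check contain no long hole.

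\emph{Anticonnectedness, the main obstacle.} A graph fails to be anticonnected exactly when it is a nontrivial join $G[A]*G[B]$ with $A,B$ nonempty and complete to each other. The key lemma I would isolate is that among holes only $C_4$ is such a join (since $\overline{C_4}=2K_2$ is disconnected while $\overline{C_k}$ is connected for $k\geq 5$), and, more generally, that in any join an induced hole of length at least five has at most one vertex on one side, since two vertices on each side would induce a $C_4$. Repeatedly using ``non-adjacent vertices lie on the same side,'' I would argue: in a theta the two (non-adjacent) branch vertices and then the whole structure collapse to one side unless every path has length $2$, yielding only $K_{2,3}$; in a pyramid and in a prism the triangle(s) force all of $V(G)$ onto one side (for the prism, the non-adjacency graph on the two triangles contains a spanning $C_6$), so both are always anticonnected, and in particular $\overline{C_6}$ is anticonnected because its complement is $C_6$; and for a wheel either the entire rim lies on one side, forcing $B=\{x\}$ and hence a universal wheel, or the rim splits nontrivially, forcing $k=4$ and, after placing the center, exactly $W_4$ or $W_4^{\text{t}}$ (no proper wheel occurs here, as proper wheels have at least six vertices). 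These four analyses together give the lemma; the part needing the most care is the wheel subcase of anticonnectedness, where the enumeration of join decompositions must be exhaustive, and the clean way to control it is precisely the observation that a long hole cannot split across a join, reducing everything to a rim that is $C_4$ or lies entirely on one side.
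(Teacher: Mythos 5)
Your proposal is correct, and it coincides with the paper's approach: the paper disposes of this lemma with the single line ``This follows by routine checking,'' and your case analysis over thetas, pyramids, prisms, and wheels is precisely that routine checking, carried out in full and organized sensibly (path-length parameters for 3PCs, the pair $(k,N)$ for wheels, and the join characterization of non-anticonnectedness). One small sharpening worth recording: in a join, a hole of length at least five must lie \emph{entirely} on one side --- a hole vertex on the other side would be complete to at least four vertices of the hole, exceeding its degree two in the hole --- and this stronger form (rather than ``at most one vertex on one side,'' which your $C_4$ justification only loosely supports) is what your wheel subcase actually invokes when concluding that the rim either is a $C_4$ or sits on a single side.
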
 
\begin{proof} 
This follows by routine checking. 
\end{proof} 

\begin{lemma} \label{rmrk-K23-anticomp} If a $K_{2,3}$-free graph $G$ has at least two nontrivial anticomponents, then $\alpha(G) = 2$. 
\end{lemma}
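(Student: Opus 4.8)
The plan is to establish the two inequalities $\alpha(G) \geq 2$ and $\alpha(G) \leq 2$ separately. The lower bound is immediate: a nontrivial anticomponent $A$ is anticonnected and has at least two vertices, so $\overline{A}$ is a connected graph on at least two vertices and hence contains an edge; this edge is a nonedge of $A$, i.e.\ a stable set of size two in $A$, and therefore in $G$. Since $G$ has a nontrivial anticomponent, we obtain $\alpha(G) \geq 2$.

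For the upper bound, I would argue by contradiction: suppose $\{a,b,c\}$ is a stable set of size three in $G$, and exhibit an induced $K_{2,3}$. The key preliminary observation is that every stable set of $G$ is contained in a single anticomponent. Indeed, as noted in the excerpt, distinct anticomponents of $G$ are pairwise complete to each other, so any two vertices lying in \emph{different} anticomponents are adjacent and cannot both belong to a stable set. Consequently $a,b,c$ all lie in one anticomponent, say $A$. Now let $B$ be a second nontrivial anticomponent of $G$ (guaranteed by hypothesis, and distinct from $A$). Exactly as in the lower-bound argument, $B$ contains a nonedge, on vertices $d,e$ say. Because $A$ and $B$ are distinct anticomponents, $A$ is complete to $B$, so each of $d,e$ is adjacent to each of $a,b,c$. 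The five vertices $a,b,c,d,e$ are pairwise distinct (as $V(A) \cap V(B) = \emptyset$), the sets $\{a,b,c\}$ and $\{d,e\}$ are each stable, and every edge between the two sets is present; hence $G[a,b,c,d,e]$ is an induced $K_{2,3}$, contradicting the hypothesis that $G$ is $K_{2,3}$-free. This gives $\alpha(G) \leq 2$, and combined with the lower bound, $\alpha(G) = 2$.

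The whole argument is short, and its only mild subtlety is the reduction step: confining an arbitrary stable set to a single anticomponent (via the completeness between distinct anticomponents) and then extracting a nonedge from the \emph{second} nontrivial anticomponent. Once these two ingredients are in place, the completeness between the two anticomponents manufactures the forbidden $K_{2,3}$ automatically, so I expect no genuine obstacle beyond correctly invoking that distinct anticomponents are pairwise complete and that nontrivial anticomponents contain nonedges.
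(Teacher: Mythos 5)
Your proof is correct and follows essentially the same route as the paper: assume a stable set $\{a,b,c\}$ of size three, note it must lie in a single anticomponent, take a nonedge $\{d,e\}$ from a second nontrivial anticomponent, and use completeness between distinct anticomponents to exhibit the forbidden $K_{2,3}$ on $\{a,b,c,d,e\}$. Your explicit verification of the lower bound $\alpha(G) \geq 2$ is a small addition the paper leaves implicit, but otherwise the arguments coincide.
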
 
\begin{proof} 
Let $G$ be a graph that has at least two nontrivial anticomponents, and assume that $\alpha(G) \geq 3$. Let $\{a_1,a_2,a_3\}$ be a stable set of size three in $G$; clearly, $a_1,a_2,a_3$ belong to the same anticomponent of $G$. Let $H$ be a nontrivial anticomponent of $G$ that is different from the one containing $a_1,a_2,a_3$, and fix nonadjacent vertices $b_1,b_2 \in V(H)$. Then $G[b_1,b_2,a_1,a_2,a_3]$ is a $K_{2,3}$, and so $G$ is not $K_{2,3}$-free. 
\end{proof} 

The (unique) cap on five vertices is called the {\em house}. Note that the house is isomorphic to $\overline{P_5}$. Clearly, every cap-free graph is house-free. 

\begin{lemma} \label{lemma-clique-cut-alpha-2} Let $G$ be a graph. Assume that $\alpha(G) \leq 2$ and that $G$ admits a clique-cutset. Then the following hold: 
\begin{itemize} 
\item[(a)] $G$ is cobipartite, and consequently, $G$ contains no long holes; 
\item[(b)] if $G$ is house-free, then $G$ is chordal. 
\end{itemize} 
\end{lemma}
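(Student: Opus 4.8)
The plan is to fix a clique-cut-partition $(A,B,C)$ of $G$, which exists because $G$ admits a clique-cutset, and to exploit throughout that $A$ and $B$ are nonempty and anticomplete to each other while $C$ is a clique.

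For part (a), I would first observe that $A$ and $B$ are both cliques: if $A$ contained nonadjacent vertices $a_1,a_2$, then picking any $b\in B$ (using $B\neq\emptyset$) would yield a stable set $\{a_1,a_2,b\}$ of size three, since $A$ is anticomplete to $B$, contradicting $\alpha(G)\le 2$; symmetrically for $B$. The one genuinely nonroutine point is distributing the clique $C$ between the two sides. For each $c\in C$ I claim that $c$ is complete to $A$ or complete to $B$: if some $a\in A$ were nonadjacent to $c$, then for every $b\in B$ the set $\{a,c,b\}$ cannot be stable, which forces $cb\in E(G)$, so $c$ is complete to $B$. Letting $C_A$ be the set of vertices of $C$ complete to $A$ and $C_B=C\setminus C_A$ (each complete to $B$, by the claim), the sets $A\cup C_A$ and $B\cup C_B$ are cliques partitioning $V(G)$, so $G$ is cobipartite. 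The ``no long holes'' consequence then follows because cobipartiteness is hereditary and a long hole is never cobipartite: a long hole either is $C_5$, whose complement is an odd cycle and hence not bipartite, or has at least six vertices and thus stability number at least three (so it cannot even occur in $G$, and is not cobipartite).

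For part (b), since part (a) already rules out long holes, it suffices to show that a house-free such $G$ has no $4$-hole, whence $G$ has no holes at all and is chordal. I would argue by contradiction, assuming $H=v_1v_2v_3v_4v_1$ is a $4$-hole. The first step is the standard observation that a hole cannot cross a clique-cutset: a hole is triangle-free, so $H\cap C$ is a clique of size at most two, inducing an edge (or less) of $H$, and removing it from the cycle $H$ leaves a connected path; hence $H\cap C$ is not a cutset of $H$, so $H$ cannot meet both $A$ and $B$. Thus $H\subseteq A\cup C$, the case $H\subseteq B\cup C$ being symmetric. Since $A$ and $C$ are cliques and a $4$-hole is triangle-free, each of $H\cap A$ and $H\cap C$ has at most two vertices, forcing $|H\cap A|=|H\cap C|=2$, and these two pairs must be opposite edges of $H$; relabel so that $v_1,v_2\in A$ and $v_3,v_4\in C$, with non-edges $v_1v_3$ and $v_2v_4$.

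Finally I would produce a house. Choose $b\in B$; it is nonadjacent to $v_1,v_2\in A$. Applying $\alpha(G)\le 2$ to $\{v_1,v_3,b\}$ and to $\{v_2,v_4,b\}$, each of which already contains one of the non-edges $v_1v_3$, $v_2v_4$ of $H$ together with a non-edge to $b$, forces $b$ to be adjacent to both $v_3$ and $v_4$. Then $G[v_1,v_2,v_3,v_4,b]$ is a $4$-hole together with a vertex $b$ adjacent to exactly the two consecutive vertices $v_3,v_4$, i.e.\ a house, contradicting house-freeness. I expect the main obstacle to be the combinatorial placement step in part (b)&nbsp;--- pinning down $|H\cap A|=|H\cap C|=2$ and identifying which vertices form the non-edges&nbsp;--- together with the analogous distribution of $C$ in part (a); once the partition is understood, the cobipartition and the house each appear immediately.
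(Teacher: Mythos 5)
Your proof is correct and follows essentially the same route as the paper's: the same clique-cut-partition $(A,B,C)$, the same argument that $A,B$ are cliques and each vertex of $C$ is complete to $A$ or to $B$ for part (a), and for part (b) the same placement of the $4$-hole as two opposite edges in $A$ and $C$ followed by the same house $G[v_1,v_2,v_3,v_4,b]$. The only cosmetic difference is that where the paper reuses the completeness dichotomy to conclude $v_3,v_4$ are complete to $B$, you apply $\alpha(G)\le 2$ directly to the triples $\{v_1,v_3,b\}$ and $\{v_2,v_4,b\}$, which is the same computation.
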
 
\begin{proof} 
Let $(A,B,C)$ be a clique-cut-partition of $G$. Then $A$ is a clique, for otherwise, we fix nonadjacent vertices $a_1,a_2 \in A$, we fix any $b \in B$, and we observe that $\{a_1,a_2,b\}$ is a stable set of size three, a contradiction. Similarly, $B$ is a clique. Further, every vertex of $C$ is complete to at least one of $A$ and $B$, for otherwise, we fix some $c \in C$ that has a nonneighbor $a \in A$ and a nonneighbor $b \in B$, and we observe that $\{a,b,c\}$ is a stable set of size three, a contradiction. Let $C_A$ be the set of all vertices of $C$ that are complete to $A$, and let $C_B = C \setminus C_A$; then $C_B$ is complete to $B$. Now $A \cup C_A$ and $B \cup C_B$ are (disjoint) cliques whose union is $V(G)$, and it follows that $G$ is cobipartite. Since no cobipartite graph contains a long hole, (a) follows. 

It remains to prove (b). We assume that $G$ is house-free, and we show that $G$ is chordal. In view of (a), we just need to show that $G$ contains no 4-holes. Suppose otherwise, and let $H = x_1,x_2,x_3,x_4,x_1$ be a 4-hole in $G$. Since $H$ contains no clique-cutset, we see that either $V(H) \subseteq A \cup C$ or $V(H) \subseteq B \cup C$; by symmetry, we may assume that $V(H) \subseteq A \cup C$. Since $A$ and $C$ are cliques, and since $H$ contains no triangles, we see that each of $A$ and $C$ contains at most two vertices of $H$, and furthermore, if $A$ or $C$ contains precisely two vertices, then those two vertices are adjacent. By symmetry, we may now assume that $x_1,x_2 \in A$ and $x_3,x_4 \in C$. But then neither $x_3$ nor $x_4$ is complete to $A$, and consequently, $x_3$ and $x_4$ are complete to $B$. Fix $b \in B$. Now $G[x_1,x_2,x_3,x_4,b]$ is a house, a contradiction. This proves (b). 
\end{proof} 

\begin{lemma} \label{ring-in-GT} Let $R$ be a $k$-ring with good partition $(X_1,\dots,X_k)$. Then all the following hold: 
\begin{itemize} 
\item[(a)] every hole in $R$ intersects each of $X_1,\dots,X_k$ in exactly one vertex; 
\item[(b)] every hole in $R$ is of length $k$; 
\item[(c)] for all $i \in \mathbb{Z}_k$, $R \setminus X_i$ is chordal; 
\item[(d)] $R$ is (3PC, proper wheel, universal wheel)-free; 
\item[(e)] $R$ is cap-free if and only if $R$ is a $k$-hyperhole with good partition $(X_1,\dots,X_k)$. 
\end{itemize} 
\end{lemma}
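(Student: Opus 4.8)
The plan is to establish (a) and (b) first, since the remaining parts all reduce to statements about holes of $R$, and then to deduce (c)--(e) from the structure of these holes together with the defining properties of a ring recorded in Lemma~\ref{lemma-ring-char}.

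For (a), let $H$ be a hole of $R$. I would first show that $|V(H) \cap X_i| \le 1$ for every $i$. Since $X_i$ is a clique and $H$ (an induced cycle of length at least four) contains no triangle, $|V(H)\cap X_i| \le 2$; and if $y,y' \in V(H) \cap X_i$, then $y,y'$ are adjacent, hence consecutive on $H$, so the other $H$-neighbours $a$ of $y$ and $b$ of $y'$ are distinct and satisfy $a \not\sim y'$ and $b \not\sim y$. But property~(d) of Lemma~\ref{lemma-ring-char} says one of $y,y'$ dominates the other, which (applied to $a$ or to $b$) contradicts one of these non-adjacencies. Knowing that $H$ meets each $X_i$ at most once, properties~(a)--(b) of Lemma~\ref{lemma-ring-char} force consecutive vertices of $H$ to lie in cliques whose indices differ by exactly $1$ in $\mathbb{Z}_k$. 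Thus traversing $H$ yields a closed walk in $\mathbb{Z}_k$ with all steps $\pm 1$ visiting pairwise distinct residues. A short extremal argument finishes it: lifting the walk to $\mathbb{Z}$, if the net displacement is $0$ then a highest point is attained at two positions with equal residue (contradicting distinctness), while otherwise the displacement is $\pm k$, forcing all steps equal and the walk to wind once around $\mathbb{Z}_k$. Hence $H$ meets every $X_i$ exactly once and has length $k$, giving (a) and (b). Part~(c) is then immediate, as a hole of $R \setminus X_i$ would be a hole of $R$ missing $X_i$, contradicting (a).

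For (d), the wheels are easy: if $(H,x)$ is a wheel with $x \in X_j$, then by property~(b) of Lemma~\ref{lemma-ring-char} the rim-neighbours of $x$ lie in $X_{j-1}\cup X_j \cup X_{j+1}$, so by (a) they form a subset of the three consecutive rim vertices in $X_{j-1},X_j,X_{j+1}$; thus $x$ has at most three neighbours on $H$, which rules out universal wheels (these need $k \ge 4$), and any wheel present is a twin wheel, ruling out proper wheels. The $3$PCs are where I expect the main difficulty, because the tempting pigeonhole argument (``the three neighbours of a branch vertex are a stable set inside three cliques'') fails: three pairwise non-adjacent vertices can occupy $X_{j-1},X_j,X_{j+1}$ one apiece. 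Instead I would argue through the holes supplied by (a)/(b). For a theta $3PC(a,b)$, relabel so that $H_{12}=P_1\cup P_2$ reads $v_1,\dots,v_k,v_1$ with $v_i \in X_i$; then $P_1,P_2$ cover complementary arcs of $\mathbb{Z}_k$ between the indices of $a$ and $b$, and since $H_{13}=P_1\cup P_3$ is also a hole of length $k$, the path $P_3$ must cover the same arc as $P_2$. Choosing an interior index $j$ of that arc (it exists, as theta-paths have length at least $2$) gives distinct vertices of $P_2$ and $P_3$ both in $X_j$, hence adjacent, contradicting that interiors of distinct theta-paths are anticomplete. For a prism or pyramid with triangle $\{a_1,a_2,a_3\}$, the vertices $a_1,a_2$ are consecutive on $H_{12}$, so $a_3$ (adjacent to both and off $H_{12}$) lies in the same clique as $a_1$ or as $a_2$ by property~(b); but then the corresponding hole $H_{13}$ or $H_{23}$ contains two vertices of one $X_i$, contradicting (a).

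Finally, for (e): if $R$ is a $k$-hyperhole with good partition $(X_1,\dots,X_k)$, then for any hole $H$ and any $x \in X_j \setminus V(H)$, completeness of $X_j$ to $X_{j-1}\cup X_{j+1}$ forces $x$ to be adjacent (using (a)) to exactly the three consecutive rim vertices in $X_{j-1},X_j,X_{j+1}$; since no vertex sees exactly two consecutive rim vertices, $R$ is cap-free. Conversely, suppose $R$ is cap-free but is not such a hyperhole, so some $X_i$ fails to be complete to $X_{i+1}$, i.e.\ some $y\in X_i$ misses some $z\in X_{i+1}$ (up to relabelling by the symmetry of the ring). Writing $u_1^j$ for a vertex of $X_j$ complete to $X_{j-1}\cup X_{j+1}$ (these exist by property~(c)), I would take the hole $H = u_1^1,\dots,u_1^i,z,u_1^{i+2},\dots,u_1^k,u_1^1$ obtained from the ``all-representatives'' hole by using $z$ in place of $u_1^{i+1}$, and check that $y \notin V(H)$, that $y$ is adjacent to $u_1^{i-1}$ and to its clique-mate $u_1^i$, and that $y$ has no other neighbour on $H$ (it misses $z$ and is anticomplete to the remaining cliques). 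Then $(H,y)$ is a cap, a contradiction, so $R$ is a hyperhole. Throughout, the secondary obstacle is $\mathbb{Z}_k$ bookkeeping for small $k$ (notably $k=4$, where $X_{i-1}$ and $X_{i+1}$ are still distinct and anticomplete); I would verify those few index computations directly.
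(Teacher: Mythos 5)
Your proof is correct, and most of it --- parts (a)--(c), the wheel half of (d), and both directions of (e) --- follows essentially the same route as the paper: at most one hole vertex per $X_i$ via the domination condition of Lemma~\ref{lemma-ring-char}(d), exactly one per $X_i$ via the anticompleteness condition (your explicit lifting-to-$\mathbb{Z}$ winding argument just spells out what the paper compresses into ``Statement (a) now follows from Lemma~\ref{lemma-ring-char}(b)''), and for (e) the same cap construction: the paper replaces the dominating representative of $X_1$ by the nonadjacent vertex $y_1$ and uses $y_2$ as the cap vertex, which is the mirror image of your replacing $u_1^{i+1}$ by $z$ and using $y$; your verification steps (that $y \neq u_1^i$ since $u_1^i$ is complete to $X_{i+1}$, etc.) all check out, including at $k=4$. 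The one genuinely different piece is 3PC-freeness in (d). You argue by cases: for thetas, since all three holes of the configuration wind once around $\mathbb{Z}_k$, the paths $P_2$ and $P_3$ must cover the same index arc, producing two adjacent interior vertices in a single $X_j$; for prisms and pyramids, the triangle vertex $a_3$ is forced by Lemma~\ref{lemma-ring-char}(b) into the clique of $a_1$ or of $a_2$, putting two vertices of the hole $H_{13}$ or $H_{23}$ into one $X_i$. The paper instead uses a single uniform argument covering all three types: a 3PC $K$ properly contains a hole, and every hole has exactly $k$ vertices by (a)--(b), so $|V(K)| > k$ and some $X_i$ contains two vertices of $K$; since every pair of distinct vertices of a 3PC lies in a common hole of that 3PC, this contradicts (a) immediately. (Amusingly, the paper's argument \emph{is} a pigeonhole argument --- just not the naive one on branch-vertex neighbourhoods that you rightly dismiss.) Your case analysis costs more $\mathbb{Z}_k$ bookkeeping but pinpoints where the collision occurs; the paper's observation buys uniformity and avoids all case distinctions.
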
 
\begin{proof} 
Since no vertex in a hole dominates any other vertex of that hole, Lemma~\ref{lemma-ring-char}(d) guarantees that a hole in $R$ can intersect each of $X_1,\dots,X_k$ in at most one vertex. Statement (a) now follows from Lemma~\ref{lemma-ring-char}(b). 

Statements (b) and (c) follow immediately from (a). 

Next, we prove (d). Suppose that $K$ is a 3PC in $R$. We know that $K$ contains at least three holes, and by (a), each of those holes contains exactly one vertex from each of $X_1,\dots,X_k$. Thus, some $X_i$ (with $i \in \mathbb{Z}_k$) contains at least two distinct vertices of $K$. But by the definition of a 3PC, we see that every pair of distinct vertices of $K$ belongs to a hole of $K$. Thus, $X_i$ contains at least two vertices of some hole of $K$, contrary to (a). This proves that $R$ is 3PC-free. 

Suppose now that $(H,x)$ is a wheel in $R$; we must show that $(H,x)$ is a twin wheel. Using (a), for each $i \in \mathbb{Z}_k$, we let $x_i$ be the unique vertex in $V(H) \cap X_i$. It readily follows from Lemma~\ref{lemma-ring-char}(b) that the hole $H$ is of the form $H = x_1,\dots,x_k,x_1$. By symmetry, we may assume that $x \in X_2$. Since $x$ has at least three neighbors in $H$ (because $(H,x)$ is a wheel), Lemma~\ref{lemma-ring-char}(b) implies that the neighbors of $x$ in $H$ are precisely $x_1,x_2,x_3$. Thus, $(H,x)$ is a twin wheel, and we deduce that $R$ is (proper wheel, universal wheel)-free. This proves (d). 

It remains to prove (e). The ``if'' part follows from (a) and routine checking. For the ``only if'' part, we assume that $R$ is not a $k$-hyperhole with good partition $(X_1,\dots,X_k)$, and we show that $R$ is not cap-free. Since $R$ is a $k$-ring with good partition $(X_1,\dots,X_k)$, but not a $k$-hyperhole with good partition $(X_1,\dots,X_k)$, we may assume by symmetry that $X_1$ is not complete to $X_2$. Fix nonadjacent vertices $y_1 \in X_1$ and $y_2 \in X_2$. By the definition of a ring, for each $i \in \mathbb{Z}_k$, there exists a vertex $x_i \in X_i$ such that $N_R[x_i] = X_{i-1} \cup X_i \cup X_{i+1}$. Since $y_1y_2 \notin E(R)$, we see that $y_1 \neq x_1$ and $y_2 \neq x_2$. But now $H = y_1,x_2,\dots,x_k,y_1$ is a hole in $R$, and $N_R(y_2) \cap V(H) = \{x_2,x_3\}$; it follows that $R[y_1,y_2,x_2,\dots,x_k]$ is a cap, and so $R$ is not cap-free. This proves (e). 
\end{proof}

\section{A decomposition theorem for the class $\mathcal{G}_{\text{UT}}$}\label{sec:decompGUT}

In this section, we prove Theorem~\ref{decomp-thm-GUT}, which states that every graph in $\mathcal{G}_{\text{UT}}$ either belongs to $\mathcal{B}_{\text{UT}}$ or admits a clique-cutset. We begin with a few preliminary lemmas, which will be of use to us, not only in this section, but also in subsequent ones. 

\begin{lemma} \label{lemma-vertex-attach-GUT} Let $G \in \mathcal{G}_{\text{UT}}$, let $H = x_1,\dots,x_k,x_1$ (with $k \geq 4$) be a hole in $G$, and let $x \in V(G) \setminus V(H)$. Then one of the following holds: 
\begin{itemize} 
\item[(a)] $x$ is complete to $V(H)$; 
\item[(b)] there exists some $i \in \mathbb{Z}_k$ such that $N_G(x) \cap V(H) = \{x_{i-1},x_i,x_{i+1}\}$ (i.e.\ $x$ is a twin of $x_i$ with respect to $H$); 
\item[(c)] there exists some $i \in \mathbb{Z}_k$ such that $N_G(x) \cap V(H) \subseteq \{x_i,x_{i+1}\}$ (i.e.\ $N_G(x) \cap V(H)$ is a clique of size at most two). 
\end{itemize} 
\end{lemma}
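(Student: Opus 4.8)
The plan is to classify the attachment of $x$ to $H$ by the size of $N := N_G(x) \cap V(H)$, and to show that every possibility other than (a), (b), (c) produces either a theta or a proper wheel, both of which are forbidden in $\mathcal{G}_{\text{UT}}$.

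First I would dispose of the cases $|N| \le 1$: here $N$ is a clique of size at most two, so (c) holds trivially. Next, for $|N| = 2$, I write $N = \{x_a, x_b\}$ and split according to whether $x_a, x_b$ are consecutive on $H$. If they are consecutive, then $N$ is contained in a set of the form $\{x_i, x_{i+1}\}$ and (c) holds. If they are not consecutive, I claim $G[V(H) \cup \{x\}]$ is a theta. The hole splits into two internally disjoint $x_a$--$x_b$ paths along the rim, each of length at least two because $x_a, x_b$ are nonadjacent; adjoining the two-edge path $x_a, x, x_b$ (whose only interior vertex $x$ lies off $H$) yields three internally disjoint $x_a$--$x_b$ paths, and since $x$ has no neighbor of $H$ besides $x_a, x_b$ and $x_a x_b \notin E(G)$, the union of any two of these three paths induces a hole. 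Hence $G[V(H) \cup \{x\}]$ is a $3PC(x_a, x_b)$, contradicting $G \in \mathcal{G}_{\text{UT}}$.

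The last case is $|N| \ge 3$. Then $(H, x)$ is a wheel by definition, and since $G$ is proper-wheel-free, this wheel is either universal or a twin wheel. In the universal case $x$ is complete to $V(H)$, giving (a); in the twin case $x$ has exactly three neighbors in $H$ and they are consecutive, so $N = \{x_{i-1}, x_i, x_{i+1}\}$ for some $i \in \mathbb{Z}_k$, giving (b). This exhausts all possibilities.

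I do not expect any serious obstacle: the whole argument is driven by the two exclusions defining $\mathcal{G}_{\text{UT}}$, and the case split on $|N|$ is exhaustive. The one point that needs genuine checking rather than mere assertion is that the three paths in the $|N| = 2$ nonconsecutive subcase really do form a theta, i.e.\ that the two cycles through $x$ are chordless; but this follows immediately from $N = \{x_a, x_b\}$ together with $x_a x_b \notin E(G)$, so it is routine.
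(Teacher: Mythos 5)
Your proof is correct and follows essentially the same route as the paper: a case split on $|N_G(x) \cap V(H)|$, with the theta exclusion handling two nonconsecutive neighbors and the proper-wheel exclusion handling three or more. Your version merely spells out the theta verification and invokes the universal/twin wheel classification explicitly, where the paper leaves these as routine checks.
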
 
\begin{proof} 
If $|N_G(x) \cap V(H)| \leq 1$, then (c) holds. If $|N_G(x) \cap V(H)| = 2$, then (c) holds, for otherwise, $G[V(H) \cup \{x\}]$ is a theta, a contradiction. If $3 \leq |N_G(x) \cap V(H)| \leq k-1$, then (b) holds, for otherwise, $(H,x)$ is a proper wheel in $G$, a contradiction. Finally, if $|N_G(x) \cap V(H)| = k$, then (a) holds, and we are done. 
\end{proof} 

\begin{lemma} \label{lemma-hole-ring-GUT} Let $G \in \mathcal{G}_{\text{UT}}$, and let $H = x_1,\dots,x_k,x_1$ (with $k \geq 4$) be a hole in $G$. For all $i \in \{1,\dots,k\}$, set $X_i = X_{x_i}(H)$. Then the following hold: 
\begin{itemize} 
\item $X_1,\dots,X_k$ are pairwise disjoint cliques; 
\item if $k \geq 5$, then $H^*$ is a $k$-ring with good partition $(X_1,\dots,X_k)$. 
\end{itemize} 
\end{lemma}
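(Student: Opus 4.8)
The plan is to first establish the two easy properties of the sets $X_i$, and then to recognise $H^*$ as a ring by verifying the four conditions of Lemma~\ref{lemma-ring-char}; the domination condition will be the only genuinely delicate point. Throughout, I record that $v\in X_i$ means precisely $N_G[v]\cap V(H)=\{x_{i-1},x_i,x_{i+1}\}$, and that $x_i\in X_i$. Disjointness is then immediate: for $k\geq 4$ the three-element ``windows'' $\{x_{i-1},x_i,x_{i+1}\}$ are pairwise distinct as $i$ ranges over $\mathbb{Z}_k$, so no vertex can lie in two different $X_i$. For the clique property I would use a substitution trick that recurs throughout the argument: if $u,v\in X_i$ are distinct with $u\neq x_i$, then $u\notin V(H)$ (by disjointness), and replacing $x_i$ by $u$ yields a new hole $H'=x_1,\dots,x_{i-1},u,x_{i+1},\dots,x_k,x_1$ of length $k$. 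If moreover $u\not\sim v$, then $v$ would have exactly the two neighbours $x_{i-1},x_{i+1}$ in $H'$, and these are nonadjacent, contradicting Lemma~\ref{lemma-vertex-attach-GUT}; hence $u\sim v$. The case $u=x_i$ is handled directly, since every twin of $x_i$ is adjacent to $x_i$. This proves the first bullet.

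For the second bullet I observe that $(X_1,\dots,X_k)$ is a partition of $V(H^*)$ into nonempty sets, and I verify conditions (a)--(d) of Lemma~\ref{lemma-ring-char} for the graph $H^*$. Condition (a) is the clique property just proved. Condition (c) is immediate: every vertex of $X_{i-1}\cup X_{i+1}$ has $x_i$ in its window, so $x_i$ is complete to $X_{i-1}\cup X_{i+1}$. Condition (b), that $X_i$ is anticomplete to every $X_j$ with $j\notin\{i-1,i,i+1\}$, follows from another application of the substitution trick: after replacing $x_i$ by some $u\in X_i\setminus\{x_i\}$, a neighbour $w\in X_j$ of $u$ with $w\neq x_j$ would keep all three of its $H$-neighbours (none of which is $x_i$, as $j\notin\{i-1,i,i+1\}$) and gain $u$, giving it four neighbours on the hole $H'$, which is impossible by Lemma~\ref{lemma-vertex-attach-GUT}; the remaining cases, where $u=x_i$ or $w=x_j$, are ruled out directly from the windows.

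The main obstacle is condition (d). Writing $A(v)=N_G(v)\cap X_{i-1}$ and $B(v)=N_G(v)\cap X_{i+1}$ for $v\in X_i$, and using (a) and (b), one checks that $N_{H^*}[v]=X_i\cup A(v)\cup B(v)$, so that $v$ dominates $v'$ in $H^*$ exactly when $A(v')\subseteq A(v)$ and $B(v')\subseteq B(v)$. Thus (d) amounts to showing that for distinct $y,y'\in X_i$ the pairs $(A(y),B(y))$ and $(A(y'),B(y'))$ are comparable in the product order. I would prove this by showing that three kinds of ``incomparability'' each create a forbidden configuration. First, if there are $a\in A(y)\setminus A(y')$ and $a'\in A(y')\setminus A(y)$, then $\{y,y',x_{i+1}\}$ and $\{a,a',x_{i-2}\}$ are triangles joined by the edges $ya$, $y'a'$ and by the long arc $x_{i+1},x_{i+2},\dots,x_{i-2}$ of $H$, which together form a prism; the mirror statement on the $X_{i+1}$ side likewise yields a prism. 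Hence $A(y),A(y')$ are nested, and so are $B(y),B(y')$. Second, if $a\in A(y)\setminus A(y')$ and $c\in B(y')\setminus B(y)$ both existed, then $\{a,y,y',c\}$ together with the long arc $x_{i+2},\dots,x_{i-2}$ forms a hole $H_2$ of length $k+1\geq 6$ on which $x_i$ (which one checks is distinct from $y$ and $y'$, as otherwise a window forces the excluded adjacency) has exactly the four consecutive neighbours $a,y,y',c$, so $(H_2,x_i)$ is a proper wheel.

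To conclude (d), I combine these incompatibilities. The second one, together with its mirror image, says that $A(y)\setminus A(y')\neq\emptyset$ forces $B(y')\subseteq B(y)$, and symmetrically. Since $A(y)$ and $A(y')$ are nested, either one strictly contains the other --- in which case the corresponding $B$-containment follows, giving domination in the same direction --- or they coincide, in which case the nestedness of $B(y),B(y')$ settles the matter. This establishes the product-order comparability, hence (d), and completes the identification of $H^*$ as a $k$-ring with good partition $(X_1,\dots,X_k)$. The bulk of the remaining work is the routine but careful verification, in each configuration, that the listed cycles are induced and that the participating vertices are genuinely distinct; this is precisely where the hypothesis $k\geq 5$ is used.
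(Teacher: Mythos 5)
Your proposal is correct and follows essentially the same route as the paper: both verify conditions (a)--(d) of Lemma~\ref{lemma-ring-char}, and for the crucial domination condition (d) both derive the same two contradictions --- a prism when the incomparability is witnessed on one side ($X_{i-1}$ or $X_{i+1}$ alone), and a proper wheel on a hole of length $k+1\geq 6$ in the cross-side case (the paper centers that wheel at $x_{i+1}$ where you center it at $x_i$, an immaterial difference). Your product-order/nestedness bookkeeping and the substitution-plus-Lemma~\ref{lemma-vertex-attach-GUT} proof of the clique property are just mild repackagings of the paper's direct theta argument and symmetry-based case split.
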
 
\begin{proof} 
It is clear that $X_1,\dots,X_k$ are pairwise disjoint, so that $(X_1,\dots,X_k)$ is a partition of $V(H^*)$. Let us show that $X_1,\dots,X_k$ are cliques. By symmetry, it suffices to show that $X_1$ is a clique. Suppose otherwise, and fix nonadjacent vertices $y_1,y_1' \in X_1$. But then $G[y_1,y_1',x_2,\dots,x_k]$ is a $3PC(x_2,x_k)$, a contradiction. This proves that $X_1,\dots,X_k$ are cliques. 

From now on, we assume that $k \geq 5$. Our goal is to show that $H^*$ and $(X_1,\dots,X_k)$ satisfy (a)-(d) of Lemma~\ref{lemma-ring-char}. We already showed that $X_1,\dots,X_k$ satisfy (a). Further, it is clear that for all $i \in \mathbb{Z}_k$, $x_i$ is complete to $X_{i-1} \cup X_{i+1}$; thus, (c) holds. 

We now prove (b). Suppose otherwise. By symmetry, we may assume that for some index $j \in \mathbb{Z}_k \setminus \{k,1,2\}$ and vertices $y_1 \in X_1$ and $y_j \in X_j$, we have that $y_1y_j \in E(G)$. By construction, $x_1$ is anticomplete to $X_j$, and $x_j$ is anticomplete to $X_1$; since $y_1y_j \in E(G)$, it follows that $y_1 \neq x_1$ and $y_j \neq x_j$. But now the hole $y_1,x_2,\dots,x_k,y_1$ and vertex $y_j$ contradict Lemma~\ref{lemma-vertex-attach-GUT}. Thus, (b) holds. 

It remains to prove (d); by symmetry, it suffices to prove this for $i = 1$. Let $y_1,y_1' \in X_1$ be distinct; we claim that one of $y_1,y_1'$ dominates the other in $H^*$. Suppose otherwise. Since $X_1$ is a clique that is anticomplete to $V(H^*) \setminus (X_k \cup X_1 \cup X_2)$, it follows that there exist $z,z' \in X_k \cup X_2$ such that $y_1z,y_1'z' \in E(G)$ and $y_1z',y_1'z \notin E(G)$. By symmetry, we may assume that either $z \in X_k$ and $z' \in X_2$, or that $z,z' \in X_2$. Suppose first that $z \in X_k$ and $z' \in X_2$. Then $H' = y_1,y_1',z',x_3,\dots,x_{k-1},z,y_1$ is a hole. Furthermore, since $x_2$ is complete to $X_1$, while $z'y_1 \notin E(G)$, it follows that $x_2 \neq z'$. Thus, we see that $x_2 \notin V(H')$, and that $x_2$ has precisely four neighbors (namely, $y_1,y_1',z',x_3$) in the hole $H'$. Since the hole $H'$ is of length $k+1 \geq 6$, it follows that $(H',x_2)$ is a proper wheel in $G$, a contradiction. Suppose now that $z,z' \in X_2$. But then $G[y_1,y_1',z,z',x_3,\dots,x_k]$ is a $3PC(x_ky_1y_1',x_3zz')$, a contradiction. Thus, one of $y_1,y_1'$ dominates the other in $H^*$, and (d) holds. 

We have now shown that $H^*$ and $(X_1,\dots,X_k)$ satisfy (a)-(d) of Lemma~\ref{lemma-ring-char}, and it follows that $H^*$ is a $k$-ring with good partition $(X_1,\dots,X_k)$. 
\end{proof} 

\begin{lemma} \label{lemma-ring-or-clique-cut-GUT} Let $G \in \mathcal{G}_{\text{UT}}$, and assume that $G$ contains a long hole. Then either some anticomponent of $G$ is a long ring, or $G$ admits a clique-cutset. 
\end{lemma}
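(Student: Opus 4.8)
The plan is to start from a \emph{longest} hole, build the ring around it via Lemma~\ref{lemma-hole-ring-GUT}, classify every other vertex by its attachment to the hole using Lemma~\ref{lemma-vertex-attach-GUT}, peel off the vertices complete to the hole through a stability/anticomponent argument, and then either recognize the anticomponent as a long ring or locate a clique-cutset.

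First I would fix a longest hole $H = x_1,\dots,x_k,x_1$ of $G$; since $G$ contains a long hole, $k \ge 5$. By Lemma~\ref{lemma-hole-ring-GUT}, $R := H^*$ is a $k$-ring with good partition $(X_1,\dots,X_k)$, where $X_i = X_{x_i}(H)$. By Lemma~\ref{lemma-vertex-attach-GUT}, every vertex of $V(G) \setminus V(R)$ is either complete to $V(H)$ (the set $U_H$) or has at most two neighbors in $V(H)$, and these are consecutive (the set $Z$); the twins land inside $R$. Thus $V(G) = V(R) \cup U_H \cup Z$. I would next show $U_H$ is complete to all of $V(R)$: if $u \in U_H$ missed a twin $y \in X_i$, then rerouting $H$ through $y$ yields a $k$-hole $H'$ on which $u$ is adjacent to all vertices but $y$, so $(H',u)$ is a proper wheel, a contradiction. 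Since a long ring is anticonnected (in the complement each class is joined to the far classes, which are nonempty as $k \ge 5$), let $A$ be the anticomponent of $G$ containing $R$. As $U_H$ is complete to $R$ we get $U_H \cap A = \emptyset$, while each $z \in Z$ misses $\ge 3$ hole vertices and so is anticonnected to $R$, giving $Z \subseteq A$; hence $A = V(R) \cup Z$. If $Z = \emptyset$, then $A = R$ is a long ring and we are done.

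Assume now $Z \neq \emptyset$; the goal becomes a clique-cutset. A preliminary reduction removes $U_H$: any $z \in Z$ together with two nonadjacent hole vertices it misses is a stable set of size three, so $\alpha(G) \ge 3$; since $G$ is $K_{2,3}$-free (as $K_{2,3}$ is a theta), the contrapositive of Lemma~\ref{rmrk-K23-anticomp} shows $A$ is the \emph{only} nontrivial anticomponent, whence $U_H$ is a clique each of whose vertices is complete to $V(G) \setminus U_H$. Consequently any clique-cutset $C$ of $A = G \setminus U_H$ yields the clique-cutset $C \cup U_H$ of $G$, so I may assume $U_H = \emptyset$, i.e.\ $V(G) = V(R) \cup Z$ with no vertex complete to $V(H)$. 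I would then localize the attachment of each $z \in Z$: if $z$ had neighbors in two non-consecutive classes $X_a, X_b$, rerouting $H$ simultaneously through those two neighbors gives a $k$-hole on which $z$ has two non-consecutive neighbors but, since $z \in Z$, fewer than $k$ neighbors, contradicting Lemma~\ref{lemma-vertex-attach-GUT}. Hence $N_G(z) \cap V(R) \subseteq X_{i} \cup X_{i+1}$ for a single arc $i = i(z)$.

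The remaining and hardest step is to produce the clique-cutset, or else to recognize $A$ as a larger long ring. Using that $H$ is a \emph{longest} hole, I would rule out the configurations in which $Z$-vertices chain between consecutive arcs all the way around $R$, since such a chain, together with the far part of $R$, would close up into an induced cycle of length greater than $k$. With the arcs thus controlled, I expect to exhibit a single class $X_j$ whose removal separates the two sides of the now-chordal graph $R \setminus X_j$ (Lemma~\ref{ring-in-GT}(c)) and across which no short chain of $Z$-vertices reaches; then $X_j$, being a clique by Lemma~\ref{lemma-ring-char}(a), is the desired cutset. Alternatively, if every arc absorbs its $Z$-vertices consistently, $A$ itself satisfies conditions (a)--(d) of Lemma~\ref{lemma-ring-char} and is a long ring. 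I anticipate this last step to be the main obstacle: the careful bookkeeping of $Z$-vertices attached to \emph{non-dominating} vertices of a class, of short chains of $Z$-vertices within a single arc, and of the precise interface between $Z$ and $R$, is what makes the clique-cutset construction delicate, whereas the reductions above are comparatively routine.
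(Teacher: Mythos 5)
Your opening moves match the paper's (longest hole $H$, the ring $H^*$ via Lemma~\ref{lemma-hole-ring-GUT}, attachment classification via Lemma~\ref{lemma-vertex-attach-GUT}, and the observation that vertices complete to $V(H)$ are complete to $V(H^*)$), but there are two genuine gaps before the step you yourself flag as incomplete. First, the reduction disposing of $U_H$ is false. From ``$U_H$ is complete to $R$'' you conclude $U_H \cap A = \emptyset$, but membership in the anticomponent $A$ can come through complement-edges to $Z$ or within $U_H$, not only to $R$. Concretely, take $W_5$ (hub $u$, rim $C_5 = x_1,\dots,x_5$) and add a vertex $z$ adjacent only to $x_1$: this graph is in $\mathcal{G}_{\text{UT}}$, has $\alpha \geq 3$ and a single nontrivial anticomponent containing \emph{all} vertices, so $u \in U_H \cap A$ and $u$ is not complete to $V(G) \setminus U_H$. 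Since $U_H$ need not even be a clique (two nonadjacent vertices of $U_H$ simply lie in $A$), the claim ``any clique-cutset $C$ of $G \setminus U_H$ yields the clique-cutset $C \cup U_H$'' has no justification, and you cannot assume $U_H = \emptyset$. The paper instead keeps $U_{H^*}$ inside the set $K = V(H^*) \cup U_{H^*}$ and proves directly (its claim (4)) that each component of $G \setminus K$ attaches to $U_{H^*}$ in a clique, via a $3PC(u_1,u_2)$ built through two far-apart hole vertices.

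Second, your localization argument for $z \in Z$ overlooks outcome (b) of Lemma~\ref{lemma-vertex-attach-GUT}. If $z$ has neighbors $y_a \in X_a$ and $y_b \in X_{a+2}$ and is also adjacent to $x_{a+1}$, then on the rerouted hole $Y$ the neighbors $y_a, x_{a+1}, y_b$ are three \emph{consecutive} vertices, so $z$ is a twin with respect to $Y$ and no contradiction arises. This is exactly the hard case in the paper's proof, and handling it requires a resource you did not set up: the paper chooses $H$ of maximum length \emph{and, subject to that, with $|V(H^*)|$ maximum}, and then spends the bulk of its argument showing $X_i \subseteq Y_i$ for all $i$ (using hole-length maximality plus several $3PC$ and proper-wheel constructions), so that $V(H^*) \subsetneqq V(Y^*)$ contradicts the secondary maximality. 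With only ``longest hole'' as your extremal choice, this repair is unavailable. Finally, your concluding step is, as you admit, a hope rather than a proof, and the intended mechanism is also off-target: the paper's clique-cutset is $N_G(C) \cap V(K)$ for a component $C$ of $G \setminus K$ (proved a clique by minimal-connected-subgraph arguments in its claims (3) and (4)), not the removal of a single class $X_j$; a vertex $z$ with neighbors in two consecutive classes is not separated by any one $X_j$.
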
 
\begin{proof} 
Let $H = x_1,\dots,x_k,x_1$ be a hole of maximum length in $G$ (thus, $k \geq 5$, and $G$ contains no holes of length greater than $k$), and subject to that, assume that $H$ was chosen so that $|V(H^*)|$ is maximum. For all $i \in \mathbb{Z}_k$, set $X_i = X_{x_i}(H)$, and set $K = G[V(H^*) \cup U_{H^*}]$. By Lemma~\ref{lemma-hole-ring-GUT}, $H^*$ is a $k$-ring with good partition $(X_1,\dots,X_k)$; Lemma~\ref{lemma-ring-char} now implies that $X_1,\dots,X_k$ are cliques, and that for all $i \in \mathbb{Z}_k$, $X_i$ is anticomplete to $V(H^*) \setminus (X_{i-1} \cup X_i \cup X_{i+1})$. Clearly, $H^*$ is anticonnected. Thus, the long ring $H^*$ is an anticomponent of $K$, and so if $K = G$, then we are done. So from now on, we assume that $V(K) \subsetneqq V(G)$. 

\begin{quote} 
\emph{(1) $U_{H^*} = U_H$.} 
\end{quote} 
\begin{proof}[Proof of (1)] 
Clearly, $U_{H^*} \subseteq U_H$. Suppose that $U_H \not\subseteq U_{H^*}$, and fix some $x \in U_H \setminus U_{H^*}$. Fix $i \in \mathbb{Z}_k$ and a vertex $y_i \in X_i$ such that $xy_i \notin E(G)$. But now the hole $y_i,x_{i+1},x_{i+2},\dots,x_{i-1},y_i$ and vertex $x$ contradict Lemma~\ref{lemma-vertex-attach-GUT}. This proves (1). 
\end{proof} 

\begin{quote} 
\emph{(2) For all $x \in V(G) \setminus V(K)$, $N_G(x) \cap V(H^*)$ is a clique, and in particular, there exists some $i \in \mathbb{Z}_k$ such that $N_G(x) \cap V(H^*) \subseteq X_i \cup X_{i+1}$.} 
\end{quote} 
\begin{proof}[Proof of (2)] 
Fix $x \in V(G) \setminus V(K)$. By (1), $x$ is not complete to $V(H)$. Since $x \notin V(K)$, we know that $x$ is not a twin of a vertex of $H$ with respect to $H$. Lemma~\ref{lemma-vertex-attach-GUT} now implies that $N_G(x) \cap V(H)$ is a clique of size at most two. 

We first show that there exists some $i \in \mathbb{Z}_k$ such that $N_G(x) \cap V(H^*) \subseteq X_i \cup X_{i+1}$. Suppose otherwise. By symmetry, we may assume that there exists some $j \in \mathbb{Z}_k \setminus \{k,1,2\}$ such that $x$ has a neighbor in $X_1$ and in $X_j$. For each $i \in \{1,j\}$, if $x$ is adjacent to $x_i$, then set $y_i = x_i$, and otherwise, let $y_i$ be any neighbor of $x$ in $X_i$. Since $X_1$ is anticomplete to $X_j$, we have that $y_1y_j \notin E(G)$, and it follows that $Y = y_1,x_2,\dots,x_{j-1},y_j,x_{j+1},\dots,x_k,y_1$ is a $k$-hole. Since $x$ has at most two neighbors in $H$, we know that $x$ is not complete to $Y$. Since $x$ is complete to $\{y_1,y_j\}$, Lemma~\ref{lemma-vertex-attach-GUT} now implies that $x$ is a twin of a vertex of $Y$ with respect to $Y$. It follows that either $j = 3$ and $x$ is a twin of $x_2$ with respect to $Y$ (and in particular, $xx_2 \in E(G)$), or $j = k-1$ and $x$ is a twin of $x_k$ with respect to $Y$ (and in particular $xx_k \in E(G)$); by symmetry, we may assume that the former holds. Since $x$ is not a twin of $x_2$ with respect to $H$, we know that $x$ is nonadjacent to at least one of $x_1,x_3$ (and consequently, either $y_1 \neq x_1$ or $y_3 \neq x_3$). Set $Y_1 = X_{y_1}(Y)$, $Y_3 = X_{y_3}(Y)$, and $Y_i = X_{x_i}(Y)$ for all $i \in \mathbb{Z}_k \setminus \{1,3\}$. By Lemma~\ref{lemma-hole-ring-GUT}, $Y^*$ is $k$-ring with good partition $(Y_1,\dots,Y_k)$. Our goal is to show that $V(H^*) \subsetneqq V(Y^*)$, contrary to the maximality of $|V(H^*)|$. Note that $x \in Y_2$ and $x \notin V(H^*)$, and so $V(H^*) \neq V(Y^*)$. Thus, it suffices to show that $V(H^*) \subseteq V(Y^*)$; we prove this by showing that $X_i \subseteq Y_i$ for all $i \in \mathbb{Z}_k$. 

First of all, in view of Lemma~\ref{lemma-vertex-attach-GUT}, it is easy to see that $X_i = Y_i$ for all $i \in \mathbb{Z}_k \setminus \{k,2,4\}$. Next, we claim that $X_4 \subseteq Y_4$ and $X_k \subseteq Y_k$; by symmetry, it suffices to show that $X_4 \subseteq Y_4$. Fix $y_4 \in X_4$; we must show that $y_4 \in Y_4$. Clearly, it suffices to show that $y_4y_3 \in E(G)$. Suppose otherwise. Since $x_3y_4 \in E(G)$, we see that $y_3 \neq x_3$, and so by the choice of $y_3$, it follows that $xx_3 \notin E(G)$. Furthermore, we have that $xy_4 \notin E(G)$, for otherwise, the hole $y_1,x_2,x_3,y_4,x_5,\dots,x_k,y_1$ and vertex $x$ would contradict Lemma~\ref{lemma-vertex-attach-GUT}. But now $y_1,x,y_3,x_3,y_4,x_5,\dots,x_k,y_1$ is a hole of length $k+1$ in $G$, contrary to the fact that $G$ contains no holes of length greater than $k$. It follows that $X_4 \subseteq Y_4$, and similarly, $X_k \subseteq Y_k$. 

It remains to show that $X_2 \subseteq Y_2$. Suppose otherwise, and fix $z_2 \in X_2 \setminus Y_2$. Then $z_2 \neq x_2$, and furthermore, $z_2$ is complete to $\{x_1,x_2,x_3\}$, anticomplete to $V(H) \setminus \{x_1,x_2,x_3\} = V(Y) \setminus \{y_1,x_2,y_3\}$, and nonadjacent to at least one of $y_1,y_3$. 

Suppose that $xz_2 \notin E(G)$. For each $i \in \{1,3\}$, fix a minimum-length induced path $P_i$ between $x$ and $z_2$, all of whose internal vertices are in $X_i$ (such a path exists because $x$ is adjacent to $y_i \in X_i$, $z_2$ is adjacent to $x_i \in X_i$, and either $x_i = y_i$ or $x_iy_i \in E(G)$; clearly, $P_i$ is of length two or three). But now $G[V(P_1) \cup V(P_3) \cup \{x_4,\dots,x_k\}]$ is a 3PC, a contradiction. Thus, $xz_2 \in E(G)$. 

Suppose that $y_1 \neq x_1$ and $y_3 \neq x_3$, so that (by the choice of $y_1,y_3$) $x$ is anticomplete to $\{x_1,x_3\}$. We know that $z_2$ is nonadjacent to at least one of $y_1,y_3$; by symmetry, we may assume that $z_2y_3 \notin E(G)$. But now $G[x_1,z_2,x,x_3,y_3,x_4,x_5,\dots,x_k]$ is a $3PC(x_3y_3x_4,z_2)$, a contradiction. Thus, either $y_1 = x_1$ or $y_3 = x_3$; by symmetry, we may assume that $y_3 = x_3$, and consequently, $y_1 \neq x_1$. Note that this implies that $z_2x_1,xy_1 \in E(G)$ and $z_2y_1,xx_1 \notin E(G)$. But now $G[x_1,x_3,x_4,\dots,x_k,y_1,z_2,x]$ is a $3PC(xz_2x_3,y_1x_1x_k)$, a contradiction. This proves that there exists some $i \in \mathbb{Z}_k$ such that $N_G(x) \cap V(H^*) \subseteq X_i \cup X_{i+1}$. 

By symmetry, we may now assume that $N_G(x) \cap V(H^*) \subseteq X_1 \cup X_2$. Suppose that $N_G(x) \cap V(H^*)$ is not a clique. Since $X_1$ and $X_2$ are cliques, it follows that there exist nonadjacent vertices $y_1 \in X_1$ and $y_2 \in X_2$ such that $xy_1,xy_2 \in E(G)$. But now $y_1,x,y_2,x_3,x_4,\dots,x_k,y_1$ is a $(k+1)$-hole in $G$, contrary to the fact that $G$ contains no holes of length greater than $k$. This proves (2). 
\end{proof} 

Let $C$ be a component of $G \setminus V(K)$. Our goal is to show that $N_G(C) \cap V(K)$ is a clique. This is enough because it implies that $N_G(C) \cap V(K)$ is a clique-cutset of $G$. 

\begin{quote} 
\emph{(3) $N_G(C) \cap V(H^*)$ is a clique.} 
\end{quote} 
\begin{proof}[Proof of (3)] 
Suppose otherwise. Let $P$ be a minimal connected induced subgraph of $C$ such that $N_G(P) \cap V(H^*)$ is not a clique. Fix $a_1,a_2 \in V(P)$ such that some vertex in $N_G(a_1) \cap V(H^*)$ is nonadjacent to some vertex of $N_G(a_2) \cap V(H^*)$. By (2), $a_1 \neq a_2$. Furthermore, $P$ is a path between $a_1$ and $a_2$, for otherwise, any induced path in $P$ between $a_1$ and $a_2$ would contradict the minimality of $P$. Set $P = p_1,\dots,p_n$ with $p_1 = a_1$ and $p_n = a_2$. 

By the minimality of $P$, we know that $N_G(P \setminus p_1) \cap V(H^*)$ and $N_G(P \setminus p_n) \cap V(H^*)$ are cliques; consequently, $N_G(P) \cap V(H^*)$ is the union of two cliques. Since for every clique $X$ of $H^*$, there exists some $i \in \mathbb{Z}_k$ such that $X \subseteq X_i \cup X_{i+1}$, we deduce that there exist at most four indices $i \in \mathbb{Z}_k$ such that $N_G(P) \cap X_i \neq \emptyset$; since $k \geq 5$, we deduce that there exists an index $i \in \mathbb{Z}_k$ such that $N_G(P) \cap X_i = \emptyset$. On the other hand, since each $X_i$ is a clique and $N_G(P) \cap V(H^*)$ is not a clique, we see that there exist at least two indices $i \in \mathbb{Z}_k$ such that $N_G(P) \cap X_i \neq \emptyset$. 

Now, let $X_i,X_{i+1},\dots,X_j$ be a sequence of maximum length having the property that $N_G(P)$ intersects both $X_i$ and $X_j$, but fails to intersect $X_{i+1} \cup \dots \cup X_{j-1}$. By what we just showed, the length of the sequence $X_i,X_{i+1},\dots,X_j$ is at least three, and at most $k$; in particular, $i \neq j$. Furthermore, $N_G(P) \cap V(H^*) \subseteq X_j \cup X_{j+1} \cup X_{i-1} \cup X_i$. 

Fix nonadjacent vertices $y_i \in N_G(P) \cap X_i$ and $y_j \in N_G(P) \cap X_j$. (If $i \neq j+1$, then any two vertices $y_i \in N_G(P) \cap X_i$ and $y_j \in N_G(P) \cap X_j$ are nonadjacent. On the other hand, if $i = j+1$, then we have that $N_G(P) \cap V(H^*) \subseteq X_i \cup X_j$, and the existence of $y_i$ and $y_j$ follows from the fact that $N_G(P) \cap V(H^*)$ is not a clique, whereas both $X_i$ and $X_j$ are cliques.) By the minimality of $P$, all interior vertices of $P$ are anticomplete to $\{y_i,y_j\}$, and either $p_1y_i,p_ny_j \in E(G)$ and $p_1y_j,p_ny_i \notin E(G)$, or $p_1y_j,p_ny_i \in E(G)$ and $p_1y_i,p_ny_j \notin E(G)$; by symmetry, we may assume that the latter holds, that is, that $p_1y_j,p_ny_i \in E(G)$ and $p_1y_i,p_ny_j \notin E(G)$. Then $y_i,x_{i+1},\dots,x_{j-1},y_j,p_1,\dots,p_n,y_i$ is a hole in $G$, and its length is the sum of $n$ and the length of the sequence $X_i,\dots,X_j$. Since $G$ contains no holes of length greater than $k$, we see that the length of the sequence $X_i,\dots,X_j$ is at most $k-n \leq k-2$, and it follows that the cliques $X_j,X_{j+1},X_{i-1},X_i$ are pairwise distinct. 

Now, recall that $N_G(P) \cap V(H^*) \subseteq X_j \cup X_{j+1} \cup X_{i-1} \cup X_i$, and that $N_G(P \setminus p_1) \cap V(H^*)$ and $N_G(P \setminus p_n) \cap V(H^*)$ are both cliques. Since $p_1$ has a neighbor in $X_j$, and $p_n$ has a neighbor in $X_i$, we deduce that $N_G(P \setminus p_n) \cap V(H^*) \subseteq X_j \cup X_{j+1}$ and $N_G(P \setminus p_1) \cap V(H^*) \subseteq X_{i-1} \cup X_i$, and it follows that $N_G(P \setminus \{p_1,p_n\}) \cap V(H^*) \subseteq (X_j \cup X_{j+1}) \cap (X_{i-1} \cup X_i) = \emptyset$. Thus, the interior vertices of $P$ are anticomplete to $V(H^*)$. We also know that $N_G(p_1) \cap V(H^*) \subseteq X_j \cup X_{j+1}$ and $N_G(p_n) \cap V(H^*) \subseteq X_{i-1} \cup X_i$. But now $G[V(P) \cup \{y_i,x_{i+1},\dots,x_{j-1},y_j,x_{j+1},\dots,x_{i-1}\}]$ is a 3PC, a contradiction. This proves (3). 
\end{proof} 

\begin{quote} 
\emph{(4) $N_G(C) \cap V(K)$ is a clique.}
\end{quote} 
\begin{proof}[Proof of (4)]  
In view of (3), it suffices to show that $N_G(C) \cap U_{H^*}$ is a clique. Suppose otherwise, and fix a minimal connected induced subgraph $P$ of $C$ such that $N_G(P) \cap U_{H^*}$ is not a clique. Fix nonadjacent vertices $u_1,u_2 \in N_G(P) \cap U_{H^*}$, and fix (not necessarily distinct) vertices $a_1,a_2 \in V(P)$ such that $a_1u_1,a_2u_2 \in E(G)$. 
It is clear that $P$ is a path between $a_1$ and $a_2$ (if $a_1 = a_2$, then $P$ is a one-vertex path), for otherwise, any induced path in $P$ between $a_1$ and $a_2$ would contradict the minimality of $P$. Set $P = p_1,\dots,p_n$ (with $n \geq 1$) so that $p_1 = a_1$ and $p_n = a_2$. By the minimality of $P$, $u_1$ is anticomplete to $V(P) \setminus \{a_1\}$, and $u_2$ is anticomplete to $V(P) \setminus \{a_2\}$. Thus, $P' = u_1,p_1,\dots,p_n,u_2$ is an induced path in $G$. 

Since $N_G(C) \cap V(H^*)$ is a clique, we know that there exists some $i \in \mathbb{Z}_k$ such that $N_G(C) \cap V(H^*) \subseteq X_i \cup X_{i+1}$; by symmetry we may assume that $N_G(C) \cap V(H^*) \subseteq X_1 \cup X_2$. But now $G[V(P) \cup \{u_1,u_2,x_3,x_5\}]$ is a $3PC(u_1,u_2)$, a contradiction. This proves (4). 
\end{proof} 

Since $K$ is not a complete graph, (4) implies that $N_G(C) \cap V(K)$ is a clique-cutset of $G$. This completes the argument. 
\end{proof} 

We remind the reader that $\mathcal{B}_{\text{UT}}$ is the class of all graphs $G$ that satisfy at least one of the following: 
\begin{itemize} 
\item $G$ has exactly one nontrivial anticomponent, and this anticomponent is a long ring; 
\item $G$ is (long hole, $K_{2,3}$, $\overline{C_6}$)-free; 
\item $\alpha(G) = 2$, and every anticomponent of $G$ is either a 5-hyperhole or a $(C_5,\overline{C_6})$-free graph. 
\end{itemize} 
We are now ready to prove Theorem~\ref{decomp-thm-GUT}, restated below for the reader's convenience. 

\begin{decomp-thm-GUT} Every graph in $\mathcal{G}_{\text{UT}}$ either belongs to $\mathcal{B}_{\text{UT}}$ or admits a clique-cutset. 
\end{decomp-thm-GUT}
\begin{proof} 
Fix $G \in \mathcal{G}_{\text{UT}}$. We assume that $G$ does not admit a clique-cutset, and we show that $G \in \mathcal{B}_{\text{UT}}$. Clearly, $G$ is $(K_{2,3},\overline{C_6})$-free. If $G$ contains no long holes, then $G \in \mathcal{B}_{\text{UT}}$, and we are done. So assume that $G$ contains a long hole. By Lemma~\ref{lemma-ring-or-clique-cut-GUT}, some anticomponent of $G$ is a long ring; if this anticomponent is the only nontrivial anticomponent of $G$, then $G \in \mathcal{B}_{\text{UT}}$, and we are done. So assume that $G$ has at least two nontrivial anticomponents. Lemma~\ref{rmrk-K23-anticomp} then implies that $\alpha(G) = 2$. We claim that every anticomponent of $G$ is either a 5-hyperhole or a $(C_5,\overline{C_6})$-free graph (this will imply that $G \in \mathcal{B}_{\text{UT}}$). Let $H$ be an anticomponent of $G$. If $H$ contains no long holes, then $H$ is $(C_5,\overline{C_6})$-free, and we are done. So assume that $H$ does contain a long hole. Since $\alpha(H) \leq \alpha(G) = 2$, Lemma~\ref{lemma-clique-cut-alpha-2}(a) implies that $H$ does not admit a clique-cutset, and so by Lemma~\ref{lemma-ring-or-clique-cut-GUT}, $H$ is a long ring. Since $\alpha(H) \leq \alpha(G) = 2$, we deduce that $H$ is a 5-hyperhole (indeed, any long ring other than a 5-hyperhole contains a stable set of size three). This completes the argument. 
\end{proof}

\section{A decomposition theorem for the class $\mathcal{G}_{\text{U}}$}\label{sec:decompGU}

Our goal in this section is to prove Theorem~\ref{decomp-thm-GU}, which states that every graph in $\mathcal{G}_{\text{U}}$ either belongs to $\mathcal{B}_{\text{U}}$ or admits a clique-cutset. 

\begin{lemma} \label{square-clique-cut-GU} Let $G \in \mathcal{G}_{\text{U}}$, and let $H = x_1,x_2,x_3,x_4,x_1$ be a 4-hole in $G$. Then either $V(G) = V(H) \cup U_H$, or $G$ admits a clique-cutset. 
\end{lemma}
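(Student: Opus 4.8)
The plan is to show that if $V(G) \neq V(H) \cup U_H$, then for every component $C$ of $G \setminus (V(H) \cup U_H)$ the set $N_G(C) \cap (V(H) \cup U_H)$ is a clique; since this clique will meet $V(H)$ in at most an edge (so some vertex of $H$ lies outside $C \cup N_G(C)$), it is in fact a clique-cutset, and we are done. Fix such a $C$ (nonempty, as $V(G) \neq V(H) \cup U_H$) and write $H = x_1,x_2,x_3,x_4,x_1$. The starting observation is that, by Lemma~\ref{lemma-vertex-attach-GUT} applied in $\mathcal{G}_{\text{U}}$, every $c \in C$ satisfies $N_G(c) \cap V(H) \subseteq \{x_i,x_{i+1}\}$ for some $i$: option (a) would put $c$ in $U_H$, and option (b) (three consecutive neighbors) would create a twin wheel $W_4^{\text{t}}$, which is excluded from $\mathcal{G}_{\text{U}}$. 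In particular $N_G(c) \cap V(H)$ is a clique for each individual $c$, so any failure of cliqueness must be manufactured along a connecting path.

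\emph{Step (I): $N_G(C) \cap V(H)$ is a clique.} Suppose not; since the only nonadjacent pairs in $H$ are the two diagonals, $N_G(C) \cap V(H)$ then contains a diagonal. Following the minimal-connector technique used in the proof of Lemma~\ref{lemma-ring-or-clique-cut-GUT}, I take a minimal connected induced subgraph $P = p_1,\dots,p_n$ of $C$ with $N_G(P) \cap V(H)$ not a clique. Minimality forces $P$ to be a path with $n \geq 2$, makes $N_G(P \setminus p_1) \cap V(H)$ and $N_G(P \setminus p_n) \cap V(H)$ cliques, and hence writes $N_G(P) \cap V(H)$ as the union of two edges of $H$, with the interior vertices $p_2,\dots,p_{n-1}$ attaching to $V(H)$ only inside the intersection of those two edges. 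Two edges of a $4$-hole meet in a vertex or are disjoint, so $N_G(P) \cap V(H)$ is either three consecutive vertices or all of $V(H)$. In the first case, say $N_G(P) \cap V(H) = \{x_1,x_2,x_3\}$ with $x_1 \sim p_1$ and $x_3 \sim p_n$: then $x_4$ is anticomplete to $P$, so $x_4,x_1,p_1,\dots,p_n,x_3,x_4$ is a hole, and $x_2$ (adjacent to $x_1$, $x_3$, and some $p_i$, but not to $x_4$) is the center of a non-universal wheel on it, i.e.\ a proper or twin wheel, a contradiction. In the second case the two edges are opposite, the interior of $P$ is anticomplete to $V(H)$, and (say) $p_1$ is complete to $\{x_1,x_2\}$ while $p_n$ is complete to $\{x_3,x_4\}$; then the triangles $\{x_1,x_2,p_1\}$ and $\{x_4,x_3,p_n\}$, joined by the edges $x_1x_4$, $x_2x_3$ and the path $p_1,\dots,p_n$, form a prism, again a contradiction.

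\emph{Step (II): $N_G(C) \cap U_H$ is a clique.} By (I), some edge $x_ax_b$ of $H$ is anticomplete to $C$. Suppose $u_1,u_2 \in N_G(C) \cap U_H$ are nonadjacent, and take a minimal connected $P' = p_1',\dots,p_m'$ in $C$ joining a neighbor of $u_1$ to a neighbor of $u_2$, so that $u_1,p_1',\dots,p_m',u_2$ is an induced path. Since $u_1,u_2$ are complete to $V(H)$ while $x_a,x_b$ are anticomplete to $C$, the cycle $u_1,x_a,u_2,p_m',\dots,p_1',u_1$ is a hole $B$, and $x_b$ is adjacent in $B$ to exactly the three consecutive vertices $u_1,x_a,u_2$; hence $(B,x_b)$ is a twin wheel, a contradiction. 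Thus $N_G(C) \cap U_H$ is a clique, and together with (I) and the fact that $U_H$ is complete to $V(H)$, this shows $N_G(C) \cap (V(H) \cup U_H)$ is a clique, as required.

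The main obstacle is Step (I): a single vertex of $C$ is harmless, so the contradiction must come from an entire connecting path, and the delicate point is to use the minimality of $P$ to pin down precisely how $p_1$, $p_n$, and the interior of $P$ attach to $H$. The payoff is the clean dichotomy between three consecutive attachment vertices (yielding a forbidden wheel) and a full opposite-edges attachment (yielding a prism); selecting the right hole and center in the wheel case and recognizing the prism in the other are where the care goes, while the reduction to components, Step (II), and the clique-cutset conclusion are routine.
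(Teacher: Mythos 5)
Your overall strategy is the paper's: reduce to showing that $N_G(C) \cap (V(H) \cup U_H)$ is a clique for each component $C$ of $G \setminus (V(H) \cup U_H)$ via a minimal connected connector, and your Step (II) (the hole $u_1,x_a,u_2,p_m',\dots,p_1',u_1$ with twin-wheel center $x_b$) is exactly the paper's argument up to relabeling. But Step (I) has a genuine gap. From the minimality of $P$ you correctly get that $N_G(P \setminus p_1) \cap V(H)$ and $N_G(P \setminus p_n) \cap V(H)$ are cliques, but a clique of a 4-hole is an edge, \emph{a single vertex}, or empty --- not necessarily an edge. Your claimed dichotomy ``$N_G(P) \cap V(H)$ is three consecutive vertices or all of $V(H)$'' rests on both cliques being edges, and it omits the case in which both are singletons, i.e.\ $N_G(P) \cap V(H) = \{x_1,x_3\}$ is a bare diagonal with $x_2$ and $x_4$ anticomplete to $V(P)$. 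In that configuration neither of your two cases applies: there is no vertex of $V(H) \setminus \{x_1,x_3\}$ attached to $P$ to serve as a wheel center, and you do not get two triangles for a prism. This case genuinely occurs (it is precisely how a theta through $x_1$ and $x_3$ would sit in $G$), so the proof as written is incomplete.

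The repair is one line, and it is the paper's remaining sub-case: if $N_G(P) \cap V(H) = \{x_1,x_3\}$, then minimality forces $p_1$ to be the unique neighbor of $x_1$ in $P$, $p_n$ the unique neighbor of $x_3$, and the interior of $P$ to be anticomplete to $V(H)$, so $G[V(H) \cup V(P)]$ is a theta $3PC(x_1,x_3)$ formed by the three paths $x_1,x_2,x_3$; $x_1,x_4,x_3$; and $x_1,p_1,\dots,p_n,x_3$ --- a contradiction. With this case added, your Step (I) coincides in substance with the paper's claim (2): your ``three consecutive'' case is the paper's wheel case with a different (equally valid) choice of hole and center (your non-universal wheel $(B,x_2)$ on $B = x_4,x_1,p_1,\dots,p_n,x_3,x_4$ versus the paper's proper wheel $(H',x_4)$ on $H' = x_3,x_2,x_1,p_1,\dots,p_n,x_3$; in $\mathcal{G}_{\text{U}}$ it indeed suffices that the wheel is not universal), and your ``all of $V(H)$'' case is the paper's prism. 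The remainder --- citing Lemma~\ref{lemma-vertex-attach-GUT} for the per-vertex attachment, and the observation that the final clique meets $V(H)$ in at most an edge so that it really is a cutset --- is correct.
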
 
\begin{proof} 
We may assume that $V(H) \cup U_H \subsetneqq V(G)$, for otherwise we are done. 

\begin{quote} 
\emph{(1) For all $x \in V(G) \setminus (V(H) \cup U_H)$, there exists some $i \in \mathbb{Z}_4$ such that $N_G(x) \cap V(H) \subseteq \{x_i,x_{i+1}\}$.}
\end{quote} 
\begin{proof}[Proof of (1)] 
Fix $x \in V(G) \setminus (V(H) \cup U_H)$. Then there exists some $i \in \mathbb{Z}_4$ such that either $N_G(x) \cap V(H) \subseteq \{x_i,x_{i+1}\}$, or $N_G(x) \cap V(H) = \{x_i,x_{i+2}\}$, or $N_G(x) \cap V(H) = \{x_{i-1},x_i,x_{i+1}\}$. In the first case, we are done. In the second case, $G[x_1,x_2,x_3,x_4,x]$ is a $3PC(x_i,x_{i+2})$, a contradiction. In the third case, $(H,x)$ is a twin wheel in $G$, again a contradiction. This proves (1). 
\end{proof} 

Let $C$ be a component of $G \setminus (V(H) \cup U_H)$. 

\begin{quote} 
\emph{(2) $N_G(C) \cap V(H)$ is a clique.} 
\end{quote} 
\begin{proof}[Proof of (2)]  
Suppose otherwise, and fix a minimal connected induced subgraph $P$ of $C$ such that $N_G(P) \cap V(H)$ is not a clique. Then for some $i \in \mathbb{Z}_4$, we have that $x_i,x_{i+2} \in N_G(P) \cap V(H)$; by symmetry, we may assume that $x_1,x_3 \in N_G(P) \cap V(H)$. Fix $a_1,a_3 \in V(P)$ such that $a_1x_1,a_3x_3 \in E(G)$; by (1), we have that $a_1x_3,a_3x_1 \notin E(G)$, and in particular, $a_1 \neq a_3$. Clearly, $P$ is a path between $a_1$ and $a_3$, for otherwise, any induced path in $P$ between $a_1$ and $a_3$ would contradict the minimality of $P$. Further, the minimality of $P$ implies that all interior vertices of $P$ are anticomplete to $\{x_1,x_3\}$. Set $P = p_1,\dots,p_n$, with $p_1 = a_1$ and $p_n = a_3$. 

Suppose first that both $x_2$ and $x_4$ have a neighbor in $P$. Then both $x_2,x_4$ are anticomplete to the interior of $P$. (Indeed, suppose that some interior vertex $p$ of $P$ is adjacent to $x_2$, and let $p'$ be any vertex of $P$ adjacent to $x_4$. Then the subpath of $P$ between $p$ and $p'$ contradicts the minimality of $P$. Similarly, no interior vertex of $P$ is adjacent to $x_4$.) By (1), each of $a_1,a_3$ is adjacent to at most one of $x_2,x_4$; by symmetry, we may now assume that $N_G(a_1) \cap V(H) = \{x_1,x_2\}$ and $N_G(a_3) \cap V(H) = \{x_3,x_4\}$. But now $G[V(H) \cup V(P)]$ is a $3PC(a_1x_1x_2,a_3x_4x_3)$, a contradiction. 

From now on, we assume that at most one of $x_2,x_4$ has a neighbor in $P$; by symmetry, we may assume that $x_2$ is anticomplete to $V(P)$. Now, if $x_4$ has a neighbor in $P$, then we observe that $H' = x_3,x_2,x_1,p_1,\dots,p_n,x_3$ is a hole and $(H',x_4)$ a proper wheel in $G$, a contradiction. On the other hand, if $x_4$ has no neighbors in $P$, then $G[V(H) \cup V(P)]$ is a $3PC(x_1,x_3)$, again a contradiction. This proves (2). 
\end{proof} 

\begin{quote} 
\emph{(3) $N_G(C) \cap (V(H) \cup U_H)$ is a clique.} 
\end{quote} 
\begin{proof}[Proof of (3)] 
In view of (2), we need only show that $N_G(C) \cap U_H$ is a clique. Suppose otherwise, and let $P$ be a minimal connected induced subgraph of $C$ such that $N_G(P) \cap U_H$ is not a clique. Fix nonadjacent vertices $u_1,u_2 \in N_G(P) \cap U_H$, and fix (not necessarily distinct) vertices $a_1,a_2 \in V(P)$ such that $a_1u_1,a_2u_2 \in E(G)$. Clearly, $P$ is a path between $a_1$ and $a_2$ (if $a_1 = a_2$, then $P$ is a one-vertex path), for otherwise, any induced path between $a_1$ and $a_2$ in $P$ would contradict the minimality of $P$. Set $P = p_1,\dots,p_n$ with $p_1 = a_1$ and $p_n = a_2$. By the minimality of $P$, we have that $P' = u_1,p_1,\dots,p_n,u_2$ is an induced path in $G$. By (2), and by symmetry, we may assume that $N_G(C) \cap V(H) \subseteq \{x_3,x_4\}$. Then $H' = x_1,u_1,p_1,\dots,p_n,u_2,x_1$ is a hole in $G$, and $x_2 \in X_{x_1}(H')$. Thus, $(H',x_2)$ a twin wheel in $G$, a contradiction. This proves (3). 
\end{proof} 

Clearly, (3) implies that $N_G(C) \cap (V(H) \cup U_H)$ is a clique-cutset of $G$. 
\end{proof} 

We remind the reader that $\mathcal{B}_{\text{U}}$ is the class of all graphs $G$ that satisfy one of the following: 
\begin{itemize} 
\item $G$ has exactly one nontrivial anticomponent, and this anticomponent is a long hole; 
\item all nontrivial anticomponents of $G$ are isomorphic to $\overline{K_2}$. 
\end{itemize} 
We are now ready to prove Theorem~\ref{decomp-thm-GU}, restated below for the reader's convenience. 

\begin{decomp-thm-GU} Every graph in $\mathcal{G}_{\text{U}}$ either belongs to $\mathcal{B}_{\text{U}}$ or admits a clique-cutset. 
\end{decomp-thm-GU}
\begin{proof} 
Fix $G \in \mathcal{G}_{\text{U}}$, and assume that $G$ does not admit a clique-cutset; we must show that $G \in \mathcal{B}_{\text{U}}$. 

\begin{quote} 
\emph{(1) If some anticomponent of $G$ contains more than two vertices, then all other anticomponents of $G$ are trivial.} 
\end{quote} 
\begin{proof}[Proof of (1)] 
Suppose otherwise. Then $G$ has at least two nontrivial anticomponents, and so by Lemma~\ref{rmrk-K23-anticomp}, $\alpha(G) = 2$. Let $C_1$ be an anticomponent of $G$ that contains at least three vertices, and let $C_2$ be some other nontrivial anticomponent of $G$. Since $\alpha(G) = 2$ and the anticomponents $C_1,C_2$ are nontrivial, we have that $\alpha(C_1) = \alpha(C_2) = 2$. Since $|V(C_1)| \geq 3$, we deduce that $C_1$ is not edgeless, and so since $C_1$ is anticonnected, it follows that there exist pairwise distinct vertices $a,b,c \in V(C_1)$ such that $ab,bc \notin E(G)$ and $ac \in E(G)$. Fix nonadjacent vertices $x,y \in V(C_2)$. But now $H = a,x,b,y,a$ is a hole and $(H,c)$ a twin wheel in $G$, a contradiction. This proves (1). 
\end{proof} 

Suppose first that $G$ contains a 4-hole $H$. Then by Lemma~\ref{square-clique-cut-GU}, $V(G) = V(H) \cup U_H$. $H$ has two anticomponents, both isomorphic to $\overline{K_2}$, and clearly, these anticomponents of $H$ are also anticomponents of $G$. It now follows from (1) that no anticomponent of $G$ has more than two vertices. Thus, all nontrivial anticomponents of $G$ are isomorphic to $\overline{K_2}$, and it follows that $G \in \mathcal{B}_{\text{U}}$. 

Suppose next that $G$ contains a long hole. Then by Lemma~\ref{lemma-ring-or-clique-cut-GUT}, some anticomponent $H$ of $G$ is a long ring. But then $H$ is a long hole, for otherwise, the ring $H$ would contain a twin wheel. By (1), $H$ is the only nontrivial anticomponent of $G$. Thus, $G \in \mathcal{B}_{\text{U}}$. 

It remains to consider the case when $G$ contains no holes. But then by definition, $G$ is chordal. Since $G$ does not admit a clique-cutset, Theorem~\ref{thm-Dirac61} implies that $G$ is a complete graph, and consequently, $G \in \mathcal{B}_{\text{U}}$. This completes the argument. 
\end{proof}

\section{A decomposition theorem for the class $\mathcal{G}_{\text{T}}$}\label{sec:decompGT}

In this section, we prove Theorem~\ref{decomp-thm-GT}, which states that every graph in $\mathcal{G}_{\text{T}}$ either belongs to $\mathcal{B}_{\text{T}}$ or admits a clique-cutset. 

\begin{lemma} \label{lemma-long-hole-GT} Let $G \in \mathcal{G}_{\text{T}}$. Then $G$ contains no antiholes of length six, and no antiholes of length greater than seven. Furthermore, if $G$ contains a long hole, then either $G$ is a long ring, or $G$ admits a clique-cutset. 
\end{lemma}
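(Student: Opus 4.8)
The statement consists of two logically independent claims, and the plan is to prove them separately. For the antihole claim, note first that the $6$-antihole is exactly $\overline{C_6}$, which by definition is a prism and hence a $3PC$; since $G$ is $3PC$-free, $G$ contains no $6$-antihole. For antiholes of length at least $8$, the plan is to show that every such antihole already contains a \emph{universal wheel}, which is forbidden in $\mathcal{G}_{\text{T}}$. Write the $k$-antihole as $a_1,\dots,a_k$ (indices mod $k$), so that $a_i$ is nonadjacent to exactly $a_{i-1},a_{i+1}$ and adjacent to all other $a_j$. Fix any vertex $a_i$. It is complete to the $k-3$ vertices obtained by deleting $a_{i-1},a_i,a_{i+1}$, and those vertices induce the complement of a path, namely $\overline{P_{k-3}}$. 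For $k\geq 8$ we have $k-3\geq 5$, so this subgraph contains an induced $\overline{P_5}$, which in turn contains a $4$-hole; concretely, $a_{i+2},a_{i+5},a_{i+3},a_{i+6},a_{i+2}$ is a $4$-hole (the only nonadjacent pairs among these four are the consecutive pairs $a_{i+2}a_{i+3}$ and $a_{i+5}a_{i+6}$), and $a_i$ is complete to all four of these vertices. Hence $a_i$ together with this $4$-hole is a universal wheel $W_4$ in $G$, a contradiction. I would note in passing that for $k=7$ the same construction only yields $\overline{P_4}=P_4$, which has no hole, consistent with $7$-antiholes being permitted in $\mathcal{G}_{\text{T}}$; the only care needed is checking that the chosen indices $i+2,i+3,i+5,i+6$ are distinct and avoid $\{a_{i-1},a_i,a_{i+1}\}$, which holds precisely because $k\geq 8$.

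For the long-hole claim, the plan is to reduce immediately to the machinery already available for the larger class $\mathcal{G}_{\text{UT}}$. Since every theta, prism, and proper wheel is excluded in $\mathcal{G}_{\text{T}}$, we have $\mathcal{G}_{\text{T}}\subseteq\mathcal{G}_{\text{UT}}$, so Lemma~\ref{lemma-ring-or-clique-cut-GUT} applies: if $G$ contains a long hole, then either $G$ admits a clique-cutset (in which case we are done) or some anticomponent $H$ of $G$ is a long ring. In the latter case I claim $G=H$, so that $G$ is itself a long ring. By the definition of a ring (the vertices $u_1^1,\dots,u_1^k$ form an induced $C_k$), or equivalently by Lemma~\ref{ring-in-GT}(b), the long ring $H$ contains a long hole $C$. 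Suppose for contradiction that $V(G)\neq V(H)$ and fix $v\in V(G)\setminus V(H)$. Since $v$ lies in an anticomponent of $G$ distinct from $H$, and distinct anticomponents are pairwise complete, $v$ is complete to $V(H)\supseteq V(C)$. Then $(C,v)$ is a universal wheel in $G$, contradicting $G\in\mathcal{G}_{\text{T}}$. Therefore $G=H$ is a long ring, completing the proof.

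The main obstacle is the antihole case for $k\geq 8$: the rest is essentially bookkeeping, but this step requires correctly identifying the right induced subgraph ($\overline{P_{k-3}}$ seen from a single apex vertex) and verifying both that it contains a $4$-hole and that the apex completes that $4$-hole to a universal wheel. I expect the verification of the $4$-hole inside $\overline{P_5}$ and the index bookkeeping to be the only genuinely error-prone part; the long-hole statement, by contrast, is a short reduction to Lemma~\ref{lemma-ring-or-clique-cut-GUT} together with the single observation that any vertex outside a long-ring anticomponent is complete to a long hole of that ring and hence forms a forbidden universal wheel.
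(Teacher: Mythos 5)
Your proposal is correct and follows essentially the same route as the paper: the $6$-antihole is excluded because $\overline{C_6}$ is a prism, the long antiholes are excluded by exhibiting a $4$-hole inside the antihole together with a vertex complete to it (your hole $a_{i+2},a_{i+5},a_{i+3},a_{i+6},a_{i+2}$ with apex $a_i$ is, up to relabeling, the paper's $x_1,x_4,x_2,x_5,x_1$ with apex $x_7$), and the long-hole statement is reduced to Lemma~\ref{lemma-ring-or-clique-cut-GUT}, with the observation that any vertex outside the long-ring anticomponent would be complete to a hole of the ring and hence yield a forbidden universal wheel (the paper phrases this as $U_R = \emptyset$). Your detour through $\overline{P_{k-3}}$ and $\overline{P_5}$ is harmless extra motivation, and all the index checks go through.
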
 
\begin{proof} 
Since $\overline{C_6}$ is a prism, we see that $G$ contains no antiholes of length six. Furthermore, we observe that if $A = x_1,\dots,x_k,x_1$ (with $k \geq 8$) is an antihole in $G$, then $H = x_1,x_4,x_2,x_5,x_1$ is a 4-hole and $(H,x_7)$ a universal wheel in $G$, a contradiction. This proves the first statement. 

It remains to prove the second statement. Suppose that $G$ contains a long hole. Then by Lemma~\ref{lemma-ring-or-clique-cut-GUT}, either some anticomponent of $G$ is a ring, or $G$ admits a clique-cutset. In the latter case, we are done; so assume that some anticomponent of $G$, call it $R$, is a ring. If $U_R \neq \emptyset$, then $G$ contains a universal wheel, a contradiction. Thus, $U_R = \emptyset$, and it follows that $G = R$. Thus, $G$ is a ring. 
\end{proof} 

\begin{lemma} \label{lemma-7antihole-GT} Let $G \in \mathcal{G}_{\text{T}}$, and assume that $G$ contains no long holes, but does contain a 7-antihole. Then either $G$ is a 7-hyperantihole, or $G$ admits a clique-cutset. 
\end{lemma}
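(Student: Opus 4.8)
The plan is to show that a $7$-antihole in $G \in \mathcal{G}_{\text{T}}$ forces a very rigid local structure, and then prove that either this structure extends to the whole graph (making $G$ a $7$-hyperantihole) or else produces a clique-cutset. Let $A = a_1,\dots,a_7,a_1$ be a $7$-antihole in $G$, so that the $a_i$ are pairwise adjacent except that $a_i$ is nonadjacent to $a_{i-1}$ and $a_{i+1}$ (indices in $\mathbb{Z}_7$). For each $i$, set $X_i = X_{a_i}(A)$, the set of twins of $a_i$ with respect to $A$. My first step is to prove the analogue of Lemma~\ref{lemma-hole-ring-GUT} for antiholes: that $X_1,\dots,X_7$ are pairwise disjoint cliques whose union carries the hyperantihole structure, and that every vertex of $G$ outside $U_A$ must fall into one of the $X_i$ or attach to $A$ in a controlled way.

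**First I would** analyze how an arbitrary vertex $x \in V(G) \setminus V(A)$ attaches to $A$. The key observation is that $\overline{G}$ restricted to $A$ is a $7$-hole, so I can transfer information through the complement. The hypothesis that $G$ has no long holes is crucial here: since complements of the relevant configurations show up as holes and wheels, a vertex with an ``irregular'' neighborhood on $A$ will create either a long hole, a twin wheel (forbidden in $\mathcal{G}_{\text{T}}$), a $3PC$, a proper wheel, or a universal wheel. I expect the case analysis to show that $N_G(x) \cap V(A)$ must be exactly the neighborhood of some $a_i$ (so $x$ is a twin, landing in $X_i$), or $x$ is complete to $V(A)$ (so $x \in U_A$); any other pattern is excluded. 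Using this, together with arguments analogous to the proof that $X_i$ are cliques in Lemma~\ref{lemma-hole-ring-GUT} (nonadjacent twins would build a forbidden $3PC$ or wheel in the complement-hole), I would establish that $A^* = G[X_1 \cup \dots \cup X_7]$ is a $7$-hyperantihole with good partition $(X_1,\dots,X_7)$.

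**Next I would** handle $U_A$ and the rest of the graph. If $U_A \neq \emptyset$, a universal vertex together with any $4$-hole inside the antihole-structure would give a universal wheel or force a clique-cutset; more precisely, I would argue that $U_A$ is complete to $A^*$, so $U_A$ together with $A^*$ still has no convenient separation and instead I would show $U_A$ must be empty or else contribute a clique-cutset. The main structural push is then, for each component $C$ of $G \setminus (V(A^*) \cup U_A)$, to show that $N_G(C) \cap (V(A^*) \cup U_A)$ is a clique, exactly as in Lemmas~\ref{lemma-ring-or-clique-cut-GUT} and~\ref{square-clique-cut-GU}. This uses a minimal connected subgraph $P$ of $C$ whose neighborhood on $A^*$ is not a clique, and builds a $3PC$ or a long hole (the latter contradicting the no-long-hole hypothesis) through $P$ and a suitable induced path across the antihole. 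If every such neighborhood is a clique, then either there are no outside components and $G = A^*$ is a $7$-hyperantihole, or a clique-cutset separates $C$ from the rest.

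**The hard part will be** the attachment analysis in the antihole setting, because unlike a hole, an antihole is dense, so building a $3PC$ or exhibiting a forbidden wheel requires carefully selecting a sparse induced subconfiguration (a $4$-hole or long hole sitting inside the antihole's complement) on which the external vertex $x$ or the path $P$ has the right number of neighbors. In particular, ruling out intermediate neighborhood patterns of $x$ on $A$ will likely require exhibiting, for each bad pattern, an explicit induced $4$-hole $H \subseteq A^*$ together with $x$ so that $(H,x)$ is a twin wheel or so that some five or six vertices form a $3PC$; getting the parities and the consecutive-vertex conditions of a twin wheel to line up across the $\mathbb{Z}_7$ indexing is the delicate bookkeeping. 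I expect the no-long-hole hypothesis to be exactly what rules out the remaining ``spread-out'' neighborhoods that would otherwise survive, and the fact that $G$ contains no $6$-antihole and no antihole longer than $7$ (Lemma~\ref{lemma-long-hole-GT}) to pin down that the global antihole structure has length precisely seven.
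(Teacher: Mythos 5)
Your overall architecture matches the paper's proof (twin classes $X_i$ assembling into a $7$-hyperantihole $A^*$, an attachment analysis for outside vertices, then minimal connected subgraphs $P$ showing component neighborhoods are cliques), but your execution plan rests on a wrong premise about the class: twin wheels are \emph{allowed} in $\mathcal{G}_{\text{T}}$, not forbidden. The class $\mathcal{G}_{\text{T}}$ is (3PC, proper wheel, universal wheel)-free, and twin wheels are precisely the one type of wheel its members may contain. Your ``hard part'' plan --- ``exhibiting \dots an explicit induced $4$-hole $H \subseteq A^*$ together with $x$ so that $(H,x)$ is a twin wheel'' --- therefore yields no contradiction at all; indeed, any vertex with exactly three neighbors on a $4$-hole forms a twin wheel, and such vertices are unremarkable in $\mathcal{G}_{\text{T}}$. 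The contradictions the dense antihole setting actually supplies are \emph{universal} wheels over $4$-holes sitting inside the antihole (e.g.\ $a_1,a_4,a_2,a_5,a_1$), thetas $K_{2,3}$, proper wheels, $3$PCs, and $5$- or $6$-holes violating the no-long-hole hypothesis; this is exactly the toolkit the paper's proof uses in its steps (1)--(5), and your case analysis would need to be redone with these targets.

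A second genuine gap is your claimed attachment dichotomy: it is false that every $x \in V(G)\setminus V(A)$ is either a twin of some $a_i$ or complete to $V(A)$. A vertex adjacent to just $a_1$ (or to any small clique of $A$) creates no forbidden configuration and no long hole, so it survives your case analysis --- and such vertices are precisely the ones that drive the clique-cutset outcome of the lemma. The correct statement (the paper's step (4)) is only that $N_G(x) \cap V(A^*)$ is a \emph{clique} for every $x \notin V(A^*)$; the component analysis then upgrades this to $N_G(C) \cap V(A^*)$ being a clique. Note also that your proposal is internally inconsistent on this point: if the dichotomy held, $V(G) = V(A^*) \cup U_A$ and the components you process in your third paragraph could not exist. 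Finally, the treatment of $U_A$ can be made clean rather than hedged: $U_A = \emptyset$ outright, since any vertex complete to $V(A)$ is complete to the $4$-hole $a_1,a_4,a_2,a_5,a_1$, giving a universal wheel; no ``$U_A$ contributes a clique-cutset'' branch arises, and Lemma~\ref{lemma-long-hole-GT} is not needed to pin down the antihole length, since the $7$-antihole is given by hypothesis.
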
 
\begin{proof} 
Let $A = x_1,x_2,\dots,x_7,x_1$ be a 7-antihole in $G$, and for all $i \in \mathbb{Z}_7$, set $X_i = X_{x_i}(A)$. Thus, $A^* = G[\bigcup_{i \in \mathbb{Z}_7} X_i]$. 

\begin{quote} 
\emph{(1) $A^*$ is a 7-hyperantihole with good partition $(X_1,X_2,\dots,X_7)$.} 
\end{quote} 
\begin{proof}[Proof of (1)]  
By symmetry, it suffices to show that $X_1$ is a clique, complete to $X_3 \cup X_4 \cup X_5 \cup X_6$ and anticomplete to $X_2 \cup X_7$. 

Suppose that $X_1$ is not a clique, and fix nonadjacent vertices $y_1,y_1' \in X_1$. By construction, $x_1$ is complete to $X_1 \setminus \{x_1\}$, and so $x_1 \notin \{y_1,y_1'\}$. But now $H = y_1,x_3,y_1',x_4,y_1$ is a 4-hole and $(H,x_1)$ a universal wheel in $G$, a contradiction. 

Next, suppose that $X_1$ is not anticomplete to $X_2 \cup X_7$; by symmetry, we may assume that there exist some $y_1 \in X_1$ and $y_2 \in X_2$ such that $y_1y_2 \in E(G)$. But now $H = y_2,x_5,x_3,x_6,y_2$ is a 4-hole and $(H,y_1)$ a universal wheel in $G$, a contradiction. 

Further, suppose that $X_1$ is not complete to $X_3 \cup X_6$; by symmetry, we may assume that some $y_1 \in X_1$ and $y_3 \in X_3$ are nonadjacent. Since $x_1$ is complete to $X_3$, we have that $y_1 \neq x_1$. But then $H = y_1,x_5,y_3,x_6,y_1$ is a 4-hole and $(H,x_1)$ a universal wheel in $G$, a contradiction. 

It remains to show that $X_1$ is complete to $X_4 \cup X_5$. Suppose otherwise; by symmetry, we may assume that some $y_1 \in X_1$ and $y_4 \in X_4$ are nonadjacent. But now $y_1,x_5,x_7,y_4,x_6,y_1$ is a 5-hole in $G$, contrary to the fact that $G$ contains no long holes. This proves (1). 
\end{proof} 

\begin{quote} 
\emph{(2) For all $x \in V(G) \setminus V(A^*)$, and all $i \in \mathbb{Z}_7$, if $x$ has a neighbor both in $X_i$ and in $X_{i+1}$, then either $x$ is complete to $X_{i-2} \cup X_{i+3}$ and anticomplete to $X_{i-3}$, or $x$ is complete to $X_{i-3}$ and anticomplete to $X_{i-2} \cup X_{i+3}$.} 
\end{quote} 
\begin{proof}[Proof of (2)] 
Fix $x \in V(G) \setminus V(A^*)$, and assume that for some $i \in \mathbb{Z}_7$, $x$ has a neighbor both in $X_i$ and $X_{i+1}$; by symmetry, we may assume that $x$ is adjacent to some $y_1 \in X_1$ and to some $y_2 \in X_2$. We must show that $x$ is complete to one of $X_4 \cup X_6$ and $X_5$, and anticomplete to the other. 

Fix $j \in \{4,5\}$, and suppose that $x$ is adjacent to some $y_j \in X_j$ and to some $y_{j+1} \in X_{j+1}$; then, by (1), $H = y_1,y_j,y_2,y_{j+1},y_1$ is a 4-hole and $(H,x)$ a universal wheel in $G$, a contradiction. Thus, $x$ has a neighbor in at most one of $X_j$ and $X_{j+1}$. Suppose now that $x$ has a nonneighbor $y_j' \in X_j$ and a nonneighbor $y_{j+1}' \in X_{j+1}$. But then, by (1), $G[y_1,y_2,y_j',y_{j+1}',x]$ is a $K_{2,3}$, a contradiction. Thus, $x$ has a nonneighbor in at most one of $X_j$ and $X_{j+1}$. It now follows that $x$ is complete to one of $X_j$ and $X_{j+1}$, and anticomplete to the other. 

We now have that $x$ is complete to one of $X_4$ and $X_5$, and anticomplete to the other, and we also have that $x$ is complete to one of $X_5$ and $X_6$, and anticomplete to the other. It follows that $x$ is complete to one of $X_4 \cup X_6$ and $X_5$, and anticomplete to the other. This proves (2). 
\end{proof} 

\begin{quote} 
\emph{(3) For all $x \in V(G) \setminus V(A^*)$, and all $i \in \mathbb{Z}_7$, if $x$ has a neighbor both in $X_i$ and in $X_{i+1}$, then $x$ is complete to at least one of $X_{i-1}$ and $X_{i+2}$.}
\end{quote} 
\begin{proof}[Proof of (3)] 
Suppose otherwise. By symmetry, we may assume that some vertex $x \in V(G) \setminus V(A^*)$ has a neighbor both in $X_1$ and in $X_2$, and a nonneighbor both in $X_3$ and in $X_7$. Fix $y_1 \in X_1$, $y_2 \in X_2$, $y_3 \in X_3$, and $y_7 \in X_7$ such that $xy_1,xy_2 \in E(G)$, and $xy_3,xy_7 \notin E(G)$. But now, by (1), $x,y_1,y_3,y_7,y_2,x$ is a 5-hole in $G$, contrary to the fact that $G$ contains no long holes. This proves (3). 
\end{proof} 

\begin{quote} 
\emph{(4) For all $x \in V(G) \setminus V(A^*)$, $N_G(x) \cap V(A^*)$ is a clique.}
\end{quote} 
\begin{proof}[Proof of (4)] 
Fix $x \in V(G) \setminus V(A^*)$, and suppose that $N_G(x) \cap V(A^*)$ is not a clique. By (1), and by symmetry, we may assume that there exist $y_1 \in X_1$ and $y_2 \in X_2$ such that $xy_1,xy_2 \in E(G)$. By (3) and by symmetry, we may assume that $x$ is complete to $X_3$. By (2), with $i = 1$, we have that $x$ is complete to one of $X_4 \cup X_6$ and $X_5$, and anticomplete to the other. 

Suppose first that $x$ is complete to $X_4 \cup X_6$ and anticomplete to $X_5$. By (2), with $i = 2$, we see that $x$ is anticomplete to $X_7$. By (2), with $i = 3$, $x$ is complete to $X_1$. By (3), with $i = 3$, $x$ is complete to $X_2$. We now have that $x$ is complete to $X_1 \cup X_2 \cup X_3 \cup X_4 \cup X_6$ and anticomplete to $X_5 \cup X_7$. But then $x$ is a twin of $x_6$ with respect to $A$, and so $x \in X_6$, contrary to the fact that $x \notin V(A^*)$. 

Suppose now that $x$ is complete to $X_5$ and anticomplete to $X_4 \cup X_6$. By (2), with $i = 2$, we see that $x$ is complete to $X_7$. By (3), with $i = 2$, we see that $x$ is complete to $X_1$. By (3), with $i = 7$, we see that $x$ is complete to $X_2$. But now $x$ is complete to $X_1 \cup X_2 \cup X_3 \cup X_5 \cup X_7$ and anticomplete to $X_4 \cup X_6$. It follows that $x$ is a twin of $x_5$ with respect to $A$, and so $x \in X_5$, contrary to the fact that $x \notin V(A^*)$. This proves (4). 
\end{proof} 

\begin{quote} 
\emph{(5) For every component $C$ of $G \setminus V(A^*)$, $N_G(C) \cap V(A^*)$ is a clique.} 
\end{quote} 
\begin{proof}[Proof of (5)] 
Suppose otherwise. Fix a minimal connected induced subgraph $P$ of $G \setminus V(A^*)$ such that $N_G(P) \cap V(A^*)$ is not a clique. By (1) and by symmetry, we may assume that $N_G(P) \cap X_1 \neq \emptyset$ and $N_G(P) \cap X_2 \neq \emptyset$. Fix $a_1 \in V(P)$ such that $a_1$ has a neighbor $y_1 \in X_1$, and fix $a_2 \in V(P)$ such that $a_2$ has a neighbor $y_2 \in X_2$. By (1) and (4), $a_1$ is anticomplete to $X_2 \cup X_7$, and $a_2$ is anticomplete to $X_1 \cup X_3$; it follows that $a_1 \neq a_2$. Clearly, $P$ is a path between $a_1$ and $a_2$, for otherwise, any induced path in $P$ between $a_1$ and $a_2$ would contradict the minimality of $P$. Set $P = p_1,\dots,p_n$, with $p_1 = a_1$ and $p_n = a_2$ (thus, $n \geq 2$). By the minimality of $P$, and by (1), each interior vertex of $P$ is anticomplete to $X_7 \cup X_1 \cup X_2 \cup X_3$, for if some interior vertex $p$ of $P$ had a neighbor in $X_7 \cup X_1 \cup X_2 \cup X_3$, then either the subpath of $P$ between $a_1$ and $p$, or the subpath of $P$ between $a_2$ and $p$, would contradict the minimality of $P$. We now observe the following: 
\begin{itemize} 
\item[(i)] if $a_1x_3,a_2x_7 \notin E(G)$, then $y_1,p_1,\dots,p_n,y_2,x_7,x_3,y_1$ is an $(n+4)$-hole in $G$; 
\item[(ii)] if $a_1x_3 \in E(G)$ and $a_2x_7 \notin E(G)$, then $x_3,p_1,\dots,p_n,y_2,x_7,x_3$ is an $(n+3)$-hole in $G$; 
\item[(iii)] if $a_1x_3 \notin E(G)$ and $a_2x_7 \in E(G)$, then $y_1,p_1,\dots,p_n,x_7,x_3,y_1$ is an $(n+3)$-hole in $G$; 
\item[(iv)] if $a_1x_3,a_2x_7 \in E(G)$, then $x_3,p_1,\dots,p_n,x_7,x_3$ is an $(n+2)$-hole in $G$. 
\end{itemize} 
Since $n \geq 2$ and $G$ contains no long holes, we deduce that (iv) holds, with $n = 2$. (Thus, $P = a_1,a_2$.) 

Now, if some $x \in \{x_4,x_5,x_6\}$ is anticomplete to $\{a_1,a_2\}$, then $x,y_1,a_1,a_2,y_2,x$ is a 5-hole in $G$, a contradiction. Thus each of $x_4,x_5,x_6$ has a neighbor in $\{a_1,a_2\}$. By symmetry, we may assume that $x_5a_1 \in E(G)$. We now have that $a_1x_5,a_2x_7 \in E(G)$, and so since $x_6x_5,x_6x_7 \notin E(G)$, (4) implies that $a_1x_6,a_2x_6 \notin E(G)$, contrary to the fact that $x_6$ has a neighbor in $\{a_1,a_2\}$. This proves (5). 
\end{proof} 

Clearly, (1) and (5) together imply that either $G$ is a 7-hyperantihole, or $G$ admits a clique-cutset. 
\end{proof} 

\begin{lemma} \label{lemma-4-hole-ring-GT} Let $G \in \mathcal{G}_{\text{T}}$, and let $H = x_1,x_2,x_3,x_4,x_1$ be a 4-hole in $G$. For each $i \in \mathbb{Z}_4$, set $X_i = X_{x_i}(H)$. Then $H^*$ is a 4-ring with good partition $(X_1,X_2,X_3,X_4)$. 
\end{lemma}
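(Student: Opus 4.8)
My plan is to verify the four conditions (a)--(d) of Lemma~\ref{lemma-ring-char} for $H^*$ and the partition $(X_1,X_2,X_3,X_4)$. Since $\mathcal{G}_{\text{T}}\subseteq\mathcal{G}_{\text{UT}}$, the graph $G$ lies in $\mathcal{G}_{\text{UT}}$, so Lemma~\ref{lemma-hole-ring-GUT} (applied with $k=4$) immediately gives that $X_1,X_2,X_3,X_4$ are pairwise disjoint cliques; this is condition~(a). Condition~(c) is essentially free from the definition of twins: for each $i\in\mathbb{Z}_4$ and each $y\in X_{i+1}$, we have $N_G[y]\cap V(H)=N_G[x_{i+1}]\cap V(H)\ni x_i$, so (as $y\neq x_i$) the vertex $x_i$ is complete to $X_{i+1}$, and symmetrically to $X_{i-1}$; thus $x_i$ witnesses~(c). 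The content of the lemma, and the reason $k=4$ genuinely needs the extra hypothesis that $G$ is universal-wheel-free (beyond what Lemma~\ref{lemma-hole-ring-GUT} provides for $k\geq 5$), is concentrated in conditions~(b) and~(d), which govern the interaction of the two pairs of ``opposite'' classes $X_1,X_3$ and $X_2,X_4$.

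For condition~(b), I would show that $X_1$ is anticomplete to $X_3$ (the case $X_2,X_4$ being symmetric). Suppose some $y_1\in X_1$ and $y_3\in X_3$ are adjacent. First I would rule out $y_1=x_1$ and $y_3=x_3$ using the twin relation: since $x_1x_3\notin E(G)$ and every vertex of $X_3$ misses $x_1$ while every vertex of $X_1$ misses $x_3$, an edge $y_1y_3$ forces $y_1\neq x_1$ and $y_3\neq x_3$. Then $y_1$ is complete to $\{x_1,x_2,x_4\}$ and $y_3$ is complete to $\{x_2,x_3,x_4\}$, so $H'=x_1,x_2,y_3,x_4,x_1$ is an induced $4$-hole (its two diagonals $x_1y_3$ and $x_2x_4$ are nonedges), and $y_1\notin V(H')$ is complete to $V(H')$, producing a universal wheel $W_4$ --- a contradiction.

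For condition~(d), take distinct $y_1,y_1'\in X_1$; since $X_1$ is a clique anticomplete to $X_3$ (by~(b)), their closed neighborhoods in $H^*$ can differ only inside $X_2\cup X_4$. If neither dominates the other, fix $z$ adjacent to $y_1$ but not $y_1'$ and $z'$ adjacent to $y_1'$ but not $y_1$, both in $X_2\cup X_4$; note $x_1\notin\{y_1,y_1'\}$ because $x_1$ is complete to $X_2\cup X_4$ whereas each of $y_1,y_1'$ misses one of $z,z'$. If $z,z'$ lie in the same class, say $X_2$, then $y_1,z,z',y_1'$ is an induced $4$-hole and the external vertex $x_1$ (complete to $X_1\cup X_2$) is a universal-wheel center, a contradiction. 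The delicate mixed subcase is $z\in X_2$, $z'\in X_4$: using $X_2$ anticomplete $X_4$ and $X_1$ anticomplete $X_3$, together with $z,z'$ both adjacent to $x_3$, one checks that $y_1,z,x_3,z',y_1'$ is an induced $5$-hole in which $x_1$ is adjacent to exactly the four consecutive rim vertices $z,y_1,y_1',z'$ and misses only $x_3$, yielding the proper wheel $W_5^4$ --- again a contradiction. Hence one of $y_1,y_1'$ dominates the other, giving~(d).

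Having established (a)--(d), Lemma~\ref{lemma-ring-char} concludes that $H^*$ is a $4$-ring with good partition $(X_1,X_2,X_3,X_4)$. The main obstacle I anticipate is condition~(d), and specifically the mixed subcase above, where the contradiction comes not from a universal wheel but from building the right $5$-hole and recognizing $x_1$ as a $W_5^4$ center; all the other verifications reduce to exhibiting a $W_4$ and are comparatively routine. I would also double-check throughout that the chosen center lies outside the rim, which is exactly why establishing~(c) (the completeness of $x_i$ to its neighbouring classes) early on is convenient.
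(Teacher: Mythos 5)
Your proof is correct and follows essentially the same route as the paper's: both verify conditions (a)--(d) of Lemma~\ref{lemma-ring-char}, obtain (a) from Lemma~\ref{lemma-hole-ring-GUT}, handle (b) by building a twin-substituted $4$-hole with a universal-wheel center, and dispose of the mixed subcase of (d) with exactly the same $5$-hole $y_1,z,x_3,z',y_1'$ and the proper wheel $W_5^4$ centered at $x_1$. The only (valid) deviation is the same-class subcase of (d), where you exhibit a universal wheel $W_4$ centered at $x_1$, while the paper instead finds a prism $3PC(y_1y_1'x_4,zz'x_3)$; both contradictions are available in $\mathcal{G}_{\text{T}}$.
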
 
\begin{proof} 
Our goal is to show that $H^*$ and $(X_1,X_2,X_3,X_4)$ satisfy (a)-(d) from Lemma~\ref{lemma-ring-char}. Clearly, for all $i \in \mathbb{Z}_4$, we have that $N_{H^*}[x_i] = X_{i-1} \cup X_i \cup X_{i+1}$, and in particular, that $x_i$ is complete to $X_{i-1} \cup X_{i+1}$; thus, (c) holds. Further, by Lemma~\ref{lemma-hole-ring-GUT}, $X_1,X_2,X_3,X_4$ are cliques, and so (a) holds. 

Next, we show that (b) holds. By symmetry, it suffices to show that $X_1$ is anticomplete to $X_3$. Suppose otherwise, and fix $y_1 \in X_1$ and $y_3 \in X_3$ such that $y_1y_3 \in E(G)$. By construction, $x_1$ is anticomplete to $X_3$, and $x_3$ is anticomplete to $X_1$, and so we see that $y_1 \neq x_1$ and $y_3 \neq x_3$. But now $H' = y_1,x_2,x_3,x_4,y_1$ is a hole and $(H',y_3)$ a universal wheel in $G$, a contradiction. Thus, (b) holds. 

It remains to show that (d) holds; by symmetry, it suffices to prove (d) for $i = 1$. Fix distinct $y_1,y_1' \in X_1$; we claim that one of $y_1,y_1'$ dominates the other in $H^*$. Suppose otherwise. By (a), $y_1y_1' \in E(G)$, and by (b), both $y_1$ and $y_1'$ are anticomplete to $X_3$. Thus, by symmetry, we may assume that one of the following holds: 
\begin{itemize} 
\item[(i)] there exist $y_2,y_2' \in X_2$ such that $y_1y_2,y_1'y_2' \in E(G)$ and $y_1y_2',y_1'y_2 \notin E(G)$; 
\item[(ii)] there exist $y_2 \in X_2$ and $y_4' \in X_4$ such that $y_1y_2,y_1'y_4' \in E(G)$ and $y_1y_4',y_1'y_2 \notin E(G)$. 
\end{itemize} 
If (i) holds, then $G[y_1,y_1',y_2,y_2',x_3,x_4]$ is a $3PC(y_1y_1'x_4,y_2y_2'x_3)$, a contradiction. Suppose now that (ii) holds. Since $x_1$ is complete to $X_2 \cup X_4$, we have that $x_1 \notin \{y_1,y_1'\}$. Using (a) and (b), we now deduce that $H' = y_1,y_2,x_3,y_4',y_1',y_1$ is a 5-hole in $G$, and $x_1$ has precisely four neighbors (namely, $y_1,y_1',y_2,y_4'$) in $V(H')$; thus, $(H',x_1)$ is a proper wheel in $G$, a contradiction. It follows that one of $y_1,y_1'$ dominates the other in $H^*$. This proves (d). 

Lemma~\ref{lemma-ring-char} now implies that $H^*$ is a 4-ring with good partition $(X_1,X_2,X_3,X_4)$. 
\end{proof} 

\begin{lemma} \label{lemma-4-hole-vertex-attach-GT} Let $G \in \mathcal{G}_{\text{T}}$, assume that $G$ contains no long holes and no 7-antiholes, and let $H = x_1,x_2,x_3,x_4$ be a 4-hole in $G$, chosen so that $|V(H^*)|$ is maximum. Then for all $x \in V(G) \setminus V(H^*)$, $N_G(x) \cap V(H^*)$ is a clique. 
\end{lemma}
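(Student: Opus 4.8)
The plan is to fix a vertex $x \in V(G) \setminus V(H^*)$, assume for contradiction that $N_G(x) \cap V(H^*)$ is not a clique, and reach a contradiction. Write $X_i = X_{x_i}(H)$; by Lemma~\ref{lemma-4-hole-ring-GT}, $(X_1,X_2,X_3,X_4)$ is a good partition of the $4$-ring $H^*$, so each $X_i$ is a clique, $X_i$ is anticomplete to $X_{i+2}$, and each $x_i$ is complete to $X_{i-1}\cup X_{i+1}$. I would first pin down $N_G(x)\cap V(H)$: if $x$ were complete to $V(H)$ then $(H,x)$ is a universal wheel; if $N_G(x)\cap V(H)=\{x_i,x_{i+2}\}$ for an opposite pair, then $G[V(H)\cup\{x\}]$ is a $3PC(x_i,x_{i+2})$; and if $x$ were adjacent to exactly three (necessarily consecutive) vertices of $H$, then $(H,x)$ is a twin wheel and $x\in X_i\subseteq V(H^*)$. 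All three are impossible (the last because $x\notin V(H^*)$), so $N_G(x)\cap V(H)$ is a clique contained in some $\{x_i,x_{i+1}\}$.

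There are then two things to prove: (i) the neighborhood is confined to two consecutive cliques, i.e.\ $N_G(x)\cap V(H^*)\subseteq X_i\cup X_{i+1}$ for some $i$; and (ii) once confined, it is a clique. Step (ii) is short, and it is where the hypothesis that $G$ has no long holes is used: if, say, $N_G(x)\cap V(H^*)\subseteq X_1\cup X_2$ but $x$ has neighbors $y_1\in X_1$ and $y_2\in X_2$ with $y_1y_2\notin E(G)$, then $y_1,x,y_2,x_3,x_4,y_1$ is an induced $5$-hole. Indeed every chord is killed: $x_4$ is complete to $X_1$ and $x_3$ is complete to $X_2$ (supplying the edges), while $X_1$ is anticomplete to $X_3$, $X_2$ is anticomplete to $X_4$, and $x$ has no neighbor in $X_3\cup X_4$ (killing the would-be chords). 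This contradicts the absence of long holes.

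The crux is step (i): ruling out neighbors in two opposite cliques. By the rotational symmetry of the $4$-ring it suffices to show that $x$ cannot have a neighbor $y_1\in X_1$ and a neighbor $y_3\in X_3$ at once. Assuming it does, $H'=y_1,x_2,y_3,x_4,y_1$ is a $4$-hole (since $x_2,x_4$ are complete to $X_1\cup X_3$, while $X_1$ is anticomplete to $X_3$ and $x_2x_4\notin E(G)$), and applying the theta/universal-wheel dichotomy of the first paragraph to $H'$ forces $x$ to be adjacent to exactly one of $x_2,x_4$; say $xx_2\in E(G)$ and $xx_4\notin E(G)$, so that $x$ is a twin of $x_2$ with respect to $H'$. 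The strategy is then to contradict the maximality of $|V(H^*)|$ by showing $V(H^*)\subsetneq V(H'^*)$: one checks directly that $X_1\subseteq X_{y_1}(H')$ and $X_3\subseteq X_{y_3}(H')$ (using that $x_2,x_4$ are complete to $X_1\cup X_3$ and that each $X_i$ is a clique anticomplete to $X_{i+2}$), while $x\in X_{x_2}(H')\setminus V(H^*)$ would give the strict inclusion, \emph{provided} every vertex of $X_2\cup X_4$ is also a twin, with respect to $H'$, of $x_2$ or of $x_4$.

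I expect the main obstacle to be exactly this last proviso. A vertex $z_2\in X_2$ nonadjacent to $y_1$ (or to $y_3$) attaches to $H'$ as a single vertex or as a consecutive pair rather than as a twin of $x_2$, so the clean inclusion $X_2\subseteq X_{x_2}(H')$ can fail, and symmetrically for $X_4$. I would dispose of these stray vertices by a secondary case analysis that again exploits the absence of $C_5$ and of 3PC's. For instance, using the $X_1\leftrightarrow X_3$ symmetry (which fixes $x_2,x_4$) one may assume $xx_1\notin E(G)$, so $y_1\neq x_1$; then if $z_2\in X_2$ is nonadjacent to $y_1$ and adjacent to $x$, a cycle such as $y_1,x,z_2,x_3,x_4,y_1$ is an induced $5$-hole whenever $xx_3\notin E(G)$ (with the remaining subcase $xx_3\in E(G)$ handled symmetrically through $x_1$), whereas if $x$ is nonadjacent to $z_2$, a shortest-path argument through $X_1$ yields a 3PC. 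Iterating these reductions, one either lands in a forbidden configuration or upgrades $H'$ to a $4$-hole whose star strictly contains $V(H^*)$, contradicting the choice of $H$. This bookkeeping, rather than any single clever hole, is the technical heart of the lemma, and it runs parallel to the proof of statement~(2) in Lemma~\ref{lemma-ring-or-clique-cut-GUT}.
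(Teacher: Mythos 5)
Your skeleton is the same as the paper's: the attachment analysis for $H$ (universal wheel, theta, twin-wheel trichotomy), the $5$-hole argument disposing of the case $N_G(x) \cap V(H^*) \subseteq X_i \cup X_{i+1}$, the new $4$-hole $Y = y_1,x_2,y_3,x_4,y_1$ with $x$ adjacent to exactly one of $x_2,x_4$ (say $x_2$), and the plan to contradict the maximality of $|V(H^*)|$ by proving $X_1 \subseteq X_{y_1}(Y)$, $X_3 \subseteq X_{y_3}(Y)$, $X_4 \subseteq X_{x_4}(Y)$, and $X_2 \subsetneqq X_{x_2}(Y)$. The easy inclusions $X_1,X_3$ you verify correctly, and your handling of the $X_2$ strays runs parallel to claims (3) and (4) of the paper's proof. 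But the part you yourself flag as the crux is where the proposal breaks down, in two ways. First, $X_2$ and $X_4$ are \emph{not} symmetric in this configuration, precisely because $xx_2 \in E(G)$ while $xx_4 \notin E(G)$; so ``and symmetrically for $X_4$'' is not legitimate, and the containment $X_4 \subseteq X_{x_4}(Y)$ (equivalently: $y_1,y_3$ are complete to $X_4$) requires its own argument, which is claim (2) of the paper's proof.

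Second, and more seriously, that argument cannot be completed with the toolkit you announce (``the absence of $C_5$ and of 3PC's,'' together with wheels and long holes). In the critical subcase --- $y_4 \in X_4$ with $y_4y_3 \notin E(G)$, where one first forces $xy_4 \in E(G)$ by hole arguments and then considers $y_1y_4 \in E(G)$ --- the only forbidden configuration available is the $7$-antihole $y_1,y_3,y_4,x_2,x_4,x,x_3,y_1$; this is exactly where the hypothesis ``no 7-antiholes,'' which your sketch never invokes, is used, and it is unavoidable. Indeed, the lemma is false without it: take $G = \overline{C_7}$ with $C_7 = 1,2,\dots,7,1$. Then $G$ is a $7$-hyperantihole, hence $G \in \mathcal{G}_{\text{T}}$ by Lemma~\ref{BB-in-GG}, and $G$ contains no long holes; $H = 1,4,2,5,1$ is a $4$-hole, vertex $6$ is a twin of $4$ and vertex $7$ a twin of $2$ with respect to $H$, so $V(H^*) = \{1,2,4,5,6,7\}$, which is of maximum size since all $4$-holes of $\overline{C_7}$ are equivalent under its automorphism group; yet $N_G(3) \cap V(H^*) = \{1,5,6,7\}$ is not a clique ($6$ and $7$ are nonadjacent in $\overline{C_7}$). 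So any ``iteration of reductions'' using only $C_5$'s, long holes, 3PCs, and wheels must get stuck at this subcase, and the gap in your proposal is genuine rather than merely presentational.
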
 
\begin{proof} 
For each $i \in \mathbb{Z}_4$, let $X_i = X_{x_i}(H)$. By Lemma~\ref{lemma-4-hole-ring-GT}, $H^*$ is a 4-ring with good partition $(X_1,X_2,X_3,X_4)$; in particular, $X_1,X_2,X_3,X_4$ are cliques, $X_1$ is anticomplete to $X_3$, and $X_2$ is anticomplete to $X_4$. 

Suppose that for some $x \in V(G) \setminus V(H^*)$, $N_G(x) \cap V(H^*)$ is not a clique. Suppose first that $N_G(x) \cap V(H^*) \subseteq X_i \cup X_{i+1}$ for some $i \in \mathbb{Z}_4$; by symmetry, we may assume that $N_G(x) \cap V(H^*) \subseteq X_1 \cup X_2$. Since $N_G(x) \cap V(H^*)$ is not a clique, there exist nonadjacent vertices $y_1 \in X_1$ and $y_2 \in X_2$ such that $xy_1,xy_2 \in E(G)$. But now $x,y_2,x_3,x_4,y_1,x$ is a 5-hole in $G$, contrary to the fact that $G$ contains no long holes. It follows that for some $i \in \mathbb{Z}_4$, $x$ has a neighbor both in $X_i$ and in $X_{i+2}$. 

By symmetry, we may assume that $x$ has a neighbor both in $X_1$ and in $X_3$. For each $i \in \{1,3\}$, if $xx_i \in E(G)$, then set $y_i = x_i$, and otherwise, let $y_i$ be any neighbor of $x$ in $X_i$. Note that if $x$ were complete to $\{x_2,x_4\}$, then $H' = y_1,x_2,y_3,x_4,y_1$ would be a hole and $(H',x)$ a universal wheel in $G$, a contradiction. On the other hand, if $x$ were anticomplete to $\{x_2,x_4\}$, then $G[y_1,y_3,x,x_2,x_4]$ would be a $K_{2,3}$, a contradiction. Thus, $x$ is adjacent to precisely one of $x_2,x_4$; by symmetry, we may assume that $x$ is adjacent to $x_2$ and nonadjacent to $x_4$. Further, note that $x$ is adjacent to at most one of $x_1,x_3$, for otherwise, $x$ would be a twin of $x_2$ with respect to $H$, and we would have that $x \in X_2$, a contradiction. Thus, either $y_1 \neq x_1$ or $y_3 \neq x_3$. Now, $Y = y_1,x_2,y_3,x_4,y_1$ is a 4-hole in $G$. Our goal is to show that $V(H^*) \subsetneqq V(Y^*)$, contrary to the maximality of $|V(H^*)|$. 

For $i \in \{1,3\}$, set $Y_i = X_{y_i}(Y)$, and for $i \in \{2,4\}$, set $Y_i = X_{x_i}(Y)$. By Lemma~\ref{lemma-4-hole-ring-GT}, $Y^*$ is a 4-ring with good partition $(Y_1,Y_2,Y_3,Y_4)$; in particular, $Y_1,Y_2,Y_3,Y_4$ are cliques, $Y_1$ is anticomplete to $Y_3$, and $Y_2$ is anticomplete to $Y_4$. Now, to show that $V(H^*) \subsetneqq V(Y^*)$, it suffices to show that $X_i \subseteq Y_i$ for all $i \in \{1,3,4\}$, and that $X_2 \subsetneqq Y_2$. 

\begin{quote} 
\emph{(1) $X_1 = Y_1$ and $X_3 = Y_3$.} 
\end{quote} 
\begin{proof}[Proof of (1)] 
By symmetry, it suffices to show that $X_1 = Y_1$. But this readily follows from the definition of $X_1$ and $Y_1$, from the fact that $X_1$ is a clique, anticomplete to $X_3$, and from the fact that $Y_1$ is a clique, anticomplete to $Y_3$. This proves (1). 
\end{proof} 

\begin{quote} 
\emph{(2) Vertices $y_1$ and $y_3$ are complete to $X_4$, and consequently, $X_4 \subseteq Y_4$.} 
\end{quote} 
\begin{proof}[Proof of (2)] 
Since $X_4$ is a clique, the first statement clearly implies the second. Suppose that the first statement is false, and fix $y_4 \in X_4$ such that $y_4$ is nonadjacent to at least one of $y_1$ and $y_3$; by symmetry, we may assume that $y_4$ is nonadjacent to $y_3$, and consequently (since $x_3$ is complete to $X_4$, and $x_4$ is complete to $X_3$), we have that $y_3 \neq x_3$ and $y_4 \neq x_4$. By the choice of $y_3$, it follows that $xx_3 \notin E(G)$. 

Now, suppose that $xy_4 \notin E(G)$. Suppose additionally that $y_1y_4 \notin E(G)$; in particular then, $y_1 \neq x_1$, and by the choice of $y_1$, we see that $xx_1 \notin E(G)$. But then $x_1,y_1,x,y_3,x_3,y_4,x_1$ is a 6-hole in $G$, contrary to the fact that $G$ contains no long holes. Thus, $y_1y_4 \in E(G)$. But then $y_1,x,y_3,x_3,y_4,y_1$ is a 5-hole in $G$, again a contradiction. This proves that $xy_4 \in E(G)$. 

Next, if $y_1y_4 \in E(G)$, then $y_1,y_3,y_4,x_2,x_4,x,x_3,y_1$ is a 7-antihole in $G$, a contradiction. This proves that $y_1y_4 \notin E(G)$. Since $x_1$ is complete to $X_4$, it follows that $y_1 \neq x_1$, and by the choice of $y_1$, it follows that $xx_1 \notin E(G)$. But now $G[x_2,y_4,x,x_1,x_3]$ is a $K_{2,3}$, a contradiction. This proves (2). 
\end{proof} 

\begin{quote} 
\emph{(3) Vertex $x$ is complete to $X_2$.} 
\end{quote} 
\begin{proof}[Proof of (3)] 
Suppose that $x$ has a nonneighbor $y_2 \in X_2$. Since $xx_2 \in E(G)$, we have that $y_2 \neq x_2$. Suppose that $y_2$ is anticomplete to $\{y_1,y_3\}$. Then $y_1 \neq x_1$ and $y_3 \neq x_3$, and so by the choice of $y_1$ and $y_3$, we have that $xx_1,xx_3 \notin E(G)$. But now $y_2,x_1,y_1,x,y_3,x_3,y_2$ is a 6-hole in $G$, contrary to the fact that $G$ contains no long holes. Thus, $y_2$ is adjacent to at least one of $y_1,y_3$; by symmetry, we may assume that $y_1y_2 \in E(G)$. If $y_2y_3 \notin E(G)$, then $y_3 \neq x_3$, and we have that $y_2,y_1,x,y_3,x_3,y_2$ is a 5-hole in $G$, contrary to the fact that $G$ contains no long holes. Thus, $y_2y_3 \in E(G)$. But now $H' = y_2,y_1,x,y_3,y_2$ is a 4-hole and $(H',x_2)$ a universal wheel in $G$, a contradiction. Thus, $x$ is complete to $X_2$. This proves (3). 
\end{proof} 

\begin{quote} 
\emph{(4) $X_2 \subsetneqq Y_2$.}
\end{quote} 
\begin{proof} 
First of all, we know that $x \in Y_2 \setminus X_2$, and so $X_2 \neq Y_2$. It remains to show that $X_2 \subseteq Y_2$. Since $X_2$ is a clique, it suffices to show that $y_1$ and $y_3$ are complete to $X_2$. Suppose otherwise. By symmetry, we may assume that $y_1$ has a nonneighbor $y_2 \in X_2$. Since $x_1$ is complete to $X_2$, it follows that $y_1 \neq x_1$ (by the choice of $y_1$, this implies that $xx_1 \notin E(G)$) and that $y_2 \neq x_2$. By (3), we have that $xx_2,xy_2 \in E(G)$. We now have that $H' = x_1,y_1,x,y_2,x_1$ is a 4-hole and $(H',x_2)$ a universal wheel in $G$, a contradiction. This proves (4). 
\end{proof} 

Statements (1), (2), and (4) imply that $V(H^*) \subsetneqq V(Y^*)$, contrary to the maximality of $|V(H^*)|$. 
\end{proof} 

\begin{lemma} \label{lemma-4-hole-GT} Let $G \in \mathcal{G}_{\text{T}}$, assume that $G$ contains no long holes and no 7-antiholes, and let $H = x_1,x_2,x_3,x_4$ be a 4-hole in $G$, chosen so that $|V(H^*)|$ is maximum. Then either $G = H^*$ (and consequently, $G$ is a 4-ring), or $G$ admits a clique-cutset. 
\end{lemma}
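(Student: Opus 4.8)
The plan is to mimic the structure of the proof of Lemma~\ref{lemma-ring-or-clique-cut-GUT}, but in the simpler setting where $H^*$ is a $4$-ring rather than a long ring. By Lemma~\ref{lemma-4-hole-ring-GT}, $H^*$ is a $4$-ring with good partition $(X_1,X_2,X_3,X_4)$, so by Lemma~\ref{lemma-ring-char} the sets $X_1,X_2,X_3,X_4$ are cliques, $X_1$ is anticomplete to $X_3$, and $X_2$ is anticomplete to $X_4$. If $G = H^*$ then $G$ is a $4$-ring and we are done, so I assume $V(H^*) \subsetneqq V(G)$. The goal is to show that for every component $C$ of $G \setminus V(H^*)$, the set $N_G(C) \cap V(H^*)$ is a clique; since $H^*$ is not a complete graph (its partition has anticomplete pairs), this yields a clique-cutset of $G$, completing the argument.

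The crucial input is Lemma~\ref{lemma-4-hole-vertex-attach-GT}: for every single vertex $x \in V(G) \setminus V(H^*)$, the set $N_G(x) \cap V(H^*)$ is a clique. Because $X_1 \perp X_3$ and $X_2 \perp X_4$, any clique of $H^*$ meets at most two of the $X_i$, and those two must be consecutive; hence each external vertex $x$ satisfies $N_G(x) \cap V(H^*) \subseteq X_i \cup X_{i+1}$ for some $i \in \mathbb{Z}_4$. I would then lift this from single vertices to whole components by the standard minimal-path argument: suppose some component $C$ has $N_G(C) \cap V(H^*)$ not a clique, and choose a minimal connected induced subgraph $P$ of $C$ whose neighborhood in $V(H^*)$ is not a clique. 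By minimality, $P$ is an induced path $p_1,\dots,p_n$, and the non-clique condition forces two nonadjacent vertices $y_i \in X_i$, $y_j \in X_j$ in $N_G(P) \cap V(H^*)$, attached to the two endpoints of $P$, with all interior vertices of $P$ anticomplete to $\{y_i,y_j\}$.

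The heart of the argument is deriving a contradiction from such a path $P$ together with the $4$-hole structure. The nonadjacent pair $y_i,y_j$ lies either in opposite classes ($X_1,X_3$ or $X_2,X_4$) or in the same class. Stringing $y_i$, a portion of the rim of $H^*$, $y_j$, and the path $P$ together produces either a hole or a theta/prism ($3$PC). Since $G$ contains no long holes by hypothesis, any such hole must have length exactly four, which tightly constrains $n$ and the rim portion used; and any $3$PC is forbidden because $G \in \mathcal{G}_{\text{T}} \subseteq \mathcal{G}_{\text{UT}}$. The main obstacle I anticipate is the careful bookkeeping of exactly which $X_i$'s the endpoints $p_1,p_n$ can attach to, given the $\subseteq X_i \cup X_{i+1}$ restriction from Lemma~\ref{lemma-4-hole-vertex-attach-GT}, and then confirming that every resulting configuration is a forbidden $3$PC, a forbidden long hole, or a universal/twin wheel that is excluded or that contradicts the maximality of $|V(H^*)|$. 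Because the rim has only four vertices, there are only a handful of cases (opposite classes versus same class, endpoints attaching to the same or adjacent classes), so I expect each case to close quickly via a short configuration check, analogous to but much shorter than step~(3) of Lemma~\ref{lemma-ring-or-clique-cut-GUT}.
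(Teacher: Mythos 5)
Your skeleton is exactly the paper's: Lemma~\ref{lemma-4-hole-ring-GT} gives the ring structure, Lemma~\ref{lemma-4-hole-vertex-attach-GT} is the single-vertex input (and correctly packages the maximality of $|V(H^*)|$, which is never needed again), and the contradiction comes from a minimal connected subgraph $P$ of a component $C$ together with long holes and a 3PC. However, your case enumeration contains a genuine error. You assert that the nonadjacent pair $y_i \in X_i$, $y_j \in X_j$ lies ``either in opposite classes ($X_1,X_3$ or $X_2,X_4$) or in the same class.'' The same-class case is vacuous: each $X_i$ is a clique by Lemma~\ref{lemma-ring-char}(a), so nonadjacent vertices never share a class. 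What you have omitted is the \emph{consecutive}-class case, which genuinely occurs: a $4$-ring need not be a $4$-hyperhole, so $X_1$ need not be complete to $X_2$, and $N_G(P) \cap V(H^*)$ can fail to be a clique via a non-edge between $X_1$ and $X_2$ even though it meets only these two classes. This is in fact the first case the paper treats: nonadjacent $z_1 \in X_1$, $z_2 \in X_2$ with $a_1z_1, a_3z_2 \in E(G)$ yield the hole $z_1,p_1,\dots,p_n,z_2,x_3,x_4,z_1$ (minimality makes the interior of $P$ anticomplete to $\{z_1,z_2\} \cup X_3 \cup X_4$, and $n \geq 2$ because Lemma~\ref{lemma-4-hole-vertex-attach-GT} prevents any single vertex from seeing both $z_1$ and $z_2$), of length $n+4 \geq 6$, a contradiction. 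So the missing case closes quickly, but as written your enumeration fails to cover a configuration that actually arises, while including one that cannot.

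A second, smaller issue: the opposite-class case, which you defer to ``a short configuration check,'' is where the real work lives, and your sketch is too thin to verify. The paper's route is: with $a_1$ adjacent to $y_1 \in X_1$ and anticomplete to $X_3$, and $a_3$ adjacent to $y_3 \in X_3$ and anticomplete to $X_1$, each of $x_2,x_4$ must have a neighbor in $P$ (else $y_1,p_1,\dots,p_n,y_3,x_2,y_1$ or its analogue is a long hole of length $n+3 \geq 5$); minimality forces the interior of $P$ to be anticomplete to $\{x_2,x_4\}$, so $x_2,x_4$ attach to the endpoints, and Lemma~\ref{lemma-4-hole-vertex-attach-GT} forces the crossed pattern $a_1x_2,a_3x_4 \in E(G)$ and $a_1x_4,a_3x_2 \notin E(G)$, giving the prism $3PC(y_1x_2a_1,\, x_4y_3a_3)$. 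Note also that your anticipated fallback --- holes of ``length exactly four, which tightly constrains $n$'' --- never materializes: since $a_1 \neq a_3$ forces $n \geq 2$, every hole built through $P$ has length at least five and is immediately forbidden, and no appeal to wheels or to the maximality of $|V(H^*)|$ is needed anywhere in this lemma's proof.
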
 
\begin{proof} 
For each $i \in \mathbb{Z}_4$, set $X_i = X_{x_i}(H)$. By Lemma~\ref{lemma-4-hole-ring-GT}, $H^*$ is a 4-ring with good partition $(X_1,X_2,X_3,X_4)$; in particular, $X_1,X_2,X_3,X_4$ are cliques, $X_1$ is anticomplete to $X_3$, and $X_2$ is anticomplete to $X_4$. If $G = H^*$, then we are done. So assume that $G \neq H^*$, and let $C$ be a component of $G \setminus V(H^*)$. Our goal is to show that $N_G(C) \cap V(H^*)$ is a clique; since $H^*$ is not complete, this will readily imply that $N_G(C) \cap V(H^*)$ is a clique-cutset of $G$, which is what we need. 

Suppose otherwise, that is, suppose that $N_G(C) \cap V(H^*)$ is not a clique. Let $P$ be a minimal connected induced subgraph of $C$ such that $N_G(P) \cap V(H^*)$ is not a clique. Fix $a_1,a_3 \in V(P)$ such that some vertex in $N_G(a_1) \cap V(H^*)$ is nonadjacent to some vertex of $N_G(a_3) \cap V(H^*)$; by Lemma~\ref{lemma-4-hole-vertex-attach-GT}, $a_1 \neq a_3$. Note that $P$ is a path between $a_1$ and $a_3$, for otherwise, any induced path in $P$ between $a_1$ and $a_3$ would contradict the minimality of $P$. Set $P = p_1,\dots,p_n$ so that $p_1 = a_1$ and $p_n = a_3$. 

Now, suppose that for some $i \in \mathbb{Z}_4$, $\{a_1,a_3\}$ is anticomplete to $X_i \cup X_{i+1}$; by symmetry, we may assume that $\{a_1,a_3\}$ is anticomplete to $X_3 \cup X_4$, so that $N_G(\{a_1,a_3\}) \cap V(H^*) \subseteq X_1 \cup X_2$. Since some vertex in $N_G(a_1) \cap V(H^*)$ is nonadjacent to some vertex of $N_G(a_3) \cap V(H^*)$, we may assume by symmetry that there exist nonadjacent vertices $z_1 \in X_1$ and $z_2 \in X_2$ such that $a_1z_1,a_3z_2 \in E(G)$. By Lemma~\ref{lemma-4-hole-vertex-attach-GT}, we know that $a_1z_2,a_3z_1 \notin E(G)$. Next, we claim that all interior vertices of $P$ are anticomplete to $\{z_1,z_2\} \cup X_3 \cup X_4$. Suppose otherwise, and assume that some interior vertex $p$ of $P$ has a neighbor in $\{z_1,z_2\} \cup X_3 \cup X_4$. By symmetry, we may assume that $p$ has a neighbor $z' \in \{z_2\} \cup X_3$. But then $z_1z' \notin E(G)$, and we see that the subpath of $P$ between $a_1$ and $p$ contradicts the minimality of $P$. This proves our claim. But now $z_1,p_1,\dots,p_n,z_2,x_3,x_4,z_1$ is a long hole in $G$, a contradiction. 

By symmetry, we may now assume that $\{a_1,a_3\}$ is anticomplete neither to $X_1$ nor to $X_3$. We know that $X_1$ is anticomplete to $X_3$; by Lemma~\ref{lemma-4-hole-vertex-attach-GT} and by symmetry, we may now assume that $a_1$ has a neighbor $y_1 \in X_1$ and is anticomplete to $X_3$, and that $a_3$ has a neighbor $y_3 \in X_3$ and is anticomplete to $X_1$. Note that $x_2$ has a neighbor in $P$, for otherwise, $y_1,p_1,\dots,p_n,y_3,x_2,y_1$ is a long hole in $G$, a contradiction. Similarly, $x_4$ has a neighbor in $P$. 

Now, we claim that interior vertices of $P$ are anticomplete to $\{x_2,x_4\}$. Suppose otherwise. By symmetry, we may assume that some interior vertex $p$ of $P$ is adjacent to $x_2$. Let $p' \in V(P)$ be such that $p'x_4 \in E(G)$. But now the subpath of $P$ between $p$ and $p'$ contradicts the minimality of $P$. This proves our claim. Since the interior vertices of $P$ are anticomplete to $\{y_1,y_3\}$, we deduce that the interior vertices of $P$ are anticomplete to $\{y_1,x_2,y_3,x_4\}$. It follows that each of $x_2,x_4$ has a neighbor in $\{a_1,a_3\}$. By Lemma~\ref{lemma-4-hole-vertex-attach-GT}, and by symmetry, we may assume that $a_1x_2,a_3x_4 \in E(G)$ and $a_1x_4,a_3x_2 \notin E(G)$. But now $G[\{y_1,x_2,y_3,x_4\} \cup V(P)]$ is a $3PC(y_1x_2a_1,x_4y_3a_3)$, a contradiction. 
\end{proof} 

We remind the reader that $\mathcal{B}_{\text{T}}$ is the class of all complete graphs, rings, and 7-hyperantiholes. We are now ready to prove Theorem~\ref{decomp-thm-GT}, restated below for the reader's convenience. 

\begin{decomp-thm-GT} Every graph in $\mathcal{G}_{\text{T}}$ either belongs to $\mathcal{B}_{\text{T}}$ or admits a clique-cutset. 
\end{decomp-thm-GT}
\begin{proof} 
Fix $G \in \mathcal{G}_{\text{T}}$. If $G$ contains a long hole, then we are done by Lemma~\ref{lemma-long-hole-GT}. So assume that $G$ contains no long holes. If $G$ contains a 7-antihole, then we are done by Lemma~\ref{lemma-7antihole-GT}. So assume that $G$ contains no 7-antiholes. If $G$ contains a 4-hole, then we are done by Lemma~\ref{lemma-4-hole-GT}. So we may assume that $G$ contains no 4-holes. We now have that $G$ contains no holes, and so by definition, $G$ is chordal. But then by Theorem~\ref{thm-Dirac61}, either $G$ is a complete graph, or $G$ admits a clique-cutset, and in either case, we are done. 
\end{proof}

\section{A decomposition theorem for the class $\mathcal{G}_{\text{UT}}^{\text{cap-free}}$}\label{sec:decompGUTcap}

In this section, we prove Theorem~\ref{decomp-thm-GUTcap}, which states that every graph in $\mathcal{G}_{\text{UT}}^{\text{cap-free}}$ either belongs to $\mathcal{B}_{\text{UT}}^{\text{cap-free}}$ or admits a clique-cutset. We remind the reader that the {\em house} is the (unique) cap on five vertices; note that the house is isomorphic to $\overline{P_5}$. Clearly, every cap-free graph is house-free. 

\begin{lemma} \label{lemma-ring-GUTcap-free} Let $G \in \mathcal{G}_{\text{UT}}^{\text{cap-free}}$, and assume that $G$ contains a long hole. Then either some anticomponent of $G$ is a long hyperhole, or $G$ admits a clique-cutset. 
\end{lemma}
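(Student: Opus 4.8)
The plan is to reduce to the already-established ring decomposition for $\mathcal{G}_{\text{UT}}$ and then upgrade ``ring'' to ``hyperhole'' using cap-freeness. First I would observe that $\mathcal{G}_{\text{UT}}^{\text{cap-free}} \subseteq \mathcal{G}_{\text{UT}}$, since the former class is obtained from the latter by excluding one additional configuration (the cap). Hence $G \in \mathcal{G}_{\text{UT}}$, and since $G$ contains a long hole by hypothesis, Lemma~\ref{lemma-ring-or-clique-cut-GUT} applies directly: either some anticomponent of $G$ is a long ring, or $G$ admits a clique-cutset.

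In the second case there is nothing left to prove, so I would assume the first case and fix an anticomponent $R$ of $G$ that is a long ring, with good partition $(X_1,\dots,X_k)$ where $k \geq 5$. The crucial point is that cap-freeness is hereditary: since $R$ is an induced subgraph of the cap-free graph $G$, any cap in $R$ would also be a cap in $G$, and therefore $R$ is itself cap-free. Now Lemma~\ref{ring-in-GT}(e) states precisely that a ring with a given good partition is cap-free if and only if it is a hyperhole with that same good partition. Applying this to $R$ yields that $R$ is a $k$-hyperhole with good partition $(X_1,\dots,X_k)$, and since $k \geq 5$, it is a long hyperhole. Thus some anticomponent of $G$ is a long hyperhole, which is what we want.

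I do not expect a serious obstacle here: the whole argument is essentially a two-step combination of Lemma~\ref{lemma-ring-or-clique-cut-GUT} (which does the real work of producing either a long-ring anticomponent or a clique-cutset) and Lemma~\ref{ring-in-GT}(e) (which characterizes cap-free rings as hyperholes). The only point demanding any care is the simple but indispensable observation that cap-freeness descends to the induced subgraph $R$, as this is exactly what licenses the application of Lemma~\ref{ring-in-GT}(e); everything else is immediate from the cited results.
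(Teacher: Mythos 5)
Your proposal is correct and is exactly the paper's argument: the paper's proof consists of the single line that the lemma follows immediately from Lemma~\ref{lemma-ring-or-clique-cut-GUT} and Lemma~\ref{ring-in-GT}(e). You have merely made explicit the two routine points the paper leaves implicit (that $\mathcal{G}_{\text{UT}}^{\text{cap-free}} \subseteq \mathcal{G}_{\text{UT}}$ and that cap-freeness is inherited by the ring anticomponent), which is fine.
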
 
\begin{proof} 
This follows immediately from Lemmas~\ref{lemma-ring-or-clique-cut-GUT} and~\ref{ring-in-GT}(e). 
\end{proof} 

A {\em domino} is a six-vertex graph $D$ with vertex set $V(D) = \{a_1,a_2,a_3,b_1,b_2,b_3\}$ and edge set $E(D) = \{a_1a_2,a_2a_3,b_1b_2,b_2b_3,a_1b_1,a_2b_2,a_3b_3\}$; under these circumstances, we write that ``$D = (a_1,a_2,a_3;b_1,b_2,b_3)$ is a domino.''

\begin{lemma} \label{lemma-domino} Let $G \in \mathcal{G}_{\text{UT}}^{\text{cap-free}}$. Assume that $G$ contains no long holes, but does contain a domino. Then $G$ admits a clique-cutset. 
\end{lemma}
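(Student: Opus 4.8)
The plan is to build a clique-cutset around the ``middle rung'' of the domino. Write $D=(a_1,a_2,a_3;b_1,b_2,b_3)$, and observe that $D$ contains the two $4$-holes $H_1=a_1,a_2,b_2,b_1,a_1$ and $H_2=a_2,a_3,b_3,b_2,a_2$, which share the edge $M=\{a_2,b_2\}$; inside $D$, deleting $M$ separates the ``left'' edge $\{a_1,b_1\}$ from the ``right'' edge $\{a_3,b_3\}$. Since $\mathcal{G}_{\text{UT}}^{\text{cap-free}}\subseteq\mathcal{G}_{\text{UT}}$, Lemma~\ref{lemma-vertex-attach-GUT} applies to every hole of $G$; moreover, because $G$ is cap-free, the possibility $|N_G(x)\cap V(H)|=2$ in part (c) of that lemma cannot occur for a $4$-hole $H$: two neighbors in a $4$-hole are either consecutive, giving a cap, or opposite, giving a theta. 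So I would first record the refined trichotomy that for a $4$-hole $H$, every vertex off $H$ is either complete to $V(H)$, a twin of a vertex of $H$, or has at most one neighbor on $H$.

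The crux is the claim that every vertex $v\notin V(D)$ having a neighbor in $\{a_1,b_1\}$ and a neighbor in $\{a_3,b_3\}$ (call such $v$ a \emph{bridge}) is complete to $M$. The engine is the absence of long holes. If $v$ misses, say, $b_2$, then one joins $v$ to a left vertex and a right vertex it sees and closes the cycle along the opposite path $b_1,b_2,b_3$; for instance, if $v$ is adjacent to $a_1,a_3$ but to none of $b_1,b_2,b_3$, then $a_1,v,a_3,b_3,b_2,b_1,a_1$ is a $6$-hole. The finitely many choices of which of $a_1,b_1$ (resp.\ $a_3,b_3$) and which of $a_1,a_2,a_3$ the vertex $v$ sees each produce a hole of length at least five, contradicting the hypothesis. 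A few borderline subcases instead yield a $K_{2,3}$: if $v$ is adjacent to $a_1,a_3$ but not $a_2$, then $a_1,v,a_3,a_2,a_1$ is a $4$-hole at whose two opposite vertices $v,a_2$ the vertex $b_2$ (were it adjacent to $v$) would attach, which is forbidden by Lemma~\ref{lemma-vertex-attach-GUT}. Exhausting the cases gives $v\sim a_2$ and $v\sim b_2$.

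Next I would set $S=M\cup B$, where $B$ is the set of bridges, and argue that $S$ is a clique and a cutset. Every bridge is complete to $M$ by the claim, so it remains to see that two bridges $v_1,v_2$ are adjacent: if not, the trichotomy forces each $v_i$ to see a vertex of $\{a_1,b_1\}$ and of $\{a_3,b_3\}$, and (using no long holes to push them onto a common left- and right-neighbor) the $4$-hole on these vertices together with $a_2$ or $b_2$ attached at its two opposite vertices is a theta, a contradiction. For the separation, note first that none of $a_1,b_1,a_3,b_3$ lies in $S$, so both sides survive. A shortest path $P$ from $a_1$ to $a_3$ in $G\setminus S$ would avoid $M$ and every bridge; closing $P$ with $a_2$ (adjacent to both ends) and, where needed, with the outer path $b_1,b_2,b_3$, the no-long-hole hypothesis confines $P$ to length two, whereupon its interior vertex is itself a bridge and lies in $S$ --- a contradiction. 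Hence $S$ separates $\{a_1,b_1\}$ from $\{a_3,b_3\}$ and is the desired clique-cutset.

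The main obstacle is the bridge-completeness claim together with the separation step. Both rest on a careful but finite case analysis in which the no-long-hole hypothesis is invoked repeatedly to manufacture a five- or six-vertex hole (and occasionally a $K_{2,3}$ via Lemma~\ref{lemma-vertex-attach-GUT}) from a hypothetical ``incomplete'' bridge or a hypothetical bridging path; the bookkeeping is nontrivial because one must treat uniformly the symmetric roles of the top and bottom rows and of the left and right ends of $D$, and because the separation argument is cleanest when phrased, in the style of the paper's other decomposition lemmas, via a minimal connected induced subgraph witnessing a would-be non-clique neighborhood.
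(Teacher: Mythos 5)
Your plan is in substance the paper's: the cutset you build is literally the same set, since the paper takes $\{a_2,b_2\}\cup S$ with $S$ the set of vertices complete to $\{a_2,b_2\}$ and shows (via a house) that every such vertex has a neighbor in each end-rung, while your bridge-completeness claim is the converse inclusion; so your $M\cup B$ equals the paper's cutset. The bridge-completeness claim itself is true and your case analysis for it is sound in outline. In the clique step, note that your theta/$K_{2,3}$ device does not cover the mixed case where two nonadjacent bridges have common neighbors $a_1$ and $b_3$ (there neither $a_2$ nor $b_2$ attaches at the opposite pair of the $4$-hole, because $a_2\sim a_1$ and $b_2\sim b_3$); the paper's claim (2) handles this case by producing a house instead, and your sketch is repairable the same way.

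The genuine gap is in the separation step. The assertion that closing $P$ with $a_2$, ``and, where needed, with the outer path $b_1,b_2,b_3$,'' confines $P$ to length two is not justified: the interior vertices of $P$, while not bridges, may be adjacent to $a_2$, or to $b_2$, or to the corners $b_1,b_3$ (your shortest $a_1$--$a_3$ path in $G\setminus S$ does not exclude adjacencies to $b_1,b_3$, which the paper avoids by taking a minimal path outside $V(D)\cup S$ between neighbors of the two end-rungs), and every such adjacency is a chord destroying the long hole you are trying to manufacture. Even the weaker statement that each interior vertex meets at most one of $a_2,b_2$ needs the unstated (easy) fact that a common neighbor of $a_2,b_2$ is a bridge. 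The real difficulty, which your sketch passes over entirely, is the configuration where some interior vertices see $a_2$ and others see $b_2$: then neither closure is available, and no contradiction is immediate. The paper's claim (3) is devoted to exactly this --- choosing $p_i\sim a_2$ and $p_j\sim b_2$ with $|i-j|$ minimum, forcing $j=i+1$ via a long hole, then showing $b_2$ is anticomplete to the earlier part of $P$ and extracting two successive $K_{2,3}$'s --- and this is the heart of the cutset verification, not bookkeeping. (The endgame with, say, $a_2$ anticomplete to $V(P)$ also requires several steps in the paper, though in your framework it shortens: once $P$ has length two its interior vertex is adjacent to $a_1$ and $a_3$, hence is a bridge, hence lies in $S$.) Until the two-sided-attachment case is ruled out, the proof is incomplete.
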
 
\begin{proof} 
Let $D = (a_1,a_2,a_3;b_1,b_2,b_3)$ be an induced domino in $G$. Let $S$ be the set of all vertices in $V(G) \setminus V(D)$ that are complete to $\{a_2,b_2\}$. Our goal is to show that $\{a_2,b_2\} \cup S$ is a clique-cutset of $G$. 

\begin{quote} 
\emph{(1) Every vertex in $S$ has a neighbor both in $\{a_1,b_1\}$ and in $\{a_3,b_3\}$.} 
\end{quote} 
\begin{proof}[Proof of (1)] 
Fix $x \in S$ and $i \in \{1,3\}$. If $x$ is anticomplete to $\{a_i,b_i\}$, then $G[a_i,b_i,a_2,b_2,x]$ is a house, contrary to the fact that $G$ is cap-free. This proves (1). 
\end{proof} 

\begin{quote} 
\emph{(2) $\{a_2,b_2\} \cup S$ is a clique.} 
\end{quote} 
\begin{proof}[Proof of (2)] 
Since $a_2b_2 \in E(G)$, and since $S$ is complete to $\{a_2,b_2\}$, it suffices to show that $S$ is a clique. Suppose otherwise, and fix nonadjacent vertices $x,y \in S$. By (1), each of $x,y$ has a neighbor both in $\{a_1,b_1\}$ and in $\{a_3,b_3\}$. Further, $x,y$ have a common neighbor in each of $\{a_1,b_1\}$ and $\{a_3,b_3\}$, for otherwise, it is easy to see that $G[a_1,b_1,a_3,b_3,x,y]$ contains either a 5-hole or a 6-hole, contrary to the fact that $G$ contains no long holes. Now, $\{x,y\}$ is not complete to $\{a_1,a_3\}$, for otherwise, $G[x,y,a_1,b_2,a_3]$ would be a $K_{2,3}$, a contradiction. Similarly, $\{x,y\}$ is not complete to $\{b_1,b_3\}$. By symmetry, we may now assume that $\{x,y\}$ is complete to $\{a_1,b_3\}$, and that $y$ is nonadjacent to $a_3$. But now $G[a_1,a_2,a_3,b_3,y]$ is a house, contrary to the fact that $G$ is house-free. This proves (2). 
\end{proof} 

It remains to show that $\{a_2,b_2\} \cup S$ is a cutset of $G$. Suppose otherwise. Since $\{a_1,b_1\}$ is anticomplete to $\{a_3,b_3\}$, it follows that there exists an induced path $P = p_1,\dots,p_n$ in $G \setminus (V(D) \cup S)$ such that $p_1$ has a neighbor in $\{a_1,b_1\}$, and $p_n$ has a neighbor in $\{a_3,b_3\}$; we may assume that $P$ was chosen so that its length is as small as possible. Note that the minimality of $P$ implies that all interior vertices of $P$ are anticomplete to $\{a_1,b_1,a_3,b_3\}$.   

\begin{quote} 
\emph{(3) At most one of $a_2,b_2$ has a neighbor in $V(P)$.} 
\end{quote} 
\begin{proof}[Proof of (3)] 
Suppose otherwise. Fix $i,j \in \{1,\dots,n\}$, such that $p_ia_2,p_jb_2 \in E(G)$, and subject to that, such that $|i-j|$ is minimum. By symmetry, we may assume that $i \leq j$. If $i = j$, then $p_i = p_j$ belongs to $S$, a contradiction; thus, $i < j$. If $i+1 < j$, then $p_i,p_{i+1},\dots,p_j,b_2,a_2,p_i$ is a long hole in $G$, a contradiction; thus, $j = i+1$. 

Next, we claim that $b_2$ is anticomplete to $\{p_1,\dots,p_{i-1}\}$. Suppose otherwise, and fix a maximum $\ell \in \{1,\dots,i-1\}$ such that $b_2p_{\ell} \in E(G)$. Then $b_2,p_{\ell},\dots,p_i,p_{i+1},b_2$ is a hole in $G$; since $G$ contains no long holes, it follows that $\ell = i-1$. Since $p_{\ell} \notin S$, we see that $p_{\ell}a_2 \notin E(G)$, and it follows that $G[p_i,b_2,p_{i-1},a_2,p_{i+1}]$ is a $K_{2,3}$, a contradiction. This proves our claim. 

Now, if $p_1b_1 \notin E(G)$, then $p_1a_1 \in E(G)$, and $p_1,\dots,p_{i+1},b_2,b_1,a_1,p_1$ is a long hole in $G$, a contradiction. Thus, $p_1b_1 \in E(G)$, and we have that $H = p_1,\dots,p_{i+1},b_2,b_1,p_1$ is a hole in $G$. Since $G$ contains no long holes, it follows that $H$ is a 4-hole, and consequently, $i = 1$. But now $G[p_1,b_2,b_1,a_2,p_2]$ is a $K_{2,3}$, a contradiction. This proves (3). 
\end{proof} 

By (3), and by symmetry, we may assume that $a_2$ is anticomplete to $V(P)$. Then $n = 1$ (i.e.\ $P$ is a trivial path), and $p_1$ is complete to $\{a_1,a_3\}$, for otherwise, we readily deduce that $G[V(P) \cup (V(D) \setminus \{b_2\})]$ contains a long hole, a contradiction. Now $p_1,a_1,a_2,a_3,p_1$ is a 4-hole, and since $G$ is house-free, we deduce that $p_1$ is anticomplete to $\{b_1,b_3\}$. Then $p_1b_2 \in E(G)$, for otherwise, $p_1,a_1,b_1,b_2,b_3,a_3,p_1$ is a 6-hole in $G$, a contradiction. But now $G[p_1,a_2,a_1,b_2,a_3]$ is a $K_{2,3}$, a contradiction. 
\end{proof} 

\begin{lemma} \label{lemma-4-hole-GUTcap} Let $G \in \mathcal{G}_{\text{UT}}^{\text{cap-free}}$. Assume that $G$ contains a 4-hole, contains no long holes, and does not admit a clique-cutset. Then $G$ has at least two nontrivial anticomponents. 
\end{lemma}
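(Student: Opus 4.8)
The plan is to pin down the global structure of $G$ around a well-chosen 4-hole and then read off two distinct nontrivial anticomponents from the two diagonal (nonadjacent) pairs of that hole. Throughout I would use that $G$ is $(3PC,\text{proper wheel})$-free, hence $K_{2,3}$-free, as well as cap-free. First I would fix a 4-hole $H = x_1,x_2,x_3,x_4,x_1$ with $|V(H^*)|$ as large as possible and set $X_i = X_{x_i}(H)$ for $i \in \mathbb{Z}_4$. Combining Lemma~\ref{lemma-vertex-attach-GUT} with cap-freeness, every $x \in V(G)\setminus V(H)$ is of exactly one type: complete to $V(H)$ (so $x \in U_H$), a twin of some $x_i$ (so $x \in X_i$), or $|N_G(x)\cap V(H)| \le 1$; indeed two consecutive neighbours in $V(H)$ would make $G[V(H)\cup\{x\}]$ a cap.

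By Lemma~\ref{lemma-hole-ring-GUT} the sets $X_1,\dots,X_4$ are disjoint cliques, and by the definition of twins each $x_i$ is complete to $X_{i-1}\cup X_{i+1}$. The crucial refinement is that in fact $X_i$ is complete to $X_{i-1}\cup X_{i+1}$: if some $y_1\in X_1$ and $y_2\in X_2$ were nonadjacent, then $y_1\neq x_1$ and $y_2\neq x_2$, the quadruple $y_1,x_2,x_3,x_4,y_1$ would be an (induced) 4-hole, and $y_2$ would be adjacent in it to exactly the two consecutive vertices $x_2,x_3$, producing a cap. Writing $L = X_1\cup X_3$ and $R = X_2\cup X_4$, it follows that $L$ is complete to $R$. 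Finally, since $G$ has no long hole and no clique-cutset, Lemma~\ref{lemma-domino} shows $G$ contains no domino.

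Next I would show $V(G) = V(H^*)\cup U_H$, exploiting the no-clique-cutset hypothesis in the manner of Lemmas~\ref{lemma-4-hole-vertex-attach-GT} and~\ref{lemma-4-hole-GT}. The first step is that $N_G(x)\cap V(H^*)$ is a clique for every $x\notin V(H^*)\cup U_H$: by the completeness just established, a pair of nonadjacent neighbours of $x$ in $V(H^*)$ must lie in opposite classes $X_i,X_{i+2}$; then either a fourth vertex of the corresponding blown-up 4-hole exhibits a $K_{2,3}$ (when $x$ has no neighbour among the two ``other'' hole-vertices), or $x$ is a twin in a 4-hole $H'$ with $V(H^*)\subsetneqq V((H')^*)$, contradicting the maximality of $|V(H^*)|$. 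A standard minimal-path argument then yields that, for each component $C$ of $G\setminus(V(H^*)\cup U_H)$, the set $N_G(C)\cap(V(H^*)\cup U_H)$ is a clique; since $H^*$ is not complete (it contains $H$), this would be a clique-cutset. Hence no such component exists and $V(G) = V(H^*)\cup U_H$.

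It then remains to locate the anticomponents. Because $L$ is complete to $R$, no edge of $\overline{G}$ joins $L$ to $R$; and since $x_1x_3\notin E(G)$ and $x_2x_4\notin E(G)$, the vertices $x_1,x_3$ are $\overline{G}$-adjacent, as are $x_2,x_4$, so $x_1,x_3$ lie in one anticomponent and $x_2,x_4$ in another. These two anticomponents are nontrivial and are distinct \emph{provided no vertex of $U_H$, and no chain of such vertices, links $L$ to $R$ in $\overline{G}$}. This is the crux and the main obstacle: a vertex $u\in U_H$ is complete to $V(H)$ and so touches both sides, and I must rule out that $u$ has a nonneighbour in $L$ \emph{and} a nonneighbour in $R$ (and likewise rule out a nonadjacent pair $u,u'\in U_H$ straddling the two sides). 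The expectation—again provable from the forced adjacencies of $U_H$ to $V(H)$ together with the absence of long holes, dominoes, and $K_{2,3}$'s—is that every $u\in U_H$ is complete to at least one of $L,R$, so that $U_H$ attaches to a single side in $\overline{G}$; one can then split $V(G)$ into two sets containing $\{x_1,x_3\}$ and $\{x_2,x_4\}$ that are complete to each other, each non-complete, giving the two required nontrivial anticomponents. (For orientation, $P_4\vee\overline{K_2}$ is exactly such a $G$, with anticomponents $X_1\cup X_3$ and $X_2\cup X_4$ and $U_H=\emptyset$, while adjoining a suitable vertex of $U_H$ shows why the $U_H$-analysis cannot be skipped.)
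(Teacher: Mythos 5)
Your first half tracks the paper's proof closely: the completeness of $X_i$ to $X_{i-1}\cup X_{i+1}$ is proved there by the same house argument, Lemma~\ref{lemma-domino} is invoked in exactly the role you give it, and a minimal-connected-subgraph argument (ending in a theta $3PC(y,y')$ through $x_2$ and $x_4$) yields $V(G)=V(H^*)\cup U_H$. Two remarks on your variant of this half. First, the paper works with an \emph{arbitrary} 4-hole; no maximality of $|V(H^*)|$ is needed for this lemma. Second, your maximality route to ``$N_G(x)\cap V(H^*)$ is a clique'' is modelled on Lemma~\ref{lemma-4-hole-vertex-attach-GT}, but the case analysis there repeatedly kills configurations by producing \emph{universal wheels}, which are forbidden in $\mathcal{G}_{\text{T}}$ but perfectly allowed in $\mathcal{G}_{\text{UT}}^{\text{cap-free}}$; so the GT argument does not transfer as stated. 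This is repairable without maximality (e.g., a vertex outside $V(H^*)\cup U_H$ adjacent to $y_1\in X_1$ and $y_3\in X_3$ gives the $K_{2,3}$ on $\{y_1,y_3\},\{x,x_2,x_4\}$, or, if its unique neighbour in $V(H)$ is $x_2$, the house $G[y_1,x_2,x_3,x_4,x]$), but as written this step leans on unavailable exclusions.

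The genuine gap is precisely where you say ``the crux'' lies, and your proposed resolution does not close it. Your key claim --- every $u\in U_H$ is complete to at least one of $L=X_1\cup X_3$ and $R=X_2\cup X_4$ --- is true (the paper proves the stronger statement that each $u\in U_H$ is complete to at least three of $X_1,\dots,X_4$, its claim (6)), but it only rules out a \emph{single} vertex of $U_H$ straddling the two sides in $\overline{G}$. It does not rule out a nonadjacent pair $u,u'\in U_H$ with $u$ missing a vertex of $L$ and $u'$ missing a vertex of $R$: then $\overline{G}$ contains a path (vertex of $L$)--$u$--$u'$--(vertex of $R$), merging the anticomponent of $\{x_1,x_3\}$ with that of $\{x_2,x_4\}$, and longer anticonnected chains through $U_H$ pose the same threat. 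You flag this parenthetically but never supply an argument, and no per-vertex statement can supply it. What is needed --- and what the paper proves as its claim (7) --- is that every \emph{nontrivial anticomponent of $G[U_H]$} is complete to $V(H^*)$, via a short dichotomy: if $u,y\in U_H$ are nonadjacent and $u$ misses $y_1\in X_1$, then $G[x_2,x_4,u,y,y_1]$ is a $K_{2,3}$ when $yy_1\notin E(G)$, and $G[u,y,y_1,x_1,x_3]$ is a house when $yy_1\in E(G)$. This kills all straddling at once: any $u\in U_H$ with a nonneighbour in $V(H^*)$ lies in a trivial anticomponent of $G[U_H]$, hence sends no $\overline{G}$-edge into $U_H$ at all. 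The paper then finishes by a case split you would also need: if $U_H$ is not a clique, claim (7) exhibits a nontrivial anticomponent of $G$ inside $U_H$, disjoint from the one containing $x_1,x_3$; if $U_H$ is a clique, claim (6) partitions $U_H$ as $Y\cup Y_1\cup\dots\cup Y_4$ and one checks that $X_1\cup X_3\cup Y_1\cup Y_3$, $X_2\cup X_4\cup Y_2\cup Y_4$, and $Y$ are pairwise complete. Without claim (7) or an equivalent, your final step does not go through.
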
 
\begin{proof} 
Let $H = x_1,x_2,x_3,x_4,x_1$ be a 4-hole in $G$, and for all $i \in \mathbb{Z}_4$, set $X_i = X_{x_i}(H)$. Thus, $(X_1,X_2,X_3,X_4)$ is a partition of $V(H^*)$. 

\begin{quote}
\emph{(1) For all $i \in \mathbb{Z}_4$, $X_i$ is a clique, complete to $X_{i-1} \cup X_{i+1}$.} 
\end{quote} 
\begin{proof}[Proof of (1)] 
By Lemma~\ref{lemma-hole-ring-GUT}, $X_1,X_2,X_3,X_4$ are cliques. By symmetry, it now suffices to show that $X_1$ is complete to $X_2$. Suppose otherwise, and fix nonadjacent vertices $y_1 \in X_1$ and $y_2 \in X_2$; since $x_1$ is complete to $X_2$, we have that $y_1 \neq x_1$. But now $G[y_1,x_1,y_2,x_3,x_4]$ is a house, a contradiction. This proves (1). 
\end{proof} 

\begin{quote} 
\emph{(2) For all $x \in V(G) \setminus (V(H^*) \cup U_H)$, there exists some $i \in \mathbb{Z}_4$ such that $N_G(x) \cap V(H^*) \subseteq X_i \cup X_{i+2}$.} 
\end{quote} 
\begin{proof}[Proof of (2)] 
Suppose otherwise. By symmetry, we may assume that there exist some $x \in V(G) \setminus (V(H^*) \cup U_H)$, $y_1 \in X_1$, and $y_2 \in X_2$ such that $xy_1,xy_2 \in E(G)$. By (1), $y_1y_2 \in E(G)$. 

Since $x \notin V(H^*) \cup U_H$, we know that $x$ has at most two neighbors in $V(H)$. But if $x$ has precisely two neighbors in $V(H)$, then $G[V(H) \cup \{x\}]$ is either a house or a $K_{2,3}$, a contradiction in either case. Thus, $x$ has at most one neighbor in $V(H)$. 

If $x$ is anticomplete to $\{x_3,x_4\}$, then $G[x,y_1,y_2,x_3,x_4]$ is a house, a contradiction. By symmetry, we may now assume that $xx_3 \in E(G)$. Since $x$ has at most one neighbor in $V(H)$, we deduce that $x_3$ is the unique neighbor of $x$ in $V(H)$. But now $G[x_1,y_2,x_3,x_4,x]$ is a house, a contradiction. This proves (2). 
\end{proof} 

\begin{quote} 
\emph{(3) For all components $C$ of $G \setminus (V(H^*) \cup U_H)$, there exists some $i \in \mathbb{Z}_4$ such that $N_G(C) \cap V(H^*) \subseteq X_i \cup X_{i+2}$.} 
\end{quote} 
\begin{proof}[Proof of (3)] 
Suppose otherwise, and let $C$ be a component of $G \setminus (V(H^*) \cup U_H)$ that contradicts (3). Then for some $i \in \mathbb{Z}_4$, $N_G(C)$ intersects both $X_i$ and $X_{i+1}$. Let $P$ be a minimal connected induced subgraph of $C$ such that there exists some $i \in \mathbb{Z}_4$ such that $N_G(P)$ intersects both $X_i$ and $X_{i+1}$; by symmetry, we may assume that $N_G(P)$ intersects both $X_1$ and $X_2$. Let $a_1,a_2 \in V(P)$ be such that $a_1$ has a neighbor $y_1 \in X_1$ and $a_2$ has a neighbor $y_2 \in X_2$. By (1), $y_1y_2 \in E(G)$. By (2), $N_G(a_1) \cap V(H^*) \subseteq X_1 \cup X_3$ and $N_G(a_2) \cap V(H^*) \subseteq X_2 \cup X_4$; in particular, $a_1 \neq a_2$. Clearly, $P$ is a path between $a_1$ and $a_2$, for otherwise, any induced path in $P$ between $a_1$ and $a_2$ would contradict the minimality of $P$. Furthermore, by the minimality of $P$, all interior vertices of $P$ are anticomplete to $X_1 \cup X_2$. Further, $P$ is of length one, for otherwise, $G[V(P) \cup \{y_1,y_2\}]$ would be a long hole in $G$, a contradiction; in particular, $a_1a_2 \in E(G)$. 

Next, we have that $a_1x_3 \notin E(G)$, for otherwise, $G[y_1,x_3,a_1,y_2,x_4]$ would be a $K_{2,3}$, a contradiction. Similarly, $a_2x_4 \notin E(G)$. But now $G[a_1,y_1,x_4,a_2,y_2,x_3]$ is a domino, and so by Lemma~\ref{lemma-domino}, $G$ admits a clique-cutset, a contradiction. This proves (3). 
\end{proof} 

\begin{quote} 
\emph{(4) For all components $C$ of $G \setminus (V(H^*) \cup U_H)$, $N_G(C) \cap (V(H^*) \cup U_H)$ is a clique.} 
\end{quote} 
\begin{proof}[Proof of (4)]  
Suppose otherwise, and let $C$ be a component of $G \setminus (V(H^*) \cup U_H)$ such that $N_G(C) \cap (V(H^*) \cup U_H)$ is not a clique. Let $P$ be a minimal connected induced subgraph of $C$ such that $N_G(P) \cap (V(H^*) \cup U_H)$ is not a clique. By (3), and by symmetry, we may assume that $N_G(P) \cap V(H^*) \subseteq X_1 \cup X_3$. Now, fix nonadjacent $y,y' \in N_G(P) \cap (V(H^*) \cup U_H)$, and fix (not necessarily distinct) $a,a' \in V(P)$ such that $ay,a'y' \in E(G)$. Note that $P$ is a path between $a$ and $a'$ (if $a = a'$, then we simply have that $P$ is a one-vertex path), for otherwise, any induced path in $P$ between $a$ and $a'$ would contradict the minimality of $P$. Set $P = p_1,\dots,p_n$, with $p_1 = a$ and $p_n = a'$; by the minimality of $P$, we have that $P' = y,p_1,\dots,p_n,y'$ is an induced path in $G$. Now, since $N_G(P) \cap V(H^*) \subseteq X_1 \cup X_3$, we see that $x_2$ and $x_4$ are anticomplete to $V(P)$. Since $\{x_2,x_4\}$ is complete to $X_1 \cup X_3 \cup U_H$, we deduce that $\{x_2,x_4\}$ is complete to $\{y,y'\}$. But now $G[V(P) \cup \{y,y',x_2,x_4\}]$ is a $3PC(y,y')$, a contradiction. This proves (4). 
\end{proof} 

\begin{quote} 
\emph{(5) $V(G) = V(H^*) \cup U_H$.} 
\end{quote} 
\begin{proof}[Proof of (5)]  
Suppose otherwise, and let $C$ be a component of $G \setminus (V(H^*) \cup U_H)$. It then follows from (4) that $N_G(C) \cap (V(H^*) \cup U_H)$ is a clique-cutset of $G$, a contradiction. This proves (5). 
\end{proof} 

\begin{quote} 
\emph{(6) Every vertex in $U_H$ is complete to at least three of the sets $X_1,X_2,X_3,X_4$.}
\end{quote} 
\begin{proof}[Proof of (6)] 
Let $x \in U_H$. By symmetry, it suffices to show that if $x$ has a nonneighbor in $X_1$, then $x$ is complete to $X_2 \cup X_3 \cup X_4$. So suppose that $x$ is nonadjacent to some $y_1 \in X_1$. Suppose that $x$ has a nonneighbor $y_2 \in X_2$. By (1), $y_1y_2 \in E(G)$, and we deduce that $G[x,y_1,y_2,x_3,x_4]$ is a house, a contradiction. Thus, $x$ is complete to $X_2$, and similarly, $x$ is complete to $X_4$. Suppose that $x$ has a nonneighbor $y_3 \in X_3$. If $y_1y_3 \notin E(G)$, then $G[x_2,x_4,y_1,y_3,x]$ is a $K_{2,3}$, a contradiction. Thus, $y_1y_3 \in E(G)$. Since $x \in U_H$, we know that $xx_1,xx_3 \in E(G)$; since $xy_1,xy_3 \notin E(G)$, it follows that $y_1 \neq x_1$ and $y_3 \neq x_3$. But now, by (1), $x,x_1,y_1,y_3,x_3,x$ is a 5-hole in $G$, a contradiction. This proves (6). 
\end{proof} 

\begin{quote} 
\emph{(7) Every nontrivial anticomponent of $G[U_H]$ is complete to $V(H^*)$.} 
\end{quote} 
\begin{proof}[Proof of (7)]  
Suppose otherwise, and let $C$ be the vertex set of a nontrivial anticomponent of $G[U_H]$ such that $C$ is not complete to $V(H^*)$. Fix $x \in C$ such that $x$ has a nonneighbor in $V(H^*)$, and let $y \in C$ be a nonneighbor of $x$ ($y$ exists because $G[C]$ is anticonnected and has at least two vertices). By symmetry, we may assume that $x$ has a nonneighbor $y_1 \in X_1$ (clearly, $y_1 \neq x_1$). But now if $yy_1 \notin E(G)$, then $G[x_2,x_4,x,y,y_1]$ is a $K_{2,3}$, a contradiction, and if $yy_1 \in E(G)$, then $G[x,y,y_1,x_1,x_3]$ is a house, again a contradiction. This proves (7). 
\end{proof} 

Suppose first that $U_H$ is not a clique, and let $C$ be the vertex set of a nontrivial anticomponent of $G[U_H]$. By (5) and (7), $C$ is the vertex set of a nontrivial anticomponent of $G$. Since $C \cap V(H) = \emptyset$, we see that some other anticomponent of $G$ (for example, the one containing $x_1$ and $x_3$) is also nontrivial, and it follows that $G$ contains at least two nontrivial anticomponents, which is what we needed to show. 

From now on, we assume that $U_H$ is a clique. Let $Y$ be the set of all vertices in $U_H$ that are complete to $V(H^*)$, and for all $i \in \mathbb{Z}_4$, let $Y_i$ be the set of all vertices in $U_H$ that have a nonneighbor in $X_i$. Clearly, $U_H = Y \cup Y_1 \cup Y_2 \cup Y_3 \cup Y_4$. By (6), we have that $Y_i$ is complete to $X_{i+1} \cup X_{i+2} \cup X_{i+3}$ for all $i \in \mathbb{Z}_4$, and we deduce that $Y,Y_1,Y_2,Y_3,Y_4$ are pairwise disjoint. By (5), we now have that $V(G) = (X_1 \cup X_3 \cup Y_1 \cup Y_3) \cup (X_2 \cup X_4 \cup Y_2 \cup Y_4) \cup Y$. By (1), $X_1 \cup X_3$ is complete to $X_2 \cup X_4$, and we now deduce that the sets $X_1 \cup X_3 \cup Y_1 \cup Y_3$, $X_2 \cup X_4 \cup Y_2 \cup Y_4$, and $Y$ are pairwise complete to each other. Since $x_1x_3 \notin E(G)$, we know that $G[X_1 \cup X_3 \cup Y_1 \cup Y_3]$ contains at least one nontrivial anticomponent, and since $x_2x_4 \notin E(G)$, $G[X_2 \cup X_4 \cup Y_2 \cup Y_4]$ contains at least one nontrivial anticomponent. It follows that $G$ contains at least two nontrivial anticomponents, and we are done. 
\end{proof} 

We remind the reader that $\mathcal{B}_{\text{UT}}^{\text{cap-free}}$ is the class of all graphs $G$ that satisfy one of the following: 
\begin{itemize} 
\item $G$ has exactly one nontrivial anticomponent, and this anticomponent is a hyperhole of length at least six; 
\item each anticomponent of $G$ is either a 5-hyperhole or a chordal cobipartite graph. 
\end{itemize} 
We are now ready to prove Theorem~\ref{decomp-thm-GUTcap}, restated below for the reader's convenience. 

\begin{decomp-thm-GUTcap} Every graph in $\mathcal{G}_{\text{UT}}^{\text{cap-free}}$ either belongs to $\mathcal{B}_{\text{UT}}^{\text{cap-free}}$ or admits a clique-cutset. 
\end{decomp-thm-GUTcap}
\begin{proof} 
Let $G \in \mathcal{G}_{\text{UT}}^{\text{cap-free}}$, and assume that $G$ does not admit a clique-cutset; we must show that $G \in \mathcal{B}_{\text{UT}}^{\text{cap-free}}$.

\begin{quote} 
\emph{(1) Every anticomponent of $G$ is either a long hyperhole or a chordal cobipartite graph.} 
\end{quote} 
\begin{proof}[Proof of (1)] 
Let $H$ be an anticomponent of $G$. We must show that $H$ is either a long hyperhole or a chordal cobipartite graph. 

Suppose first that $H$ admits a clique-cutset $C$. Clearly, $\alpha(H) \geq 2$. If $U_H$ is a (possibly empty) clique, then $C \cup U_H$ is a clique-cutset of $G$, a contradiction. Thus, $U_H$ is not a clique, and we deduce that $G$ has at least two nontrivial anticomponents. Lemma~\ref{rmrk-K23-anticomp} now implies that $\alpha(G) = 2$; since $\alpha(H) \geq 2$, it follows that $\alpha(H) = 2$, and so by Lemma~\ref{lemma-clique-cut-alpha-2}, $H$ is a chordal cobipartite graph, and we are done. From now on, we assume that $H$ does not admit a clique-cutset. 

Suppose that $H$ contains a long hole. Then by Lemma~\ref{lemma-ring-GUTcap-free}, $H$ is a long hyperhole, and we are done. So from now on, we assume that $H$ contains no long holes. Since $H$ is anticonnected, Lemma~\ref{lemma-4-hole-GUTcap} implies that $H$ contains no 4-holes. Thus, $H$ contains no holes, and so by definition, $H$ is chordal. Since $H$ does not admit a clique-cutset, Theorem~\ref{thm-Dirac61} implies that $H$ is a complete graph (in fact, since $H$ is anticonnected, $H$ is isomorphic to $K_1$), and in particular, $H$ is a chordal cobipartite graph. This proves (1). 
\end{proof} 

If $G$ contains at most one nontrivial anticomponent, then (1) implies that $G \in \mathcal{B}_{\text{UT}}^{\text{cap-free}}$, and we are done. So assume that $G$ has at least two nontrivial anticomponents; by Lemma~\ref{lemma-clique-cut-alpha-2}, it follows that $\alpha(G) = 2$. Since every hyperhole of length greater than five contains a stable set of size three, (1) now implies that every anticomponent of $G$ is either a 5-hyperhole or a chordal cobipartite graph, and it follows that $G \in \mathcal{B}_{\text{UT}}^{\text{cap-free}}$. 
\end{proof}

\section{$\chi$-Boundedness} \label{sec:chi}

In this section, we obtain polynomial $\chi$-bounding functions for the classes $\mathcal{G}_{\text{UT}},\mathcal{G}_{\text{U}},\mathcal{G}_{\text{T}},\mathcal{G}_{\text{UT}}^{\text{cap-free}}$. 

In subsection~\ref{subsec6.1}, we deal with classes $\mathcal{G}_{\text{U}},\mathcal{G}_{\text{T}},\mathcal{G}_{\text{UT}}^{\text{cap-free}}$. For each of the three classes, we obtain a linear $\chi$-bounding function; the proofs rely on our decomposition theorems for these classes (i.e.\ Theorems~\ref{decomp-thm-GU},~\ref{decomp-thm-GT}, and~\ref{decomp-thm-GUTcap}), as well as on results from~\cite{WeaklyTriangulatedPerfect, HyperholeColoring}. 

In subsection~\ref{subsec6.2}, we obtain a fourth-degree polynomial $\chi$-bounding function for the class $\mathcal{G}_{\text{UT}}$. Instead of relying on Theorem~\ref{decomp-thm-GUT} (the decomposition theorem for $\mathcal{G}_{\text{UT}}$ that we stated in the introduction and proved in section~\ref{sec:decompGUT}), we prove a new decomposition theorem for the class $\mathcal{G}_{\text{UT}}$, one that ``decomposes'' graphs in $\mathcal{G}_{\text{UT}}$ into ``basic'' cap-free induced subgraphs via ``double-star-cutsets'' that are ``small'' relative to the clique number of the graph. We then rely on Theorem~\ref{chi-bound-GUTcap} (which states that the class $\mathcal{G}_{\text{UT}}^{\text{cap-free}}$ is $\chi$-bounded by a linear function), as well as a result of~\cite{small-cut}, to obtain a polynomial $\chi$-bounding function for the class $\mathcal{G}_{\text{UT}}$.

\subsection{Classes $\mathcal{G}_{\text{U}},\mathcal{G}_{\text{T}},\mathcal{G}_{\text{UT}}^{\text{cap-free}}$} \label{subsec6.1} 

We begin with an easy lemma, which essentially states that clique-cutsets ``preserve $\chi$-boundedness'' (by the same $\chi$-bounding function). 

\begin{lemma} \label{lemma-chi-clique-cut} Let $\mathcal{G}$ be a hereditary class, and let $f:\mathbb{N}^+ \rightarrow \mathbb{N}^+$ be a nondecreasing function. Assume that every graph $G \in \mathcal{G}$ either satisfies $\chi(G) \leq f(\omega(G))$ or admits a clique-cutset. Then every graph $G \in \mathcal{G}$ satisfies $\chi(G) \leq f(\omega(G))$. 
\end{lemma}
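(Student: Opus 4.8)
The plan is to argue by strong induction on $|V(G)|$, using the clique-cutset to split $G$ into two strictly smaller members of $\mathcal{G}$ and then reassembling their colorings. Fix $G \in \mathcal{G}$ and set $k = f(\omega(G))$. If $\chi(G) \leq k$, there is nothing to prove, so by hypothesis we may assume that $G$ admits a clique-cutset; let $(A,B,C)$ be a clique-cut-partition of $G$, so that $A$ and $B$ are nonempty and anticomplete to each other, and $C$ is a clique. Put $G_A = G[A \cup C]$ and $G_B = G[B \cup C]$. Since $\mathcal{G}$ is hereditary, both $G_A$ and $G_B$ lie in $\mathcal{G}$, and since $A,B \neq \emptyset$ each has strictly fewer vertices than $G$; moreover $\omega(G_A),\omega(G_B) \leq \omega(G)$, so because $f$ is nondecreasing the induction hypothesis yields $\chi(G_A) \leq f(\omega(G_A)) \leq k$ and likewise $\chi(G_B) \leq k$.

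Next I would glue the two colorings along $C$. Fix proper colorings $c_A$ of $G_A$ and $c_B$ of $G_B$, each using colors in $\{1,\dots,k\}$. Because $C$ is a clique, $c_A$ is injective on $C$ and so is $c_B$; hence the color sets $c_A(C)$ and $c_B(C)$ have the same cardinality $|C| \leq k$, and the assignment $c_B(v) \mapsto c_A(v)$ (for $v \in C$) is a well-defined bijection between them, which extends to a permutation $\pi$ of $\{1,\dots,k\}$. Replacing $c_B$ by $\pi \circ c_B$ (still a proper coloring of $G_B$ into $\{1,\dots,k\}$), we may assume that $c_A$ and $c_B$ agree on $C$. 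I then define $c$ on $V(G)$ by letting $c = c_A$ on $A \cup C$ and $c = c_B$ on $B$.

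Finally I would verify that $c$ is proper. Every edge of $G$ lies in $G_A$ or in $G_B$: since $A$ is anticomplete to $B$, no edge joins $A$ to $B$, and every remaining edge has both ends in $A \cup C$ or both ends in $B \cup C$. On any such edge $c$ restricts to $c_A$ or to $c_B$ (here we use that $c_A$ and $c_B$ coincide on $C$), both of which are proper, so $c$ is a proper $k$-coloring of $G$ and $\chi(G) \leq k = f(\omega(G))$, completing the induction. The case of an empty cutset $C = \emptyset$ is covered by the same argument, with the permutation step vacuous, so no separate treatment is needed.

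I do not expect a genuine obstacle here; the only points requiring care are that the induction bottoms out precisely at graphs admitting no clique-cutset (which satisfy the bound directly by hypothesis), that the two pieces are strictly smaller than $G$ (guaranteed by $A,B \neq \emptyset$), and that both colorings are drawn from a common palette of size $k$ so that the recoloring permutation $\pi$ is meaningful. This last point is exactly where the monotonicity of $f$, together with $\omega(G_A),\omega(G_B) \leq \omega(G)$, is used.
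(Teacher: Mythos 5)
Your proof is correct and takes essentially the same route as the paper, which compresses the argument into the observation that $\chi(G) = \max\{\chi(G[A \cup C]), \chi(G[B \cup C])\}$ for any clique-cut-partition $(A,B,C)$, followed by ``an easy induction''; your permutation-of-colors gluing step is exactly the standard justification of that identity. All the delicate points (strict shrinkage via $A,B \neq \emptyset$, heredity of $\mathcal{G}$, monotonicity of $f$ to keep a common palette, and $|C| \leq k$ via injectivity of $c_A$ on the clique $C$) are handled properly.
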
 
\begin{proof} 
Clearly, if $(A,B,C)$ is a clique-cut-partition of a graph $G$, then $\chi(G) = \max\{\chi(G[A \cup C]),\chi(G[B \cup C])\}$. The result now follows by an easy induction. 
\end{proof} 

A function $f:\mathbb{N}^+ \rightarrow \mathbb{N}^+$ is {\em superadditive} if for all $m,n \in \mathbb{N}^+$, we have that $f(m)+f(n) \leq f(m+n)$. Note that every superadditive function is nondecreasing. 

\begin{lemma} \label{lemma-chi-anticomp} Let $f:\mathbb{N}^+ \rightarrow \mathbb{N}^+$ be a superadditive function, let $G$ be a graph, and assume that all anticomponents $H$ of $G$ satisfy $\chi(H) \leq f(\omega(H))$. Then $\chi(G) \leq f(\omega(G))$. 
\end{lemma}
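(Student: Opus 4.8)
The plan is to exploit the fact that $G$ is the join of its anticomponents. First I would let $H_1,\dots,H_t$ be the anticomponents of $G$; as noted in the preliminaries, distinct anticomponents are pairwise complete to one another, and so $G$ is precisely the join of $H_1,\dots,H_t$. The whole statement will then reduce to two additivity identities for the join, together with iterated superadditivity of $f$.

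Next I would establish the two additivity identities. For the clique number, since the $H_i$ are pairwise complete, a union of cliques, one taken from each $H_i$, is again a clique of $G$, and conversely every clique of $G$ meets each $H_i$ in a clique; hence $\omega(G)=\sum_{i=1}^t\omega(H_i)$. For the chromatic number, any two vertices lying in distinct anticomponents are adjacent, so in any proper coloring the color sets used on distinct $H_i$ are pairwise disjoint; this gives $\chi(G)\ge\sum_{i=1}^t\chi(H_i)$, and the reverse inequality follows by coloring each $H_i$ optimally with a private palette, so $\chi(G)=\sum_{i=1}^t\chi(H_i)$.

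Combining these identities with the hypothesis $\chi(H_i)\le f(\omega(H_i))$ yields $\chi(G)=\sum_{i=1}^t\chi(H_i)\le\sum_{i=1}^t f(\omega(H_i))$. It then remains to show that $\sum_{i=1}^t f(\omega(H_i))\le f\bigl(\sum_{i=1}^t\omega(H_i)\bigr)=f(\omega(G))$. Since each anticomponent is nonempty, each $\omega(H_i)\ge 1$ lies in $\mathbb{N}^+$, so this follows from superadditivity by a straightforward induction on $t$: the base case $t=1$ is trivial, and the inductive step applies the defining inequality $f(m)+f(n)\le f(m+n)$ with $m=\sum_{i<t}\omega(H_i)$ and $n=\omega(H_t)$.

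The argument is essentially routine. The mildly substantive point is the additivity of $\chi$ under the join operation, but this is standard. The only other point needing the slightest care is that superadditivity is stated for two summands, so one must iterate it by induction and check that all arguments remain in $\mathbb{N}^+$, which is guaranteed because anticomponents are nonempty.
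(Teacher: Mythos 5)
Your proposal is correct and follows essentially the same route as the paper's proof: both use the additivity identities $\omega(G)=\sum_i\omega(H_i)$ and $\chi(G)=\sum_i\chi(H_i)$ for the anticomponents (which the paper states as ``clear'') and then apply superadditivity of $f$ termwise. You simply spell out the justification of the two identities and the induction needed to iterate the two-summand superadditivity, details the paper leaves implicit.
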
 
\begin{proof} 
Let $G_1,\dots,G_t$ be the anticomponents of $G$. Clearly, $\omega(G) = \sum_{i=1}^t \omega(G_i)$ and $\chi(G) = \sum_{i=1}^t \chi(G_i)$. By hypothesis, $\chi(G_i) \leq f(\omega(G_i))$ for all $i \in \{1,\dots,t\}$. Since $f$ is superadditive, it follows that $\chi(G) = \sum_{i=1}^t \chi(G_i) \leq \sum_{i=1}^t f(\omega(G_i)) \leq f(\sum_{i=1}^t \omega(G_i)) = f(\omega(G))$. 
\end{proof}

\begin{lemma} \label{lemma-ring-chi} Every ring $R$ satisfies $\chi(R) \leq \lfloor \frac{3}{2}\omega(R) \rfloor$. In particular, every hyperhole $H$ satisfies $\chi(H) \leq \lfloor \frac{3}{2}\omega(H) \rfloor$. 
\end{lemma}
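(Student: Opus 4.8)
The plan is to prove the bound for rings; the hyperhole statement is then immediate, since every hyperhole is a ring (as noted after the definition of a ring and in Lemma~\ref{lemma-ring-char}). So fix a ring $R$ with good partition $(X_1,\dots,X_k)$ and write $\omega = \omega(R)$. First I would record the structural facts I intend to use. By Lemma~\ref{lemma-ring-char}, each $X_i$ is a clique and $X_i$ is anticomplete to every part other than $X_{i-1},X_i,X_{i+1}$; hence every clique of $R$ is contained in $X_i \cup X_{i+1}$ for some $i$, so $\omega = \max_i \omega(R[X_i \cup X_{i+1}])$. By Lemma~\ref{ring-in-GT}(c), $R \setminus X_i$ is chordal, hence perfect (Theorem~\ref{thm-Dirac61}), for every $i$; thus $\chi(R \setminus X_i) = \omega(R\setminus X_i) \le \omega$. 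Finally, Lemma~\ref{lemma-ring-char}(d) says the vertices of each $X_i$ are linearly ordered by domination, so the edges between two consecutive cliques $X_i,X_{i+1}$ form a ``chain'' (difference) bipartite graph: the neighborhoods $N(u)\cap X_{i+1}$, for $u\in X_i$, are nested, and symmetrically from the $X_{i+1}$ side.

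The easy regime is when some part is small. If $\min_i |X_i| \le \lfloor \frac{\omega}{2}\rfloor$, I would pick such an $X_i$, properly color the perfect graph $R\setminus X_i$ with at most $\omega$ colors, and then assign fresh colors to the clique $X_i$; this uses at most $\omega + \lfloor\frac{\omega}{2}\rfloor = \lfloor\frac{3}{2}\omega\rfloor$ colors (using the identity $\lfloor\frac{3}{2}\omega\rfloor = \omega + \lfloor\frac{\omega}{2}\rfloor$), and we are done.

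The substance is the remaining regime, where $|X_i| \ge \lfloor\frac{\omega}{2}\rfloor+1$ for every $i$. Here the naive ``remove one clique'' bound of $\omega + |X_i|$ is too weak, and one must exploit the cyclic structure. I would cut the cycle at one part, say $X_1$: color the chordal (perfect) graph $R\setminus X_1$ with a palette of $\omega$ colors, and then re-insert the clique $X_1$, whose outside-neighbors all lie in $X_k \cup X_2$. The goal is to color $X_1$ reusing all but at most $\lfloor\frac{\omega}{2}\rfloor$ of the old colors; matching the vertices of $X_1$ to colors missing from the relevant outside-neighborhoods is a Hall-type argument driven by the chain structure between $X_1$ and each of $X_k,X_2$ and by the domination order inside $X_1$. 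The parity of $k$ governs how tightly the cut can be closed up: when $k$ is even the skeleton cycle is bipartite and one expects $\chi(R)=\omega$, whereas when $k$ is odd the odd cyclic dependence is exactly what can force genuinely more than $\omega$ colors (already for $C_5$), and the factor $\frac{3}{2}$ is present to absorb it.

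I expect the main obstacle to be precisely this odd, ``all parts large'' case: closing the cyclic coloring around an odd $k$ while reusing enough old colors. The delicate point is to choose the base coloring of $R\setminus X_1$ so that the two non-adjacent cliques $X_k$ and $X_2$ (which may therefore share colors) are colored as economically as possible, leaving enough colors free in the outside-neighborhoods of the vertices of $X_1$. Quantifying this using the nested (difference-graph) structure of the consecutive connections, and verifying that at most $\lfloor\frac{\omega}{2}\rfloor$ new colors are ever needed, is the technical heart of the proof; the even case and the small-part case are comparatively routine consequences of the chordality of $R\setminus X_i$.
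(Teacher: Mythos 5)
Your proposal has a genuine gap: the entire ``all parts large'' regime is a plan, not a proof. The small-part case is fine (if some $|X_i| \leq \lfloor \frac{\omega}{2} \rfloor$, coloring the chordal graph $R \setminus X_i$ with $\omega(R)$ colors via Lemma~\ref{ring-in-GT}(c) and Theorem~\ref{thm-Dirac61}, then giving $X_i$ fresh colors, is correct and complete). But in the complementary case you only assert that, after coloring $R \setminus X_1$ with $\omega(R)$ colors, a ``Hall-type argument driven by the chain structure'' lets you recolor $X_1$ using at most $\lfloor \frac{\omega}{2} \rfloor$ new colors --- and you acknowledge yourself that quantifying this is ``the technical heart of the proof.'' That assertion is exactly the content of the lemma in this regime: since $|X_1|$ can be as large as $\omega(R)-1$, you must genuinely show that at least $|X_1| - \lfloor \frac{\omega}{2} \rfloor$ old colors are reusable, which requires both a careful choice of the base coloring of $R \setminus X_1$ (the colorings of $X_k$ and $X_2$ interact) and a verified Hall condition; none of this is carried out, and it is not obvious how to do so. The parity discussion (even $k$ forcing $\chi = \omega$) is plausible intuition but likewise unproven and, as it turns out, unnecessary.

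For comparison, the paper's proof avoids your case split entirely with a short induction on $|V(R)|$. Take a part $X_2$ of maximum size; since $\omega(R) \leq \max_i \{|X_i| + |X_{i+1}|\}$, maximality gives $|X_2| \geq \frac{1}{2}\omega(R)$ automatically, so the ``small part'' dichotomy never arises. Let $x_2 \in X_2$ be minimal in the domination order of Lemma~\ref{lemma-ring-char}(d), i.e.\ $N_R[x_2] \subseteq N_R[x_2']$ for all $x_2' \in X_2$. This minimality forces all of $X_2$ to be complete to $Y_1 = N_R(x_2) \cap X_1$ and $Y_3 = N_R(x_2) \cap X_3$, so both $|Y_1| + |X_2|$ and $|Y_3| + |X_2|$ are at most $\omega(R)$, whence
\begin{displaymath}
d_R(x_2) \; = \; |Y_1| + |X_2| + |Y_3| - 1 \; \leq \; 2\omega(R) - |X_2| - 1 \; \leq \; \tfrac{3}{2}\omega(R) - 1 .
\end{displaymath}
If $|X_2| \geq 2$, then $R \setminus x_2$ is again a ring, so induction plus greedily coloring $x_2$ (which has at most $\lfloor \frac{3}{2}\omega(R) \rfloor - 1$ neighbors) finishes; if $|X_2| = 1$, then $R$ is a hole and the bound is trivial. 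The idea your proposal is missing is precisely this: one never needs to close the cycle globally, because the domination-minimal vertex of a largest part always has degree at most $\lfloor \frac{3}{2}\omega(R) \rfloor - 1$, reducing everything to a degeneracy argument.
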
 
\begin{proof} 
Since every hyperhole is a ring, the second statement follows from the first. To prove the first statement, we let $R$ be a ring, and we assume inductively that every ring $R'$ on fewer than $|V(R)|$ vertices satisfies $\chi(R') \leq \lfloor \frac{3}{2}\omega(R') \rfloor$. We must show that $\chi(R) \leq \lfloor \frac{3}{2} \omega(R) \rfloor$. 

Let $(X_1,\dots,X_k)$, with $k \geq 4$, be a good partition of the ring $R$. By symmetry, we may assume that $|X_2| = \max\{|X_i| \mid i \in \mathbb{Z}_k\}$. If $|X_2| = 1$, then $R$ is a hole, we deduce that $\omega(R) = 2$ and $\chi(R) \leq 3$, and the result follows. So from now on, we assume that $|X_2| \geq 2$. Further, it readily follows from Lemma~\ref{lemma-ring-char}(b) that $\omega(R) \leq \max\{|X_i|+|X_{i+1}| \mid i \in \mathbb{Z}_k\}$, and so the maximality of $|X_2|$ implies that $|X_2| \geq \frac{1}{2}\omega(R)$. 

Let $x_2 \in X_2$ be such that for all $x_2' \in X_2$, $N_R[x_2] \subseteq N_R[x_2']$ (the existence of the vertex $x_2$ follows from the definition of a ring). Set $Y_1 = N_R(x_2) \cap X_1$ and $Y_3 = N_R(x_2) \cap X_3$; then $N_R(x_2) = Y_1 \cup (X_2 \setminus \{x_2\}) \cup Y_3$, and it follows that $d_R(x_2) = |Y_1|+|X_2|+|Y_3|-1$. Now, the choice of $x_2$ guarantees that $X_2$ is complete to $Y_1 \cup Y_3$, which in turn implies that $\max\{|Y_1|+|X_2|,|Y_3|+|X_2|\} \leq \omega(R)$. It follows that $|Y_1|+|X_2|+|Y_3| \leq 2\omega(R)-|X_2|$, and so 
\begin{displaymath} 
\begin{array}{ccccccc}
d_R(x_2) & = & |Y_1|+|X_2|+|Y_3|-1 & \leq & 2\omega(R)-|X_2|-1 & \leq & \frac{3}{2}\omega(R)-1.
\end{array} 
\end{displaymath} 
Since $d_R(x_2)$ is an integer, it follows that $d_R(x_2) \leq \lfloor \frac{3}{2}\omega(R) \rfloor-1$. 

Now, since $|X_2| \geq 2$, the choice of $x_2$ guarantees that $R \setminus x_2$ is a ring. By the induction hypothesis, we have that $\chi(R \setminus x_2) \leq \lfloor \frac{3}{2}\omega(R \setminus x_2) \rfloor \leq \lfloor \frac{3}{2}\omega(R) \rfloor$. Since $d_R(x_2) \leq \lfloor \frac{3}{2}\omega(R) \rfloor-1$, it follows that $\chi(R) \leq \lfloor \frac{3}{2} \omega(R) \rfloor$. 
\end{proof} 

A graph is {\em weakly chordal} (or {\em weakly triangulated}) if it contains no long holes and no long antiholes. It was shown in~\cite{WeaklyTriangulatedPerfect} that weakly chordal graphs are perfect (note that this can also be deduced from the Strong Perfect Graph Theorem~\cite{SPGT}). 

\begin{lemma} \label{lemma-7hyperantihole-chi} Every 7-hyperantihole $G$ satisfies $\chi(G) \leq \lfloor \frac{4}{3} \omega(G) \rfloor$. 
\end{lemma}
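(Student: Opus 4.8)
The plan is to delete a single part of the hyperantihole partition and reduce to a perfect graph. Let $(X_1,\dots,X_7)$ be a good partition of the $7$-hyperantihole $G$ (with subscripts in $\mathbb{Z}_7$), and write $n_i = |X_i|$. By definition, for distinct $i,j$ the parts $X_i,X_j$ are complete to each other precisely when $\{i,j\}$ is \emph{not} an edge of the cycle $C_7$ (i.e.\ $j \notin \{i-1,i+1\}$), and anticomplete to each other otherwise. I would first record the two facts that drive the whole argument.

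The first fact is that some part is small relative to $\omega(G)$. The indices $1,3,5$ are pairwise nonadjacent in $C_7$, so $X_1,X_3,X_5$ are pairwise complete cliques; hence $X_1 \cup X_3 \cup X_5$ is a clique of $G$ and $\omega(G) \ge n_1+n_3+n_5 \ge 3\min_i n_i$. Since $\omega(G)$ is an integer, this gives $\min_i n_i \le \lfloor \omega(G)/3 \rfloor$. The second fact is that deleting one part yields a cobipartite (hence perfect) graph. By the cyclic symmetry of $G$ we may delete $X_7$. In $G \setminus X_7$ two parts $X_a,X_b$ fail to be complete only when $\{a,b\}$ is an edge of $C_7$; removing the vertex $7$ from $C_7$ leaves a path on the remaining six indices, which is $2$-colourable, say with colour classes $\{1,3,5\}$ and $\{2,4,6\}$. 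Each class consists of pairwise $C_7$-nonadjacent indices, so each of $X_1 \cup X_3 \cup X_5$ and $X_2 \cup X_4 \cup X_6$ is a clique. Thus $V(G)\setminus X_7$ is a union of two cliques, i.e.\ $G \setminus X_7$ is cobipartite and therefore perfect, whence $\chi(G \setminus X_7) = \omega(G \setminus X_7) \le \omega(G)$.

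To finish, I would choose the index $j$ with $n_j$ minimum, colour $G \setminus X_j$ optimally using $\omega(G\setminus X_j) \le \omega(G)$ colours, and assign the clique $X_j$ a set of $n_j$ fresh colours. This yields
\[
\chi(G) \;\le\; \chi(G \setminus X_j) + n_j \;\le\; \omega(G) + \lfloor \omega(G)/3 \rfloor \;=\; \lfloor \tfrac{4}{3}\omega(G) \rfloor,
\]
the last equality holding because $\omega(G)$ is an integer. I do not expect a serious obstacle here: the content is entirely in the second fact, namely that deleting a single part breaks the only odd-cycle obstruction and leaves a union of two cliques, so the main things to state carefully are that cobipartite graphs are perfect (complements of bipartite graphs, via the weak perfect graph theorem, consistent with the perfection facts already invoked in this section) and the elementary floor identity $\omega + \lfloor \omega/3 \rfloor = \lfloor 4\omega/3 \rfloor$.
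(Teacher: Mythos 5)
Your proof is correct and follows essentially the same route as the paper: delete a smallest part $X_j$ (which has size at most $\frac{1}{3}\omega(G)$ because three pairwise non-consecutive parts form a clique), observe that the remainder is perfect so it can be coloured with $\omega(G)$ colours, and give $X_j$ fresh colours. The only (harmless) difference is the justification of perfection: the paper notes $G \setminus X_7$ is weakly chordal and cites Hayward's theorem, whereas you observe the stronger fact that it is a union of two cliques, hence cobipartite and perfect -- an equally valid, arguably more elementary step.
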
 
\begin{proof} 
Let $G = X_1,X_2,\dots,X_7,X_1$ be a 7-hyperantihole. By symmetry, we may assume that $|X_7| = \min \{|X_i| \mid i \in \mathbb{Z}_7\}$. Since $X_7 \cup X_2 \cup X_4$ is a clique, the minimality of $|X_7|$ implies that $|X_7| \leq \frac{1}{3}\omega(G)$. Now, note that $G \setminus X_7$ is weakly chordal, and therefore (by~\cite{WeaklyTriangulatedPerfect}) perfect. Thus, $\chi(G \setminus X_7) = \omega(G \setminus X_7) \leq \omega(G)$. Since $|X_7| \leq \frac{1}{3}\omega(G)$, it follows that $\chi(G) \leq \frac{4}{3}\omega(G)$. The result now follows from the fact that $\chi(G)$ is an integer. 
\end{proof} 

We are now ready to show that each of the classes $\mathcal{G}_{\text{U}},\mathcal{G}_{\text{T}},\mathcal{G}_{\text{UT}}^{\text{cap-free}}$ is $\chi$-bounded by a linear function. 

\begin{theorem} \label{chi-bound-GU} Every graph in $\mathcal{G}_{\text{U}}$ satisfies $\chi(G) \leq \omega(G)+1$. 
\end{theorem}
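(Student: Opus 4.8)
The plan is to reduce, via the decomposition theorem for $\mathcal{G}_{\text{U}}$ (Theorem~\ref{decomp-thm-GU}), to the basic class $\mathcal{B}_{\text{U}}$, and there to reduce further to individual anticomponents. First I would record that the function $f:\mathbb{N}^+ \to \mathbb{N}^+$ given by $f(n) = n+1$ is superadditive (indeed $f(m)+f(n) = m+n+2 \geq m+n+1 = f(m+n)$), and hence nondecreasing, and that $\mathcal{G}_{\text{U}}$ is hereditary, being the class of (3PC, proper wheel, twin wheel)-free graphs. By Theorem~\ref{decomp-thm-GU}, every graph in $\mathcal{G}_{\text{U}}$ either belongs to $\mathcal{B}_{\text{U}}$ or admits a clique-cutset, so Lemma~\ref{lemma-chi-clique-cut} reduces the task to proving $\chi(G) \leq \omega(G)+1$ for every $G \in \mathcal{B}_{\text{U}}$.

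Next I would invoke Lemma~\ref{lemma-chi-anticomp}: since $f(n)=n+1$ is superadditive, in order to bound $\chi(G)$ for $G \in \mathcal{B}_{\text{U}}$ it suffices to show that every anticomponent $H$ of $G$ satisfies $\chi(H) \leq \omega(H)+1$. By the definition of $\mathcal{B}_{\text{U}}$, each anticomponent of such a $G$ is either trivial (isomorphic to $K_1$), isomorphic to $\overline{K_2}$, or a long hole; in both cases of the definition the remaining anticomponents are single vertices, so this trichotomy is exhaustive.

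Then I would check the three cases directly. If $H \cong K_1$ or $H \cong \overline{K_2}$, then $H$ is edgeless, so $\chi(H) = 1$ and $\omega(H) = 1$, whence $\chi(H) = 1 \leq 2 = \omega(H)+1$. If $H$ is a long hole, then $H$ is triangle-free, so $\omega(H) = 2$, while $\chi(H) \leq 3$ (every cycle is $3$-colorable), so $\chi(H) \leq 3 = \omega(H)+1$. In every case $\chi(H) \leq \omega(H)+1$, so Lemma~\ref{lemma-chi-anticomp} gives $\chi(G) \leq \omega(G)+1$ for all $G \in \mathcal{B}_{\text{U}}$, and Lemma~\ref{lemma-chi-clique-cut} then lifts this bound to all of $\mathcal{G}_{\text{U}}$.

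There is no serious obstacle here: the argument is a routine assembly of the decomposition theorem with the two $\chi$-transfer lemmas. The only place where genuine content enters is the long-hole case, where an odd hole attains $\chi = 3 = \omega+1$; this is precisely what forces the additive constant and makes the bound tight, consistent with the remark in the introduction that the join of an odd hole and a complete graph lies in $\mathcal{G}_{\text{U}}$ and satisfies $\chi = \omega+1$.
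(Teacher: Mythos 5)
Your reduction to $\mathcal{B}_{\text{U}}$ via Theorem~\ref{decomp-thm-GU} and Lemma~\ref{lemma-chi-clique-cut} matches the paper's proof exactly, but your treatment of $\mathcal{B}_{\text{U}}$ contains a genuine error: the function $f(n)=n+1$ is \emph{not} superadditive. The paper's definition requires $f(m)+f(n)\leq f(m+n)$, whereas $f(m)+f(n)=m+n+2>m+n+1=f(m+n)$; in your parenthetical check you verified the inequality $f(m)+f(n)\geq f(m+n)$, which is the reverse of what the definition demands. Consequently Lemma~\ref{lemma-chi-anticomp} cannot be invoked with this $f$, and the gap is not merely formal: the abstract principle ``if every anticomponent satisfies $\chi\leq\omega+1$, then so does $G$'' is false in general, as the join of two copies of $C_5$ shows ($\chi=6$ but $\omega+1=5$). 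Since both $\chi$ and $\omega$ add across anticomponents, the additive constant $1$ accumulates once per anticomponent that actually needs it, so a per-anticomponent bound of $\omega+1$ only yields $\omega+t$ for $t$ anticomponents.

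The repair is easy, and it is what the paper's terse ``readily follows from the definition of $\mathcal{B}_{\text{U}}$'' amounts to: in either case of the definition, \emph{at most one} anticomponent of $G\in\mathcal{B}_{\text{U}}$ can fail to satisfy $\chi=\omega$. If all nontrivial anticomponents are isomorphic to $\overline{K_2}$, then every anticomponent $H$ (whether $K_1$ or $\overline{K_2}$) satisfies $\chi(H)=\omega(H)=1$, and summing over anticomponents gives $\chi(G)=\omega(G)$. If $G$ has exactly one nontrivial anticomponent $H$ and it is a long hole, then $\chi(H)\leq 3=\omega(H)+1$ while every other anticomponent is a single vertex contributing $\chi=\omega=1$, so summing gives $\chi(G)\leq\omega(G)+1$, the $+1$ coming only from the hole. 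Your case-by-case bounds on the individual anticomponents are all correct; what your write-up is missing is this direct summation argument in place of the inapplicable Lemma~\ref{lemma-chi-anticomp}.
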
 
\begin{proof} 
In view of Theorem~\ref{decomp-thm-GU} and Lemma~\ref{lemma-chi-clique-cut}, it suffices to show that every graph $G \in \mathcal{B}_{\text{U}}$ satisfies $\chi(G) \leq \omega(G)+1$. But this readily follows from the definition of $\mathcal{B}_{\text{U}}$. 
\end{proof} 

\begin{theorem} \label{chi-bound-GT} Every graph in $\mathcal{G}_{\text{T}}$ satisfies $\chi(G) \leq \lfloor \frac{3}{2}\omega(G) \rfloor$. 
\end{theorem}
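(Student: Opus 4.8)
The plan is to follow exactly the template established in the proof of Theorem~\ref{chi-bound-GU}: combine the decomposition theorem for $\mathcal{G}_{\text{T}}$ (Theorem~\ref{decomp-thm-GT}) with the fact that clique-cutsets preserve $\chi$-bounding functions (Lemma~\ref{lemma-chi-clique-cut}). First I would note that the candidate bounding function $f(\omega) = \lfloor \frac{3}{2}\omega \rfloor$ is nondecreasing, which is precisely the hypothesis required by Lemma~\ref{lemma-chi-clique-cut}. Hence it suffices to verify the inequality $\chi(G) \leq \lfloor \frac{3}{2}\omega(G) \rfloor$ for graphs $G$ in the basic class $\mathcal{B}_{\text{T}}$: since every graph in $\mathcal{G}_{\text{T}}$ either lies in $\mathcal{B}_{\text{T}}$ or admits a clique-cutset, Lemma~\ref{lemma-chi-clique-cut} then propagates the bound to all of $\mathcal{G}_{\text{T}}$.

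Next I would split into the three types of graph comprising $\mathcal{B}_{\text{T}}$, namely complete graphs, rings, and 7-hyperantiholes. For a complete graph, $\chi(G) = \omega(G) \leq \lfloor \frac{3}{2}\omega(G) \rfloor$ trivially. For a ring, the desired bound is exactly the content of Lemma~\ref{lemma-ring-chi}, so nothing further is needed. For a 7-hyperantihole, Lemma~\ref{lemma-7hyperantihole-chi} gives the stronger estimate $\chi(G) \leq \lfloor \frac{4}{3}\omega(G) \rfloor$; since $\frac{4}{3} \leq \frac{3}{2}$ and the floor function is monotone, this yields $\chi(G) \leq \lfloor \frac{3}{2}\omega(G) \rfloor$, as required.

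Since all three basic cases satisfy the target inequality, Lemma~\ref{lemma-chi-clique-cut} completes the argument. There is essentially no hard step here: the entire difficulty has been front-loaded into the decomposition theorem (Theorem~\ref{decomp-thm-GT}) and into the per-type colouring bounds (Lemmas~\ref{lemma-ring-chi} and~\ref{lemma-7hyperantihole-chi}), all of which are already available. The only points requiring a moment's care are confirming that $f(\omega) = \lfloor \frac{3}{2}\omega \rfloor$ is nondecreasing (so that Lemma~\ref{lemma-chi-clique-cut} applies) and checking the numerical comparison $\lfloor \frac{4}{3}\omega \rfloor \leq \lfloor \frac{3}{2}\omega \rfloor$ used in the hyperantihole case.
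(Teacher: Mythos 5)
Your proposal is correct and is essentially identical to the paper's own proof: the paper likewise reduces to the basic class $\mathcal{B}_{\text{T}}$ via Theorem~\ref{decomp-thm-GT} and Lemma~\ref{lemma-chi-clique-cut}, and then handles complete graphs, rings, and 7-hyperantiholes by the definition of $\mathcal{B}_{\text{T}}$ together with Lemmas~\ref{lemma-ring-chi} and~\ref{lemma-7hyperantihole-chi}. Your explicit checks (that $\lfloor \frac{3}{2}\omega \rfloor$ is nondecreasing and that $\lfloor \frac{4}{3}\omega \rfloor \leq \lfloor \frac{3}{2}\omega \rfloor$) are exactly the routine verifications the paper leaves implicit.
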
 
\begin{proof} 
In view of Theorem~\ref{decomp-thm-GT} and Lemma~\ref{lemma-chi-clique-cut}, it suffices to show that every graph $G \in \mathcal{B}_{\text{T}}$ satisfies $\chi(G) \leq \lfloor \frac{3}{2} \omega(G) \rfloor$. But this easily follows from the definition of $\mathcal{B}_{\text{T}}$, and from Lemmas~\ref{lemma-ring-chi} and~\ref{lemma-7hyperantihole-chi}. 
\end{proof} 

\begin{theorem} \label{chi-bound-GUTcap} Every graph in $\mathcal{G}_{\text{UT}}^{\text{cap-free}}$ satisfies $\chi(G) \leq \lfloor \frac{3}{2} \omega(G) \rfloor$. 
\end{theorem}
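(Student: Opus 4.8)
The plan is to assemble the result from the decomposition theorem for $\mathcal{G}_{\text{UT}}^{\text{cap-free}}$ together with the two $\chi$-bounding reduction lemmas, reducing first to the basic class and then to individual anticomponents. Since the candidate $\chi$-bounding function $f(\omega)=\lfloor\frac{3}{2}\omega\rfloor$ is nondecreasing, Theorem~\ref{decomp-thm-GUTcap} and Lemma~\ref{lemma-chi-clique-cut} together reduce the task to proving $\chi(G)\le\lfloor\frac{3}{2}\omega(G)\rfloor$ for every $G\in\mathcal{B}_{\text{UT}}^{\text{cap-free}}$.

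Next I would pass from the whole basic graph to its anticomponents. First I would check that $f$ is superadditive: because the floor function satisfies $\lfloor a\rfloor+\lfloor b\rfloor\le\lfloor a+b\rfloor$, taking $a=\frac{3}{2}m$ and $b=\frac{3}{2}n$ yields $\lfloor\frac{3}{2}m\rfloor+\lfloor\frac{3}{2}n\rfloor\le\lfloor\frac{3}{2}(m+n)\rfloor$. Hence Lemma~\ref{lemma-chi-anticomp} applies, and it suffices to show that every anticomponent $H$ of a graph $G\in\mathcal{B}_{\text{UT}}^{\text{cap-free}}$ satisfies $\chi(H)\le\lfloor\frac{3}{2}\omega(H)\rfloor$.

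Finally I would read off from the definition of $\mathcal{B}_{\text{UT}}^{\text{cap-free}}$ that every such anticomponent $H$ is of one of two kinds: a hyperhole of length at least five (this covers both the unique nontrivial anticomponent of length at least six in the first bullet and the $5$-hyperholes in the second bullet), or a chordal cobipartite graph (and trivial anticomponents, being copies of $K_1$, fall into this latter case). For a hyperhole $H$, Lemma~\ref{lemma-ring-chi} gives $\chi(H)\le\lfloor\frac{3}{2}\omega(H)\rfloor$ directly, since every hyperhole is a ring. For a chordal cobipartite $H$, chordality together with Theorem~\ref{thm-Dirac61} yields perfection, so $\chi(H)=\omega(H)\le\lfloor\frac{3}{2}\omega(H)\rfloor$, the last inequality holding because $\omega(H)$ is a nonnegative integer. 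This handles both kinds of anticomponent and completes the argument.

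There is no substantial obstacle here: the statement is essentially a bookkeeping assembly of results proved earlier. The only points that need care are verifying the superadditivity of $f$ (so that Lemma~\ref{lemma-chi-anticomp} is legitimately invoked) and confirming that the two bullets defining $\mathcal{B}_{\text{UT}}^{\text{cap-free}}$ really do force every anticomponent into the hyperhole-or-chordal-cobipartite dichotomy, in particular that trivial single-vertex anticomponents are correctly absorbed into the chordal cobipartite case.
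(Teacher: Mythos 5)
Your proposal is correct and follows the paper's proof exactly: Theorem~\ref{decomp-thm-GUTcap} with Lemma~\ref{lemma-chi-clique-cut} reduces to the basic class $\mathcal{B}_{\text{UT}}^{\text{cap-free}}$, Lemma~\ref{lemma-chi-anticomp} reduces to anticomponents, and the hyperhole and chordal cobipartite cases are handled by Lemma~\ref{lemma-ring-chi} and Theorem~\ref{thm-Dirac61}, respectively. The paper states this tersely, so your explicit verification of the superadditivity of $\omega \mapsto \lfloor \frac{3}{2}\omega \rfloor$ and of the anticomponent dichotomy (including trivial anticomponents) is exactly the bookkeeping the paper leaves to the reader.
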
 
\begin{proof}
In view of Theorem~\ref{decomp-thm-GUTcap} and Lemma~\ref{lemma-chi-clique-cut}, it suffices to show that every graph $G \in \mathcal{B}_{\text{UT}}^{\text{cap-free}}$ satisfies $\chi(G) \leq \lfloor \frac{3}{2} \omega(G) \rfloor$. But this easily follows from the definition of $\mathcal{B}_{\text{UT}}^{\text{cap-free}}$, from Lemmas~\ref{lemma-chi-anticomp} and~\ref{lemma-ring-chi}, and from the fact that (by Theorem~\ref{thm-Dirac61}) chordal graphs are perfect. 
\end{proof}

\subsection{Class $\mathcal{G}_{\text{UT}}$} \label{subsec6.2} 

It was proven in~\cite{alon87} that every graph of ``large'' chromatic number contains a ``highly connected'' induced subgraph of ``large'' chromatic number. The bound from~\cite{alon87} was subsequently improved in~\cite{substitution}, and it was further improved in~\cite{small-cut}. We state the result from~\cite{small-cut} below. 

\begin{theorem}\cite{small-cut} \label{thm-small-cut} Let $k$ be a positive and $c$ a nonnegative integer, and let $G$ be a graph such that $\chi(G)>\max\{c+2k-2,2k^2\}$. Then G contains a $(k+1)$-connected induced subgraph of chromatic number greater than $c$. 
\end{theorem}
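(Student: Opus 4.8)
The plan is to prove the contrapositive by induction on $|V(G)|$: assuming $G$ has no $(k+1)$-connected induced subgraph of chromatic number greater than $c$, I would show $\chi(G) \le \max\{c+2k-2, 2k^2\}$. Write $M = \max\{c+2k-2, 2k^2\}$. Two cases are immediate. If $G$ is itself $(k+1)$-connected, then by hypothesis $\chi(G) \le c \le M$. If $|V(G)| \le k+1$, then $\chi(G) \le k+1 \le 2k^2 \le M$ (using $k \ge 1$). So I may assume $|V(G)| > k+1$ and $G$ is not $(k+1)$-connected, which means $G$ has a cutset of size at most $k$; equivalently, there is a separation $(A,B)$ with $A \cup B = V(G)$, $S := A \cap B$ a cutset of size $s \le k$, and $A \setminus S$, $B \setminus S$ both nonempty and anticomplete to each other.

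To set up the inductive step, I would choose the separation $(A,B)$ carefully, namely so that the smaller side $A \setminus S$ is inclusion-minimal among all separations of order at most $k$. A standard ``fragment'' argument (if $G[A]$ had a further cutset of size at most $k$, one of the resulting pieces would yield a separation of $G$ with a strictly smaller small side) then forces $G[A]$ to be $(k+1)$-connected, modulo genuinely small cases where $|A|$ is bounded in terms of $k$. Since $G[A]$ is an induced subgraph of $G$, the standing hypothesis gives $\chi(G[A]) \le c$. Meanwhile $G[B]$ is a proper induced subgraph of $G$ that also inherits the hypothesis, so the induction hypothesis yields $\chi(G[B]) \le M$.

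The crux is a \emph{combination lemma} that glues the colorings of $G[A]$ and $G[B]$ across $S$ without losing more than $2k-2$ colors. Fix an optimal coloring of $G[B]$; it colors $S$ using a set $F$ of at most $s$ colors, and I then want to extend it to $A \setminus S$. If $\chi(G[B])$ is large enough to ``absorb'' $A$ (precisely, if $\chi(G[B]) \ge \chi(G[A \setminus S]) + s$), then at least $\chi(G[A \setminus S])$ colors of $G[B]$'s palette lie outside $F$, so $G[A \setminus S]$ can be properly colored reusing $G[B]$'s colors while each vertex avoids the colors of its neighbors in $S$; this gives $\chi(G) \le \chi(G[B]) \le M$. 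Otherwise $\chi(G[B])$ is small, and a direct ``rainbow-on-$S$'' gluing (recolor $S$ with $s$ fresh distinct colors in both pieces, which then agree, and merge) gives $\chi(G) \le \max\{\chi(G[A]),\chi(G[B])\} + (s-1) \le c + 2(s-1) \le c + 2k - 2 \le M$. Either way $\chi(G) \le M$, completing the induction.

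The main obstacle will be the two delicate, intertwined points in the last two paragraphs. First, pinning down the separation so that the small side is provably $(k+1)$-connected, and accounting for the residual small configurations; this is where the $2k^2$ term should enter, essentially as a Mader-type guarantee that a graph whose chromatic number exceeds $2k^2$ already contains a sufficiently connected subgraph to seed the argument. Second, forcing the recoloring across $S$ to cost at most $s-1$ rather than $s$ extra colors in the small-$\chi(G[B])$ case, so that the final constant is exactly $2k-2$ and not $2k-1$. Tracking these constants precisely, rather than the overall scheme, is where the real work lies.
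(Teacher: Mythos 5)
First, a framing point: the paper does not prove Theorem~\ref{thm-small-cut} at all --- it quotes the result from~\cite{small-cut} (Penev--Thomass\'e--Trotignon), so there is no in-paper argument to match your proposal against; it has to stand on its own. It does not. The fatal step is your claim that choosing a separation $(A,B)$ with $S = A \cap B$, $|S| = s \le k$, and $A \setminus S$ inclusion-minimal forces $G[A]$ to be $(k+1)$-connected; the entire induction hinges on this, since it is what licenses $\chi(G[A]) \le c$. The ``fragment'' argument you invoke fails: if $T$ is a cutset of $G[A]$ with $|T| \le k$ separating a piece $X$, then the vertices of $X \setminus S$ are separated from the rest of $G$ only by $T \cup (S \cap X)$, a set of size up to $|T| + |S| \le 2k$, not $\le k$. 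So minimality of $A \setminus S$ among order-$\le k$ separations of $G$ says nothing about small cutsets inside $G[A]$, and $G[A]$ can be large yet far from $(k+1)$-connected. Nor does the Mader-type fallback repair this: a graph with $\chi(G) > 2k^2$ indeed has degeneracy at least $2k^2$, hence by Mader a $(k+1)$-connected subgraph $H'$, and even $G[V(H')]$ is then a $(k+1)$-connected \emph{induced} subgraph (adding edges preserves connectivity) --- but nothing forces its chromatic number to exceed $c$, and your standing hypothesis is perfectly consistent with its existence. The whole difficulty of the theorem, and the actual content of~\cite{small-cut}, is coupling high connectivity with high chromatic number in the same induced subgraph; your scheme has no mechanism for that.

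One can also see concretely that the obvious patch fails. Your gluing lemmas are correct (and the $s-1$ trick is routine, not ``real work'': rename the palette on one side so the two colorings agree at a single vertex $v \in S$, then give $S \setminus \{v\}$ fresh colors in both pieces, yielding $\chi(G) \le \max\{\chi(G[A]),\chi(G[B])\} + s - 1$; the ``absorbing'' case is likewise fine). But suppose you instead take $H$ vertex-minimal among induced subgraphs with $\chi(H) \ge \chi(G) - 2k + 2$ and try to show $H$ is $(k+1)$-connected: if $H$ has a cutset of size $s \le k$ splitting it into $H_1, H_2$, your gluing bound only guarantees a piece with $\chi(H_i) \ge \chi(H) - s + 1 \ge \chi(G) - 3k + 3$, which falls below the threshold $\chi(G) - 2k + 2$ for every $k \ge 2$, so minimality is not contradicted and the induction does not close. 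Iterating with cutset order degrading from $k$ toward $2k, 3k, \dots$ is roughly where a $2k^2$-type allowance would enter, but making that work requires a genuinely different and more delicate argument than the minimal-fragment induction you outline --- which is why the cited result is a theorem of an independent paper rather than a remark.
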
 

Our next result is an easy corollary of Theorem~\ref{thm-small-cut}. 

\begin{theorem} \label{cor-small-cut}
Let $\mathcal{G}$ and $\mathcal{G}^*$ be hereditary classes, and let $f,h:\mathbb{N}^+ \rightarrow \mathbb{N}^+$ be nondecreasing functions. Assume that $\mathcal{G}$ is $\chi$-bounded by $f$, and assume that every graph $G \in \mathcal{G}^*$ either belongs to $\mathcal{G}$ or admits a cutset of size at most $h(\omega(G))$. Then $\mathcal{G}^*$ is $\chi$-bounded by the function $g:\mathbb{N}^+ \rightarrow \mathbb{N}^+$ given by $g(n) = \max\{f(n)+2h(n)-2,2h(n)^2\}$ for all $n \in \mathbb{N}^+$. 
\end{theorem}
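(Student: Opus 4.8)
The plan is to argue by contradiction, feeding the parameters $k = h(\omega(G))$ and $c = f(\omega(G))$ into Theorem~\ref{thm-small-cut}. So suppose, for a contradiction, that some $G \in \mathcal{G}^*$ violates the bound, i.e.\ that $\chi(G) > g(\omega(G)) = \max\{f(\omega(G))+2h(\omega(G))-2, 2h(\omega(G))^2\}$. Writing $\omega = \omega(G)$, $k = h(\omega)$, and $c = f(\omega)$, I first observe that $k \geq 1$ is a positive integer and $c \geq 0$ is a nonnegative integer (since $f$ and $h$ map into $\mathbb{N}^+$), so the hypotheses of Theorem~\ref{thm-small-cut} are met. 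As $\chi(G) > \max\{c+2k-2, 2k^2\}$, the theorem produces a $(k+1)$-connected induced subgraph $H$ of $G$ with $\chi(H) > c = f(\omega)$.

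Next I would pin down the contradiction by showing that $H$ both must and cannot admit a small cutset. On the one hand, since $H$ is an induced subgraph of $G$, we have $\omega(H) \leq \omega$; because $f$ is nondecreasing, $f(\omega(H)) \leq f(\omega) < \chi(H)$, so $H$ fails to be $\chi$-bounded by $f$ at its own clique number. Since $\mathcal{G}$ \emph{is} $\chi$-bounded by $f$, this forces $H \notin \mathcal{G}$. On the other hand, $\mathcal{G}^*$ is hereditary and $H$ is an induced subgraph of $G \in \mathcal{G}^*$, so $H \in \mathcal{G}^*$; applying the structural hypothesis to $H$ and using $H \notin \mathcal{G}$, we conclude that $H$ admits a cutset of size at most $h(\omega(H)) \leq h(\omega) = k$, where the inequality uses that $h$ is nondecreasing together with $\omega(H) \leq \omega$.

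The final step is to note that this contradicts the connectivity of $H$: a $(k+1)$-connected graph has no cutset of size at most $k$, so $H$ cannot have the cutset just produced. This completes the contradiction, establishing that every $G \in \mathcal{G}^*$ satisfies $\chi(G) \leq g(\omega(G))$, i.e.\ that $\mathcal{G}^*$ is $\chi$-bounded by $g$. I do not anticipate a genuine obstacle here, since the argument is a direct application of Theorem~\ref{thm-small-cut}; the only points requiring care are bookkeeping ones. Specifically, I must check that the parameters $k = h(\omega)$ and $c = f(\omega)$ are of the integer types the cited theorem demands, that the assumed failure value $g(\omega)$ matches exactly the threshold $\max\{c+2k-2, 2k^2\}$ appearing there, and that the monotonicity of $f$ and $h$ is invoked correctly to transfer bounds between the clique numbers of $G$ and of the extracted subgraph $H$. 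Reconciling the notion of ``$(k+1)$-connected'' with ``has no cutset of size at most $k$'' is a purely definitional verification.
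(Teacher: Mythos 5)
Your proposal is correct and follows essentially the same route as the paper's proof: fix a counterexample $G$, set $k = h(\omega(G))$ and $c = f(\omega(G))$, extract a $(k+1)$-connected induced subgraph $H$ with $\chi(H) > c$ via Theorem~\ref{thm-small-cut}, use monotonicity of $f$ and the $\chi$-boundedness of $\mathcal{G}$ to conclude $H \notin \mathcal{G}$, then use heredity of $\mathcal{G}^*$ and the structural hypothesis to produce a cutset of size at most $h(\omega(H)) \leq k$, contradicting the connectivity of $H$. All the bookkeeping points you flag (integer types of the parameters, matching $g(\omega)$ to the threshold in the cited theorem, the two monotonicity transfers) are handled exactly as in the paper, so there is nothing to add.
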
 
\begin{proof} 
Suppose otherwise. Fix $G \in \mathcal{G}^*$ such that $\chi(G) > g(\omega(G))$. Set $k = h(\omega(G))$ and $c = f(\omega(G))$; then $\chi(G) > \max\{c+2k-2,2k^2\}$, and so by Theorem~\ref{thm-small-cut}, $G$ contains a $(k+1)$-connected induced subgraph $H$ such that $\chi(H) > c$. Since $\mathcal{G}^*$ is hereditary, we know that $H \in \mathcal{G}^*$. Since $f$ is nondecreasing, we have that $\chi(H) > c = f(\omega(G)) \geq f(\omega(H))$; since $\mathcal{G}$ is $\chi$-bounded by $f$, it follows that $H \notin \mathcal{G}$. Since $H \in \mathcal{G}^*$ and $H \notin \mathcal{G}$, it follows that $H$ has a cutset of size at most $h(\omega(H))$. But since $h$ is nondecreasing, we have that $h(\omega(H)) \leq h(\omega(G)) = k$, and so $H$ has a cutset of size at most $k$, contrary to the fact that $H$ is $(k+1)$-connected. This proves that $\mathcal{G}^*$ is $\chi$-bounded by $g$. 
\end{proof} 

Given $k,\ell\in\mathbb{N}^+$, the \emph{Ramsey number} $R(k,\ell)$ is the smallest integer such that all graphs on $R(k,\ell)$ vertices contain a clique of size $k$ or a stable set of size $\ell$ (see, for instance, chapter 8.3 of \cite{West}). A {\em double-star-cutset} of a graph $G$ is a cutset $S$ of $G$ such that there exist two adjacent vertices $u,v \in S$ (called the {\em centers} of the double-star-cutset $S$) such that $S \subseteq N_G[u] \cup N_G[v]$.

\begin{theorem} \label{GUT-cap-double-star-cut} Every graph $G \in \mathcal{G}_{\text{UT}}$ satisfies at least one of the following: 
\begin{itemize} 
\item $G$ is cap-free (and so $G \in \mathcal{G}_{\text{UT}}^{\text{cap-free}}$); 
\item $\omega(G) \geq 3$, and $G$ admits a double-star-cutset of size at most $R(\omega(G)-1,3)+4\omega(G)-7$. 
\end{itemize} 
\end{theorem}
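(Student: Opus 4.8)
The plan is to build the cutset directly from a cap, controlling its size by combining the vertex-attachment restrictions of Lemma~\ref{lemma-vertex-attach-GUT} with a single Ramsey estimate. First I would dispose of the easy reductions. If $G$ admits a clique-cutset $C$, then fixing any edge $uv$ of $C$ we have $C \subseteq N_G[u] \subseteq N_G[u] \cup N_G[v]$, so $C$ is a double-star-cutset; since $|C| \le \omega(G) \le R(\omega(G)-1,3)+4\omega(G)-7$, we are done. Hence I may assume $G$ has no clique-cutset. Now suppose $G$ is not cap-free, and fix a cap consisting of a hole $H = h_1,\dots,h_k,h_1$ and a vertex $c$ with $N_G(c)\cap V(H)=\{h_1,h_2\}$, chosen so that its hole is as long as possible. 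The triangle $\{c,h_1,h_2\}$ shows $\omega(G)\ge 3$. A short argument (or the structural reduction via Lemma~\ref{lemma-ring-or-clique-cut-GUT} and Theorem~\ref{decomp-thm-GUT}) lets me assume $k\ge 5$; the $4$-hole (house) case is treated separately and there a cutset of size at most $2$ inside $N_G[h_1]\cup N_G[h_2]$ is readily available.

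The centers of the double-star-cutset will be $h_1$ and $h_2$. Let $A$ be the component of $G\setminus(N_G[h_1]\cup N_G[h_2])$ containing $h_4$ (note $h_4\notin N_G[h_1]\cup N_G[h_2]$ since $k\ge 5$), and set $S=N_G(A)$. Because $A$ is a component of $G\setminus(N_G[h_1]\cup N_G[h_2])$, every neighbour of $A$ lies in $N_G[h_1]\cup N_G[h_2]$, so $S\subseteq N_G[h_1]\cup N_G[h_2]$; moreover $h_1,h_2\notin A\cup S$, so $S$ separates $A$ from $h_1$ and is a double-star-cutset with centers $h_1,h_2$. It remains to bound $|S|$, and I would do this by partitioning $S$ into the common part $S_{12}=S\cap N_G(h_1)\cap N_G(h_2)$ and the private parts $S_1=S\cap(N_G(h_1)\setminus N_G(h_2))$ and $S_2=S\cap(N_G(h_2)\setminus N_G(h_1))$.

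The technical heart is the following observation: if a vertex $x$ has three pairwise non-adjacent neighbours each having a neighbour in the connected set $A$, while $x$ itself is anticomplete to $A$, then routing three paths from $x$ through $A$ produces a theta, pyramid, prism, or a wheel whose three spokes are non-consecutive (hence a proper wheel), all of which are forbidden in $\mathcal{G}_{\text{UT}}$. Applied with $x=h_1$ (which is anticomplete to $A$), this shows that $S_{12}$ and $S_1$ contain no stable set of size $3$, and symmetrically $S_2$ does not. For the clique bounds, any clique of $S_{12}$ together with the edge $h_1h_2$ is a clique, so $S_{12}$ has no clique of size $\omega(G)-1$; by the definition of the Ramsey number this gives $|S_{12}|\le R(\omega(G)-1,3)-1$. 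For $S_1$, Lemma~\ref{lemma-vertex-attach-GUT} forces every $s\in S_1$ to satisfy $N_G(s)\cap V(H)\subseteq\{h_{k-1},h_k,h_1\}$; choosing $H$ to be a \emph{longest} hole rules out the attachment $\{h_k,h_1\}$ (else $h_1,\dots,h_k,s,h_1$ is a longer hole), so $S_1$ splits into the twin class $X_{h_k}(H)$ (a clique, by Lemma~\ref{lemma-hole-ring-GUT}) and the vertices attaching to $H$ only at $h_1$; the latter form a clique by the three-path observation, using $h_k$ as the third non-neighbour of $h_1$ reaching $A$. Adjoining $h_1$ to each clique bounds its size by $\omega(G)-1$, giving a linear bound on $|S_1|$ and, symmetrically, on $|S_2|$; summing the three estimates yields the claimed $|S|\le R(\omega(G)-1,3)+4\omega(G)-7$.

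I expect the main obstacle to be twofold. The first is making the three-path observation fully rigorous: one must extract an \emph{induced} 3PC or proper wheel from a minimal connected subgraph of $A$ meeting the three neighbours, and verify that the outcome is never a permitted universal or twin wheel — this is exactly where the pairwise non-adjacency of the three neighbours is essential, since three non-adjacent spokes cannot be consecutive on a rim. The second is the clique accounting for $S_1$ and $S_2$: the crude split gives $|S_1|,|S_2|\le 2\omega(G)-2$, and sharpening the analysis (using the longest-hole property and the interaction of $X_{h_k}(H)$ with the vertices attaching only at $h_1$) so that the constant matches the stated $4\omega(G)-7$ requires care. The $4$-hole case needs its own short treatment, but it is comparatively routine.
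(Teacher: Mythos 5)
There is a genuine gap, and it sits exactly where the paper's proof does its real work. Your set $S = N_G(A)$ is not a double-star-cutset under the paper's definition: since $A$ is a component of $G \setminus (N_G[h_1] \cup N_G[h_2])$, neither $h_1$ nor $h_2$ has a neighbor in $A$, so $h_1,h_2 \notin N_G(A) = S$; but the definition requires the two centers to be adjacent vertices \emph{of} $S$ with $S \subseteq N_G[u] \cup N_G[v]$, so $h_1,h_2$ cannot serve as centers of your $S$. The natural repair, $S' = S \cup \{h_1,h_2\}$, is no longer automatically a cutset: the only vertices your construction separates from $A$ are $h_1$ and $h_2$ themselves, which now sit inside $S'$. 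To get a second component you must exhibit a vertex outside $A \cup N_G(A) \cup \{h_1,h_2\}$ --- in effect, you must show the cap-vertex $c$ has no neighbor in $A$ (and no escape into it), which is not free: if $c$ sees $A$, a contradiction is reached only via a minimal-path analysis extracting a pyramid $3PC(h_1h_2c,\cdot)$, a theta, or a proper wheel. That separation argument is precisely the bulk of the paper's proof (its claims (2) and (3), showing that the explicit set $\{x,y\} \cup T_x \cup T_y \cup T_{x_1} \cup T_{x_h} \cup U_H$ separates $c$ from $x_1,\dots,x_h$), and it is entirely absent from your proposal. Relatedly, your Ramsey step for $S_{12}$ rests on the unproven ``three-path observation'' (a 3-in-a-tree-type extraction), whereas the paper needs only a one-line argument: $x$ and $x_1$ are nonadjacent and complete to $U_H$, so $\alpha(G[U_H]) \leq 2$ by Lemma~\ref{rmrk-K23-anticomp} ($K_{2,3}$-freeness); and your own accounting $|S_1|,|S_2| \leq 2\omega-2$ overshoots $4\omega-7$, with the longest-hole trick for excluding the attachment $\{h_k,h_1\}$ failing when $c$ is adjacent to the new vertex $s$ (the extended hole then carries only a twin wheel, not a longer cap).

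The house case is also genuinely broken, not ``comparatively routine.'' Once you assume $G$ has no clique-cutset, a double-star-cutset of size at most $2$ cannot exist: such a set would consist of two adjacent vertices, i.e., would itself be a clique-cutset. Moreover, the reduction to $k \geq 5$ is unavailable: by Theorem~\ref{decomp-thm-GUT}, a clique-cutset-free graph in $\mathcal{G}_{\text{UT}}$ not of the long-ring type either contains no long holes or has stability number $2$, and in both situations every cap is a house (a cap on a hole of length at least $5$ contains a stable set of size $3$). Concretely, $\overline{C_7} \in \mathcal{G}_{\text{T}} \subseteq \mathcal{G}_{\text{UT}}$ contains a house, admits no clique-cutset (its connectivity is $4$ while $\omega(\overline{C_7}) = 3$), has only $4$-holes, and its minimum cutset has size $4$ --- so neither ``assume $k \geq 5$'' nor ``a cutset of size at most $2$'' applies. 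The paper avoids the dichotomy altogether by running one uniform argument for caps on holes of every length at least $4$ (its cap-hole is $x,y,x_1,\dots,x_h$ with $h \geq 2$), defining the cutset explicitly as the centers plus four twin classes plus $U_H$, and then proving separation; to close your gaps you would essentially have to reproduce that argument. (A smaller point: your opening reduction ``fix any edge $uv$ of $C$'' presumes the clique-cutset has size at least $2$, which empty or singleton clique-cutsets need not satisfy.)
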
 
\begin{proof} 
Fix $G \in \mathcal{G}_{\text{UT}}$. We may assume that $G$ is not cap-free, for otherwise, we are done. Since every cap contains a triangle, this implies that $\omega(G) \geq 3$. Since $G$ contains a cap, we know that there exists a hole $H = x,y,x_1,\dots,x_h,x$ (with $h \geq 2$) in $G$, and a vertex $c \in V(G) \setminus V(H)$ such that $N_G(c) \cap V(H) = \{x,y\}$. (Thus, $G[V(H) \cup \{c\}]$ is a cap.) For all $v \in V(H)$, let $T_v$ be the set of all twins of $v$ with respect to $H$, that is, let $T_v = X_v(H) \setminus \{v\}$. Set $S = \{x,y\} \cup T_x \cup T_y \cup T_{x_1} \cup T_{x_h} \cup U_H$. Our goal is to show that $S$ is a double-star-cutset (with centers $x,y$), with $|S| \leq R(\omega(G)-1,3)+4\omega(G)-7$. 

\begin{quote} 
\emph{(1) $|S| \leq R(\omega(G)-1,3)+4\omega(G)-7$.} 
\end{quote} 
\begin{proof}[Proof of (1)] 
By Lemma~\ref{lemma-hole-ring-GUT}, $T_v$ is a clique for all $v \in V(H)$. Furthermore, for all $v \in V(H)$, both $v$ and its two neighbors in $H$ are complete to $T_v$; consequently, $|T_v| \leq \omega(G)-2$ for all $v \in V(H)$, and it follows that $|S \setminus U_H| = |\{x,y\} \cup T_x \cup T_y \cup T_{x_1} \cup T_{x_h}| \leq 4\omega(G)-6$. 

It remains to show that $|U_H| \leq R(\omega(G)-1,3)-1$. Since $U_H$ is complete to the clique $\{x,y\}$, we know that $\omega(G[U_H]) \leq \omega(G)-2$. Next, note that $x$ and $x_1$ are nonadjacent and complete to $U_H$, and so Lemma~\ref{rmrk-K23-anticomp} applied to $G[\{x,x_1\} \cup U_H]$ implies that $\alpha(G[U_H]) \leq 2$. Thus, $G[U_H]$ contains neither a clique of size $\omega(G)-1$ nor a stable set of size three, and so $|U_H| \leq R(\omega(G)-1,3)-1$. This proves (1). 
\end{proof} 

It remains to prove that $S$ is a double-star-cutset with centers $x$ and $y$. First of all, it is clear that $x$ and $y$ are adjacent, that $x$ is complete to $T_x \cup T_{x_h}$, that $y$ is complete to $T_y \cup T_{x_1}$, and that $\{x,y\}$ is complete to $U_H$. Thus, it suffices to show that $S$ is a cutset of $G$ separating $c$ from $\{x_1,\dots,x_h\}$. Clearly, we may now assume that $T_x \cup T_y \cup T_{x_1} \cup T_{x_h} \cup U_H = \emptyset$, and we just need to show that $\{x,y\}$ is a cutset of $G$ that separates $c$ from $\{x_1,\dots,x_h\}$. 

Suppose otherwise, that is, suppose that $\{x,y\}$ does not separate $c$ from $\{x_1,\dots,x_h\}$ in $G$. Fix a minimum-length induced path $P$ in $G \setminus \{x,y\}$ such that one endpoint of $P$ is $c$, and the other endpoint of $P$ belongs to $\{x_1,\dots,x_h\}$. Since $c$ is anticomplete to $\{x_1,\dots,x_h\}$, we know that $P$ is of length at least two. Set $P = p_0,p_1,\dots,p_{n+1}$ (with $n \geq 1$), so that $c = p_0$ and $p_{n+1} \in \{x_1,\dots,x_h\}$. By the minimality of $P$, we know that $c$ is anticomplete to $\{p_2,\dots,p_{n+1}\}$, and that vertices $p_0,\dots,p_{n-1}$ are anticomplete to $\{x_1,\dots,x_h\}$. 

\begin{quote} 
\emph{(2) $N_G(p_n) \cap V(H)$ is a clique of size at most two.} 
\end{quote} 
\begin{proof}[Proof of (2)] 
Suppose otherwise. Since $U_H = \emptyset$, Lemma~\ref{lemma-vertex-attach-GUT} implies that $p_n$ is a twin of some vertex of $H$ with respect to $H$. Since $T_x \cup T_y \cup T_{x_1} \cup T_{x_h} = \emptyset$, we see that $p_n \in T_{x_i}$ for some $i \in \{2,\dots,h-1\}$ (and in particular, $h \geq 3$). Note that $p_0 = c$ is adjacent to $x,y \in V(H)$; let $j \in \{0,\dots,n-1\}$ be maximal with the property that $p_j$ has a neighbor in $V(H)$. We know that $p_j$ is anticomplete to $\{x_1,\dots,x_h\}$, and so $N_G(p_j) \cap V(H) \subseteq \{x,y\}$, and at least one of $x,y$ is adjacent to $p_j$. If $p_j$ is complete to $\{x,y\}$, then $G[(V(H) \setminus \{x_i\}) \cup \{p_j,\dots,p_n\}]$ is a $3PC(p_jxy,p_n)$, a contradiction. Thus, $p_j$ is adjacent to exactly one of $x,y$; by symmetry, we may assume that $p_j$ is adjacent to $x$ and nonadjacent to $y$. But now $G[(V(H) \setminus \{x_i\}) \cup \{p_j,\dots,p_n\}]$ is a $3PC(x,p_n)$, a contradiction. This proves (2). 
\end{proof}

\begin{quote} 
\emph{(3) Vertex $p_n$ is anticomplete to $\{x,y\}$.} 
\end{quote} 
\begin{proof}[Proof of (3)] 
Suppose otherwise. Since $p_n$ has a neighbor (namely $p_{n+1}$) in $\{x_1,\dots,x_h\}$, (2) implies that either $N_G(p_n) \cap V(H) = \{y,x_1\}$ or $N_G(p_n) \cap V(H) = \{x,x_h\}$; by symmetry, we may assume that $N_G(p_n) \cap V(H) = \{y,x_1\}$. Now, we know that $x$ is adjacent to $c = p_0$; let $j \in \{0,\dots,n-1\}$ be maximal with the property that $p_jx \in E(G)$. But now $Y = x,p_j,\dots,p_n,x_1,\dots,x_h,x$ is a hole and $(Y,y)$ a proper wheel in $G$, a contradiction. This proves (3). 
\end{proof} 

We know that $p_n$ has a neighbor (namely, $p_{n+1}$) in $\{x_1,\dots,x_h\}$. We may assume that $p_n$ has a neighbor in $\{x_1,\dots,x_{h-1}\}$, for the case when $x_h$ is the only neighbor of $p_n$ in $\{x_1,\dots,x_h\}$ is symmetric to the case when $x_1$ is the only neighbor of $p_n$ in $\{x_1,\dots,x_h\}$. Now, let $i \in \{1,\dots,h-1\}$ be minimal with the property that $p_nx_i \in E(G)$; it now follows from (2) and (3) that $x_i \in N_G(p_n) \cap V(H) \subseteq \{x_i,x_{i+1}\}$. 

Recall that $p_0 = c$ is adjacent to $x,y \in V(H)$; let $j \in \{0,\dots,n-1\}$ be maximal with the property that $p_j$ has a neighbor in $V(H)$. We know that $p_j$ is anticomplete to $\{x_1,\dots,x_h\}$, and so we have that $N_G(p_j) \cap V(H) \subseteq \{x,y\}$, and that $p_j$ is adjacent to at least one of $x,y$. Set $K = G[V(H) \cup \{p_j,\dots,p_n\}]$. It then follows from (3) and routine checking that $y$ is the only neighbor of $p_j$ in $V(H)$, and $x_1$ is the only neighbor of $p_n$ in $V(H)$, for otherwise, $K$ is a $3PC$, a contradiction. Note that we now have that $x$ is anticomplete to $\{p_j,\dots,p_n\}$. Recall that $x$ is adjacent to $p_0 = c$ (thus, $j \geq 1$); let $\ell \in \{0,\dots,j-1\}$ be maximal with the property that $xp_{\ell} \in E(G)$. But now $Y = x,p_{\ell},\dots,p_n,x_1,\dots,x_h,x$ is a hole and $(Y,y)$ a proper wheel in $G$, a contradiction. This completes the argument. 
\end{proof} 

\begin{theorem} \label{chi-bound-GUT-Ramsey}
The class $\mathcal{G}_{\text{UT}}$ is $\chi$-bounded by the function $g:\mathbb{N}^+ \rightarrow \mathbb{N}^+$ given by $g(1) = 1$, $g(2) = 3$, and $g(n) = 2(R(n-1,3)+4n-7)^2$ for $n \geq 3$. 
\end{theorem}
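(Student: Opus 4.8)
The plan is to assemble the statement from three ingredients that are already in place: the linear $\chi$-bounding function for the cap-free subclass (Theorem~\ref{chi-bound-GUTcap}), the double-star-cutset structure theorem (Theorem~\ref{GUT-cap-double-star-cut}), and the small-cutset reduction (Theorem~\ref{cor-small-cut}). Concretely, I would apply Theorem~\ref{cor-small-cut} with $\mathcal{G} = \mathcal{G}_{\text{UT}}^{\text{cap-free}}$ and $\mathcal{G}^* = \mathcal{G}_{\text{UT}}$; both are hereditary, as each is defined by excluded induced subgraphs. For the $\chi$-bounding function of $\mathcal{G}$, I would take $f(n) = \lfloor \frac{3}{2}n\rfloor$, which is admissible by Theorem~\ref{chi-bound-GUTcap} and is clearly nondecreasing.

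Next I would produce the cutset-size function $h$. Since every double-star-cutset is in particular a cutset, Theorem~\ref{GUT-cap-double-star-cut} says that each $G \in \mathcal{G}_{\text{UT}}$ is either cap-free (hence in $\mathcal{G}_{\text{UT}}^{\text{cap-free}} = \mathcal{G}$) or has $\omega(G) \geq 3$ and admits a cutset of size at most $R(\omega(G)-1,3) + 4\omega(G) - 7$. I would therefore set $h(n) = R(n-1,3) + 4n - 7$ for $n \geq 3$ and $h(1) = h(2) = 8$. One checks $h(3) = R(2,3) + 5 = 8$ and that $h$ is nondecreasing (Ramsey numbers are nondecreasing in their first argument and the linear part increases), so $h$ is an admissible nondecreasing function $\mathbb{N}^+ \to \mathbb{N}^+$. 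The hypothesis of Theorem~\ref{cor-small-cut} then holds: if $\omega(G) \leq 2$ then $G$ is triangle-free, hence cap-free (every cap contains a triangle), so $G \in \mathcal{G}$; and if $G$ is not cap-free then $\omega(G) \geq 3$ and the cutset bound $h(\omega(G))$ applies. Theorem~\ref{cor-small-cut} then yields that $\mathcal{G}_{\text{UT}}$ is $\chi$-bounded by $g'(n) = \max\{f(n) + 2h(n) - 2,\ 2h(n)^2\}$.

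It then remains to reconcile $g'$ with the stated function $g$. For $n \geq 3$ I would verify that the quadratic term dominates, i.e. $2h(n)^2 \geq f(n) + 2h(n) - 2$: since $h(n) \geq 4n-4$ and $f(n) = \lfloor \frac{3}{2}n\rfloor \leq 2h(n)$, the inequality reduces to $2(h(n)-1)^2 \geq 0$, so $g'(n) = 2h(n)^2 = 2(R(n-1,3)+4n-7)^2 = g(n)$. For the two small cases I would not use $g'$ (which is wastefully large there) but argue directly: a graph with $\omega(G) = 1$ is edgeless and satisfies $\chi(G) = 1 = g(1)$, while a graph with $\omega(G) = 2$ is triangle-free, hence cap-free, hence lies in $\mathcal{G}_{\text{UT}}^{\text{cap-free}}$, so Theorem~\ref{chi-bound-GUTcap} gives $\chi(G) \leq \lfloor \frac{3}{2}\cdot 2\rfloor = 3 = g(2)$. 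Combining the three ranges gives $\chi(G) \leq g(\omega(G))$ for every $G \in \mathcal{G}_{\text{UT}}$.

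There is no genuinely hard step here: the real content lives in the three invoked theorems, so the remaining work is bookkeeping. The two points requiring a little care are choosing the values $h(1), h(2)$ so that $h$ stays nondecreasing while the hypothesis of Theorem~\ref{cor-small-cut} remains valid, and confirming that $2h(n)^2$ overtakes the linear expression for $n \geq 3$. The only subtlety worth flagging is that the global bound $g'$ produced by Theorem~\ref{cor-small-cut} is far from optimal for $\omega(G) \in \{1,2\}$, which is precisely why the statement treats those two values separately.
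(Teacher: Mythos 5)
Your proposal is correct and follows essentially the same route as the paper's proof: both combine Theorem~\ref{chi-bound-GUTcap}, Theorem~\ref{GUT-cap-double-star-cut}, and Theorem~\ref{cor-small-cut} with $f(n)=\lfloor \frac{3}{2}n\rfloor$ and $h(n)=R(n-1,3)+4n-7$ for $n\geq 3$, then treat $\omega\leq 2$ directly and check that the quadratic term dominates the linear one for $\omega\geq 3$. The only (immaterial) difference is your choice $h(1)=h(2)=8$ where the paper sets $h(1)=h(2)=1$; both keep $h$ nondecreasing, and the cutset hypothesis is only invoked when $\omega(G)\geq 3$, so either choice is admissible.
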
 
\begin{proof} Let $f:\mathbb{N}^+ \rightarrow \mathbb{N}^+$ be given by $f(n) = \lfloor \frac{3}{2} n \rfloor$. Let $h:\mathbb{N}^+ \rightarrow \mathbb{N}^+$ be given by $h(1) = h(2) = 1$, and $h(n) = R(n-1,3)+4n-7$ for $n \geq 3$. Define $\widetilde{g}:\mathbb{N}^+ \rightarrow \mathbb{N}^+$ by setting $\widetilde{g}(n) = \max\{f(n)+2h(n)-2,2h(n)^2\}$. By Theorem~\ref{chi-bound-GUTcap}, $\mathcal{G}_{\text{UT}}^{\text{cap-free}}$ is $\chi$-bounded by $f$. On the other hand, Theorem~\ref{GUT-cap-double-star-cut} guarantees that every graph $G \in \mathcal{G}_{\text{UT}}$ either belongs to $\mathcal{G}_{\text{UT}}^{\text{cap-free}}$ or admits a cutset of size at most $h(\omega(G))$. Therefore, by Theorem~\ref{cor-small-cut}, we have that $\mathcal{G}_{\text{UT}}$ is $\chi$-bounded by $\widetilde{g}$. 

Now, to show that $\mathcal{G}_{\text{UT}}$ is in fact $\chi$-bounded by $g$, we fix $G \in\mathcal{G}_{\text{UT}}$, we set $\omega = \omega(G)$, and we prove that $\chi(G) \leq g(\omega)$. If $\omega = 1$, then the result is immediate. Next, suppose that $\omega = 2$. Since every cap contains a triangle, this implies that $G$ is cap-free. It follows that $G \in \mathcal{G}_{\text{UT}}^{\text{cap-free}}$, and so $\chi(G) \leq f(2) = 3 = g(2)$. From now on, we assume that $\omega \geq 3$. Since $\mathcal{G}_{\text{UT}}$ is $\chi$-bounded by $\widetilde{g}$, we just need to show that $g(\omega) = \widetilde{g}(\omega)$. By the definition of $g$ and $\widetilde{g}$, and by an easy calculation, we get the following: 
\begin{displaymath} 
\begin{array}{rcl} 
\widetilde{g}(\omega) & = & \max\{f(\omega)+2h(\omega)-2,2h(\omega)^2\} 
\\
\\
& = & \max\{\lfloor \frac{3}{2}\omega \rfloor+2R(\omega-1,3)+8\omega-14-2,2(R(\omega-1,3)+4\omega-7)^2\}
\\
\\
& = & 2(R(\omega-1,3)+4\omega-7)^2 
\\
\\
& = & g(\omega). 
\end{array} 
\end{displaymath} 
This completes the argument. 
\end{proof} 

Since $R(k,3)$ is of order $k^2/\log k$ \cite{Kim}, Theorem~\ref{chi-bound-GUT-Ramsey} implies that there exists a constant $c > 0$ such that every graph $G\in\mathcal{G}_{\text{UT}}$ that has at least one edge satisfies $\chi(G) \leq c\frac{\omega(G)^4}{\log^2 \omega(G)}$. We also have the following corollary of Theorem~\ref{chi-bound-GUT-Ramsey}. 

\begin{theorem}\label{chi-bound-GUT}
Every graph $G\in\mathcal{G}_{\text{UT}}$ satisfies $\chi(G) \leq 2\omega(G)^4$. 
\end{theorem}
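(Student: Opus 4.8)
The plan is to deduce this directly from Theorem~\ref{chi-bound-GUT-Ramsey}, which already establishes that $\mathcal{G}_{\text{UT}}$ is $\chi$-bounded by the function $g$ with $g(1)=1$, $g(2)=3$, and $g(n)=2(R(n-1,3)+4n-7)^2$ for $n\geq 3$. Since that theorem gives $\chi(G)\leq g(\omega(G))$ for every $G\in\mathcal{G}_{\text{UT}}$, it suffices to prove the purely numerical inequality $g(\omega)\leq 2\omega^4$ for all $\omega\in\mathbb{N}^+$. Thus the graph-theoretic content is entirely carried by Theorem~\ref{chi-bound-GUT-Ramsey}, and what remains is a short estimate on $g$.

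First I would dispose of the small cases: for $\omega=1$ we have $g(1)=1\leq 2=2\cdot 1^4$, and for $\omega=2$ we have $g(2)=3\leq 32=2\cdot 2^4$. For $\omega\geq 3$, since $g(\omega)=2(R(\omega-1,3)+4\omega-7)^2$, the desired bound $g(\omega)\leq 2\omega^4$ is equivalent to $\big(R(\omega-1,3)+4\omega-7\big)^2\leq \omega^4$, i.e.\ (as both sides are nonnegative) to the inequality
\[
R(\omega-1,3)+4\omega-7\leq \omega^2.
\]
So everything reduces to controlling the Ramsey number $R(\omega-1,3)$ by a quadratic in $\omega$.

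The key ingredient is the classical upper bound $R(k,3)\leq \binom{k+1}{2}$, which applied with $k=\omega-1$ gives $R(\omega-1,3)\leq \binom{\omega}{2}=\tfrac{\omega(\omega-1)}{2}$. Substituting this into the displayed inequality, it suffices to check
\[
\frac{\omega(\omega-1)}{2}+4\omega-7\leq \omega^2,
\]
which after clearing denominators rearranges to $\omega^2-7\omega+14\geq 0$. The quadratic $\omega^2-7\omega+14$ has discriminant $49-56=-7<0$, so it is strictly positive for every real $\omega$; in particular it is nonnegative for all integers $\omega\geq 3$, and the bound follows. Combining this with the two small cases yields $g(\omega)\leq 2\omega^4$ for all $\omega\in\mathbb{N}^+$, and hence $\chi(G)\leq 2\omega(G)^4$ for every $G\in\mathcal{G}_{\text{UT}}$.

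The only real ``obstacle'' here is a minor one, namely invoking the correct Ramsey estimate: one needs the bound $R(k,3)\leq\binom{k+1}{2}$ (rather than merely the asymptotic $R(k,3)=\Theta(k^2/\log k)$ quoted just after Theorem~\ref{chi-bound-GUT-Ramsey}), since the explicit coefficient $2$ in $2\omega^4$ requires an explicit, non-asymptotic bound. Once that is in hand, the remainder is the elementary negative-discriminant computation above, and no graph-theoretic argument beyond Theorem~\ref{chi-bound-GUT-Ramsey} is needed.
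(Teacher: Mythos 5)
Your proposal is correct and follows essentially the same route as the paper's own proof: both deduce the result from Theorem~\ref{chi-bound-GUT-Ramsey} by reducing to the inequality $R(\omega-1,3)+4\omega-7\leq\omega^2$ for $\omega\geq 3$, and both bound the Ramsey number via the Erd\H{o}s--Szekeres estimate (the paper writes $R(\omega-1,3)\leq\binom{\omega}{2}$, which is exactly your $R(k,3)\leq\binom{k+1}{2}$ with $k=\omega-1$), finishing with the same elementary quadratic check. Your negative-discriminant computation merely makes explicit what the paper calls ``a simple calculation,'' so there is nothing to add.
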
 
\begin{proof} Let $\omega = \omega(G)$. If $\omega \leq 2$, then the result follows immediately from Theorem~\ref{chi-bound-GUT-Ramsey}. So assume that $\omega \geq 3$. In view of Theorem~\ref{chi-bound-GUT-Ramsey}, we need only show that $2(R(\omega-1,3)+4\omega-7)^2 \leq 2\omega^4$, which is, in turn, equivalent to showing that $R(\omega-1,3)+4\omega-7 \leq \omega^2$. By the Erd\H os-Szekeres upper bound for Ramsey numbers (see \cite{West}), we know that $R(k,\ell) \leq {k+\ell-2 \choose \ell-1}$ for all $k,\ell \in \mathbb{N}^+$; thus, $R(\omega-1,3) \leq {\omega \choose 2}$, and consequently, $R(\omega-1,3)+4\omega-7 \leq {\omega \choose 2}+4\omega-7$. A simple calculation now shows that ${\omega \choose 2}+4\omega-7 \leq \omega^2$, and the result follows. 
\end{proof}

\section{Algorithms} \label{sec:alg}

Unless stated otherwise, in all our algorithms, $n$ denotes the number of vertices and $m$ the number of edges of the input graph. 

We remark that our algorithms are {\em robust}, that is, they either produce a correct solution to the problem in question for the input (weighted) graph, or they correctly determine that the graph does not belong to the class under consideration.

Our decomposition theorems for classes $\mathcal{G}_{\text{UT}},\mathcal{G}_{\text{U}},\mathcal{G}_{\text{T}},\mathcal{G}_{\text{UT}}^{\text{cap-free}}$ all involve clique-cutsets, and for this reason, the algorithmic tools developed in~\cite{Tarjan} for handling clique-cutsets will be used extensively in this section. Our next subsection (subsection~\ref{sec:alg:Tarjan}) heavily borrows from~\cite{Tarjan}. 

\subsection{Clique-cutset decomposition tree} \label{sec:alg:Tarjan} 

A function $f:\mathbb{N}^p \rightarrow \mathbb{N}$ is said to be {\em nondecreasing} if it satisfies the property that for all $n_1,\dots,n_p,n_1',\dots,n_p' \in \mathbb{N}$ such that $n_1 \leq n_1',\dots,n_p \leq n_p'$, we have that $f(n_1,\dots,n_p) \leq f(n_1',\dots,n_p')$; $f$ is said to be {\em superadditive} if for all $n_1,\dots,n_p,n_1',\dots,n_p' \in \mathbb{N}$, we have that $f(n_1,\dots,n_p)+f(n_1',\dots,n_p') \leq f(n_1+n_1',\dots,n_p+n_p')$. Clearly, any superadditive function is nondecreasing. Note also that any polynomial function, all of whose coefficients are nonnegative, and whose free coefficient is zero, is superadditive. 

A {\em rooted tree} is an ordered pair $(T,r)$, where $T$ is a tree, and $r$ is a node of $T$ called the {\em root}. If $T$ has at least two nodes, then the {\em leaves} of $(T,r)$ are the nodes in $V(T) \setminus \{r\}$ that are of degree one in $T$; and if $V(T) = \{r\}$, then we consider the root $r$ to be a {\em leaf} of $T$. The set of all leaves of $(T,r)$ is denoted by $\mathcal{L}(T,r)$. The {\em internal nodes} of $(T,r)$ are the nodes in $V(T) \setminus \mathcal{L}(T,r)$. If $u,v \in V(T)$, then we say that $v$ is a {\em descendant} of $u$, and that $u$ is an {\em ancestor} of $v$ in $(T,r)$, provided that $u \neq v$ and $u$ belongs to the unique path between $r$ and $v$ in $T$. Given $u,v \in V(T)$, we say that $v$ is a {\em child} of $u$, and that $u$ is the {\em parent} of $v$ in $(T,r)$ provided that $v$ is a descendant of $u$ in $(T,r)$, and $uv \in E(T)$. Clearly, every node other than the root has a unique parent in $(T,r)$, leaves have no children in $(T,r)$, and all internal nodes have at least one child in $(T,r)$. If $u \in V(T)$, then the {\em subtree of $(T,r)$ rooted at $u$} is the rooted tree $(T_u,u)$, where $T_u$ is the subtree of $T$ induced by $u$ and all the descendants of $u$ in $(T,r)$. 

A clique-cut-partition $(A,B,C)$ of a graph $G$ is {\em extreme} if $G[A \cup C]$ admits no clique-cutset. It is easy to see that if $G$ admits a clique-cutset, then $G$ admits an extreme clique-cut-partition. (To see this, suppose that $G$ admits a clique-cutset. Choose a clique-cut-partition $(A,B,C)$ of $G$ such that $|A \cup C|$ is as small as possible. Then $(A,B,C)$ is readily seen to be an extreme clique-cut-partition of $G$.) This implies that for every graph $G$, there exists a {\em clique-cutset decomposition tree} of $G$, which is a rooted tree $(T_G,r)$ equipped with an associated family $\{V^u\}_{u \in V(T_G)}$ of subsets of $V(G)$, having the following properties: 
\begin{itemize} 
\item if $G$ admits no clique-cutset, then $V(T_G) = \{r\}$ and $V^r = V(G)$; 
\item if $G$ does admit a clique-cutset, then there exists an extreme clique-cut-partition $(A,B,C)$ of $G$ such that $V^r = C$, $r$ has precisely two children in $(T_G,r)$, one of them (call it $x$) is a leaf of $(T_G,r)$ and satisfies $V^x = A \cup C$, and the subtree of $(T_G,r)$ rooted at the other child of $r$ is a clique-cutset decomposition tree of $G[B \cup C]$. 
\end{itemize} 
Note that if $(T_G,r)$ is a clique-cutset decomposition tree of a graph $G$, then $|V(T_G)| \leq 2|V(G)|-1$ and $|\mathcal{L}(T_G,r)| \leq |V(G)|$. It was shown in~\cite{Tarjan} that a clique-cutset decomposition tree of an arbitrary input graph can be computed in $O(nm)$ time. We remark that a clique-cutset decomposition tree of a given graph need not be unique. 

If $G$ is a graph, $(T_G,r)$ a clique-cutset decomposition tree of $G$, and $u$ a node of $T_G$, then we set 
\begin{displaymath} 
G^u = G[\bigcup\{V^x \mid \text{$x = u$ or $x$ is a descendant of $u$ in $(T_G,r)$}\}]. 
\end{displaymath} 
Note that the family $\{G^u\}_{u \in V(T_G)}$ can be computed in $O(n^2+nm)$ time. We also remark that for all $u \in V(T_G)$, if $u$ is a leaf of $(T_G,r)$, then $G^u$ admits no clique-cutset, and if $u$ is an internal node of $(T_G,r)$, then $V^u$ is a clique-cutset of $G^u$. 

The following simple lemma will be used repeatedly. 

\begin{lemma} \label{lemma-leaves-basic} Let $\mathcal{B}$ and $\mathcal{G}$ be hereditary classes, and assume that every graph in $\mathcal{G}$ either belongs to $\mathcal{B}$ or admits a clique-cutset. Let $G \in \mathcal{G}$, let $(T_G,r)$ be a clique-cutset decomposition tree of $G$, and let $\{G^u\}_{u \in V(T_G)}$ be the associated family of induced subgraphs of $G$. Then all graphs in the family $\{G^u\}_{u \in \mathcal{L}(T_G,r)}$, and all their induced subgraphs, belong to $\mathcal{B}$. 
\end{lemma}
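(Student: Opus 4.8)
The plan is to fix an arbitrary leaf $u \in \mathcal{L}(T_G,r)$ and show that $G^u \in \mathcal{B}$; the clause about induced subgraphs will then follow immediately from the heredity of $\mathcal{B}$. First I would record the two membership facts that make the dichotomy hypothesis applicable. By construction, each $G^u$ is an induced subgraph of $G$; since $G \in \mathcal{G}$ and $\mathcal{G}$ is hereditary, we obtain $G^u \in \mathcal{G}$. This is the only place the heredity of $\mathcal{G}$ (and membership $G \in \mathcal{G}$) is used.

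The key step is to observe that, for a leaf $u$, the graph $G^u$ admits no clique-cutset. This is exactly the remark stated just before the lemma: for every $u \in \mathcal{L}(T_G,r)$, $G^u$ admits no clique-cutset. If one wishes to make this self-contained rather than merely cite the remark, I would note that a leaf $u$ has no descendants in $(T_G,r)$, so the union defining $G^u$ collapses to $V^u$, giving $G^u = G[V^u]$; and $V^u = A \cup C$ for the extreme clique-cut-partition $(A,B,C)$ produced at the step of the recursive construction that created $u$, whence $G[A \cup C]$ admits no clique-cutset by the very definition of \emph{extreme}. With this in hand, I would apply the hypothesis of the lemma: every graph in $\mathcal{G}$ either belongs to $\mathcal{B}$ or admits a clique-cutset. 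Since $G^u \in \mathcal{G}$ and $G^u$ admits no clique-cutset, it follows that $G^u \in \mathcal{B}$.

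Finally, since $\mathcal{B}$ is hereditary and $G^u \in \mathcal{B}$, every induced subgraph of $G^u$ also lies in $\mathcal{B}$, which completes the argument for all leaves $u$ simultaneously. There is no genuine obstacle here: the proof is a short chain of definitional bookkeeping resting on the already-established property that leaf graphs of the decomposition tree admit no clique-cutset. The only points meriting a sentence of care are the explicit identification $G^u = G[V^u]$ for a leaf (so that $G^u$ is visibly an induced subgraph of $G$), and the order of steps—one must verify $G^u \in \mathcal{G}$ before invoking the ``belongs to $\mathcal{B}$ or admits a clique-cutset'' dichotomy, since that dichotomy is a property of members of $\mathcal{G}$.
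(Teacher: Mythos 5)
Your proposal is correct and takes essentially the same route as the paper's proof: heredity of $\mathcal{G}$ gives $G^u \in \mathcal{G}$, the definition of the decomposition tree gives that leaf graphs admit no clique-cutset, the dichotomy then forces $G^u \in \mathcal{B}$, and heredity of $\mathcal{B}$ handles induced subgraphs. One small remark on your optional self-contained aside: it justifies the no-clique-cutset property only for leaves of the form $V^x = A \cup C$ arising from an extreme clique-cut-partition, whereas the terminal leaf (where the recursion bottoms out, including the case $V(T_G) = \{r\}$) admits no clique-cutset directly by the base case of the definition rather than by extremeness --- but since you chiefly cite the stated remark, exactly as the paper does, this does not affect correctness.
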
 
\begin{proof} 
Since $\mathcal{G}$ is hereditary and $G \in \mathcal{G}$, we know that all induced subgraphs of $G$ belong to $\mathcal{G}$; in particular, each graph in $\{G^u\}_{u \in \mathcal{L}(T_G,r)}$ belongs to $\mathcal{G}$. By the definition of a clique-cutset decomposition tree, no graph in $\{G^u\}_{u \in \mathcal{L}(T_G,r)}$ admits a clique-cutset. Since (by hypothesis) all graphs in $\mathcal{G}$ that do not admit a clique-cutset belong to $\mathcal{B}$, we deduce that all graphs in $\{G^u\}_{u \in \mathcal{L}(T_G,r)}$ belong to $\mathcal{B}$. The result now follows from the fact that $\mathcal{B}$ is hereditary. 
\end{proof} 

Our next lemma can be seen as a partial converse of Lemma~\ref{lemma-leaves-basic}. 

\begin{lemma} \label{lemma-no-clique-cut-ind-sg} Let $G$ and $H$ be graphs, let $(T_G,r)$ be a clique-cutset decomposition tree of $G$, and let $\{G^u\}_{u \in V(T_G)}$ be the associated family of induced subgraphs of $G$. Assume that $H$ does not admit a clique-cutset, and assume that for all $u \in \mathcal{L}(T_G,r)$, $G^u$ is $H$-free. Then $G$ is $H$-free. 
\end{lemma}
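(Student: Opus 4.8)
The plan is to argue by induction on $|V(G)|$, exploiting the recursive structure built into the definition of a clique-cutset decomposition tree. The guiding intuition is that, since $H$ admits no clique-cutset, no induced copy of $H$ in $G$ can ``straddle'' a clique-cutset of $G$: such a copy must sit entirely inside one of the two pieces into which a clique-cutset splits $G$, and hence eventually inside a leaf graph.

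For the base case I would note that if $G$ admits no clique-cutset, then $V(T_G) = \{r\}$, so $r$ is the unique leaf and $G^r = G$; the hypothesis then directly gives that $G = G^r$ is $H$-free.

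For the inductive step, suppose $G$ admits a clique-cutset. By the definition of a clique-cutset decomposition tree, there is an extreme clique-cut-partition $(A,B,C)$ of $G$ with $V^r = C$, where $r$ has a leaf child $x$ with $G^x = G[A \cup C]$, and a second child $y$ whose rooted subtree $(T_y,y)$ is a clique-cutset decomposition tree of $G[B \cup C]$. I would first observe that the leaves of $(T_y,y)$ are precisely the leaves of $(T_G,r)$ other than $x$, and that for each such leaf $u$ the graph $(G[B \cup C])^u$ computed within $G[B \cup C]$ coincides with $G^u$ (both equal the induced subgraph on $\bigcup\{V^w \mid w = u \text{ or } w \text{ is a descendant of } u\}$, a subset of $B \cup C$). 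Hence the $H$-freeness hypothesis transfers to the tree $(T_y,y)$. Since $A \neq \emptyset$ (cut-partitions have both sides nonempty), we have $|V(G[B \cup C])| < |V(G)|$, so the induction hypothesis applies and yields that $G[B \cup C]$ is $H$-free. Combined with the fact that $G^x = G[A \cup C]$ is a leaf graph and thus $H$-free by hypothesis, I conclude that both $G[A \cup C]$ and $G[B \cup C]$ are $H$-free.

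The final and essential step is the case analysis on a hypothetical straddling copy. Suppose for contradiction that $G$ contains an induced copy $H'$ of $H$, and set $A' = V(H') \cap A$, $B' = V(H') \cap B$, and $C' = V(H') \cap C$. If $A' = \emptyset$ then $V(H') \subseteq B \cup C$, contradicting $H$-freeness of $G[B \cup C]$; symmetrically, $B' = \emptyset$ contradicts $H$-freeness of $G[A \cup C]$. Thus $A'$ and $B'$ are both nonempty. Since $A$ is anticomplete to $B$ in $G$ and $C$ is a clique, it follows that $A'$ is anticomplete to $B'$ in $H'$ and that $C'$ is a clique; moreover $C' \neq V(H')$ because $A' \neq \emptyset$ is disjoint from $C'$. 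Therefore $(A',B',C')$ is a clique-cut-partition of $H'$, so $C'$ is a clique-cutset of $H' \cong H$, contradicting the assumption that $H$ admits no clique-cutset. This completes the induction. The only genuinely substantive point is the verification that a straddling copy of $H$ inherits the clique-cutset $C'$; the rest is bookkeeping on the decomposition tree, so I do not expect any real obstacle beyond making the tree-restriction argument precise.
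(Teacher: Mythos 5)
Your proof is correct and takes essentially the same route as the paper: the paper's proof consists of exactly your key observation (for a clique-cut-partition $(A,B,C)$ of a graph $K$, since $H$ admits no clique-cutset, $K$ is $H$-free if and only if both $K[A \cup C]$ and $K[B \cup C]$ are $H$-free) followed by an appeal to the recursive definition of the decomposition tree. You have simply spelled out in full the straddling argument and the induction that the paper leaves as ``easily follows,'' and all your details (the identification of $\mathcal{L}(T_y,y)$ with $\mathcal{L}(T_G,r)\setminus\{x\}$, the transfer of the leaf graphs, and the verification that $(A',B',C')$ is a clique-cut-partition of $H'$) check out.
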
 
\begin{proof} 
Clearly, if $(A,B,C)$ is a clique-cut-partition of a graph $K$, then the fact that $H$ admits no clique-cutset implies that $K$ is $H$-free if and only if both $K[A \cup C]$ and $K[B \cup C]$ are $H$-free. The result now easily follows from the definition of a clique-cutset decomposition tree. 
\end{proof} 

We now show how a clique-cutset decomposition tree can be used to solve the optimal coloring problem, as well as the maximum weight clique and maximum weight stable set problems, in certain classes of graphs. Lemmas~\ref{clique-cutset decomposition tree-coloring} and~\ref{clique-cutset decomposition tree-clique} (which deal with the optimal coloring and maximum weight clique problems, respectively) and their proofs are very similar to the results and arguments from~\cite{Tarjan}, and we include them here for the sake of completeness. The maximum weight stable set problem is dealt with in a slightly different way than in~\cite{Tarjan} (see Lemmas~\ref{lemma-combining-stable} and~\ref{clique-cutset decomposition tree-stable}, and the discussion that follows them).

\begin{lemma} \label{clique-cutset decomposition tree-coloring} Let $\mathcal{B}$ and $\mathcal{G}$ be hereditary classes, and assume that every graph in $\mathcal{G}$ either belongs to $\mathcal{B}$ or admits a clique-cutset. Let $f:\mathbb{N} \times \mathbb{N} \rightarrow \mathbb{N}$ be a nondecreasing function. Assume that there exists an algorithm $\mathbf{A}$ with the following specifications: 
\begin{itemize} 
\item Input: A graph $G$; 
\item Output: Either an optimal coloring of $G$, or the true statement that $G \notin \mathcal{B}$; 
\item Running time: At most $f(n,m)$. 
\end{itemize} 
Then there exists an algorithm $\mathbf{B}$ with the following specifications: 
\begin{itemize} 
\item Input: A graph $G$; 
\item Output: Either an optimal coloring of $G$, or the true statement that $G \notin \mathcal{G}$; 
\item Running time: $O(nf(n,m)+n^2+nm)$. 
\end{itemize} 
\end{lemma}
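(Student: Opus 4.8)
The plan is to follow the clique-cutset decomposition strategy of~\cite{Tarjan}: decompose $G$ into ``basic'' pieces along clique-cutsets, color each basic piece using $\mathbf{A}$, and then reassemble the colorings from the leaves of the decomposition tree up to the root.

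First I would compute a clique-cutset decomposition tree $(T_G,r)$ of $G$, together with the associated family $\{G^u\}_{u \in V(T_G)}$ of induced subgraphs; as noted in the preceding discussion, the tree can be computed in $O(nm)$ time~\cite{Tarjan}, the family in $O(n^2+nm)$ time, and $|V(T_G)| \le 2n-1$ while $|\mathcal{L}(T_G,r)| \le n$. Next, for each leaf $u \in \mathcal{L}(T_G,r)$, I would run $\mathbf{A}$ on $G^u$. By Lemma~\ref{lemma-leaves-basic}, if $G \in \mathcal{G}$ then every leaf graph $G^u$ belongs to $\mathcal{B}$, so $\mathbf{A}$ returns an optimal coloring of each $G^u$. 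Conversely, if for some leaf $u$ the algorithm $\mathbf{A}$ reports that $G^u \notin \mathcal{B}$, then (by the contrapositive of Lemma~\ref{lemma-leaves-basic}) $G \notin \mathcal{G}$, and $\mathbf{B}$ halts and outputs this (true) statement.

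Assuming $\mathbf{A}$ succeeds on every leaf, I would combine the resulting colorings bottom-up along the tree. The merge step at an internal node is the heart of the argument: such a node $u$ corresponds to an extreme clique-cut-partition $(A,B,C)$ of $G^u$ with $V^u = C$, where one child is a leaf with graph $G[A \cup C]$ and the other child's subtree colors $G[B \cup C]$. Given proper colorings $c_1$ of $G[A \cup C]$ and $c_2$ of $G[B \cup C]$, I first permute the colors used by $c_2$ so that every vertex of the clique $C$ receives the same color under both colorings; this is always possible because $C$ is a clique, so its vertices get pairwise distinct colors under each of $c_1,c_2$, and hence the partial color map $c_2(v) \mapsto c_1(v)$ (for $v \in C$) is a well-defined injection that extends to a permutation of the full color set. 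The union of $c_1$ and the relabeled $c_2$ is then a proper coloring of $G^u$: no conflict can arise between $A$ and $B$ since these sets are anticomplete, and on $C$ the two colorings agree. Moreover, as observed in the proof of Lemma~\ref{lemma-chi-clique-cut}, $\chi(G^u) = \max\{\chi(G[A \cup C]),\chi(G[B \cup C])\}$, so the merged coloring, which uses exactly this maximum number of colors, is optimal. Iterating the merge up to the root yields an optimal coloring of $G^r = G$.

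The last thing to check is the running time together with the robustness guarantee. Computing the tree and family costs $O(n^2+nm)$; running $\mathbf{A}$ on the at most $n$ leaf graphs costs at most $n \cdot f(n,m) = O(nf(n,m))$, since $f$ is nondecreasing and each leaf graph has at most $n$ vertices and $m$ edges; and each of the at most $2n-1$ merge steps relabels and unions colorings on a vertex set of size at most $n$, for a total of $O(n^2)$. Hence the overall running time is $O(nf(n,m)+n^2+nm)$, as required. Robustness holds because both the decomposition tree and the merge identity $\chi(G^u) = \max\{\chi(G[A\cup C]),\chi(G[B\cup C])\}$ are valid for \emph{every} graph: whenever $\mathbf{A}$ succeeds on all leaves, the output is a genuinely optimal coloring of $G$ (even if $G \notin \mathcal{G}$), while the only alternative output, the statement $G \notin \mathcal{G}$, is issued solely when some leaf graph is certified to lie outside $\mathcal{B}$, which by Lemma~\ref{lemma-leaves-basic} forces $G \notin \mathcal{G}$. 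I expect the main obstacle to be purely bookkeeping, namely verifying that the color-relabeling at each node stays within the stated time budget and that the merged coloring remains optimal at every level of the recursion.
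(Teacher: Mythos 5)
Your proposal is correct and takes essentially the same approach as the paper's proof: compute a clique-cutset decomposition tree, run $\mathbf{A}$ on the at most $n$ leaf graphs (reporting $G \notin \mathcal{G}$ via Lemma~\ref{lemma-leaves-basic} if any call fails), and merge the colorings across each clique-cutset by permuting colors so that they agree on the cutset clique and the color sets nest, with optimality following from $\chi(G^u) = \max\{\chi(G[A \cup C]),\chi(G[B \cup C])\}$. The only cosmetic difference is that you call $\mathbf{A}$ on all leaves eagerly and merge bottom-up, whereas the paper interleaves the calls with a recursion on $G[B \cup C]$; both give the stated $O(nf(n,m)+n^2+nm)$ bound.
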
 
\begin{proof} 
Let $G$ be an input graph. We first compute a clique-cutset decomposition tree $(T_G,r)$ of $G$ and the associated family $\{G^u\}_{u \in V(T_G)}$ of induced subgraphs of $G$ in $O(n^2+nm)$ time. By Lemma~\ref{lemma-leaves-basic}, if $G \in \mathcal{G}$, then all graphs in the family $\{G^u\}_{u \in \mathcal{L}(T_G,r)}$ belong to $\mathcal{B}$. 

Suppose first that $T_G$ has just one node (namely, the root $r$). In this case, we have that either $G \in \mathcal{B}$ or $G \notin \mathcal{G}$. We now run the algorithm $\mathbf{A}$ with input $G$; this takes at most $f(n,m)$ time. If the algorithm $\mathbf{A}$ returns the answer that $G \notin \mathcal{B}$, then our algorithm $\mathbf{B}$ returns the answer that $G \notin \mathcal{G}$ and stops. On the other hand, if the algorithm $\mathbf{A}$ returns an optimal coloring of $G$, then the algorithm $\mathbf{B}$ returns this coloring and stops. 

Suppose now that $T_G$ has more than one node. Let $x$ and $y$ be the children of the root $r$ in $(T_G,r)$; by symmetry, we may assume that $x \in \mathcal{L}(T_G,r)$. We first run the algorithm $\mathbf{A}$ with input $G^x$; this takes at most $f(n,m)$ time. If we obtain the answer that $G^x \notin \mathcal{B}$, then the algorithm $\mathbf{B}$ returns the answer that $G \notin \mathcal{G}$ and stops. Suppose now that the algorithm $\mathbf{A}$ returned an optimal coloring of $G^x$. We now recursively either determine that $G^y \notin \mathcal{G}$ or obtain an optimal coloring of $G^y$. If we obtain the answer that $G^y \notin \mathcal{G}$, then the algorithm $\mathbf{B}$ returns the answer that $G \notin \mathcal{G}$ and stops. Suppose now that we obtained an optimal coloring of $G^y$. We then permute and rename the colors used by the colorings of $G^x$ and $G^y$ to ensure that the two colorings agree on $V^r$, and that the set of colors used on one of $G^x,G^y$ is a subset of the set of colors used on the other; this takes $O(n)$ time. Finally, we take the union of the colorings of $G^x$ and $G^y$ in $O(n)$ time, and we obtain an optimal coloring of $G$; we return this coloring of $G$ and stop. 

Clearly, the algorithm is correct; it remains to estimate its running time. We run the algorithm $\mathbf{A}$ at most $|\mathcal{L}(T_G,r)| \leq n$ times, and each time, the input is an induced subgraph of the graph $G$; thus, the running time of all the calls to $\mathbf{A}$ together take at most $nf(n,m)$ time. Further, since $|V(T_G)| \leq 2n-1$, it is easy to see that all other steps of the algorithm take $O(n^2+nm)$ time. It follows that the total running time of the algorithm is $O(nf(n,m)+n^2+nm)$. 
\end{proof} 

\begin{lemma}\label{clique-cutset decomposition tree-clique} Let $\mathcal{B}$ and $\mathcal{G}$ be hereditary classes, and assume that every graph in $\mathcal{G}$ either belongs to $\mathcal{B}$ or admits a clique-cutset. Let $f:\mathbb{N} \times \mathbb{N} \rightarrow \mathbb{N}$ be a nondecreasing function, and assume that there exists an algorithm $\mathbf{A}$ with the following specifications: 
\begin{itemize} 
\item Input: A weighted graph $(G,w)$; 
\item Output: Either a maximum weight clique $C$ of $(G,w)$, or the true statement that $G \notin \mathcal{B}$; 
\item Running time: At most $f(n,m)$. 
\end{itemize} 
Then there exists an algorithm $\mathbf{B}$ with the following specifications: 
\begin{itemize} 
\item Input: A weighted graph $(G,w)$; 
\item Output: Either a maximum weight clique $C$ of $(G,w)$, or the true statement that $G \notin \mathcal{G}$; 
\item Running time: $O(nf(n,m)+n^2+nm)$. 
\end{itemize} 
\end{lemma}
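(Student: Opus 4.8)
The plan is to mimic the structure of Lemma~\ref{clique-cutset decomposition tree-coloring} almost verbatim, replacing the ``merge the two colorings'' combining step by an even simpler ``take the heavier of the two cliques'' step. First I would, on input $(G,w)$, compute a clique-cutset decomposition tree $(T_G,r)$ of $G$ together with the associated family $\{G^u\}_{u \in V(T_G)}$ of induced subgraphs; by the discussion following~\cite{Tarjan} this costs $O(n^2+nm)$ time. By Lemma~\ref{lemma-leaves-basic}, if $G \in \mathcal{G}$ then every leaf graph $G^u$ (with $u \in \mathcal{L}(T_G,r)$) belongs to $\mathcal{B}$, so the algorithm $\mathbf{A}$ is applicable at the leaves. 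The recursion is driven by the structure of the tree: if $T_G = \{r\}$ (so $G$ admits no clique-cutset, and hence either $G \in \mathcal{B}$ or $G \notin \mathcal{G}$), I simply run $\mathbf{A}$ on $(G,w)$ and pass its answer through, interpreting the output ``$G \notin \mathcal{B}$'' as ``$G \notin \mathcal{G}$.''

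For the recursive step, let $x$ and $y$ be the children of $r$, with $x$ a leaf and $G^x = G[A \cup C]$, $G^y = G[B \cup C]$, where $(A,B,C)$ is the extreme clique-cut-partition recorded at $r$ (so $V^r = C$). I would run $\mathbf{A}$ on $(G^x,w)$ and recursively solve $(G^y,w)$. If either call reports that its graph is not in the relevant class, I return ``$G \notin \mathcal{G}$'' and stop; this is justified because $G^x$ is an induced subgraph of $G$ admitting no clique-cutset, so if $G^x \notin \mathcal{B}$ then $G^x \notin \mathcal{G}$, and since $\mathcal{G}$ is hereditary this forces $G \notin \mathcal{G}$ (and likewise for the recursive call on $G^y$). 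Otherwise I obtain a maximum weight clique $C_x$ of $(G^x,w)$ and a maximum weight clique $C_y$ of $(G^y,w)$, and I return whichever of $C_x,C_y$ has larger weight.

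The key combining fact, which is where correctness rests, is that since $C$ is a clique-cutset and $A$ is anticomplete to $B$, \emph{every} clique of $G$ is contained in $A \cup C$ or in $B \cup C$; consequently $\omega(G,w) = \max\{\omega(G[A\cup C],w),\,\omega(G[B\cup C],w)\}$, and a maximum weight clique of the heavier block is a maximum weight clique of $G$. An easy induction on $|V(T_G)|$ then gives correctness of $\mathbf{B}$. For the running time, I note that $\mathbf{A}$ is invoked exactly once per leaf, hence at most $|\mathcal{L}(T_G,r)| \leq n$ times, each time on an induced subgraph of $G$; since $f$ is nondecreasing, each such call costs at most $f(n,m)$, for a total of at most $nf(n,m)$. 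All remaining bookkeeping (comparing weights and propagating cliques up the tree) is linear per node, and $|V(T_G)| \leq 2n-1$, so the overhead is absorbed into $O(n^2+nm)$, giving the claimed bound $O(nf(n,m)+n^2+nm)$.

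I do not expect a serious obstacle here: the combining step for cliques is genuinely simpler than for colorings, since there is no need to permute or rename anything, only to compare two weights. The one point requiring care is the robustness/propagation argument—ensuring that a ``$\notin \mathcal{B}$'' verdict at a leaf correctly yields ``$G \notin \mathcal{G}$''—but this follows cleanly from heredity of $\mathcal{G}$ together with the hypothesis that members of $\mathcal{G}$ either lie in $\mathcal{B}$ or admit a clique-cutset.
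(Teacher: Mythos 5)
Your proposal is correct and is essentially the paper's own proof: the paper merely flattens your recursion, calling $\mathbf{A}$ once on each leaf graph $G^u$, $u \in \mathcal{L}(T_G,r)$ (justified by Lemma~\ref{lemma-leaves-basic}), and returning the heaviest of the resulting cliques, which rests on the same key fact you isolate, namely that no clique of $G$ meets both sides of a clique-cut-partition, so $\omega(G,w) = \max\{\omega(G^u,w) \mid u \in \mathcal{L}(T_G,r)\}$. Your robustness argument (a ``$\notin \mathcal{B}$'' verdict at a clique-cutset-free leaf plus heredity of $\mathcal{G}$ forces $G \notin \mathcal{G}$) and your running-time accounting (at most $n$ calls to $\mathbf{A}$, each at cost $f(n,m)$ since $f$ is nondecreasing, plus $O(n^2+nm)$ overhead) coincide with the paper's.
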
 
\begin{proof} 
Let $(G,w)$ be an input weighted graph. We first compute a clique-cutset decomposition tree $(T_G,r)$ of $G$ and the associated family $\{G^u\}_{u \in V(T_G)}$ of induced subgraphs of $G$ in $O(n^2+nm)$ time. Clearly, $\omega(G,w) = \max\{\omega(G^u,w) \mid u \in \mathcal{L}(T_G,r)\}$. By Lemma~\ref{lemma-leaves-basic}, we know that if $G \in \mathcal{G}$, then all graphs in the family $\{G^u \mid u \in \mathcal{L}(T_G,r)\}$ belong to $\mathcal{B}$. For each $u \in \mathcal{L}(T_G,r)$, we call the algorithm $\mathbf{A}$ with input $(G^u,w)$; since $|\mathcal{L}(T_G,r)| \leq n$, we see that running the algorithm $\mathbf{A}$ for all graphs in the family $\{G^u\}_{u \in \mathcal{L}(T_G,r)}$ takes at most $nf(n,m)$ time. If for some $u \in \mathcal{L}(T_G,r)$, the algorithm $\mathbf{A}$ returns the answer that $G^u \notin \mathcal{B}$, then we return the answer that $G \notin \mathcal{G}$ and stop. Suppose now that for each $u \in \mathcal{L}(T_G,r)$, the algorithm $\mathbf{A}$ returned a maximum weight clique $C^u$ of $(G^u,w)$. We now find a node $x \in \mathcal{L}(T_G,r)$ such that $\omega(G^x,w) = \max\{\omega(G^u,w) \mid u \in \mathcal{L}(T_G,r)\}$; since $|\mathcal{L}(T_G,r)| \leq n$, this takes $O(n^2)$ time. Clearly, $C^x$ is a maximum weight clique of $(G,w)$; we return $C^x$ and stop. It is clear that the algorithm is correct, and that its running time is $O(nf(n,m)+n^2+nm)$. 
\end{proof} 

\begin{lemma} \label{lemma-combining-stable} Let $(G,w)$ be a weighted graph, and let $(A,B,C)$ be a clique-cut-partition of $G$. Define $w_B:B \cup C \rightarrow \mathbb{R}$ by setting $w_B \upharpoonright B = w \upharpoonright B$, and for all $c \in C$, setting $w_B(c) = \alpha(G[A \cup \{c\}],w)-\alpha(G[A],w)$. For each $C' \subseteq C$ such that $|C'| \leq 1$, let $S_{A \cup C'}$ be a maximum weight stable set of $(G[A \cup C'],w)$. Let $S_B$ be a maximum weight stable set of $(G[B \cup C],w_B)$, and assume that $w_B(v) > 0$ for all $v \in S_B$. Let $\widetilde{C} = S_B \cap C$. Then $|\widetilde{C}| \leq 1$, and $S_{A \cup \widetilde{C}} \cup S_B$ is a maximum weight stable set of $(G,w)$. 
\end{lemma}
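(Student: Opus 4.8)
The plan is to prove both claims by reducing everything to the single identity $\alpha(G,w) = \alpha(G[A],w) + \alpha(G[B \cup C], w_B)$, and then exhibiting $S_{A \cup \widetilde{C}} \cup S_B$ as a stable set attaining the right-hand side. First, the bound $|\widetilde{C}| \leq 1$ is immediate: $\widetilde{C} = S_B \cap C$ is contained in the clique $C$ while $S_B$ is stable, so it holds at most one vertex.

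Next I would establish the key structural fact underlying the whole argument: if $\widetilde{C} = \{c\}$, then necessarily $c \in S_{A \cup \{c\}}$. Indeed, since $c \in S_B$, the hypothesis gives $w_B(c) > 0$, i.e.\ $\alpha(G[A \cup \{c\}], w) > \alpha(G[A], w)$; were $c \notin S_{A \cup \{c\}}$, then $S_{A \cup \{c\}}$ would be a stable set of $G[A]$ of weight $\alpha(G[A \cup \{c\}], w) > \alpha(G[A], w)$, which is absurd. This is the step I expect to be the genuine crux, because it is exactly what guarantees that $S_{A \cup \widetilde{C}} \cup S_B$ is stable. Using that $A$ and $B$ are anticomplete, any edge between $S_{A \cup \{c\}} \subseteq A \cup \{c\}$ and $S_B \subseteq B \cup \{c\}$ must involve $c$; but $c$ lies in both stable sets $S_{A \cup \{c\}}$ and $S_B$, so it has no neighbor in either, and no such edge exists. (When $\widetilde{C} = \emptyset$, stability of the union follows directly from anticompleteness of $A$ and $B$.)

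With stability secured, I would compute $w(S_{A \cup \widetilde{C}} \cup S_B)$ in each of the two cases and verify it equals $\alpha(G[A],w) + \alpha(G[B \cup C], w_B)$. When $\widetilde{C} = \emptyset$ this is direct, since $S_B \subseteq B$ and $w_B \upharpoonright B = w \upharpoonright B$. When $\widetilde{C} = \{c\}$, I would write the union as the disjoint union $(S_{A \cup \{c\}} \cap A) \cup (S_B \cap B) \cup \{c\}$, express $w(S_{A \cup \{c\}} \cap A)$ and $w(S_B \cap B)$ via $\alpha(G[A \cup \{c\}],w)$ and $\alpha(G[B \cup C],w_B)$, and substitute $w_B(c) = \alpha(G[A \cup \{c\}], w) - \alpha(G[A],w)$; the terms in $\alpha(G[A \cup \{c\}], w)$ cancel, leaving the same total as in the first case.

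Finally I would prove the matching upper bound, that every stable set $S$ of $G$ satisfies $w(S) \leq \alpha(G[A],w) + \alpha(G[B \cup C], w_B)$, splitting on $|S \cap C| \in \{0,1\}$. If $S \cap C = \emptyset$, then $S \cap A$ is stable in $G[A]$ and $S \cap B$ is stable in $G[B \cup C]$, so the bound follows at once. If $S \cap C = \{c\}$, then $(S \cap A) \cup \{c\}$ is stable in $G[A \cup \{c\}]$ and $(S \cap B) \cup \{c\}$ is stable in $G[B \cup C]$; bounding their weights by $\alpha(G[A \cup \{c\}],w)$ and $\alpha(G[B \cup C],w_B)$ and once more substituting the definition of $w_B(c)$ yields the same bound. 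Combining this upper bound with the exact weight computed above shows that $S_{A \cup \widetilde{C}} \cup S_B$ realizes $\alpha(G,w)$, which is precisely the assertion. The only delicate point is the forced membership $c \in S_{A \cup \{c\}}$ coming from the positivity hypothesis; everything else is bookkeeping with the definition of $w_B$.
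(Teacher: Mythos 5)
Your proposal is correct and follows essentially the same route as the paper's proof: the forced membership $c \in S_{A \cup \{c\}}$ (derived from $w_B(c) > 0$) is precisely the paper's claim that every maximum weight stable set of $(G[A \cup \{c\}],w)$ contains $c$, and your exact weight computation and matching upper bound $w(S') \leq \alpha(G[A],w)+\alpha(G[B \cup C],w_B)$ over all stable sets $S'$ correspond to the paper's steps (3) and (4), with only the cosmetic difference that you case-split on $|\widetilde{C}|$ where the paper handles both cases uniformly via its claim (1).
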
 
\begin{proof} 
Since $C$ is a clique and $S_B$ a stable set of $G$, we have that $|\widetilde{C}| \leq 1$. Set $S = S_{A \cup \widetilde{C}} \cup S_B$. We must show that $S$ is a maximum weight stable set of $(G,w)$. 

\begin{quote} 
\emph{(1) For all $C' \subseteq C$ such that $|C'| \leq 1$, we have that $w_B(C') = \alpha(G[A \cup C'],w)-\alpha(G[A],w)$.}
\end{quote} 
\begin{proof}[Proof of (1)] 
Fix $C' \subseteq C$ such that $|C'| \leq 1$. If $C' = \emptyset$, then $w_B(C') = 0$ and $A \cup C' = A$, and the result is immediate. So assume that $|C'| = 1$, and let $c$ be the unique vertex of $C'$. Then by construction, 
\begin{displaymath} 
\begin{array}{rcl} 
w_B(C') & = & w_B(c) 
\\
\\
& = & \alpha(G[A \cup \{c\}],w)-\alpha(G[A],w) 
\\
\\
& = & \alpha(G[A \cup C'],w)-\alpha(G[A],w), 
\end{array} 
\end{displaymath} 
which is what we needed. This proves (1). 
\end{proof} 

\begin{quote} 
\emph{(2) $S_{A \cup \widetilde{C}} \cap C = \widetilde{C}$. Consequently, $S$ is a stable set.} 
\end{quote} 
\begin{proof}[Proof of (2)] 
By construction, $S_B \cap C = \widetilde{C}$. Thus, since $S_{A \cup \widetilde{C}}$ and $S_B$ are stable sets of $G$, and since $A$ is anticomplete to $B$ in $G$, the first statement clearly implies the second. 

It remains to show that $S_{A \cup \widetilde{C}} \cap C = \widetilde{C}$. By construction, $S_{A \cup \widetilde{C}} \subseteq A \cup \widetilde{C}$; consequently, $S_{A \cup \widetilde{C}} \cap C \subseteq \widetilde{C}$. It remains to show that $\widetilde{C} \subseteq S_{A \cup \widetilde{C}} \cap C$. If $\widetilde{C} = \emptyset$, then this is immediate. So assume that $\widetilde{C} \neq \emptyset$, so that $|\widetilde{C}| = 1$. Let $c$ be the unique vertex of $\widetilde{C}$. Since $c \in S_B$, we have that $w(c) > 0$. By construction, $w_B(c) = \alpha(G[A \cup \{c\}],w)-\alpha(G[A],w)$, and so $\alpha(G[A],w) < \alpha(G[A \cup \{c\}],w)$. Thus, every maximum weight stable set of $(G[A \cup \{c\}],w) = (G[A \cup \widetilde{C}],w)$ contains $c$; in particular, $c \in S_{A \cup \widetilde{C}}$, and it follows that $\widetilde{C} \subseteq S_{A \cup \widetilde{C}} \cap C$. This proves (2). 
\end{proof} 

\begin{quote} 
\emph{(3) $w(S) = \alpha(G[B \cup C],w_B)+\alpha(G[A],w)$.} 
\end{quote} 
\begin{proof}[Proof of (3)] 
By (2), and by construction, we have that $S_{A \cup \widetilde{C}} \cap C = S_B \cap C = \widetilde{C}$. We know that $|\widetilde{C}| \leq 1$, and so by (1), $w_B(\widetilde{C}) = \alpha(G[A \cup \widetilde{C}],w)-\alpha(G[A],w)$. But now we have that 
\begin{displaymath} 
\begin{array}{rcl} 
w(S) & = & w(S_{A \cup \widetilde{C}})+w(S_B \setminus \widetilde{C}) 
\\
\\
& = & w(S_{A \cup \widetilde{C}})+w_B(S_B \setminus \widetilde{C}) 
\\
\\
& = & w(S_{A \cup \widetilde{C}})+w_B(S_B)-w_B(\widetilde{C}) 
\\
\\
& = & \alpha(G[A \cup \widetilde{C}],w)+\alpha(G[B \cup C],w_B)-(\alpha(G[A \cup \widetilde{C}],w)-\alpha(G[A],w)) 
\\
\\
& = & \alpha(G[B \cup C],w_B)+\alpha(G[A],w), 
\end{array} 
\end{displaymath} 
which is what we needed. This proves (3). 
\end{proof} 

\begin{quote}
\emph{(4) Every stable set $S'$ of $G$ satisfies $w(S') \leq \alpha(G[B \cup C],w_B)+\alpha(G[A],w)$.} 
\end{quote} 
\begin{proof}[Proof of (4)] 
Fix a stable set $S'$ of $G$; we must show that $w(S') \leq \alpha(G[B \cup C],w_B)+\alpha(G[A],w)$. Set $C' = S' \cap C$; since $S'$ is a stable set and $C$ a clique of $G$, we have that $|C'| \leq 1$. By (1), we have that $w_B(C') = \alpha(G[A \cup C'],w)-\alpha(G[A],w)$. We then have that 
\begin{displaymath} 
\begin{array}{rcl} 
w(S') & = & w(S' \cap (A \cup C))+w(S' \cap B) 
\\
\\
& = & w((S' \cap A) \cup C')+w_B(S' \cap B) 
\\
\\
& = & w((S' \cap A) \cup C')+w_B(S' \cap (B \cup C))-w_B(C') 
\\
\\
& \leq & \alpha(G[A \cup C'],w)+\alpha(G[B \cup C],w_B)-(\alpha(G[A \cup C'],w)-\alpha(G[A],w)) 
\\
\\
& = & \alpha(G[B \cup C],w_B)+\alpha(G[A],w), 
\end{array} 
\end{displaymath} 
which is what we needed. This proves (4). 
\end{proof} 

Clearly, (2), (3), and (4) imply that $S$ is a maximum weight stable set of $(G,w)$. 
\end{proof}

\begin{lemma} \label{clique-cutset decomposition tree-stable} Let $\mathcal{B}$ and $\mathcal{G}$ be hereditary classes, and assume that every graph in $\mathcal{G}$ either belongs to $\mathcal{B}$ or admits a clique-cutset. Let $f:\mathbb{N} \times \mathbb{N} \rightarrow \mathbb{N}$ be a superadditive polynomial function. Assume that there exists an algorithm $\mathbf{A}$ with the following specifications: 
\begin{itemize} 
\item Input: A weighted graph $(G,w)$; 
\item Output: Either a maximum weight stable set of $(G,w)$, or the true statement that $G \notin \mathcal{B}$; 
\item Running time: At most $f(n,m)$. 
\end{itemize} 
Then there exists an algorithm $\mathbf{B}$ with the following specifications: 
\begin{itemize} 
\item Input: A weighted graph $(G,w)$; 
\item Output: Either a maximum weight stable set of $(G,w)$, or the true statement that $G \notin \mathcal{G}$; 
\item Running time: $O(nf(n,m)+n^2+nm)$. 
\end{itemize} 
\end{lemma}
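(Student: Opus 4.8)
The plan is to follow the template of Lemmas~\ref{clique-cutset decomposition tree-coloring} and~\ref{clique-cutset decomposition tree-clique}, except that the partial solutions attached to the leaves of the clique-cutset decomposition tree must be reassembled using Lemma~\ref{lemma-combining-stable} (a weight modification on the cutsets) rather than by a simple union or maximum. Given an input weighted graph $(G,w)$, I would first compute, in $O(n^2+nm)$ time, a clique-cutset decomposition tree $(T_G,r)$ of $G$ together with the associated family $\{G^u\}_{u\in V(T_G)}$. By Lemma~\ref{lemma-leaves-basic}, if $G\in\mathcal{G}$ then every graph $G^u$ with $u\in\mathcal{L}(T_G,r)$, as well as each of its induced subgraphs, belongs to $\mathcal{B}$; this is exactly what legitimizes calling $\mathbf{A}$ on such subgraphs.

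The algorithm then recurses on the tree, carrying along a weight function that is modified as it descends. At a leaf it simply runs $\mathbf{A}$ on the corresponding weighted graph (pruning from the returned set any vertices of nonpositive weight, which changes neither its weight nor its stability). At an internal node $u$ with cutset $V^u=C$, leaf child $x$ (so $G^x=G[A\cup C]$ with $A=V^x\setminus C$), and other child $y$ (so $G^y=G[B\cup C]$), I would invoke Lemma~\ref{lemma-combining-stable}: using $\mathbf{A}$ on the induced subgraphs $G[A]$ and $G[A\cup\{c\}]$, $c\in C$, of the basic piece $G^x$, compute $\alpha(G[A])$ and each $\alpha(G[A\cup\{c\}])$ (with respect to the current weight) and the corresponding stable sets, thereby forming the modified weight function $w_B$ on $B\cup C$; recurse on $(G^y,w_B)$ to obtain $S_B$, again pruning vertices of nonpositive $w_B$-weight so that the hypothesis ``$w_B(v)>0$ for all $v\in S_B$'' of Lemma~\ref{lemma-combining-stable} holds; set $\widetilde C=S_B\cap C$ and output $S_{A\cup\widetilde C}\cup S_B$.

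Correctness and robustness then follow by induction on $|V(T_G)|$: the base case is the correctness of $\mathbf{A}$ on graphs in $\mathcal{B}$, and the inductive step is precisely Lemma~\ref{lemma-combining-stable}. For robustness, note that every graph on which $\mathbf{A}$ is called is an induced subgraph of some leaf graph $G^u$; so if $\mathbf{A}$ ever reports that its input is not in $\mathcal{B}$, then some $G^u$ fails to be in $\mathcal{B}$, and by the contrapositive of Lemma~\ref{lemma-leaves-basic} we may correctly conclude $G\notin\mathcal{G}$ and halt.

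The hard part will be the running-time analysis, and this is where the \emph{superadditivity} of $f$ (a genuine strengthening of the merely nondecreasing hypothesis of Lemmas~\ref{clique-cutset decomposition tree-coloring} and~\ref{clique-cutset decomposition tree-clique}) is used. At an internal node $u$, building $w_B$ requires $1+|C|$ calls to $\mathbf{A}$, so the algorithm may make as many as $\sum_u(1+|V^u|)$ calls, which can be $\Theta(n^2)$ (for instance when a large clique is reused as a cutset). The crucial observation is that these numerous calls are on \emph{small} subgraphs: the call for $c$ is on a graph with at most $|A|+1$ vertices and hence at most $\min\{m,\binom{|A|+1}{2}\}$ edges. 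Writing $A_u$ for the private part $V^x\setminus V^u$ of the leaf at $u$, the sets $\{A_u\}_u$ are pairwise disjoint (each is disjoint from $V(G^y)$, which contains everything lower in the tree), so $\sum_u|A_u|\le n$; consequently a subgraph of size comparable to $n$ can occur at only $O(1)$ nodes, and the calls there cost $O(nf(n,m))$ in total. The remaining calls are individually cheap, and I would bound their aggregate cost using superadditivity of $f$ together with the fact that a node with a large cutset $C$ forces $\binom{|C|}{2}$ edges into $G$ (so that $m$ is correspondingly large), obtaining $O(n^2+nm)$ for them. Combining the two estimates gives total time $O(nf(n,m)+n^2+nm)$ spent in $\mathbf{A}$, while the remaining bookkeeping — forming each $w_B$, pruning nonpositive-weight vertices, and taking the unions $S_{A\cup\widetilde C}\cup S_B$ — fits within $O(n^2+nm)$. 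This superadditivity-driven accounting of many cheap calls is the one genuinely delicate point; everything else is a direct adaptation of the clique/coloring arguments with Lemma~\ref{lemma-combining-stable} in place of the trivial combine step.
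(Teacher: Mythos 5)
Your algorithm, its correctness proof, and the robustness argument coincide with the paper's: the same clique-cutset decomposition tree, the same per-node use of Lemma~\ref{lemma-combining-stable} (calling $\mathbf{A}$ on $G[A]$ and on each $G[A\cup\{c\}]$, $c\in C$, to form $w_B$, recursing on $(G^y,w_B)$, pruning vertices of nonpositive $w_B$-weight, and recombining), and the same appeal to Lemma~\ref{lemma-leaves-basic} for robustness. The divergence is exactly where you yourself flag the difficulty: the running-time accounting, and there your sketch contains a step that would fail. Your claim that the calls on subgraphs not ``comparable to $n$'' aggregate to $O(n^2+nm)$ is false. Take $f(n,m)=n^{10}$ and a graph built from $\sqrt{n}$ pieces, each on $\sqrt{n}$ vertices, glued sequentially at single cut vertices. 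Every leaf of the tree then has $|A_u|\approx\sqrt{n}$ and $|C^u|=1$, so each leaf triggers only two calls to $\mathbf{A}$, but each call costs about $f(\sqrt{n},\cdot)=\Theta(n^{5})$, for an aggregate of $\Theta(n^{5.5})$; meanwhile $m=O(n^{1.5})$, so $n^2+nm=O(n^{2.5})$. None of these subgraphs has size comparable to $n$, so they all land in your ``cheap'' bucket, and your proposed rescue---that a large clique-cutset $C$ forces $\binom{|C|}{2}$ edges into $G$---yields nothing here, since all cutsets are singletons.

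The lemma survives because the bound you actually need for these calls is only $O(nf(n,m))$, not $O(n^2+nm)$, and that follows in one line from the ingredients you already list---this is precisely the paper's accounting. Define, for each leaf $u$, $n_u=|A^u|$ and let $m_u$ be the number of edges of $G^u$ with an endpoint in $A^u$; disjointness of the private parts gives $\sum_u n_u\le n$ and $\sum_u m_u\le m$. Each of the at most $n+1$ calls at leaf $u$ is on a graph with at most $n_u+1$ vertices and at most $m_u$ edges, so the calls at $u$ cost $O(nf(n_u,m_u))$ (polynomiality absorbs the $+1$), and superadditivity gives $\sum_u nf(n_u,m_u)\le nf(n,m)$; the single full-size call at the final leaf adds at most $f(n,m)$. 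In short: your two-tier split into $O(1)$ ``big'' nodes plus cheap ones, and the edge-counting for big cutsets, should be discarded in favor of this direct per-leaf superadditivity bound; with that replacement your proof matches the paper's.
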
 
\begin{proof} 
Let $(G,w)$ be an input weighted graph. We begin by computing a clique-cutset decomposition tree $(T_G,r)$ of $G$ and the associated family $\{G^u\}_{u \in V(T_G)}$ of induced subgraphs of $G$ in $O(n^2+nm)$ time. By Lemma~\ref{lemma-leaves-basic}, if $G \in \mathcal{G}$, then all graphs in the family $\{G^u\}_{u \in \mathcal{L}(T_G,r)}$, and all their induced subgraphs, belong to $\mathcal{B}$. 

Suppose first that $T_G$ has precisely one node (namely, the root $r$). In this case, we have that either $G \in \mathcal{B}$ or $G \notin \mathcal{G}$. We call the algorithm $\mathbf{A}$ with input $(G,w)$; this takes at most $f(n,m)$ time. If we obtain the answer that $G \notin \mathcal{B}$, then we return the answer that $G \notin \mathcal{G}$ and stop. Otherwise, $\mathbf{A}$ returns a maximum weight stable set of $(G,w)$, and we return that stable set and stop. 

From now on, we assume that $T_G$ has more than one node; in particular, $r \notin \mathcal{L}(T_G,r)$. For each $u \in \mathcal{L}(T_G,r)$, let $p(u)$ denote the parent of $u$ in $(T_G,u)$. Now, for each $u \in \mathcal{L}(T_G,r)$, we compute the sets $A^u = V^u \setminus V^{p(u)}$, $B^u = V(G^{p(u)}) \setminus V^u$, and $C^u = V^{p(u)}$; clearly, $(A^u,B^u,C^u)$ is an extreme clique-cut-partition of $G^{p(u)}$, and since $|\mathcal{L}(T_G,r)| \leq n$, computing the families $\{A^u\}_{u \in \mathcal{L}(T_G,r)}$, $\{B^u\}_{u \in \mathcal{L}(T_G,r)}$, and $\{C^u\}_{u \in \mathcal{L}(T_G,r)}$ takes $O(n^2)$ time. Next, we will use the following notation: for each $u \in \mathcal{L}(T_G,r)$, we set $n_u = |A^u|$, and we let $m_u$ be the number of edges of $G^u$, at least one of whose endpoints belongs to $A^u$. Note that $\sum_{u \in \mathcal{L}(T_G,r)} n_u \leq n$ and $\sum_{u \in \mathcal{L}(T_G,r)} m_u \leq m$. 

Let $x$ and $y$ be the children of the root $r$ in $T_G$; by symmetry, we may assume that $x \in \mathcal{L}(T_G,r)$. We form the graph $G[A^x]$ in $O(n+m)$ time, and then for each $c \in C^x$, we form the graph $G[A^x \cup \{c\}]$ in $O(n_x+m_x)$ time. Clearly, forming the family $\{G[A^x \cup C'] \mid C' \subseteq C^x, |C'| \leq 1\}$ takes $O(n+m+n(n_x+m_x))$ time. Now, for each $C' \subseteq C^x$ with $|C'| \leq 1$, we call the algorithm $\mathbf{A}$ with input $G[A^x \cup C']$. Clearly, we make $O(n)$ calls to the algorithm $\mathbf{A}$, and each input graph has at most $n_x+1$ vertices and $m_x$ edges; thus, together, these calls to the algorithm $\mathbf{A}$ take at most $nf(n_x+1,m_x)$ time, which is $O(nf(n_x,m_x))$ time (we use the fact that $f$ is polynomial and superadditive). If for some $C' \subseteq C$ with $|C'| \leq 1$, the algorithm $\mathbf{A}$ returns the answer that $G[A^x \cup C'] \notin \mathcal{B}$, then we return the answer that $G \notin \mathcal{G}$ and stop. Assume now that for all $C' \subseteq C$ such that $|C'| \leq 1$, the algorithm $\mathbf{A}$ returned a maximum weight stable set $S_{A^x \cup C'}$ of $(G[A^x \cup C'],w)$. Clearly, for all $C' \subseteq C^x$ with $|C'| \leq 1$, we have that $\alpha(G[A^x \cup C'],w) = w(S_{A^x \cup C'})$, and we see that the family $\{\alpha(G[A^x \cup C'],w) \mid C' \subseteq C, |C'| \leq 1\}$ can be computed in $O(n_xn)$ time. Next, we form the weight function $w_B$ for $G^y = G[B^x \cup C^x]$ as in Lemma~\ref{lemma-combining-stable}; this takes $O(n)$ time. Then, we recursively either determine that $G^y \notin \mathcal{G}$ or obtain a maximum weight stable set $S_B$ of $(G^y,w_B)$. In the former case, we return the answer that $G \notin \mathcal{G}$ and stop. Suppose now that we obtained a maximum weight stable set $S_B$ of $(G^y,w_B)$. Clearly, $w_B(v) \geq 0$ for all $v \in S_B$, and furthermore, we may assume that $w_B(v) > 0$ for all $v \in S_B$, for otherwise, we simply delete from $S_B$ all the vertices to which $w_B$ assigns weight zero. Set $C' = C^x \cap S_B$; since $C'$ is a clique and $S_B$ a stable set, we know that $|C'| \leq 1$. Set $S = S_{A^x \cup C'} \cup S_B$. By Lemma~\ref{lemma-combining-stable}, $S$ is a maximum weight stable set of $(G,w)$. We now return the set $S$ and stop. 

It is clear that the algorithm is correct; it remains to estimate its running time. Let $u^*$ be the last leaf of $(T_G,r)$ that our algorithm $\mathbf{B}$ reaches. With the possible exception of the leaf $u^*$, for each leaf $u$ of $(T_G,r)$ reached by the algorithm $\mathbf{B}$, we call the algorithm $\mathbf{A}$ on at most $n$ induced subgraphs of $G^u$, and as we see from the description of the algorithm, this takes $O(nf(n_u,m_u))$ time. Furthermore, we may possibly call the algorithm $\mathbf{A}$ on the graph $G^{u^*}$; this takes at most $f(n,m)$ time. Thus, the total time that all the calls to the algorithm $\mathbf{A}$ take is $O((\sum_{u \in \mathcal{L}(T_G,r)} nf(n_u,m_u))+f(n,m))$; since $\sum_{u \in \mathcal{L}(T_G,r)} n_u \leq n$ and $\sum_{u \in \mathcal{L}(T_G,r)} m_u \leq m$, and since $f$ is superadditive and polynomial, this is $O(nf(n,m))$. Using the fact that $|V(T_G)| \leq 2n-1$, and the fact that $\sum_{u \in \mathcal{L}(T_G,r)} n_u \leq n$ and $\sum_{u \in \mathcal{L}(T_G,r)} m_u \leq m$, we readily see that all other steps of the algorithm take $O(n^2+nm)$ time. It now follows that the total running time of the algorithm $\mathbf{B}$ is $O(nf(n,m)+n^2+nm)$. 
\end{proof} 

Let us now briefly discuss the ways in which Lemmas~\ref{lemma-combining-stable} and~\ref{clique-cutset decomposition tree-stable} differ from their analogs in~\cite{Tarjan}. First of all, in Lemma~\ref{lemma-combining-stable} (which is used in the proof of Lemma~\ref{clique-cutset decomposition tree-stable}), the weight function $w_B$ is defined in a slightly different way than the corresponding weight function from~\cite{Tarjan}; the advantage of our approach is that we never introduce negative weights, that is to say, if the weight function $w$ assigns only nonnegative weights, then so does the weight function $w_B$. Second of all, one of the hypotheses of Lemma~\ref{clique-cutset decomposition tree-stable} is that the function $f$ is polynomial and superadditive (this hypothesis is absent from~\cite{Tarjan}); this additional hypothesis, together with a more involved complexity analysis, allows us to obtain a running time that is slightly better than the one from~\cite{Tarjan}. We remark that if, in the statement of Lemma~\ref{clique-cutset decomposition tree-stable}, we replaced the hypothesis that $f$ is polynomial and superadditive with the (weaker) hypothesis that $f$ is nondecreasing, then we would simply obtain a running time of $O(n^2f(n,m)+n^2+nm)$ for the algorithm $\mathbf{B}$. 

\subsection{Algorithms for chordal graphs and hyperholes}

A vertex $v$ in a graph $G$ is {\em simplicial} if $N_G(v)$ is a (possibly empty) clique of $G$. A {\em simplicial elimination ordering} of a graph $G$ is an ordering $v_1,\dots,v_n$ of the vertices of $G$ such that for all $i \in \{1,\dots,n\}$, $v_i$ is a simplicial vertex of $G[v_i,\dots,v_n]$. It is well-known (and easy to show) that a graph is chordal if and only if it has a simplicial elimination ordering. There is an $O(n+m)$ time algorithm that either produces a simplicial elimination ordering of the input graph, or determines that the graph is not chordal~\cite{RTL76}. Clearly, given a chordal graph $G$ and a simplicial elimination ordering $v_1,\dots,v_n$ of $G$, an optimal coloring of $G$ can be found in $O(n+m)$ time (we simply color greedily, using the ordering $v_n,\dots,v_1$, that is, the reverse of the input simplicial elimination ordering). Further, there is an $O(n+m)$ time algorithm that, given a weighted chordal graph $(G,w)$ and a simplicial elimination ordering $v_1,\dots,v_n$ for $G$, finds a maximum weight stable set of $(G,w)$~\cite{FrankChordalStable}. Finally, given a weighted chordal graph $(G,w)$ and a simplicial elimination ordering $v_1,\dots,v_n$ of $G$, a maximum weight clique of $G$ can be found in $O(n+m)$ time as follows. First, we may assume that $w$ assigns positive weight to all vertices of $G$. (If $w$ does not assign positive weight to any vertex of $G$, then $\emptyset$ is a maximum weight clique of $G$. If $w$ assigns positive weight to some, but not all, vertices of $G$, then we find and delete from $G$ and from the sequence $v_1,\dots,v_n$ all the vertices of $G$ to which $w$ assigns negative or zero weight.) For each $i \in \{1,\dots,n\}$, we form the set $C_i = \{v_j \mid j \geq i, v_j \in N_G[v_i]\}$. We then find an index $i \in \{1,\dots,n\}$ such that $w(C_i) = \max\{|C_j| \mid 1 \leq j \leq n\}$. It is easy to see that $C_i$ is a maximum weight clique of $G$. For the sake of future reference, we summarize these results in the lemma below. 

\begin{lemma} \label{lemma-chordal} Chordal graphs can be recognized and optimally colored in $O(n+m)$ time. A maximum weight clique and a maximum weight stable set of a weighted chordal graph can be found in $O(n+m)$ time. 
\end{lemma}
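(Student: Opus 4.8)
The plan is to assemble the lemma from four standard linear-time ingredients, tied together by a single simplicial elimination ordering. First I would invoke the recognition algorithm of~\cite{RTL76}: in $O(n+m)$ time it either outputs a simplicial elimination ordering $v_1,\dots,v_n$ of the input graph $G$, or reports that $G$ is not chordal. Since a graph is chordal if and only if it admits a simplicial elimination ordering, this settles the recognition claim, and it simultaneously supplies the ordering that all three optimization routines will reuse.

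For optimal coloring, I would color greedily along the reverse ordering $v_n,\dots,v_1$, assigning to each vertex the smallest color not already used on its neighbors. The key observation is that when $v_i$ is colored, its already-colored neighbors are exactly those $v_j$ with $j>i$ and $v_jv_i\in E(G)$, and these form a clique because $v_i$ is simplicial in $G[v_i,\dots,v_n]$. Hence $v_i$ receives a color in $\{1,\dots,d+1\}$, where $d$ is the size of that neighborhood clique; since $\{v_i\}$ together with those $d$ neighbors is a clique of size $d+1\le\omega(G)$, no color exceeding $\omega(G)$ is ever used, so the resulting coloring is optimal. The whole sweep runs in $O(n+m)$ time. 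The maximum weight stable set claim I would simply quote from~\cite{FrankChordalStable}, which provides an $O(n+m)$ algorithm taking $(G,w)$ together with the ordering $v_1,\dots,v_n$ as input.

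The only routine that needs a short correctness argument is the maximum weight clique. After discarding all vertices of nonpositive weight (which cannot belong to any optimal positive-weight clique and whose removal keeps $G$ chordal), I would form, for each $i$, the set $C_i=\{v_j : j\ge i,\ v_j\in N_G[v_i]\}$; each $C_i$ is a clique because $v_i$ is simplicial in $G[v_i,\dots,v_n]$. The point to verify is that every maximal clique $Q$ of $G$ equals $C_i$ for $i$ the least index with $v_i\in Q$, whence $\omega(G,w)=\max_i w(C_i)$; returning a $C_i$ attaining this maximum then yields a maximum weight clique. Computing all the $w(C_i)$ costs $O(n+m)$ because $\sum_i|C_i|\le\sum_i|N_G[v_i]|=O(n+m)$.

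I do not anticipate a genuine obstacle, as every component is classical. The one place that warrants care is the clique correctness step — confirming that restricting attention to the family $\{C_i\}$ loses no maximal clique — together with the bookkeeping needed to keep each of the four routines within $O(n+m)$ once the simplicial elimination ordering is in hand.
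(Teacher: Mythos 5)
Your proposal is correct and follows essentially the same route as the paper: recognition via the $O(n+m)$ simplicial elimination ordering algorithm of~\cite{RTL76}, greedy coloring along the reverse ordering, the maximum weight stable set via~\cite{FrankChordalStable}, and the maximum weight clique by deleting nonpositive-weight vertices and taking a best set among the cliques $C_i = \{v_j \mid j \geq i,\ v_j \in N_G[v_i]\}$. Your added correctness argument for the clique step (every maximal clique equals $C_i$ for $i$ the least index of a vertex it contains) is exactly the justification the paper leaves implicit, so there is nothing missing.
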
 

Given a graph $G$, two distinct vertices $u,v \in V(G)$ are said to be {\em true twins} in $G$ if $N_G[u] = N_G[v]$. Clearly, the relation of being a true twin is an equivalence relation; a {\em true twin class} of $G$ is an equivalence class with respect to the true twin relation. Thus, $V(G)$ can be partitioned into true twin classes of $G$ in a unique way, and clearly, every true twin class of $G$ is a clique of $G$. An exercise from~\cite{Spinrad} states that, given an input graph $G$, all true twin classes of $G$ can be found in $O(n+m)$ time; a detailed proof of this result can be found in~\cite{CapEvenHoleFree}. Given a graph $G$ and a partition $\mathcal{P}$ of $V(G)$ into true twin classes of $G$, we define the graph $G_\mathcal{P}$ (called the {\em quotient graph} of $G$ with respect to $\mathcal{P}$) to be the graph whose vertex set is $\mathcal{P}$, and in which distinct $A,B \in \mathcal{P}$ are adjacent if and only if $A$ and $B$ are complete to each other in $G$. Clearly, given $G$ and $\mathcal{P}$, the graph $G_\mathcal{P}$ can be found in $O(n+m)$ time. We summarize these results below for future reference. 

\begin{lemma} \label{true-twin-alg} There exists an algorithm with the following specifications: 
\begin{itemize}
\item Input: A graph $G$; 
\item Output: The partition $\mathcal{P}$ of $V(G)$ into true twin classes, and the quotient graph $G_\mathcal{P}$; 
\item Running time: $O(n+m)$. 
\end{itemize} 
\end{lemma}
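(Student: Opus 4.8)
The plan is to prove Lemma~\ref{true-twin-alg} in two phases that each respect the $O(n+m)$ budget: first obtain the partition $\mathcal{P}$ into true twin classes, and then assemble the quotient graph $G_{\mathcal{P}}$ from it. For the first phase, I would simply invoke the algorithm already cited in the discussion preceding the lemma: by \cite{Spinrad} (with a detailed proof in \cite{CapEvenHoleFree}), the partition $\mathcal{P}$ of $V(G)$ into true twin classes can be computed in $O(n+m)$ time. I would store $\mathcal{P}$ so that, for each vertex $v\in V(G)$, the class of $\mathcal{P}$ containing $v$ can be looked up in constant time (for instance, via an array indexed by $V(G)$ recording a class identifier for each vertex), and so that each class has a distinguished representative. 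All of this stays within the linear bound.

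For the second phase, the key observation is that between any two distinct true twin classes $A,B\in\mathcal{P}$ the relationship is \emph{either complete or anticomplete}: if $a\in A$ and $b\in B$, then $N_G[a]$ does not depend on the choice of $a\in A$, so either $A$ is complete to $b$ or $A$ is anticomplete to $b$, and since $N_G[b]$ likewise does not depend on the choice of $b\in B$, the class $A$ is either complete or anticomplete to all of $B$. (Recall also that each true twin class is a clique, since $N_G[u]=N_G[v]$ with $u\neq v$ forces $u\in N_G(v)$.) Consequently, to decide adjacency in $G_{\mathcal{P}}$ it suffices, for each class $A$, to scan the neighborhood of a single representative $a$ of $A$: for every neighbor $v$ of $a$, I would look up the class $B$ containing $v$, and whenever $B\neq A$, record $AB$ as an edge of $G_{\mathcal{P}}$. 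Scanning one representative per class touches each representative's adjacency list once, so the total work is $O\big(\sum_{A\in\mathcal{P}} d_G(a_A)\big)\leq O\big(\sum_{v\in V(G)} d_G(v)\big)=O(m)$, plus $O(n)$ overhead to set up the vertex set of $G_{\mathcal{P}}$. Thus $G_{\mathcal{P}}$ is built in $O(n+m)$ time, and adding the two phases gives the claimed bound.

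There is no deep obstacle here, since the heavy lifting (the linear-time true twin partition) is delegated to \cite{Spinrad, CapEvenHoleFree}; the part requiring the most care is the complexity bookkeeping in the second phase. The naive approach of testing every pair of classes for completeness would cost $\Theta(|\mathcal{P}|^2)$ in the worst case, which is too slow, so I would rely on the complete-or-anticomplete dichotomy above to reduce adjacency detection to a single pass over representative neighborhoods, using the constant-time class lookup to avoid any per-edge overhead beyond a constant. A minor point to handle cleanly is that each edge of $G_{\mathcal{P}}$ may be recorded twice (once from each endpoint's representative scan), so I would either orient the recorded pairs consistently or perform a final linear-time deduplication, ensuring the output adjacency structure of $G_{\mathcal{P}}$ has size $O(n+m)$ and is produced without incurring superlinear cost.
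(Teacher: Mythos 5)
Your proposal is correct and takes essentially the same route as the paper: the paper likewise delegates the linear-time computation of the true twin partition to \cite{Spinrad} (with the detailed proof in \cite{CapEvenHoleFree}) and simply asserts that $G_{\mathcal{P}}$ can then be built in $O(n+m)$ time, a step you fill in correctly via the complete-or-anticomplete dichotomy between distinct classes and a single scan of one representative's adjacency list per class. One minor bookkeeping slip: an edge $AB$ may be recorded not just twice but up to $|B|$ times during the scan of $A$'s representative (once per neighbor of the representative lying in $B$), though the total number of recordings is still $O(m)$ and your proposed linear-time deduplication absorbs this.
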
 

Clearly, a graph $G$ is a hole (resp. long hole) if and only if $G$ has at least four vertices (resp. at least five vertices), $G$ is connected (this can be checked using, for example, BFS), and all vertices of $G$ are of degree two. Thus, holes and long holes can be recognized in $O(n+m)$ time. The proof of our next lemma (Lemma~\ref{lemma-hyperhole-rec}) is an easy exercise, and we leave it to the reader. 

\begin{lemma} \label{lemma-hyperhole-rec} Let $G$ be a graph, and let $\mathcal{P}$ be a partition of $V(G)$ into true twin classes of $G$. Then $G$ is a hyperhole (resp. long hyperhole) if and only if $G_\mathcal{P}$ is a hole (resp. long hole). Consequently, there exists an $O(n+m)$ time recognition algorithm for hyperholes (resp. for long hyperholes). 
\end{lemma}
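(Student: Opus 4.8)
The plan is to prove the two equivalences (for ``hyperhole'' and for ``long hyperhole'') by identifying the canonical partition $\mathcal{P}$ of $V(G)$ into true twin classes with the good partition of a hyperhole, and then simply reading off the quotient graph. The one genuinely structural ingredient I expect to need is an \emph{all-or-nothing} property of distinct true twin classes: if $A$ and $B$ are distinct true twin classes of $G$, then $A$ is either complete or anticomplete to $B$. To see this, suppose some $a \in A$ has a neighbor $b \in B$; since every vertex of $B$ is a true twin of $b$, each such vertex has $a$ in its closed neighborhood, so $a$ is complete to $B$; then since every vertex of $A$ is a true twin of $a$, each has $B$ in its closed neighborhood, so $A$ is complete to $B$. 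Consequently ``non-adjacent in $G_\mathcal{P}$'' means exactly ``anticomplete in $G$,'' which is precisely the relation the definition of a good partition of a hyperhole demands.

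For the forward direction, I would start from a good partition $(X_1,\dots,X_k)$ with $k \geq 4$ witnessing that $G$ is a hyperhole. Every vertex of $X_i$ has closed neighborhood $X_{i-1} \cup X_i \cup X_{i+1}$, so the vertices of $X_i$ are pairwise true twins and $X_i$ lies inside a single true twin class. Conversely, for $i \neq j$ the sets $X_{i-1} \cup X_i \cup X_{i+1}$ and $X_{j-1} \cup X_j \cup X_{j+1}$ are distinct, because two distinct arcs of three consecutive blocks on a cycle of length $k \geq 4$ cannot coincide (immediate for $k \geq 5$, and a direct check for $k = 4$); hence vertices from different blocks are never true twins. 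This shows $\mathcal{P} = \{X_1,\dots,X_k\}$, and then by the definition of $G_\mathcal{P}$ together with the good-partition adjacencies, $G_\mathcal{P}$ is exactly the cycle $X_1,\dots,X_k,X_1$, i.e.\ a hole of length $k \geq 4$ (and a long hole precisely when $k \geq 5$).

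For the converse, I would assume $G_\mathcal{P}$ is a hole $P_1,\dots,P_k,P_1$ with $k \geq 4$, where the $P_i$ are the true twin classes. Each $P_i$ is a nonempty clique (true twin classes are nonempty cliques); adjacency in $G_\mathcal{P}$ makes $P_i$ complete to $P_{i-1} \cup P_{i+1}$; and for the non-adjacent classes the all-or-nothing property upgrades non-adjacency to anticompleteness, so $P_i$ is anticomplete to $V(G) \setminus (P_{i-1} \cup P_i \cup P_{i+1})$. Thus $(P_1,\dots,P_k)$ is a good partition and $G$ is a $k$-hyperhole, long exactly when $k \geq 5$. The ``consequently'' then falls out: by Lemma~\ref{true-twin-alg} we compute $\mathcal{P}$ and $G_\mathcal{P}$ in $O(n+m)$ time, and we test whether $G_\mathcal{P}$ is a hole (resp.\ long hole) in $O(n+m)$ time, using that a graph is a hole iff it is connected, has at least four vertices, and has all degrees equal to two; the total is $O(n+m)$.

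The hard part is really just the all-or-nothing property; once it is available, both directions are bookkeeping against the definitions. The only other point I would take care to flag is the claim that distinct blocks of a hyperhole have distinct closed neighborhoods, since this is exactly what prevents $\mathcal{P}$ from coarsening the good partition (it fails at $k=3$, which is why the hypothesis $k \geq 4$ built into the definition of a hyperhole is essential here).
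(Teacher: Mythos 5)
Your proof is correct and complete; the paper in fact leaves this lemma as an exercise (``The proof of our next lemma\dots is an easy exercise''), and your argument is exactly the intended one: the all-or-nothing property of distinct true twin classes, the observation that every vertex of $X_i$ has closed neighborhood $X_{i-1}\cup X_i\cup X_{i+1}$ so that $\mathcal{P}$ coincides with the good partition, and the $O(n+m)$ hole test on $G_\mathcal{P}$ via Lemma~\ref{true-twin-alg}. You were also right to check explicitly that distinct triples of consecutive blocks are distinct when $k=4$ (three-element subsets of $\mathbb{Z}_4$ are determined by the missing index), which is the one place where the hypothesis $k\geq 4$ in the definition of a hyperhole is genuinely used.
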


Given a weighted graph $(G,w)$, where $w$ is positive integer valued, a {\em proper weighted coloring} of $(G,w)$ is a function $c$ that assigns to each vertex $v \in V(G)$ a set of precisely $w(v)$ colors, and furthermore, satisfies the property that $c(v_1) \cap c(v_2) = \emptyset$ for all adjacent vertices $v_1,v_2 \in V(G)$. An {\em optimal weighted coloring} of $(G,w)$ is a proper weighted coloring that uses as few colors as possible. An $O(n)$ time weighted coloring algorithm for holes was given in~\cite{HyperholeColoring}. Together with Lemmas~\ref{true-twin-alg} and~\ref{lemma-hyperhole-rec}, this yields the following result. 

\begin{lemma} \label{lemma-hyperhole-col} There exists an algorithm with the following specifications: 
\begin{itemize} 
\item Input: A graph $G$; 
\item Output: Either an optimal coloring of $G$, or the true statement that $G$ is not a hyperhole; 
\item Running time: $O(n+m)$. 
\end{itemize} 
\end{lemma}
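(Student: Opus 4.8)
The plan is to reduce the problem, via the true-twin structure, to the weighted coloring of a hole. First I would run the algorithm of Lemma~\ref{true-twin-alg} to obtain, in $O(n+m)$ time, the partition $\mathcal{P}$ of $V(G)$ into true twin classes together with the quotient graph $G_\mathcal{P}$. By Lemma~\ref{lemma-hyperhole-rec}, $G$ is a hyperhole if and only if $G_\mathcal{P}$ is a hole, so I would next test whether $G_\mathcal{P}$ is a hole using the $O(n+m)$ hole-recognition procedure discussed just before that lemma; if it is not, I output the (true) statement that $G$ is not a hyperhole and stop. Otherwise $G_\mathcal{P}$ is a hole, and I form the weighted hole $(G_\mathcal{P},w)$ by setting $w(A)=|A|$ for each class $A\in\mathcal{P}$, run the $O(n)$-time weighted coloring algorithm for holes from~\cite{HyperholeColoring} to obtain an optimal weighted coloring that assigns to each $A$ a set $c(A)$ of exactly $|A|$ colors, and finally expand this to a coloring of $G$ by handing out, within each class $A$, the $|A|$ colors of $c(A)$ to the $|A|$ vertices of $A$, one color per vertex.

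To prove correctness, I would first record the standard fact that any two distinct true twin classes are either complete or anticomplete to each other in $G$: if some $a\in A$ and $b\in B$ are adjacent, then since $N_G[a]=N_G[a']$ and $N_G[b]=N_G[b']$ for all $a'\in A$ and $b'\in B$, every vertex of $A$ is complete to every vertex of $B$. Given this, the coloring produced is proper: inside a class (which is a clique) the assigned colors are pairwise distinct; across two complete classes $A,B$ the sets $c(A)$ and $c(B)$ are disjoint because $A,B$ are adjacent in $G_\mathcal{P}$ and the weighted coloring respects edges; and across anticomplete classes there is no constraint.

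For optimality, I would show $\chi(G)=\chi_w(G_\mathcal{P},w)$, where $\chi_w$ denotes the minimum number of colors in a proper weighted coloring. The coloring above uses exactly $\chi_w(G_\mathcal{P},w)$ colors, giving $\chi(G)\le\chi_w(G_\mathcal{P},w)$. Conversely, from any proper coloring of $G$ using $\chi(G)$ colors I assign to each class $A$ the set of colors appearing on its vertices; since $A$ is a clique this set has size $|A|=w(A)$, and for complete classes $A,B$ these sets are disjoint, so this is a proper weighted coloring of $(G_\mathcal{P},w)$ using at most $\chi(G)$ colors, whence $\chi_w(G_\mathcal{P},w)\le\chi(G)$. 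Thus the two quantities agree and the produced coloring is optimal.

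Finally, the running time is $O(n+m)$: the true-twin decomposition and quotient cost $O(n+m)$, hole recognition on $G_\mathcal{P}$ costs $O(n+m)$, the weighted hole coloring costs $O(n)$, and distributing colors within classes costs $O(\sum_{A\in\mathcal{P}}|A|)=O(n)$. The only genuinely substantive point is the identity $\chi(G)=\chi_w(G_\mathcal{P},w)$, that is, that optimally coloring the blown-up hole is exactly the same task as optimally weighted-coloring the underlying hole with the clique sizes as weights; everything else is bookkeeping layered on top of the cited subroutines.
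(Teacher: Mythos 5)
Your proposal is correct and follows essentially the same route as the paper's proof: compute the true-twin partition and quotient via Lemma~\ref{true-twin-alg}, recognize the hole using Lemma~\ref{lemma-hyperhole-rec}, run the weighted hole-coloring algorithm of~\cite{HyperholeColoring} with class sizes as weights, and expand the weighted coloring back to $G$. The only difference is that you spell out the identity $\chi(G)=\chi_w(G_\mathcal{P},w_\mathcal{P})$ explicitly, which the paper treats as routine; that is a welcome addition, not a deviation.
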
 
\begin{proof} 
Let $G$ be an input graph. We first find a partition $\mathcal{P}$ of $V(G)$ into true twin classes of $G$, and we form the quotient graph $G_\mathcal{P}$; by Lemma~\ref{true-twin-alg}, this can be done in $O(n+m)$ time. Clearly, all members of $\mathcal{P}$ are cliques of $G$. Next, we check in $O(n+m)$ time whether $G_{\mathcal{P}}$ is a hole, and if not, then we return the answer that $G$ is not a hyperhole (by Lemma~\ref{lemma-hyperhole-rec}, this is correct) and stop. From now on, we assume that $G_{\mathcal{P}}$ is a hole (and consequently, by Lemma~\ref{lemma-hyperhole-rec}, $G$ is a hyperhole). We define $w_\mathcal{P}:\mathcal{P} \rightarrow \mathbb{N}^+$ by setting $w_\mathcal{P}(X) = |X|$ for all $X \in \mathcal{P}$; this takes $O(n)$ time. Using the algorithm from~\cite{HyperholeColoring}, we then find an optimal weighted coloring $c$ of $(G_\mathcal{P},w_\mathcal{P})$; this takes a further $O(n)$ time. Using the weighted coloring $c$ of $(G_\mathcal{P},w_\mathcal{P})$, we easily obtain an optimal coloring of $G$: for each $X \in \mathcal{P}$, we assign to each vertex of $X$ one of the colors from the set $c(X)$, making sure that each vertex in $X$ gets a different color; this takes $O(n)$ time. Clearly, the algorithm is correct, and its total running time is $O(n+m)$. 
\end{proof}

\begin{lemma} \label{lemma-hyperhole-clique-stable} There exists an algorithm with the following specifications: 
\begin{itemize} 
\item Input: A weighted graph $(G,w)$; 
\item Output: Either a maximum weight clique $C$ and a maximum weight stable set $S$ of $(G,w)$, or the true statement that $G$ is not a hyperhole; 
\item Running time: $O(n+m)$. 
\end{itemize} 
\end{lemma}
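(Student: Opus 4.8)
The plan is to exploit the rigid structure of hyperholes, in the same spirit as the recognition and coloring algorithms of Lemmas~\ref{true-twin-alg} and~\ref{lemma-hyperhole-rec}. First I would compute, in $O(n+m)$ time, the partition $\mathcal{P}$ of $V(G)$ into true twin classes together with the quotient graph $G_\mathcal{P}$ (Lemma~\ref{true-twin-alg}), and then test in $O(n+m)$ time whether $G_\mathcal{P}$ is a hole. By Lemma~\ref{lemma-hyperhole-rec}, if $G_\mathcal{P}$ is not a hole, then $G$ is not a hyperhole, and the algorithm reports this and halts; otherwise $G$ is a hyperhole. In the latter case, a traversal of the cycle $G_\mathcal{P}$ orders the classes in $\mathcal{P}$ around the hole as $X_1,\dots,X_k$ (with subscripts in $\mathbb{Z}_k$), and by the definition of a hyperhole this is a good partition of $G$: each $X_i$ is a clique, complete to $X_{i-1}\cup X_{i+1}$ and anticomplete to $V(G)\setminus(X_{i-1}\cup X_i\cup X_{i+1})$. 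This extraction also takes $O(n+m)$ time.

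Next I would reduce the two optimization problems to this structure. For the clique: every clique of a hyperhole is contained in a consecutive pair $X_i\cup X_{i+1}$, since two vertices from non-consecutive classes are nonadjacent (in $C_k$ with $k\ge 4$ the only "cliques'' among classes are single classes and consecutive pairs), while each $X_i$ is itself a clique. Hence a maximum weight clique is obtained by picking, for the best index $i$, all positive-weight vertices of $X_i\cup X_{i+1}$. Concretely, I compute for each class the value $s_i^+=\sum_{v\in X_i,\ w(v)>0} w(v)\ge 0$ in $O(n)$ total time, choose an index $i$ maximizing $s_i^++s_{i+1}^+$, and output $\{v\in X_i\cup X_{i+1}\mid w(v)>0\}$ (the empty clique, of weight $0$, is returned when this maximum is $0$). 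This is correct and runs in $O(n+k)=O(n)$ time.

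For the stable set: since each $X_i$ is a clique and $X_i$ is complete to $X_{i+1}$, every stable set of $G$ meets each class in at most one vertex, and the set of classes it meets is a stable set of the cycle $G_\mathcal{P}\cong C_k$. Conversely, any stable set of $C_k$, with one vertex chosen per selected class, yields a stable set of $G$. Thus a maximum weight stable set is obtained by assigning to each class the weight $a_i=\max_{v\in X_i} w(v)$ (recording an argmax vertex $v_i$), computing a maximum weight stable set $I$ of the cycle $C_k$ under the weights $a_i$, and outputting $\{v_i\mid i\in I\}$. A maximum weight stable set of a cycle (with arbitrary real weights, the empty set allowed) is computable in $O(k)$ time by the standard dynamic program for paths, run twice to handle the two cases determined by whether a fixed class belongs to $I$; classes of negative weight are automatically excluded, and back-pointers let me reconstruct $I$, hence $S$, within the same bound.

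Altogether every step runs in $O(n+m)$ time, giving the claimed bound. I do not expect a serious obstacle: the content lies entirely in the two structural facts — that cliques live in consecutive pairs and that stable sets project to independent sets of $C_k$ — both immediate from the definition of a hyperhole. The only point requiring care is the handling of real (possibly negative) weights, namely retaining only positive-weight vertices when forming the clique and ensuring the cycle dynamic program treats the empty set (weight $0$) as feasible; both are routine.
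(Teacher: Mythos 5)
Your proposal is correct and follows essentially the same route as the paper's proof: both compute the true twin partition and test whether the quotient is a hole (Lemmas~\ref{true-twin-alg} and~\ref{lemma-hyperhole-rec}), obtain the maximum weight clique from the best consecutive pair of classes, and obtain the maximum weight stable set by taking a maximum-weight representative per class and solving on the resulting cycle via the same two-case split (your path DP run twice is exactly the paper's comparison of $S_1$ on $H \setminus x$ versus $\{x\} \cup S_2$ on $H \setminus \{x,y,z\}$). The only divergence is cosmetic: the paper first deletes non-positive-weight vertices and consequently needs a chordal fallback via Lemma~\ref{lemma-chordal} (since the reduced graph may no longer be a hyperhole), whereas you keep all vertices and absorb the sign handling into the clique selection ($s_i^+$) and the cycle DP.
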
 
\begin{proof} 
Let $(G,w)$ be an input weighted graph. If $w$ assigns zero or negative weight to all vertices of $G$ (note that this can be checked in $O(n)$ time), then $\emptyset$ is both a maximum weight clique and a maximum weight stable set of $(G,w)$, and we are done. Otherwise, we first update $(G,w)$ by deleting all vertices of $G$ to which $w$ assigns zero or negative weight; this takes $O(n+m)$ time. Clearly, any induced subgraph of a hyperhole is either a hyperhole or a chordal graph. Using Lemma~\ref{lemma-chordal}, we now check whether $G$ is chordal, and if so, we find a maximum weight clique $C$ and a maximum weight stable set $S$ of $(G,w)$, and we return $C$ and $S$ and stop; this takes $O(n+m)$ time. Suppose now that the algorithm from Lemma~\ref{lemma-chordal} returned the answer that $G$ is not a chordal graph. We then find a partition $\mathcal{P}$ of $V(G)$ into true twin classes of $G$, and we form the quotient graph $G_\mathcal{P}$; by Lemma~\ref{true-twin-alg}, this can be done in $O(n+m)$ time. Clearly, all members of $\mathcal{P}$ are cliques of $G$. We check in $O(n+m)$ time whether $G_{\mathcal{P}}$ is a hole; if not, then we return the answer that $G$ is not a hyperhole (by Lemma~\ref{lemma-hyperhole-rec}, this is correct) and stop. So from now on, we assume that $G_{\mathcal{P}}$ is a hole. 

We find a maximum weight clique $C$ of $(G,w)$ as follows. We define $w_\mathcal{P}:\mathcal{P} \rightarrow \mathbb{R}$ by setting $w_\mathcal{P}(X) = \sum_{v \in X} w(v)$ for all $X \in \mathcal{P}$; finding $w_\mathcal{P}$ takes $O(n)$ time. We then find an edge $XY$ of the hole $G_\mathcal{P}$ for which the sum of weights (with respect to $w_\mathcal{P}$) of its endpoints is maximum; this takes $O(n)$ time. Set $C = X \cup Y$. Clearly, $C$ is a maximum weight clique of $(G,w)$. 

We find a maximum weight stable set $S$ of $(G,w)$ as follows. For each $X \in \mathcal{P}$, we find a vertex $v_X \in X$ such that $w(v_X) = \max\{w(v) \mid v \in X\}$; finding the family $\{v_X\}_{X \in \mathcal{P}}$ takes $O(n)$ time. We then form the graph $H = G[\{v_X \mid X \in \mathcal{P}\}]$ in $O(n+m)$ time. Since $G$ is a hyperhole, we see that $H$ is a hole. Clearly, $\alpha(G,w) = \alpha(H,w)$, and furthermore, any maximum weight stable set of $(H,w)$ is a maximum weight stable set of $(G,w)$. 

We find a maximum weight stable set of $(H,w)$ as follows. Let $x$ be any vertex of $H$, and let $y$ and $z$ be the two neighbors of $x$ in $H$. We form induced subgraphs $H \setminus x$ and $H \setminus \{x,y,z\}$ of $H$ in $O(n)$ time, and using the $O(n)$ time algorithm from~\cite{MaxWeightStableSetInTree}, we find a maximum weight stable set $S_1$ of the weighted path $(H \setminus x,w)$, and a maximum weight stable set $S_2$ of the weighted path $(H \setminus \{x,y,z\},w)$. (Note that we can also find $S_1$ and $S_2$ using the algorithm from Lemma~\ref{lemma-chordal}.) Clearly, $\{x\} \cup S_2$ is a stable set of $H$. If $w(S_1) \geq w(\{x\} \cup S_2)$, then we set $S = S_1$, and otherwise, we set $S = \{x\} \cup S_2$. Clearly, $S$ is a maximum weight stable set of $(H,w)$, and therefore of $(G,w)$ as well. 

The algorithm now returns the clique $C$ and the stable set $S$ and stops. It is clear that the algorithm is correct, and that its running time is $O(n+m)$. 
\end{proof}

\subsection{Class $\mathcal{G}_{\text{UT}}$} 

In this subsection, we give a polynomial time recognition algorithm for the class $\mathcal{G}_{\text{UT}}$, and we prove that the maximum clique problem is NP-hard for this class. The complexity of the optimal coloring and maximum stable set problems is still open. 

\begin{theorem} The maximum clique problem is NP-hard for the class of (long hole, $K_{2,3}$, $\overline{C_6}$)-free graphs. Consequently, the maximum clique problem is NP-hard for the class $\mathcal{G}_{\text{UT}}$. 
\end{theorem}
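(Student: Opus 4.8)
The plan is to prove NP-hardness by a reduction from the maximum independent set problem on graphs of large girth, using complementation. Recall that for any graph $H$, the cliques of $\overline{H}$ are exactly the stable sets of $H$, so $\omega(\overline{H}) = \alpha(H)$, and $\overline{H}$ is computable from $H$ in polynomial time. Thus any polynomial algorithm for the (unweighted) maximum clique problem on $\overline{H}$ immediately yields $\alpha(H)$. The reduction therefore rests on two facts: (i) the maximum independent set problem is NP-hard on graphs of sufficiently large girth, and (ii) if $H$ has large girth, then $\overline{H}$ is (long hole, $K_{2,3}$, $\overline{C_6}$)-free.

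For (ii), I would translate each forbidden induced subgraph of $\overline{H}$ back into a forbidden induced subgraph of $H$ via complementation. A routine computation shows that $\overline{K_{2,3}}$ is the disjoint union of a triangle $K_3$ and an edge $K_2$, so a $K_{2,3}$ in $\overline{H}$ corresponds to an induced copy of that disjoint union in $H$; a $\overline{C_6}$ (prism) in $\overline{H}$ corresponds to an induced $C_6$ in $H$; and a hole of length $\ell \geq 5$ in $\overline{H}$ corresponds to an induced antihole $\overline{C_\ell}$ in $H$. The key observation is that $\overline{C_5} \cong C_5$, while for every $\ell \geq 6$ the antihole $\overline{C_\ell}$ contains a triangle (e.g.\ three pairwise non-consecutive rim vertices are pairwise adjacent in $\overline{C_\ell}$). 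Hence if $H$ is triangle-free, $C_5$-free, and $C_6$-free, then $H$ contains no triangle-plus-edge, no $C_6$, and no long antihole, so $\overline{H}$ is (long hole, $K_{2,3}$, $\overline{C_6}$)-free. In particular, any $H$ of girth at least seven (thus $\{C_3,C_4,C_5,C_6\}$-free) satisfies these hypotheses; note that forbidding $C_4$ in $H$ is not even needed, since $\overline{C_4}$ is only $2K_2$.

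For (i), I would invoke the classical fact that the maximum independent set problem remains NP-hard on graphs of girth at least seven (indeed of arbitrarily large girth): Poljak's construction of replacing each edge by a path of length three raises the independence number by exactly the number of edges while strictly increasing the girth, and iterating it produces in polynomial time an equivalent instance of girth at least seven. Combining (i) and (ii) shows that the (unweighted) maximum clique problem is NP-hard for (long hole, $K_{2,3}$, $\overline{C_6}$)-free graphs. The ``consequently'' then follows at once, because every (long hole, $K_{2,3}$, $\overline{C_6}$)-free graph belongs to $\mathcal{B}_{\text{UT}}$ (its second defining bullet) and $\mathcal{B}_{\text{UT}} \subseteq \mathcal{G}_{\text{UT}}$ by Lemma~\ref{BB-in-GG}, so the hard instances $\overline{H}$ all lie in $\mathcal{G}_{\text{UT}}$.

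I expect the main obstacle to be the girth bookkeeping in step (ii): correctly handling the long-hole/long-antihole correspondence and verifying that forbidding $C_3$ and $C_5$ in $H$ already destroys \emph{all} long antiholes (so that only the explicit exclusion of $C_6$ and $C_3$ remains to be imposed), and then matching this exact forbidden-subgraph requirement to a clean statement of the large-girth hardness result in (i). The remaining complementation computations are elementary.
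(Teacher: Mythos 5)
Your proposal is correct and is essentially the paper's own argument: both reduce from maximum stable set on large-girth graphs via Poljak's double-subdivision trick (which gives $\alpha(G') = \alpha(G) + |E(G)|$) and then observe that the complement of a large-girth graph is (long hole, $K_{2,3}$, $\overline{C_6}$)-free. The only cosmetic differences are that you work out the complementation bookkeeping explicitly and get by with girth seven where the paper simply uses girth nine (one application of the subdivision already yields girth at least nine, so no iteration is needed), and you derive the ``consequently'' via membership in $\mathcal{B}_{\text{UT}}$ and Lemma~\ref{BB-in-GG} rather than by directly checking that every 3PC other than $K_{2,3}$ and $\overline{C_6}$, and every proper wheel, contains a long hole.
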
 
\begin{proof} 
Since every 3PC other than $K_{2,3}$ and $\overline{C_6}$ contains a long hole, as does every proper wheel, we see that every (long hole, $K_{2,3}$, $\overline{C_6}$)-free graph belongs to $\mathcal{G}_{\text{UT}}$. Thus, the first statement implies the second. 

Let us now prove the first statement. First of all, it is easy to show that the maximum stable set problem is NP-hard for the class of graphs of girth at least nine. To see this, consider the operation of subdividing every edge of a graph $G$ twice (i.e.\ the operation of replacing each edge by an induced three-edge path); this yields a graph $G'$ of girth at least nine. As observed in~\cite{Pol74}, $\alpha(G') = \alpha(G)+|E(G)|$, and so computing the stability number of a graph of girth at least nine is as hard as computing it in a general graph. Now, note that if $G$ is a graph of girth at least nine, then $\overline{G}$ is (long hole, $K_{2,3}$, $\overline{C_6}$)-free. Therefore, if we could compute the clique number of a (long hole, $K_{2,3}$, $\overline{C_6}$)-free graph in polynomial time, then we could also compute the stability number of a graph of girth at least nine in polynomial time. It follows that the problem of computing the clique number of a (long hole, $K_{2,3}$, $\overline{C_6}$)-free graph is NP-hard. 
\end{proof} 

We now turn to the recognition problem for the class $\mathcal{G}_{\text{UT}}$. We begin with a corollary of Theorem~\ref{decomp-thm-GUT}, which is more convenient than Theorem~\ref{decomp-thm-GUT} itself for algorithmic purposes. 

\begin{lemma} \label{GUT-decomp-cor} Let $G$ be a graph, let $(T_G,r)$ be a clique-cutset decomposition tree of $G$, and let $\{G^u\}_{u \in V(T_G)}$ be the associated family of induced subgraphs of $G$. Then the following are equivalent: 
\begin{itemize} 
\item[(a)] $G \in \mathcal{G}_{\text{UT}}$; 
\item[(b)] $G$ is $(K_{2,3},\overline{C_6},W_5^4)$-free, and furthermore, for all $u \in \mathcal{L}(T_G,r)$, and all anticomponents $H$ of $G^u$, either $H$ is a long ring, or $H$ contains no long holes, or $\alpha(H) \leq 2$. 
\end{itemize} 
\end{lemma}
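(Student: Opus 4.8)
The plan is to prove the two implications separately, using Theorem~\ref{decomp-thm-GUT} for the forward direction and the clique-cutset machinery (Lemmas~\ref{lemma-no-clique-cut-ind-sg} and~\ref{lemma-anticomp}), together with the census of small Truemper configurations in Lemma~\ref{rmrk-Truemper}, for the converse. The two recurring themes are: every 3PC and every proper wheel admits no clique-cutset, so membership in $\mathcal{G}_{\text{UT}}$ can be tested locally at the leaves of the decomposition tree; and, apart from $K_{2,3}$, every forbidden configuration is anticonnected, so it can be tested anticomponent by anticomponent.

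For (a)$\Rightarrow$(b), assuming $G\in\mathcal{G}_{\text{UT}}$, the global $(K_{2,3},\overline{C_6},W_5^4)$-freeness is immediate, since $K_{2,3}$ is a theta, $\overline{C_6}$ a prism, and $W_5^4$ a proper wheel. For a leaf $u$, the graph $G^u$ lies in $\mathcal{G}_{\text{UT}}$ (as the class is hereditary) and admits no clique-cutset, so Theorem~\ref{decomp-thm-GUT} places $G^u$ in $\mathcal{B}_{\text{UT}}$. I would then read off the required anticomponent condition from the three defining alternatives of $\mathcal{B}_{\text{UT}}$: if the unique nontrivial anticomponent is a long ring we are in the first alternative (the remaining anticomponents being single vertices, which contain no long hole); if $G^u$ is (long hole, $K_{2,3}$, $\overline{C_6}$)-free then no anticomponent has a long hole; and if $\alpha(G^u)=2$ then every anticomponent $H$ satisfies $\alpha(H)\le 2$. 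In each case every anticomponent is a long ring, or has no long hole, or has stability number at most two.

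For (b)$\Rightarrow$(a), I would first note that, since no 3PC or proper wheel admits a clique-cutset (Lemma~\ref{rmrk-Truemper}), applying Lemma~\ref{lemma-no-clique-cut-ind-sg} to each such configuration shows that $G\in\mathcal{G}_{\text{UT}}$ if and only if $G^u\in\mathcal{G}_{\text{UT}}$ for every leaf $u$. This reduces the problem to proving that each leaf $K=G^u$ lies in $\mathcal{G}_{\text{UT}}$. Every such $K$ is $(K_{2,3},\overline{C_6},W_5^4)$-free, and the key claim is that every anticomponent $H$ of $K$ lies in $\mathcal{G}_{\text{UT}}$. Here the hypothesis splits into the three cases, each settled via Lemma~\ref{rmrk-Truemper}: if $H$ is a long ring, then Lemma~\ref{ring-in-GT}(d) gives the conclusion directly; if $H$ has no long hole, then, since the only 3PCs without a long hole are $K_{2,3}$ and $\overline{C_6}$ while every proper wheel contains a long hole, the excluded pair already forces $H\in\mathcal{G}_{\text{UT}}$; and if $\alpha(H)\le 2$, then, since the only forbidden configurations of stability number at most two are $\overline{C_6}$ and $W_5^4$, the excluded pair again suffices. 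Finally, since the only non-anticonnected 3PC or proper wheel is $K_{2,3}$, Lemma~\ref{lemma-anticomp} combines the anticomponentwise conclusion with the $K_{2,3}$-freeness of $K$ to give $K\in\mathcal{G}_{\text{UT}}$, completing the reduction.

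The main obstacle, and the place where the precise list $(K_{2,3},\overline{C_6},W_5^4)$ is forced, is the stability-number-two case of the converse: a graph with $\alpha\le 2$ may still contain the prism $\overline{C_6}$ and the proper wheel $W_5^4$, both of stability number exactly two, so these two must be excluded explicitly, whereas every other 3PC and proper wheel has stability number at least three and is therefore automatically absent. The crux is thus the bookkeeping via Lemma~\ref{rmrk-Truemper}: checking that the ``small'' exceptions to the long-hole and anticonnectedness patterns are exhausted by $K_{2,3}$ (the non-anticonnected case) and by $\overline{C_6}$ and $W_5^4$ (the stability-two case), so that the three local alternatives on anticomponents really do combine into full $\mathcal{G}_{\text{UT}}$-membership.
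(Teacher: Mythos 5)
Your proposal is correct and matches the paper's proof essentially step for step: the forward direction via Theorem~\ref{decomp-thm-GUT} applied to the clique-cutset-free leaves, and the converse via Lemma~\ref{lemma-no-clique-cut-ind-sg}, the reduction to anticomponents using $K_{2,3}$-freeness, Lemma~\ref{ring-in-GT}(d) for rings, and the census in Lemma~\ref{rmrk-Truemper} for the no-long-hole and $\alpha\leq 2$ cases. Your identification of $\overline{C_6}$ and $W_5^4$ as the only configurations surviving the stability-number-two filter is exactly the bookkeeping the paper performs.
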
 
\begin{proof}
It is clear that every graph in $\mathcal{G}_{\text{UT}}$ is $(K_{2,3},\overline{C_6},W_5^4)$-free. The fact that (a) implies (b) now follows immediately from Theorem~\ref{decomp-thm-GUT}. 

Suppose now that $G$ satisfies (b); we must show that $G$ satisfies (a), that is, that $G$ is (3PC, proper wheel)-free. Clearly, no 3PC, and no proper wheel admits a clique-cutset, and so by Lemma~\ref{lemma-no-clique-cut-ind-sg}, it suffices to show that each graph in $\{G^u\}_{u \in \mathcal{L}(T_G,r)}$ is (3PC, proper wheel)-free. Fix $u \in \mathcal{L}(T_G,r)$. Clearly, every 3PC other than $K_{2,3}$ is anticonnected, as is every proper wheel; since $G$ (and therefore, $G^u$ as well) is $K_{2,3}$-free, it now suffices to show that every anticomponent of $G^u$ is (3PC, proper wheel)-free. Let $H$ be an anticomponent of $G^u$; by hypothesis, $H$ is $(K_{2,3},\overline{C_6},W_5^4)$-free, and furthermore, either $H$ is a long ring, or $H$ contains no long holes, or $\alpha(H) \leq 2$. If $H$ is a long ring, then Lemma~\ref{ring-in-GT} implies that $H$ is (3PC, proper wheel)-free. So assume that $H$ either contains no long holes or satisfies $\alpha(H) \leq 2$. Clearly, every 3PC or proper wheel other than $K_{2,3}$ and $\overline{C_6}$ contains a long hole; furthermore, every 3PC or proper wheel other than $K_{2,3},\overline{C_6},W_5^4$ contains a stable set of size three. Since $H$ is $(K_{2,3},\overline{C_6},W_5^4)$-free, it follows that $H$ is (3PC, proper wheel)-free, and we are done. 
\end{proof} 

It can be determined in $O(n+m^2)$ time whether a graph contains a long hole~\cite{LongHoleDetection}. In view of this, and of Lemma~\ref{GUT-decomp-cor}, the problem of recognizing graphs in $\mathcal{G}_{\text{UT}}$ essentially reduces to the problem of recognizing long rings. 

\begin{lemma} \label{lemma-detect-ring} There exists an algorithm with the following specifications: 
\begin{itemize} 
\item Input: A graph $G$; 
\item Output: Either the true statement that $G$ is a ring, together with the length and good partition of the ring, or the true statement that $G$ is not a ring; 
\item Running time: $O(n^2)$. 
\end{itemize} 
\end{lemma}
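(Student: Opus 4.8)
The plan is to reconstruct a good partition $(X_1,\dots,X_k)$ directly and then verify it against the characterization in Lemma~\ref{lemma-ring-char}. The guiding structural observation is that in a ring the domination relation totally preorders each part (Lemma~\ref{lemma-ring-char}(d)), and that the ``most dominant'' vertices $x_i$ of the parts (those with $N_R[x_i]=X_{i-1}\cup X_i\cup X_{i+1}$) together induce a hole meeting every part in exactly one vertex, of length $k$ (Lemma~\ref{ring-in-GT}(a),(b)). Thus I would first dispose of the degenerate case: since every ring contains a hole, I would test in $O(n+m)$ time whether $G$ is chordal using the algorithm of Lemma~\ref{lemma-chordal} (and also check connectedness), rejecting immediately if $G$ is chordal or disconnected. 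If $G$ is not chordal, a hole $H=h_1,\dots,h_\ell,h_1$ can be extracted in $O(n+m)$ time from the failure of the chordality test, and I would fix $k=\ell$ as the only candidate length, since by Lemma~\ref{ring-in-GT}(b) every hole of a ring has the same length $k$.

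Given this skeleton hole $H$, the next step is to distribute the remaining vertices into candidate parts, placing $h_i\in X_i$. For each $v\notin V(H)$ I would compute $N_G(v)\cap V(H)$, which costs $O(\ell)$ per vertex and hence $O(n\ell)=O(n^2)$ overall. If $G$ is a ring then this trace is a sub-arc of $H$ of length at most three containing the ``center'' $h_i$ of $v$'s true part $X_i$; any vertex whose trace is not such an arc witnesses that $G$ is not a ring. When the arc has length one or three the center, and hence $v$'s part, is determined. The delicate case, which I expect to be the \emph{main obstacle}, is a vertex $v$ adjacent to exactly two consecutive hole vertices $h_i,h_{i+1}$: the trace alone does not decide whether $v\in X_i$ or $v\in X_{i+1}$, reflecting the genuine non-uniqueness of good partitions. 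To resolve this I would appeal to $v$'s adjacencies to already-determined vertices: for $k\ge 5$ a neighbor of $v$ lying in $X_{i-1}$ forces $v\in X_i$ and a neighbor in $X_{i+2}$ forces $v\in X_{i+1}$ (using that non-consecutive parts are anticomplete), propagating these forced placements and assigning the remaining genuinely free vertices by a fixed default. The case $k=4$ must be handled separately, since there the ``far part'' argument degenerates and orienting ambiguous vertices requires extra care.

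Once a candidate partition $(X_1,\dots,X_k)$ is in hand, the final step is to verify conditions (a)--(d) of Lemma~\ref{lemma-ring-char} outright: that each $X_i$ is a clique, that $X_i$ is anticomplete to every non-consecutive part, that some vertex of $X_i$ is complete to $X_{i-1}\cup X_{i+1}$, and that the closed neighborhoods within each $X_i$ are nested. Each of these is a direct scan over vertices and their neighborhoods, executable in $O(n^2)$ time; if all conditions hold I would output $k$ together with $(X_1,\dots,X_k)$, and otherwise report that $G$ is not a ring. The correctness of the whole procedure reduces to proving that whenever $G$ really is a ring, the disambiguation rule yields a partition that passes this verification---that the forced choices are mutually consistent and that the default placement of the remaining ambiguous vertices never violates (a)--(d). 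Establishing this, together with confirming that extracting and orienting the skeleton and comparing closed neighborhoods all fit within the $O(n^2)$ budget, is where the substance of the proof lies.
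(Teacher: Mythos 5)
Your architecture is sound in one direction: because you verify conditions (a)--(d) of Lemma~\ref{lemma-ring-char} outright at the end, any acceptance your algorithm makes is correct. The genuine gap is on the completeness side, and it is exactly the step you defer: you give no proof that, when $G$ really is a ring, your disambiguation of the 2-arc-trace vertices produces a partition passing the verification, and your fallback of ``assigning the remaining genuinely free vertices by a fixed default'' is unjustified as stated --- placement is constrained not only by adjacency to far parts but also by cliqueness and the domination condition (d) inside the receiving part, so a blind default could in principle wreck (a) or (d) and cause a false rejection. The missing idea that closes the gap is the \emph{equality} in the definition of a ring: the first vertex $u_1^i$ of each part satisfies $N_R[u_1^i] = X_{i-1} \cup X_i \cup X_{i+1}$, so it is complete to both neighbouring parts. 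Hence if $v \in X_{i+1}$ has trace exactly $\{h_i,h_{i+1}\}$, then $v \not\sim h_{i+2}$, so $u_1^{i+2} \neq h_{i+2}$; this top vertex lies off the hole, its trace is the unambiguous 3-arc $\{h_{i+1},h_{i+2},h_{i+3}\}$ (so it is placed in the first pass), and it is adjacent to $v$. Symmetrically for $v \in X_i$ via the top of $X_{i-1}$. Since the closed neighbourhood of a 2-arc vertex lies in three consecutive parts, it cannot see placed vertices in both far parts, so after one propagation round every ambiguous vertex is forced, consistently, to its part in the reference good partition --- there are no ``genuinely free'' vertices and no default is ever invoked. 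Moreover this argument uses only anticompleteness of non-consecutive parts and works verbatim for $k=4$ (opposite parts are anticomplete), so the degeneration you fear there does not occur. None of this is in your write-up, and, as you say yourself, it is where the substance of the proof lies; without it the proposal is a plan rather than a proof.

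For comparison, the paper takes a different route that sidesteps the ambiguity entirely: it never extracts a hole, but peels the parts off greedily, starting from a maximum-degree vertex $x$ with $X_1 = \{y : N_G[y] \subseteq N_G[x]\}$, then iterating $X_{i+1} = N_G(u_1^i) \setminus (X_1 \cup \dots \cup X_i)$, checking along the way that closed neighbourhoods within each part are nested (and that $G \setminus X_1$ is chordal), and finishing by checking coverage, $k \geq 4$, and that the top vertices $u_1^1,\dots,u_1^k$ induce a hole. That construction determines the candidate partition deterministically, so no disambiguation lemma is needed; your hole-anchored reconstruction needs the completeness argument sketched above, but in exchange ends with an explicit full check of the characterization of Lemma~\ref{lemma-ring-char}, which makes its soundness immediate. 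Both fit in $O(n^2)$.
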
 
\begin{proof} 
\textbf{Step~0.} We first check in $O(n+m)$ time whether $G$ is connected; if not, then the algorithm returns the answer that $G$ is not a ring and stops. From now on, we assume that $G$ is connected. Next, we check in $O(n+m)$ time whether $G$ is chordal (we use Lemma~\ref{lemma-chordal}); if so, then the algorithm returns the answer that $G$ is not a ring and stops (this is correct because every ring contains a hole). From now on, we assume that $G$ is not chordal, and in particular, that $G$ is not complete, and we go to Step~1. 

\textbf{Step~1.} For each vertex $v \in V(G)$, we compute $d_G(v)$, and we find a vertex $x \in V(G)$ such that $d_G(x) = \Delta(G)$; this takes $O(n+m)$ time. Next, we let $X_1$ be the set of all vertices $y$ of $G$ such that $N_G[y] \subseteq N_G[x]$; computing $X_1$ takes $O(n^2)$ time. Set $n_1 = |X_1|$. We order $X_1$ as $X_1 = \{u_1^1,\dots,u_{n_1}^1\}$ so that $d_G(u_{n_1}^1) \leq \dots \leq d_G(u_1^1)$; this takes $O(n_1^2)$ time. Next, we check in $O(n_1n)$ time whether $N_G[u_{n_1}^1] \subseteq \dots \subseteq N_G[u_1^1]$; if not, then the algorithm returns the statement that $G$ is not a ring and stops. So assume that the algorithm found that $N_G[u_{n_1}^1] \subseteq \dots \subseteq N_G[u_1^1]$. (Note that this implies that $X_1$ is a clique. Since $G$ is not a complete graph, it follows that $X_1 \subsetneqq V(G)$. Since $G$ is connected, and since $N_G[u_{n_1}^1] \subseteq \dots \subseteq N_G[u_1^1]$, we see that $u_1^1$ has a neighbor in $V(G) \setminus X_1$.) Next, we check in $O(n^2)$ time whether $G \setminus X_1$ is chordal (we use Lemma~\ref{lemma-chordal}); if not, then the algorithm returns the statement that $G$ is not a ring and stops (this is correct by Lemma~\ref{ring-in-GT}). So assume that $G \setminus X_1$ is indeed chordal. Let $X_2$ be the vertex set of a component of $G[N_G(u_1^1) \setminus X_1]$; clearly, $X_2$ can be found in $O(n^2)$ time. Set $n_2 = |X_2|$. We order $X_2$ as $X_2 = \{u_1^2,\dots,u_{n_2}^2\}$ so that $d_G(u_{n_2}^2) \leq \dots \leq d_G(u_1^2)$, and then we check whether $N_G[u_{n_2}^2] \subseteq \dots \subseteq N_G[u_1^2]$; this takes $O(n_2n)$ time. If it is not the case that $N_G[u_{n_2}^2] \subseteq \dots \subseteq N_G[u_1^2]$, then the algorithm returns the answer that $G$ is not a ring and stops. So assume that $N_G[u_{n_2}^2] \subseteq \dots \subseteq N_G[u_1^2]$. We now set $k = 2$, and we go to Step~2. 

\textbf{Step~2.} Having constructed ordered sets $X_1 = \{u_1^1,\dots,u_{n_1}^1\},X_2 = \{u_1^2,\dots,u_{n_2}^2\},\dots,X_k = \{u_1^k,\dots,u_{n_k}^k\}$, we proceed as follows. We compute the set $X_{k+1} = N_G(u_1^k) \setminus (X_1 \cup \dots \cup X_k)$; this takes $O(n)$ time. Set $n_{k+1} = |X_{k+1}|$. If $n_{k+1} = 0$, then we go to Step~3. So assume that $n_{k+1} \geq 1$. In this case, we order $X_{k+1}$ as $X_{k+1} = \{u_1^{k+1},\dots,u_{n_{k+1}}^{k+1}\}$ so that $d_G(u_{n_{k+1}}^{k+1}) \leq \dots \leq d_G(u_1^{k+1})$, and then we check whether $N_G[u_{n_{k+1}}^{k+1}] \subseteq \dots \subseteq N_G[u_1^{k+1}]$; this takes $O(n_{k+1}n)$ time. If it is not the case that $N_G[u_{n_{k+1}}^{k+1}] \subseteq \dots \subseteq N_G[u_1^{k+1}]$, then the algorithm returns the answer that $G$ is not a ring and stops. Otherwise, we update $k := k+1$, and we go back to Step~2. 

\textbf{Step~3.} If $k \leq 3$, or if $X_1 \cup \dots \cup X_k \subsetneqq V(G)$ (this can be checked in $O(n)$ time), then the algorithm returns the answer that $G$ is not a ring and stops. So assume that $k \geq 4$ and $V(G) = X_1 \cup \dots \cup X_k$. We check whether $u_1^1,u_1^2,\dots,u_1^k,u_1^1$ is a hole in $G$ (this takes $O(n^2)$ time), and if so, the algorithm returns the statement that $G$ is a ring of length $k$, together with the good partition $(X_1,\dots,X_k)$ of the ring $G$; otherwise, the algorithm returns the answer that $G$ is not a ring. 

Clearly, the algorithm is correct. The running time of the algorithm is $O(n^2+\sum_{i=1}^k n_in)$; since $\sum_{i=1}^k n_i \leq n$, it follows that the running time of the algorithm is $O(n^2)$. 
\end{proof}

We are now ready to give a recognition algorithm for the class $\mathcal{G}_{\text{UT}}$. 

\begin{theorem} There exists an algorithm with the following specifications: 
\begin{itemize} 
\item Input: A graph $G$; 
\item Output: Either the true statement that $G \in \mathcal{G}_{\text{UT}}$, or the true statement that $G \notin \mathcal{G}_{\text{UT}}$; 
\item Running time: $O(n^6)$. 
\end{itemize} 
\end{theorem}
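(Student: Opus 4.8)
The plan is to turn Lemma~\ref{GUT-decomp-cor} directly into an algorithm, so that recognition of $\mathcal{G}_{\text{UT}}$ reduces to (i) three induced-subgraph-freeness tests on $G$ and (ii) a disjunction of three easily testable properties applied to each anticomponent of each leaf-subgraph of a clique-cutset decomposition tree. First I would compute a clique-cutset decomposition tree $(T_G,r)$ of $G$ together with the associated family $\{G^u\}_{u \in V(T_G)}$; by the results borrowed from~\cite{Tarjan} in subsection~\ref{sec:alg:Tarjan}, this takes $O(n^2+nm)$ time, and there are at most $n$ leaves. Next I would test whether $G$ is $(K_{2,3},\overline{C_6},W_5^4)$-free by brute-force enumeration of the vertex subsets of sizes five and six; since $\overline{C_6}$ and $W_5^4$ each have six vertices, this costs $O(n^6)$, and this single global test is the source of the overall bound. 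Lemma~\ref{GUT-decomp-cor}(b) requires freeness of $G$ itself, and running the test once globally is in any case cheaper than running it on every leaf-subgraph (which would cost $O(n^7)$).

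For the second part, for each leaf $u$ I would split $G^u$ into its anticomponents by computing the connected components of $\overline{G^u}$, and for each anticomponent $H$ verify the disjunction required by Lemma~\ref{GUT-decomp-cor}(b): that $H$ is a long ring, or that $H$ is long-hole-free, or that $\alpha(H)\le 2$. I would test the long-ring condition using Lemma~\ref{lemma-detect-ring} (accepting only when the reported length is at least five), test $\alpha(H)\le 2$ by checking that $\overline{H}$ is triangle-free, and test long-hole-freeness using the $O(n+m^2)$ algorithm of~\cite{LongHoleDetection}. The graph fails the recognition test precisely when some anticomponent $H$ of some leaf-subgraph satisfies none of the three conditions, and otherwise the algorithm reports $G\in\mathcal{G}_{\text{UT}}$. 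Correctness is then immediate from the equivalence in Lemma~\ref{GUT-decomp-cor}.

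The hard part is the running-time bookkeeping for the per-leaf checks, which must be shown not to exceed the $O(n^6)$ budget. The long-hole detection dominates here: for a single $G^u$, summing $O(n_H+m_H^2)$ over its anticomponents $H$ gives at most $O\!\big(n+(\sum_H m_H)^2\big)=O(n+m^2)=O(n^4)$, since the anticomponents have pairwise disjoint vertex sets and hence $\sum_H m_H\le m$; multiplying by the at most $n$ leaves yields $O(n^5)$. The ring detections contribute $\sum_H O(n_H^2)\le O(n_u^2)$ per leaf, hence $O(n^3)$ overall, and the $\alpha(H)\le 2$ tests contribute $\sum_H O(n_H^3)\le O(n_u^3)$ per leaf, hence $O(n^4)$ overall, using the elementary bound $\sum_H a_H^t\le(\sum_H a_H)^t$ for nonnegative $a_H$ and $t\in\{2,3\}$. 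Thus Step~(ii) runs in $O(n^5)$ time, and combined with the $O(n^6)$ global freeness test and the $O(n^2+nm)$ decomposition, the total running time is $O(n^6)$, as claimed.
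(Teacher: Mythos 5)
Your proposal is correct and follows essentially the same route as the paper's own proof: both reduce recognition to condition (b) of Lemma~\ref{GUT-decomp-cor}, perform the brute-force $(K_{2,3},\overline{C_6},W_5^4)$-freeness test as the $O(n^6)$ bottleneck, and then check each anticomponent of each leaf-subgraph of a clique-cutset decomposition tree against the long-ring / long-hole-free / $\alpha \le 2$ disjunction using Lemma~\ref{lemma-detect-ring} and the algorithm of~\cite{LongHoleDetection}. Your bookkeeping differs only cosmetically (the paper bounds each anticomponent check at $O((n_i^u)^4)$, whereas you aggregate the long-hole tests per leaf via $\sum_H m_H \le m$), and both yield the same $O(n^5)$ bound for that phase.
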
 
\begin{proof}
We test for (b) from Lemma~\ref{GUT-decomp-cor}. We first check in $O(n^6)$ time whether $G$ is $(K_{2,3},\overline{C_6},W_5^4)$-free; if not, then the algorithm returns the answer that $G \notin \mathcal{G}_{\text{UT}}$ and stops. So assume that $G$ is $(K_{2,3},\overline{C_6},W_5^4)$-free. We compute a clique-cutset decomposition tree $(T_G,r)$ of $G$, together with the associated family $\{G^u\}_{u \in V(T_G)}$ of induced subgraphs of $G$; this takes $O(n^2+nm)$ time, which is $O(n^3)$ time. Fix $u \in \mathcal{L}(T_G,r)$. We first compute the anticomponents $H_1^u,\dots,H_t^u$ of $G^u$ in $O(n^2)$ time (this can be done by first computing $\overline{G^u}$, then, using BFS, computing the components of $\overline{G^u}$, and finally computing the complements of those components). For each $i \in \{1,\dots,t\}$, set $n_i^u = |V(H_i^u)|$; clearly, $\sum_{i=1}^t n_i^u = |V(G^u)| \leq n$. Now, for each $i \in \{1,\dots,t\}$, we determine in $O((n_i^u)^4)$ time whether at least one of the following holds: 
\begin{itemize} 
\item[(i)] $H_i^u$ is a long ring (we use the $O(n^2)$ time algorithm from Lemma~\ref{lemma-detect-ring}); 
\item[(ii)] $H_i^u$ contains no long holes (we use the $O(n+m^2)$ time algorithm from~\cite{LongHoleDetection}); 
\item[(iii)] $\alpha(H_i^u) \leq 2$. 
\end{itemize} 
Checking this for all anticomponents of $G^u$ takes $O(\sum_{i=1}^t (n_i^u)^4)$ time, which is $O(n^4)$ time; since $|\mathcal{L}(T_G,r)| \leq n$, performing this computation for all graphs in the family $\{G^u\}_{u \in \mathcal{L}(T_G,r)}$ takes $O(n^5)$ time. Now, if every anticomponent of every graph in the family $\{G^u\}_{\mathcal{L}(T_G,r)}$ satisfies (i), (ii), or (iii), then the algorithm returns the answer that $G \in \mathcal{G}_{\text{UT}}$ and stops; otherwise, the algorithm returns the answer that $G \notin \mathcal{G}_{\text{UT}}$ and stops. The correctness of the algorithm follows from Lemma~\ref{GUT-decomp-cor}, and clearly, the running time of the algorithm is $O(n^6)$. 
\end{proof} 

\subsection{Class $\mathcal{G}_{\text{U}}$} 

In this subsection, we give polynomial time algorithms that solve the recognition, optimal coloring, maximum weight clique, and maximum weight stable set problems for the class $\mathcal{G}_{\text{U}}$. 

Let $\mathcal{B}_{\text{U}}^{\text{h}}$ be the class of all induced subgraphs of graphs in $\mathcal{B}_{\text{U}}$. Clearly, $\mathcal{B}_{\text{U}} \subseteq \mathcal{B}_{\text{U}}^{\text{h}}$, and $\mathcal{B}_{\text{U}}^{\text{h}}$ is hereditary. Furthermore, a graph $G$ belongs to $\mathcal{B}_{\text{U}}^{\text{h}}$ if and only if one of the following holds: 
\begin{itemize} 
\item every nontrivial anticomponent of $G$ is isomorphic to $\overline{K_2}$; 
\item $G$ has exactly one nontrivial anticomponent, and this anticomponent is a long hole; 
\item $G$ has exactly one nontrivial anticomponent, and this anticomponent has at least three vertices and is a disjoint union of paths. 
\end{itemize} 

\begin{lemma} \label{decomp-lemma-GU-hereditray} The class $\mathcal{B}_{\text{U}}^{\text{h}}$ is hereditary, and $\mathcal{B}_{\text{U}}^{\text{h}} \subseteq \mathcal{G}_{\text{U}}$. Furthermore, every graph in $\mathcal{G}_{\text{U}}$ either belongs to $\mathcal{B}_{\text{U}}^{\text{h}}$ or admits a clique-cutset. 
\end{lemma}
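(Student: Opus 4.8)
The plan is to derive all three assertions directly from results already in hand, since $\mathcal{B}_{\text{U}}^{\text{h}}$ is by construction the hereditary closure of $\mathcal{B}_{\text{U}}$; I will treat the three claims in turn and invoke, respectively, the definition of $\mathcal{B}_{\text{U}}^{\text{h}}$, Lemma~\ref{BB-in-GG}, and Theorem~\ref{decomp-thm-GU}.

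First, heredity of $\mathcal{B}_{\text{U}}^{\text{h}}$ is immediate from its definition. The class consists of all induced subgraphs of graphs in $\mathcal{B}_{\text{U}}$, and the relation ``is an induced subgraph of'' is transitive; hence any induced subgraph of a member of $\mathcal{B}_{\text{U}}^{\text{h}}$ is again an induced subgraph of some member of $\mathcal{B}_{\text{U}}$, and so lies in $\mathcal{B}_{\text{U}}^{\text{h}}$. For the inclusion $\mathcal{B}_{\text{U}}^{\text{h}} \subseteq \mathcal{G}_{\text{U}}$, I would combine Lemma~\ref{BB-in-GG}, which gives $\mathcal{B}_{\text{U}} \subseteq \mathcal{G}_{\text{U}}$, with the fact that $\mathcal{G}_{\text{U}}$ is hereditary (it is the class of (3PC, proper wheel, twin wheel)-free graphs, i.e.\ an $\mathcal{H}$-free class for a fixed family $\mathcal{H}$, and such classes are hereditary). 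Since $\mathcal{G}_{\text{U}}$ is closed under induced subgraphs and contains $\mathcal{B}_{\text{U}}$, it contains every induced subgraph of every graph in $\mathcal{B}_{\text{U}}$, which is precisely $\mathcal{B}_{\text{U}}^{\text{h}}$.

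Finally, the decomposition statement follows from Theorem~\ref{decomp-thm-GU} together with the trivial inclusion $\mathcal{B}_{\text{U}} \subseteq \mathcal{B}_{\text{U}}^{\text{h}}$. Given $G \in \mathcal{G}_{\text{U}}$, Theorem~\ref{decomp-thm-GU} yields either $G \in \mathcal{B}_{\text{U}}$, and hence $G \in \mathcal{B}_{\text{U}}^{\text{h}}$, or a clique-cutset of $G$; in either case the required conclusion holds. Thus the lemma is essentially a repackaging of Theorem~\ref{decomp-thm-GU} in which the basic class $\mathcal{B}_{\text{U}}$ is replaced by its (now hereditary) closure, the whole point being to supply a \emph{hereditary} basic class so that the algorithmic machinery of Section~\ref{sec:alg} can later be applied.

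The honest assessment is that there is no real obstacle here: each of the three steps is a one-line deduction from a previously established fact. The only point that warrants any care is confirming that $\mathcal{G}_{\text{U}}$ is genuinely hereditary before invoking heredity in the second step, which is guaranteed by the introductory observation that a class is hereditary exactly when it is the class of $\mathcal{H}$-free graphs for some family $\mathcal{H}$. I would not expect to need the explicit three-case characterization of $\mathcal{B}_{\text{U}}^{\text{h}}$ recorded just before the lemma, since the definition of $\mathcal{B}_{\text{U}}^{\text{h}}$ alone suffices for all three claims.
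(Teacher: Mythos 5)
Your proposal is correct and follows exactly the paper's own argument: heredity from the definition of $\mathcal{B}_{\text{U}}^{\text{h}}$ as the class of induced subgraphs of graphs in $\mathcal{B}_{\text{U}}$, the inclusion $\mathcal{B}_{\text{U}}^{\text{h}} \subseteq \mathcal{G}_{\text{U}}$ via Lemma~\ref{BB-in-GG} together with the heredity of $\mathcal{G}_{\text{U}}$, and the decomposition statement from Theorem~\ref{decomp-thm-GU} combined with $\mathcal{B}_{\text{U}} \subseteq \mathcal{B}_{\text{U}}^{\text{h}}$. Your closing remark is also accurate: the paper's proof does not use the explicit three-case characterization of $\mathcal{B}_{\text{U}}^{\text{h}}$ either, so nothing further is needed.
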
 
\begin{proof} 
The fact that $\mathcal{B}_{\text{U}}^{\text{h}}$ is hereditary follows immediately from the definition of $\mathcal{B}_{\text{U}}^{\text{h}}$. Next, by Lemma~\ref{BB-in-GG}, we have that $\mathcal{B}_{\text{U}} \subseteq \mathcal{G}_{\text{U}}$. By definition, $\mathcal{B}_{\text{U}}^{\text{h}}$ is the class of all induced subgraphs of graphs in $\mathcal{B}_{\text{U}}$; since $\mathcal{G}_{\text{U}}$ is hereditary, it follows that $\mathcal{B}_{\text{U}}^{\text{h}} \subseteq \mathcal{G}_{\text{U}}$. 

It remains to show that every graph in $\mathcal{G}_{\text{U}}$ either belongs to $\mathcal{B}_{\text{U}}^{\text{h}}$ or admits a clique-cutset. But this follows immediately from Theorem~\ref{decomp-thm-GU}, and from the fact that $\mathcal{B}_{\text{U}} \subseteq \mathcal{B}_{\text{U}}^{\text{h}}$. 
\end{proof} 

\begin{lemma} \label{GU-decomp-cor} Let $G$ be a graph, let $(T_G,r)$ be a clique-cutset decomposition tree of $G$, and let $\{G^u\}_{u \in V(T_G)}$ be the associated family of induced subgraphs of $G$. Then $G \in \mathcal{G}_{\text{U}}$ if and only if all graphs in the family $\{G^u\}_{u \in \mathcal{L}(T_G,r)}$ belong to $\mathcal{B}_{\text{U}}^{\text{h}}$. 
\end{lemma}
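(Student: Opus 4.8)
The plan is to prove the two implications separately, using the two generic lemmas about clique-cutset decomposition trees (Lemmas~\ref{lemma-leaves-basic} and~\ref{lemma-no-clique-cut-ind-sg}) together with the decomposition result recorded in Lemma~\ref{decomp-lemma-GU-hereditray}. No genuinely new argument is needed; the statement is a packaging of earlier results into a form convenient for the recognition algorithm.

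For the forward implication, I would simply invoke Lemma~\ref{lemma-leaves-basic} with $\mathcal{B} = \mathcal{B}_{\text{U}}^{\text{h}}$ and $\mathcal{G} = \mathcal{G}_{\text{U}}$. Its hypotheses---that both classes are hereditary and that every graph in $\mathcal{G}_{\text{U}}$ either lies in $\mathcal{B}_{\text{U}}^{\text{h}}$ or admits a clique-cutset---are exactly the content of Lemma~\ref{decomp-lemma-GU-hereditray}. Hence, if $G \in \mathcal{G}_{\text{U}}$, then every graph $G^u$ with $u \in \mathcal{L}(T_G,r)$ belongs to $\mathcal{B}_{\text{U}}^{\text{h}}$, as desired.

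For the converse, assume every leaf graph $G^u$ (with $u \in \mathcal{L}(T_G,r)$) lies in $\mathcal{B}_{\text{U}}^{\text{h}}$; I must show that $G$ is (3PC, proper wheel, twin wheel)-free. I would argue one forbidden configuration at a time: let $H$ be any theta, pyramid, prism, proper wheel, or twin wheel. Each such $H$ is a Truemper configuration, so by Lemma~\ref{rmrk-Truemper} it admits no clique-cutset. Moreover, since $\mathcal{B}_{\text{U}}^{\text{h}} \subseteq \mathcal{G}_{\text{U}}$ (Lemma~\ref{decomp-lemma-GU-hereditray}), each leaf graph $G^u$ lies in $\mathcal{G}_{\text{U}}$ and is therefore $H$-free by the definition of $\mathcal{G}_{\text{U}}$. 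Lemma~\ref{lemma-no-clique-cut-ind-sg} then yields that $G$ itself is $H$-free. Applying this to every $H$ in the (infinite) forbidden family shows that $G$ is (3PC, proper wheel, twin wheel)-free, i.e.\ $G \in \mathcal{G}_{\text{U}}$.

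I do not expect any serious obstacle here. The only points requiring care are the following two. First, I must verify that every configuration excluded in the definition of $\mathcal{G}_{\text{U}}$ is indeed a Truemper configuration, so that Lemma~\ref{rmrk-Truemper} guarantees it has no clique-cutset and Lemma~\ref{lemma-no-clique-cut-ind-sg} may legitimately be applied. Second, the converse must be checked separately for each member of an infinite family of forbidden graphs; this causes no difficulty since each application of Lemma~\ref{lemma-no-clique-cut-ind-sg} is to a single $H$, and the conjunction over all $H$ gives precisely the $\mathcal{H}$-freeness defining $\mathcal{G}_{\text{U}}$.
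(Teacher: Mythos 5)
Your proposal is correct and matches the paper's own proof in all essentials: the forward direction via Lemma~\ref{lemma-leaves-basic} applied with the hypotheses supplied by Lemma~\ref{decomp-lemma-GU-hereditray}, and the converse via Lemma~\ref{lemma-no-clique-cut-ind-sg} together with the inclusion $\mathcal{B}_{\text{U}}^{\text{h}} \subseteq \mathcal{G}_{\text{U}}$ and the fact that no 3PC and no wheel admits a clique-cutset. Your explicit remark that Lemma~\ref{lemma-no-clique-cut-ind-sg} must be applied once per forbidden configuration $H$ is exactly what the paper does implicitly, so nothing is missing.
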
 
\begin{proof} 
The ``only if'' part follows immediately from Lemma~\ref{decomp-lemma-GU-hereditray} (and in particular, the fact that every graph in $\mathcal{G}_{\text{U}}$ either belongs to $\mathcal{B}_{\text{U}}^{\text{h}}$ or admits a clique-cutset). The ``if'' part follows from Lemma~\ref{lemma-no-clique-cut-ind-sg}, from the fact that (by Lemma~\ref{decomp-lemma-GU-hereditray}) $\mathcal{B}_{\text{U}}^{\text{h}} \subseteq \mathcal{G}_{\text{U}}$, and from the fact that no 3PC and no wheel admits a clique-cutset. 
\end{proof}

\begin{lemma} \label{lemma-BUh-rec} There exists an algorithm with the following specifications: 
\begin{itemize} 
\item Input: A graph $G$; 
\item Output: Exactly one of the following: 
\begin{itemize} 
\item The true statement that $G \in \mathcal{B}_{\text{U}}^{\text{h}}$, together with the anticomponents $G_1,\dots,G_t$ of $G$, and for each $i \in \{1,\dots,t\}$, the correct information whether 
\begin{itemize} 
\item[(i)] $G_i$ is isomorphic to $K_1$, or 
\item[(ii)] $G_i$ is isomorphic to $\overline{K_2}$, or 
\item[(iii)] $G_i$ is an odd long hole, or 
\item[(iv)] $G_i$ is an even long hole, or 
\item[(v)] $G_i$ has at least three vertices and is a disjoint union of paths. 
\end{itemize} 
\item The true statement that $G \notin \mathcal{B}_{\text{U}}^{\text{h}}$; 
\end{itemize} 
\item Running time: $O(n+m)$. 
\end{itemize} 
\end{lemma}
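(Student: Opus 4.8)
The plan is to compute the anticomponents of $G$ explicitly, classify each nontrivial one by its shape, and then decide membership using the characterization of $\mathcal{B}_{\text{U}}^{\text{h}}$ stated just before the lemma: $G \in \mathcal{B}_{\text{U}}^{\text{h}}$ if and only if either every nontrivial anticomponent of $G$ is isomorphic to $\overline{K_2}$, or $G$ has exactly one nontrivial anticomponent and it is a long hole, or $G$ has exactly one nontrivial anticomponent and it has at least three vertices and is a disjoint union of paths. Since a trivial anticomponent is precisely a single vertex complete to all of $V(G)$, the trivial anticomponents are exactly the universal vertices of $G$, and these will be reported as type (i); the entire difficulty is therefore to find all the anticomponents and to recognize the admissible shapes of the nontrivial ones, all within an $O(n+m)$ budget.

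First I would compute the partition of $V(G)$ into anticomponents, i.e.\ the connected components of $\overline{G}$, in $O(n+m)$ time \emph{without} ever forming $\overline{G}$ (which could have $\Theta(n^2)$ edges). This is done by a breadth-first search in the complement: maintain the still-unreached vertices in a doubly linked list $L$; to start a new anticomponent, remove a vertex from $L$ and enqueue it; to process a dequeued vertex $u$, mark the vertices of $N_G(u)$, scan $L$ once, move every \emph{unmarked} vertex of $L$ (that is, every non-neighbour of $u$, hence a neighbour of $u$ in $\overline{G}$) into the current anticomponent and onto the queue, and finally unmark $N_G(u)$. Processing $u$ costs $O(d_G(u))$ for the marking and unmarking, plus $O(|L|)$ for the scan; but at the moment $u$ is processed the vertices of $L$ that are \emph{not} moved out are exactly the neighbours of $u$ still lying in $L$, of which there are at most $d_G(u)$. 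Charging each ``scan-and-keep'' step to an edge incident with $u$ and each ``scan-and-remove'' step to the (unique) removal of that vertex, the total work is $O\big(\sum_u d_G(u) + n\big) = O(n+m)$.

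With the anticomponents in hand, I would extract each as an induced subgraph of $G$ (total size $O(n+m)$) and classify the nontrivial ones in linear time using standard BFS-based degree, connectivity, and acyclicity tests: an anticomponent on exactly two vertices is necessarily $\overline{K_2}$ (since $K_2$ is not anticonnected); a connected $2$-regular anticomponent is a single cycle, and being anticonnected it must have at least five vertices, so it is a long hole whose parity is read off from its number of vertices; and an acyclic anticomponent of maximum degree at most two with at least three vertices is a disjoint union of paths. Writing $s$ for the number of nontrivial anticomponents, I then decide as follows: if $s=0$ then $G$ is complete and lies in $\mathcal{B}_{\text{U}}^{\text{h}}$ (first bullet, vacuously); if $s\ge 2$ then $G\in\mathcal{B}_{\text{U}}^{\text{h}}$ if and only if every nontrivial anticomponent is $\overline{K_2}$; and if $s=1$ then $G\in\mathcal{B}_{\text{U}}^{\text{h}}$ if and only if its unique nontrivial anticomponent is $\overline{K_2}$, a long hole, or a disjoint union of paths on at least three vertices. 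Each such test, together with finding universal vertices, counting anticomponents, and the structural recognition above, runs in $O(n+m)$ time, and correctness is immediate from the displayed characterization of $\mathcal{B}_{\text{U}}^{\text{h}}$.

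The only genuinely delicate point is the linear-time computation of the anticomponents: the naive route forms $\overline{G}$ and costs $\Theta(n^2)$, which ruins the bound precisely when $G$ is sparse, so the complement-BFS together with its amortized charging argument is the crux of the proof. Everything after it is routine structural recognition, and the shape tests are made clean by the fact that non-anticonnected candidates (such as $K_2$, $C_3$, $C_4$, or $P_2,P_3$ as single components) can never arise as anticomponents and hence need not be ruled out by hand.
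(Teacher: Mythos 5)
Your proof is correct, but it takes a genuinely different route from the paper's. You compute the anticomponents of $G$ outright, via the standard linear-time ``BFS in the complement'' technique (a doubly linked list of unreached vertices; mark $N_G(u)$, sweep the list, charge each kept vertex to an edge incident with $u$ and each removed vertex to its unique removal), and then classify each anticomponent by shape; your amortized analysis is sound, and your observation that non-anticonnected shapes such as $K_2$, $C_3$, $C_4$, and $P_3$ can never occur as anticomponents correctly disposes of the corner cases. The paper avoids this data-structure machinery entirely by a degree dichotomy: if every vertex has degree at least $n-2$, then $m \geq \frac{1}{2}n(n-2)$, so a naive $O(n^2)$ anticomponent computation already fits the $O(n+m)$ budget and every anticomponent is $K_1$ or $\overline{K_2}$; while if some vertex has degree at most $n-3$, the trivial anticomponents are exactly the universal vertices $U$, the set $V = V(G) \setminus U$ has at least three vertices, and membership in $\mathcal{B}_{\text{U}}^{\text{h}}$ forces $G[V]$ to be the unique nontrivial anticomponent --- so it suffices to run the shape tests (iii)--(v) directly on $G[V]$, the one delicate point being that $G[V] \cong P_3$ is impossible since the middle vertex would be universal and hence in $U$. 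Your approach buys a reusable primitive (linear-time co-components) and a uniform treatment of the cases of zero, one, or several nontrivial anticomponents; the paper's buys elementarity, exploiting the specific structure of $\mathcal{B}_{\text{U}}^{\text{h}}$ so that in the sparse regime anticomponents never need to be computed at all. Both arguments are complete and meet the stated $O(n+m)$ bound.
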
 
\begin{proof} 
We first compute the degree of all the vertices of $G$; this takes $O(n+m)$ time. 

Suppose first that we have $d_G(u) \geq n-2$ for all $u \in V(G)$; note that this can be checked in $O(n)$ time. In this case, we have that $m \geq \frac{1}{2}n(n-2)$. We now compute the anticomponents $G_1,\dots,G_t$ of $G$; this takes $O(n^2)$ time, which is $O(n+m)$ time (because $m \geq \frac{1}{2}n(n-2)$). We now have that for each $i \in \{1,\dots,t\}$, $G_i$ is isomorphic to $K_1$ or $\overline{K_2}$, that is, $G_i$ satisfies (i) or (ii); clearly, we can determine in $O(n)$ time which $G_i$'s satisfy (i) and which satisfy (ii). 

Suppose now that at least one vertex of $G$ is of degree at most $n-3$. We first form the set $U$ of all vertices of degree $n-1$ in $G$, and we set $V = V(G) \setminus U$; clearly, computing $U$ and $V$ takes $O(n)$ time, and futhermore, for all $u \in U$, $G[u]$ is a trivial anticomponent of $G$. Note that the vertex of $G$ that is of degree at most $n-3$ must belong to $V$, and furthermore, all nonneighbors of this vertex belong to $V$; thus, $|V| \geq 3$, and it follows that $G[V]$ satisfies neither (i) nor (ii). Now, we form the graph $G[V]$ and check whether $G[V]$ satisfies (iii), (iv), or (v); this takes $O(n+m)$ time. If $G[V]$ satisfies none of (iii), (iv), and (v), then the algorithm returns the answer that $G \notin \mathcal{B}_{\text{U}}^{\text{h}}$ and stops. Suppose now that $G[V]$ satisfies (iii), (iv), or (v). Then $G[V]$ is anticonnected unless it is isomorphic to $P_3$. But if $G[V]$ is isomorphic to $P_3$, then the (unique) interior vertex of the path $G[V]$ is of degree $n-1$ in $G$, and consequently, it belongs to $U$, a contradiction. Thus, $G[V]$ is indeed anticonnected. The algorithm now returns the answer that $G \in \mathcal{B}_{\text{U}}^{\text{h}}$, together with the anticomponents $G[u_1],\dots,G[u_{\ell}],G[V]$, where $U = \{u_1,\dots,u_{\ell}\}$, and furthermore, the algorithm returns the answer that $G[u_1],\dots,G[u_{\ell}]$ satisfy (i), and that $G[V]$ satisfies (iii), (iv), or (v), as determined by the algorithm. (If $U = \emptyset$, then the algorithm simply returns the anticomponent $G[V] = G$, together with the information that $G[V] = G$ satisfies (iii), (iv), or (v), as determined by the algorithm.) 

Clearly, the algorithm is correct, and its running time is $O(n+m)$. 
\end{proof} 

\begin{theorem} There exists an algorithm with the following specifications: 
\begin{itemize} 
\item Input: A graph $G$; 
\item Output: Either the true statement that $G \in \mathcal{G}_{\text{U}}$, or the true statement that $G \notin \mathcal{G}_{\text{U}}$; 
\item Running time: $O(n^2+nm)$. 
\end{itemize} 
\end{theorem}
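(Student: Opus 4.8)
The plan is to turn the characterization in Lemma~\ref{GU-decomp-cor} directly into an algorithm, combining the clique-cutset machinery from subsection~\ref{sec:alg:Tarjan} with the fast basic-class recognition from Lemma~\ref{lemma-BUh-rec}. By Lemma~\ref{GU-decomp-cor}, a graph $G$ belongs to $\mathcal{G}_{\text{U}}$ if and only if every graph in the family $\{G^u\}_{u \in \mathcal{L}(T_G,r)}$ (the graphs sitting at the leaves of any clique-cutset decomposition tree of $G$) belongs to $\mathcal{B}_{\text{U}}^{\text{h}}$. Thus the recognition problem for $\mathcal{G}_{\text{U}}$ reduces to (i) building a clique-cutset decomposition tree together with its associated family of induced subgraphs, and (ii) running the linear-time basic-class test on each leaf graph.

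Concretely, the algorithm first computes a clique-cutset decomposition tree $(T_G,r)$ of $G$ together with the associated family $\{G^u\}_{u \in V(T_G)}$; as recalled in subsection~\ref{sec:alg:Tarjan}, the tree can be computed in $O(nm)$ time (by~\cite{Tarjan}), and the family $\{G^u\}_{u \in V(T_G)}$ in $O(n^2+nm)$ time, so this step takes $O(n^2+nm)$ time in total. Then, for each leaf $u \in \mathcal{L}(T_G,r)$, it calls the algorithm of Lemma~\ref{lemma-BUh-rec} on $G^u$ to decide whether $G^u \in \mathcal{B}_{\text{U}}^{\text{h}}$ (we discard the extra structural data returned by that algorithm and keep only the yes/no answer). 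If every leaf graph passes this test, the algorithm reports that $G \in \mathcal{G}_{\text{U}}$; otherwise it reports that $G \notin \mathcal{G}_{\text{U}}$. Correctness is immediate from Lemma~\ref{GU-decomp-cor}.

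For the running time, the two dominant contributions are the construction of $(T_G,r)$ and $\{G^u\}_{u \in V(T_G)}$, which is $O(n^2+nm)$, and the collection of leaf tests. Since $|\mathcal{L}(T_G,r)| \leq n$ and each leaf graph $G^u$ is an induced subgraph of $G$ (hence has at most $n$ vertices and at most $m$ edges), each individual call to the Lemma~\ref{lemma-BUh-rec} algorithm costs $O(n+m)$, so all leaf tests together cost $O(n(n+m)) = O(n^2+nm)$. Hence the overall running time is $O(n^2+nm)$, as claimed.

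There is no deep obstacle here: once Lemma~\ref{GU-decomp-cor} and Lemma~\ref{lemma-BUh-rec} are in hand, the theorem is an assembly of already-established pieces. The only point that warrants a little care is the complexity bookkeeping — in particular, bounding the combined cost of the leaf tests by $O(n^2+nm)$ rather than something larger. Because the number of leaves is at most $n$ and each leaf test is linear in the size of a \emph{single} leaf graph (and not in the total size of all leaf graphs, which can exceed $n+m$ since the clique-cutsets in the $V^u$'s are shared among several pieces), the crude bound $n \cdot O(n+m)$ already suffices, and no sharper amortized argument is required.
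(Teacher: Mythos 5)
Your proposal is correct and matches the paper's own proof essentially verbatim: both compute a clique-cutset decomposition tree with its associated family $\{G^u\}_{u \in V(T_G)}$ in $O(n^2+nm)$ time, run the $O(n+m)$-time algorithm of Lemma~\ref{lemma-BUh-rec} on each of the at most $n$ leaf graphs, and conclude via Lemma~\ref{GU-decomp-cor}. Your extra remark on the complexity bookkeeping (bounding the leaf tests crudely by $n \cdot O(n+m)$ rather than amortizing) is accurate and consistent with the paper's analysis.
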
 
\begin{proof} 
First, we compute a clique-cutset decomposition tree $(T_G,r)$ of $G$, together with the associated family $\{G^u\}_{u \in V(T_G)}$ of induced subgraphs of $G$; this takes $O(n^2+nm)$ time. Then, using the $O(n+m)$ time algorithm from Lemma~\ref{lemma-BUh-rec}, we check whether all graphs in the family $\{G^u\}_{u \in \mathcal{L}(T_G,r)}$ belong to $\mathcal{B}_{\text{U}}^{\text{h}}$; since $|\mathcal{L}(T_G,t)| \leq n$, checking this for the entire family $\{G^u\}_{u \in \mathcal{L}(T_G,r)}$ takes $O(n^2+nm)$ time. If all graphs in the family $\{G^u\}_{u \in \mathcal{L}(T_G,r)}$ belong to $\mathcal{B}_{\text{U}}^{\text{h}}$, then the algorithm return the answer that $G \in \mathcal{G}_{\text{U}}$, and otherwise, the algorithm returns the answer that $G \notin \mathcal{G}_{\text{U}}$. The correctness of the algorithm follows from Lemma~\ref{GU-decomp-cor}. Clearly, the running time of the algorithm is $O(n^2+nm)$. 
\end{proof} 

\begin{theorem} There exists an algorithm with the following specifications: 
\begin{itemize} 
\item Input: A graph $G$; 
\item Output: Either an optimal coloring of $G$, or the true statement that $G \notin \mathcal{G}_{\text{U}}$; 
\item Running time: $O(n^2+nm)$. 
\end{itemize} 
\end{theorem}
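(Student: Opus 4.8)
The plan is to reduce optimal coloring on $\mathcal{G}_{\text{U}}$ to optimal coloring on the basic class $\mathcal{B}_{\text{U}}^{\text{h}}$ and then lift it via the clique-cutset decomposition machinery. Concretely, I would first build an algorithm $\mathbf{A}$ that, given an input graph $G$, either returns an optimal coloring of $G$ or correctly reports that $G \notin \mathcal{B}_{\text{U}}^{\text{h}}$, running in $O(n+m)$ time, and then invoke Lemma~\ref{clique-cutset decomposition tree-coloring} with $\mathcal{B} = \mathcal{B}_{\text{U}}^{\text{h}}$, $\mathcal{G} = \mathcal{G}_{\text{U}}$, and $f(n,m) = O(n+m)$. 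The hypotheses of that lemma hold: both classes are hereditary, and by Lemma~\ref{decomp-lemma-GU-hereditray} every graph in $\mathcal{G}_{\text{U}}$ either belongs to $\mathcal{B}_{\text{U}}^{\text{h}}$ or admits a clique-cutset. The lemma then produces an algorithm $\mathbf{B}$ of running time $O(nf(n,m)+n^2+nm) = O(n^2+nm)$ that either optimally colors $G$ or determines that $G \notin \mathcal{G}_{\text{U}}$, which is exactly what we want.

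It remains to describe $\mathbf{A}$. I would run the $O(n+m)$ recognition algorithm of Lemma~\ref{lemma-BUh-rec}; if it reports $G \notin \mathcal{B}_{\text{U}}^{\text{h}}$, then $\mathbf{A}$ reports the same and halts. Otherwise that algorithm also returns the anticomponents $G_1,\dots,G_t$ of $G$ together with the type of each (one of (i)--(v) in the statement of Lemma~\ref{lemma-BUh-rec}). Since distinct anticomponents are complete to one another, an optimal coloring of $G$ is obtained by coloring each $G_i$ optimally with its own fresh palette and taking the union; in particular $\chi(G) = \sum_{i=1}^{t} \chi(G_i)$. Thus it suffices to color each anticomponent optimally: a $K_1$ or $\overline{K_2}$ is colored with a single color, an even long hole and a disjoint union of paths are bipartite and hence optimally colored with at most two colors by a simple traversal (one color if edgeless), and an odd long hole is colored greedily with three colors. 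Each of these costs $O(|V(G_i)|+|E(G_i)|)$ time, so the total work of $\mathbf{A}$ beyond the recognition call is $\sum_i O(|V(G_i)|+|E(G_i)|) = O(n+m)$.

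Correctness of $\mathbf{A}$ follows from the fact that each type (i)--(v) has the asserted chromatic number, and that completeness between anticomponents both forces the palettes to be disjoint and makes $\chi(G)$ the sum of the per-anticomponent chromatic numbers; correctness and running time of $\mathbf{B}$ are then exactly the content of Lemma~\ref{clique-cutset decomposition tree-coloring}. I do not expect a genuine obstacle here: the decomposition theorem for $\mathcal{G}_{\text{U}}$ and the clique-cutset framework carry the structural burden, and the only hand-checking needed is that each of the five basic anticomponent types admits a linear-time optimal coloring, which is routine. The sole points requiring a moment of care are verifying that $f(n,m)=O(n+m)$ is nondecreasing, as demanded by Lemma~\ref{clique-cutset decomposition tree-coloring}, and that the resulting bound collapses to $O(n^2+nm)$ under the stated compactness convention; both are immediate.
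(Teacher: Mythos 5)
Your proposal is correct and follows the paper's proof essentially verbatim: the paper likewise invokes Lemma~\ref{clique-cutset decomposition tree-coloring} together with Lemma~\ref{decomp-lemma-GU-hereditray} to reduce the problem to an $O(n+m)$-time optimal coloring algorithm for $\mathcal{B}_{\text{U}}^{\text{h}}$, which it obtains from Lemma~\ref{lemma-BUh-rec} (plus Lemmas~\ref{lemma-chordal} and~\ref{lemma-hyperhole-col}) with the details left as ``easy to see.'' Your only deviation is that you color the five anticomponent types by hand (fresh palettes per anticomponent, using $\chi(G)=\sum_i \chi(G_i)$ across anticomponents) rather than citing the chordal and hyperhole coloring lemmas, which is a harmless and correct way to fill in the same routine step.
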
 
\begin{proof} 
In view of Lemmas~\ref{clique-cutset decomposition tree-coloring} and~\ref{decomp-lemma-GU-hereditray}, it suffices to show that there exists an algorithm with the following specifications: 
\begin{itemize} 
\item Input: A graph $G$; 
\item Output: Either an optimal coloring of $G$, or the true statement that $G \notin \mathcal{B}_{\text{U}}^{\text{h}}$; 
\item Running time: $O(n+m)$. 
\end{itemize} 
In view of Lemmas~\ref{lemma-chordal},~\ref{lemma-hyperhole-col}, and~\ref{lemma-BUh-rec}, it is easy to see that such an algorithm exists. 
\end{proof} 

\begin{theorem} There exists an algorithm with the following specifications: 
\begin{itemize} 
\item Input: A weighted graph $(G,w)$; 
\item Output: Either a maximum weight clique $C$ and a maximum weight stable set $S$ of $(G,w)$, or the true statement that $G \notin \mathcal{G}_{\text{U}}$; 
\item Running time: $O(n^2+nm)$. 
\end{itemize} 
\end{theorem}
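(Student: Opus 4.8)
The plan is to reduce, via the clique-cutset decomposition machinery of subsection~\ref{sec:alg:Tarjan}, to solving both problems on the basic class $\mathcal{B}_{\text{U}}^{\text{h}}$, which by Lemma~\ref{decomp-lemma-GU-hereditray} is hereditary, is contained in $\mathcal{G}_{\text{U}}$, and has the property that every graph in $\mathcal{G}_{\text{U}}$ either lies in it or admits a clique-cutset. Thus the hypotheses of Lemmas~\ref{clique-cutset decomposition tree-clique} and~\ref{clique-cutset decomposition tree-stable} hold with $\mathcal{G} = \mathcal{G}_{\text{U}}$ and $\mathcal{B} = \mathcal{B}_{\text{U}}^{\text{h}}$. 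Concretely, I would first build a single linear-time ``basic'' algorithm $\mathbf{A}$ that, on input a weighted graph $(G,w)$, either outputs a maximum weight clique and a maximum weight stable set of $(G,w)$, or correctly reports that $G \notin \mathcal{B}_{\text{U}}^{\text{h}}$, in $O(n+m)$ time. I would then invoke Lemma~\ref{clique-cutset decomposition tree-clique} with $f(n,m) = O(n+m)$ (nondecreasing) to obtain an $O(nf(n,m)+n^2+nm) = O(n^2+nm)$ algorithm solving the MWCP for $\mathcal{G}_{\text{U}}$, and invoke Lemma~\ref{clique-cutset decomposition tree-stable} with $f(n,m) = n+m$ (a polynomial with nonnegative coefficients and zero free coefficient, hence superadditive) to obtain an $O(n^2+nm)$ algorithm solving the MWSSP for $\mathcal{G}_{\text{U}}$. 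Running both and concatenating their outputs gives the theorem; robustness is inherited, since if either call reports $G \notin \mathcal{G}_{\text{U}}$ then this is correct, and otherwise both return correct solutions.

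To construct $\mathbf{A}$, I would run the $O(n+m)$ recognition algorithm of Lemma~\ref{lemma-BUh-rec}; if it reports $G \notin \mathcal{B}_{\text{U}}^{\text{h}}$ we are done, and otherwise it returns the anticomponents $G_1,\dots,G_t$ of $G$ together with the type of each (isomorphic to $K_1$ or $\overline{K_2}$, a long hole, or a disjoint union of paths). The key structural observations are that distinct anticomponents are pairwise complete to one another, so that (i) any clique of $G$ meets each anticomponent in a clique, whence a maximum weight clique of $(G,w)$ is the union of maximum weight cliques of the $(G_i,w)$ and $\omega(G,w) = \sum_{i=1}^t \omega(G_i,w)$; and (ii) any stable set of $G$ lies within a single anticomponent, whence a maximum weight stable set of $(G,w)$ is a maximum weight stable set of whichever $(G_i,w)$ maximizes $\alpha(G_i,w)$, compared against the empty set (of weight $0$) to handle nonpositive weights. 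It then suffices to solve both problems on each anticomponent in time $O(n_i+m_i)$, after which summing (for cliques) and taking a maximum (for stable sets) costs $O(n+m)$ overall.

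Solving both problems on a single anticomponent is routine given the restricted list of types: for $K_1$ and $\overline{K_2}$ the answers are immediate; a disjoint union of paths is a forest and hence chordal, so Lemma~\ref{lemma-chordal} applies; and a long hole is in particular a hyperhole, so Lemma~\ref{lemma-hyperhole-clique-stable} applies (alternatively, standard $O(n+m)$ path/hole routines suffice). Since $\sum_{i=1}^t (n_i+m_i) = O(n+m)$, the total running time of $\mathbf{A}$ is $O(n+m)$, as required for the reductions above.

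I expect the only real work to be in verifying $\mathbf{A}$: one must check carefully the two reductions across anticomponents (union of cliques; single-anticomponent stable set) and handle the sign conventions for nonpositive weights so that the empty clique and empty stable set are treated correctly. The decomposition-tree bookkeeping and the running-time accounting are already packaged inside Lemmas~\ref{clique-cutset decomposition tree-clique} and~\ref{clique-cutset decomposition tree-stable}, so no further obstacle arises there; the superadditivity hypothesis needed for the stable-set reduction is precisely why one takes $f(n,m)=n+m$ rather than a function carrying a nonzero constant term.
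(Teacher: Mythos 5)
Your proposal is correct and matches the paper's proof, which invokes exactly Lemmas~\ref{clique-cutset decomposition tree-clique},~\ref{clique-cutset decomposition tree-stable}, and~\ref{decomp-lemma-GU-hereditray} to reduce to a linear-time algorithm on $\mathcal{B}_{\text{U}}^{\text{h}}$, and then cites Lemmas~\ref{lemma-chordal},~\ref{lemma-hyperhole-clique-stable}, and~\ref{lemma-BUh-rec} for that basic algorithm. The paper leaves the anticomponent bookkeeping (union of per-anticomponent maximum weight cliques; maximum weight stable set confined to a single anticomponent, with the empty set handling nonpositive weights) as ``easy to see,'' and your write-up simply supplies those details correctly.
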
 
\begin{proof} 
In view of Lemmas~\ref{clique-cutset decomposition tree-clique},~\ref{clique-cutset decomposition tree-stable}, and~\ref{decomp-lemma-GU-hereditray}, it suffices to show that there exists an algorithm with the following specifications: 
\begin{itemize} 
\item Input: A weighted graph $(G,w)$; 
\item Output: Either a maximum weight clique $C$ and a maximum weight stable set $S$ of $(G,w)$, or the true statement that $G \notin \mathcal{B}_{\text{U}}^{\text{h}}$; 
\item Running time: $O(n+m)$. 
\end{itemize} 
In view of Lemmas~\ref{lemma-chordal},~\ref{lemma-hyperhole-clique-stable}, and~\ref{lemma-BUh-rec}, it is easy to see that such an algorithm exists. 
\end{proof}

\subsection{Class $\mathcal{G}_{\text{T}}$} 

In this subsection, we give polynomial-time algorithms that solve the recognition, maximum weight clique, and maximum weight stable set problems for the class $\mathcal{G}_{\text{T}}$. We remark that we do not know whether graphs in $\mathcal{G}_{\text{T}}$ can be optimally colored in polynomial time; this is because we do not know whether rings can be optimally colored in polynomial time. We begin with a corollary of Theorem~\ref{decomp-thm-GT}. 

\begin{lemma} \label{GT-decomp-cor} Let $G$ be a graph, let $(T_G,r)$ be a clique-cutset decomposition tree of $G$, and let $\{G^u\}_{u \in V(T_G)}$ be the associated family of induced subgraphs of $G$. For all $u \in V(T_G)$, let $\mathcal{P}_u$ be the partition of $V(G^u)$ into true twin classes of $G^u$. Then the following are equivalent: 
\begin{itemize} 
\item[(a)] $G \in \mathcal{G}_{\text{T}}$; 
\item[(b)] for all $u \in \mathcal{L}(T_G,r)$, the quotient graph $G^u_{\mathcal{P}_u}$ is a ring, a one-vertex graph, or a 7-antihole. 
\end{itemize} 
\end{lemma}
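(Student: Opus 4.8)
The plan is to reduce the equivalence to a correspondence between the basic graphs of $\mathcal{B}_{\text{T}}$ and their true-twin quotients, and then to feed this correspondence into the clique-cutset machinery. The bridging fact I would isolate first is the following analogue of Lemma~\ref{lemma-hyperhole-rec}: for any graph $F$ with true-twin partition $\mathcal{P}$, the graph $F$ is a complete graph (resp.\ a ring, resp.\ a $7$-hyperantihole) if and only if $F_{\mathcal{P}}$ is a one-vertex graph (resp.\ a ring, resp.\ a $7$-antihole). Since, by definition of the quotient, two distinct true-twin classes are either complete or anticomplete to one another and each class is a clique, $F$ is exactly the graph obtained from $F_{\mathcal{P}}$ by blowing up every vertex into its true-twin class. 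The complete-graph and $7$-hyperantihole cases are then immediate from the definitions of a hyperantihole and of the blow-up operation. For the ring case I would argue both directions through Lemma~\ref{lemma-ring-char}: blowing up each vertex of a $k$-ring into a clique preserves conditions (a)--(d), with the domination condition (d) being the one requiring a short check (the domination order lifts from the quotient to each blown-up clique), and conversely contracting true twins of a $k$-ring yields a $k$-ring.

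The one point requiring genuine care, which I expect to be the main obstacle, is that in a ring $R$ with good partition $(X_1,\dots,X_k)$ the true-twin classes never straddle two blocks. Any two true twins are adjacent (each lies in the other's closed neighborhood), so two true twins in distinct blocks would have to lie in consecutive blocks, say $u\in X_i$ and $v\in X_{i+1}$. But Lemma~\ref{lemma-ring-char}(c) supplies a vertex $w\in X_{i+2}$ complete to $X_{i+1}$, so $w\in N_R[v]$; on the other hand $k\geq 4$ makes $X_{i+2}$ distinct from $X_{i-1},X_i,X_{i+1}$, so by Lemma~\ref{lemma-ring-char}(b) the block $X_i$ is anticomplete to $X_{i+2}$ and hence $w\notin N_R[u]$, contradicting $N_R[u]=N_R[v]$. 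Therefore each true-twin class sits inside a single block, so the blocks descend to a good partition of the quotient; the analogous confinement for $7$-hyperantiholes follows from index arithmetic in $\mathbb{Z}_7$. This confinement is precisely what guarantees that the quotient of a ring is a ring and the quotient of a $7$-hyperantihole is a $7$-antihole, rather than some smaller or degenerate graph.

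With the correspondence in hand, the two implications are routine. For (a)$\Rightarrow$(b): $\mathcal{G}_{\text{T}}$ is hereditary, so each leaf graph $G^u$ lies in $\mathcal{G}_{\text{T}}$; since $G^u$ admits no clique-cutset (by the definition of the clique-cutset decomposition tree), Theorem~\ref{decomp-thm-GT} gives $G^u\in\mathcal{B}_{\text{T}}$, i.e.\ $G^u$ is a complete graph, a ring, or a $7$-hyperantihole, and the correspondence turns this into (b). I note here that $\mathcal{B}_{\text{T}}$ is not hereditary, so I must invoke Theorem~\ref{decomp-thm-GT} directly rather than Lemma~\ref{lemma-leaves-basic}. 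For (b)$\Rightarrow$(a): the correspondence says every leaf graph $G^u$ is a complete graph, a ring, or a $7$-hyperantihole, so $G^u\in\mathcal{B}_{\text{T}}\subseteq\mathcal{G}_{\text{T}}$ by Lemma~\ref{BB-in-GG}; in particular each $G^u$ is (3PC, proper wheel, universal wheel)-free. Since, by Lemma~\ref{rmrk-Truemper}, no Truemper configuration admits a clique-cutset, applying Lemma~\ref{lemma-no-clique-cut-ind-sg} to each such forbidden configuration $H$ in turn shows that $G$ is $H$-free, whence $G\in\mathcal{G}_{\text{T}}$.
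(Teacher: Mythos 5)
Your proof is correct and follows essentially the same route as the paper's: both directions reduce, via Theorem~\ref{decomp-thm-GT}, Lemma~\ref{BB-in-GG}, Lemma~\ref{lemma-no-clique-cut-ind-sg}, and the fact that no Truemper configuration admits a clique-cutset, to showing that a graph belongs to $\mathcal{B}_{\text{T}}$ if and only if its true-twin quotient is a ring, a one-vertex graph, or a 7-antihole. The only difference is that the paper asserts this quotient correspondence as immediate from the definition of $\mathcal{B}_{\text{T}}$, whereas you (correctly) spell out the key detail that true-twin classes cannot straddle two blocks of a good partition.
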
 
\begin{proof} 
Since no 3PC and no wheel admits a clique-cutset, Lemma~\ref{BB-in-GG} (and in particular, the fact that $\mathcal{B}_{\text{T}} \subseteq \mathcal{G}_{\text{T}}$), Theorem~\ref{decomp-thm-GT}, and Lemma~\ref{lemma-no-clique-cut-ind-sg} imply that $G \in \mathcal{G}_{\text{T}}$ if and only if all graphs from the family $\{G^u\}_{u \in \mathcal{L}(T_G,r)}$ belong to $\mathcal{B}_{\text{T}}$. On the other hand, it follows from the definition of $\mathcal{B}_{\text{T}}$ that a graph $H$ belongs to $\mathcal{B}_{\text{T}}$ if and only if the quotient graph $H_\mathcal{P}$ (where $\mathcal{P}$ is the partition of $V(H)$ into true twin classes) is either a ring, a one-vertex graph, or a 7-antihole. The result is now immediate. 
\end{proof} 

\begin{theorem} There exists an algorithm with the following specifications: 
\begin{itemize} 
\item Input: A graph $G$; 
\item Output: Either the true statement that $G \in \mathcal{G}_{\text{T}}$, or the true statement that $G \notin \mathcal{G}_{\text{T}}$; 
\item Running time: $O(n^3)$. 
\end{itemize} 
\end{theorem}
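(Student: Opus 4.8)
The plan is to turn the characterization in Lemma~\ref{GT-decomp-cor} into an algorithm, combining the clique-cutset decomposition tree machinery of Subsection~\ref{sec:alg:Tarjan} with the true-twin-class and ring-detection subroutines already established. Lemma~\ref{GT-decomp-cor} reduces recognition of $\mathcal{G}_{\text{T}}$ to a local test at the leaves of a clique-cutset decomposition tree, so the whole algorithm is essentially an assembly of previously proved ingredients.

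First I would compute a clique-cutset decomposition tree $(T_G,r)$ of $G$, together with the associated family $\{G^u\}_{u \in V(T_G)}$ of induced subgraphs; by the discussion in Subsection~\ref{sec:alg:Tarjan} (borrowed from~\cite{Tarjan}) this takes $O(n^2+nm)$ time, which is $O(n^3)$, and yields $|\mathcal{L}(T_G,r)| \leq n$ leaves. By Lemma~\ref{GT-decomp-cor}, it then remains to test, for each leaf $u$, whether the quotient graph $G^u_{\mathcal{P}_u}$ (with $\mathcal{P}_u$ the partition of $V(G^u)$ into true twin classes) is a ring, a one-vertex graph, or a $7$-antihole; the algorithm returns that $G \in \mathcal{G}_{\text{T}}$ if and only if every leaf passes this test, and otherwise returns that $G \notin \mathcal{G}_{\text{T}}$. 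Correctness is exactly the equivalence (a)$\Leftrightarrow$(b) of Lemma~\ref{GT-decomp-cor}.

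For a fixed leaf $u$, I would first compute $\mathcal{P}_u$ and the quotient $G^u_{\mathcal{P}_u}$ using the $O(n+m)$-time algorithm of Lemma~\ref{true-twin-alg}. Then I would run three checks on $G^u_{\mathcal{P}_u}$: being a one-vertex graph is immediate; being a $7$-antihole reduces to verifying that it has exactly seven vertices and is isomorphic to $\overline{C_7}$ (for instance, checking that every vertex has degree four and that the complement is a $7$-cycle), which is $O(1)$ once the order is seen to be seven; and being a ring is decided by the $O(n^2)$-time algorithm of Lemma~\ref{lemma-detect-ring}. Each leaf is thus processed in $O(n^2)$ time, dominated by ring detection, and since there are at most $n$ leaves the total cost of all leaf tests is $O(n^3)$; adding the $O(n^2+nm)$ cost of building the tree keeps the whole algorithm within $O(n^3)$.

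Since the statement packages together results proved earlier, there is no deep obstacle. The points that need care are the complexity bookkeeping---confirming that the per-leaf cost is $O(n^2)$ and that summing over the $\leq n$ leaves (whose vertex sets overlap only in cutsets, so that $\sum_u |V(G^u)|$ may be as large as $\Theta(n^2)$) stays within $O(n^3)$---and the justification, already furnished by Lemma~\ref{GT-decomp-cor}, that membership in $\mathcal{B}_{\text{T}}$ can be decided on the quotient graph, so that ring/one-vertex/$7$-antihole detection on $G^u_{\mathcal{P}_u}$ correctly recognizes rings, complete graphs, and $7$-hyperantiholes among the $G^u$.
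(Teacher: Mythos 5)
Your proposal is correct and follows essentially the same route as the paper's proof: build the clique-cutset decomposition tree, form the true-twin quotient at each leaf via Lemma~\ref{true-twin-alg}, test ring/one-vertex/$7$-antihole using Lemma~\ref{lemma-detect-ring}, and invoke Lemma~\ref{GT-decomp-cor} for correctness, with the same $O(n^3)$ accounting over at most $n$ leaves.
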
 
\begin{proof} 
We use Lemma~\ref{GT-decomp-cor}. First, we compute a clique-cutset decomposition tree $(T_G,r)$ of $G$, together with the associated family $\{G^u\}_{u \in V(T_G)}$ of induced subgraphs of $G$; this takes $O(n^2+nm)$ time. For each $u \in \mathcal{L}(T_G,r)$, we compute the partition $\mathcal{P}_u$ of $V(G^u)$ into true twin classes of $G^u$, and we compute the quotient graph $G^u_{\mathcal{P}_u}$; Lemma~\ref{true-twin-alg} and the fact that $|\mathcal{L}(T_G,r)| \leq n$ imply that the family $\{G^u_{\mathcal{P}_u}\}_{u \in \mathcal{L}(T_G,r)}$ can be computed in $O(n^2+nm)$ time. By Lemma~\ref{lemma-detect-ring}, rings can be recognized in $O(n^2)$ time, and clearly, one can check in $O(1)$ time whether a graph is trivial (i.e.\ whether it has just one vertex) or is a 7-antihole. Since $|\mathcal{L}(T_G,r)| \leq n$, it follows that it can be checked in $O(n^3)$ time whether the family $\{G^u_{\mathcal{P}_u}\}_{u \in \mathcal{L}(T_G,r)}$ satisfies condition (b) of Lemma~\ref{GT-decomp-cor}; if so, then the algorithm returns the answer that $G \in \mathcal{G}_{\text{T}}$, and otherwise, it returns the answer that $G \notin \mathcal{G}_{\text{T}}$. Clearly, the algorithm is correct, and its running time is $O(n^3)$. 
\end{proof} 

\begin{lemma} \label{no-univ-wheel-neighborhood-chordal} Let $G$ be a graph. Then the following are equivalent: 
\begin{itemize} 
\item[(a)] $G$ contains no universal wheels; 
\item[(b)] for all $u \in V(G)$, $G[N_G[u]]$ is chordal. 
\end{itemize} 
\end{lemma}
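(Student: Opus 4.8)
The plan is to prove the two implications separately, both by unwinding the definitions of ``universal wheel'' and ``chordal''; the only real content is a small observation about where the center of a potential wheel can sit relative to the rim.

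First I would establish $(b) \Rightarrow (a)$ in its contrapositive form, which is immediate. Suppose $G$ contains a universal wheel $(H,x)$. By definition, $H$ is a hole, $x \notin V(H)$, and $x$ is complete to $V(H)$; hence $V(H) \subseteq N_G(x) \subseteq N_G[x]$, so $H$ is an induced subgraph of $G[N_G[x]]$. Since $H$ is a hole, $G[N_G[x]]$ contains a hole and is therefore not chordal, contradicting (b).

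Next I would prove $(a) \Rightarrow (b)$, again contrapositively: assuming $G[N_G[u]]$ is not chordal for some $u \in V(G)$, I fix a hole $H$ in $G[N_G[u]]$ and aim to show that $(H,u)$ is a universal wheel in $G$. The key step --- and the only point requiring care --- is to verify that $u \notin V(H)$. Indeed, suppose for contradiction that $u \in V(H)$. Since $H$ is an induced cycle of length at least four, $u$ has a non-neighbor $w$ in $H$ (within the induced subgraph $H$); but $V(H) \subseteq N_G[u]$ forces $w \in N_G(u)$, i.e.\ $uw \in E(G)$, contradicting the fact that $H$ is induced in $G$. Hence $u \notin V(H)$.

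With $u \notin V(H)$ in hand, the inclusion $V(H) \subseteq N_G[u] = N_G(u) \cup \{u\}$ gives $V(H) \subseteq N_G(u)$, so $u$ is complete to the hole $H$, and therefore $(H,u)$ is a universal wheel in $G$, contradicting (a). Combining the two directions yields the equivalence. I expect no serious obstacle: the whole argument is a definition-chase, and the one subtle point is recognizing that the center of the wheel must lie outside the rim, which holds because every vertex of an induced hole of length at least four has a non-neighbor on that hole.
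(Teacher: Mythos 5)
Your proof is correct and follows essentially the same route as the paper: both directions are a definition-chase, and your explicit verification that the center $u$ cannot lie on the rim $H$ (since every vertex of a hole of length at least four has a non-neighbor on it) is exactly the observation the paper packages as ``$G[N_G(u)]$ is chordal, and since $u$ is complete to $N_G(u)$, so is $G[N_G[u]]$.'' No gaps; the only difference is that you argue the forward direction contrapositively while the paper argues it directly.
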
 
\begin{proof} 
Suppose first that (a) holds. Fix $u \in V(G)$. First, note that if $G[N_G(u)]$ contains a hole $H$, then $(H,u)$ is a universal wheel in $G$, contrary to (a). Thus, $G[N_G(u)]$ is chordal. Since $u$ is complete to $N_G(u)$, we deduce that $G[N_G[u]]$ is also chordal. Thus, (b) holds. 

Suppose now that (b) holds. Suppose that $G$ contains a universal wheel, say $(H,u)$. Then $H$ is a hole in $G[N_G[u]]$, contrary to the fact that $G[N_G[u]]$ is chordal. 
\end{proof} 

\begin{theorem} There exists an algorithm with the following specifications: 
\begin{itemize} 
\item Input: A weighted graph $(G,w)$; 
\item Output: Either a maximum weight clique $C$ of $(G,w)$, or the true statement that $G$ contains a universal wheel (and therefore $G \notin \mathcal{G}_{\text{T}}$); 
\item Running time: $O(n^2+nm)$. 
\end{itemize} 
\end{theorem}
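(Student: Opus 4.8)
The plan is to exploit Lemma~\ref{no-univ-wheel-neighborhood-chordal}, which states that $G$ contains no universal wheels precisely when $G[N_G[u]]$ is chordal for every $u \in V(G)$, together with the fact that the maximum weight clique problem localizes to closed neighborhoods. The key observation is that every clique of $G$ is contained in $N_G[u]$ for each of its vertices $u$; hence for any vertex $u$ of a (nonempty) maximum weight clique of $(G,w)$, that clique is already a clique of the induced subgraph $G[N_G[u]]$, so $\omega(G[N_G[u]],w) \geq \omega(G,w)$. Conversely, every clique of $G[N_G[u]]$ is a clique of $G$, whence $\omega(G[N_G[u]],w) \leq \omega(G,w)$ for all $u$. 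Since graphs here are nonnull, $V(G) \neq \emptyset$, and we obtain the identity $\omega(G,w) = \max\{\omega(G[N_G[u]],w) \mid u \in V(G)\}$ (the case where the empty clique is optimal, i.e.\ $\omega(G,w)=0$, is covered as well, since the empty clique lies in every neighborhood).

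First I would iterate over the vertices $u \in V(G)$. For each $u$, form the induced subgraph $G[N_G[u]]$ in $O(n+m)$ time by marking the vertices of $N_G[u]$ and then scanning the edge list, keeping exactly the edges with both endpoints marked. Next, run the algorithm of Lemma~\ref{lemma-chordal} on $(G[N_G[u]],w)$: it either certifies that $G[N_G[u]]$ is chordal and returns a maximum weight clique $C_u$ of $(G[N_G[u]],w)$, or certifies that $G[N_G[u]]$ is not chordal. In the latter case, Lemma~\ref{no-univ-wheel-neighborhood-chordal} guarantees that $G$ contains a universal wheel, so the algorithm halts and reports this true statement (whence $G \notin \mathcal{G}_{\text{T}}$). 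If instead every $G[N_G[u]]$ is found to be chordal, then by the displayed identity a heaviest member of $\{C_u \mid u \in V(G)\}$ is a maximum weight clique of $(G,w)$, and I return it.

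Correctness and robustness follow directly from Lemma~\ref{no-univ-wheel-neighborhood-chordal}: if the algorithm never encounters a non-chordal closed neighborhood, then all closed neighborhoods are chordal, so $G$ has no universal wheels and the returned clique is genuinely of maximum weight; and whenever a non-chordal closed neighborhood is encountered, $G$ really does contain a universal wheel, so the reported statement is true. For the running time, each of the $n$ iterations forms one closed neighborhood and runs the linear-time procedure of Lemma~\ref{lemma-chordal}, costing $O(n+m)$ per iteration; tracking and comparing the weights of the cliques $C_u$ to select a heaviest one adds $O(n)$ overall. The total is $O(n(n+m)) = O(n^2+nm)$, as required.

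I do not anticipate any serious obstacle: the two supporting lemmas (the chordal-neighborhood characterization of universal-wheel-freeness and the linear-time maximum weight clique algorithm for chordal graphs) do essentially all the work. The only point needing a moment's care is the localization identity $\omega(G,w) = \max_u \omega(G[N_G[u]],w)$, and in particular the remark that a maximum weight clique of $G[N_G[u]]$ need not contain $u$ but is nonetheless a clique of $G$, so that taking the maximum over all $u$ recovers the global optimum without forcing each subproblem to include its center vertex.
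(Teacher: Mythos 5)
Your proposal is correct and follows essentially the same route as the paper's own proof: for each vertex $u$ you run the chordal recognition and maximum weight clique algorithm of Lemma~\ref{lemma-chordal} on $G[N_G[u]]$, invoke Lemma~\ref{no-univ-wheel-neighborhood-chordal} to report a universal wheel if some closed neighborhood is not chordal, and otherwise return a heaviest clique among the $C_u$, in total time $O(n^2+nm)$. Your explicit justification of the localization identity $\omega(G,w) = \max\{\omega(G[N_G[u]],w) \mid u \in V(G)\}$ is a point the paper leaves implicit, but it is the same argument.
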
 
\begin{proof} 
For each $u \in V(G)$, we form the graph $G_u = G[N_G[u]]$, we check whether $G_u$ is chordal, and if so, we compute a maximum weight clique $C_u$ of $G_u$; in view of Lemma~\ref{lemma-chordal}, for each $u \in V(G)$ individually, we can perform these computations in $O(n+m)$ time, and so for all $u \in V(G)$ together, we can perform them in $O(n^2+nm)$ time. Now, if for some $u \in V(G)$, we determined that $G_u$ is not chordal, then the algorithm returns the answer that $G$ contains a universal wheel (this is correct by Lemma~\ref{no-univ-wheel-neighborhood-chordal}) and stops. So assume that the algorithm computed a maximum weight clique $C_u$ for each $G_u$. Among all cliques in the family $\{C_u\}_{u \in V(G)}$, the algorithm finds one of maximum weight, and it returns that clique and stops. It is clear that the algorithm is correct, and that its running time is $O(n^2+nm)$. 
\end{proof} 

\begin{lemma} \label{GT-nonneighborhood-chordal} Let $G \in \mathcal{G}_{\text{T}}$. Then at least one of the following holds: 
\begin{itemize} 
\item for all $u \in V(G)$, $G \setminus N_G(u)$ is chordal; 
\item $G$ admits a clique-cutset. 
\end{itemize} 
\end{lemma}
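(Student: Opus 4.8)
The plan is to assume that $G$ does not admit a clique-cutset and to derive the first alternative, namely that $G \setminus N_G(u)$ is chordal for every $u \in V(G)$. Under this assumption, Theorem~\ref{decomp-thm-GT} immediately gives $G \in \mathcal{B}_{\text{T}}$, so that $G$ is a complete graph, a ring, or a 7-hyperantihole. Thus the whole statement reduces to checking the three basic cases, and the substantive work has already been carried out in proving the decomposition theorem; what remains is a routine verification.

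Before splitting into cases, I would record one simplification valid for an arbitrary graph. Since $u$ has no neighbor in $V(G) \setminus N_G(u)$, the vertex $u$ is isolated in $G \setminus N_G(u)$. As chordality is unaffected by adding or deleting an isolated vertex, and a graph is chordal if and only if each of its components is, it suffices to show that $G \setminus N_G[u]$ is chordal whenever it is nonnull (when $V(G) = N_G[u]$, the graph $G \setminus N_G(u)$ is the single vertex $u$ and there is nothing to prove). This reduces each case to understanding the non-neighborhood of $u$.

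For a complete graph the reduced statement is trivial, since $V(G) \setminus N_G[u] = \emptyset$. For the ring case I would invoke Lemma~\ref{ring-in-GT}(c): letting $(X_1,\dots,X_k)$ be a good partition and taking $u \in X_i$, the definition of a ring gives $X_i \subseteq N_G[u]$, so $V(G)\setminus N_G[u] \subseteq V(G)\setminus X_i$; hence $G \setminus N_G[u]$ is an induced subgraph of $R \setminus X_i$, which is chordal by Lemma~\ref{ring-in-GT}(c), and therefore $G \setminus N_G[u]$ is chordal as well. For the 7-hyperantihole case, with good partition $(X_1,\dots,X_7)$ and $u \in X_i$, the hyperantihole adjacency rules give $N_G[u] = V(G) \setminus (X_{i-1} \cup X_{i+1})$, so $V(G)\setminus N_G[u] = X_{i-1} \cup X_{i+1}$; each of $X_{i-1},X_{i+1}$ is a clique, and a short index computation modulo $7$ shows they are complete to each other, so $G[X_{i-1}\cup X_{i+1}]$ is a complete graph and in particular chordal.

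There is no genuine obstacle here, as the lemma is essentially a corollary of Theorem~\ref{decomp-thm-GT}; the only points requiring a little care are the index arithmetic modulo $7$ in the hyperantihole case (confirming that $X_{i-1}$ is indeed complete to $X_{i+1}$), and the degenerate possibility that $G$ is disconnected, in which case the empty set is already a clique-cutset and the second alternative holds outright.
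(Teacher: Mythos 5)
Your proposal is correct and follows essentially the same route as the paper: assume no clique-cutset, apply Theorem~\ref{decomp-thm-GT} to reduce to complete graphs, rings, and 7-hyperantiholes, and handle the ring case via Lemma~\ref{ring-in-GT}(c). The paper dismisses the complete-graph and 7-hyperantihole cases as immediate; your explicit verifications (in the ring case, that $X_i \subseteq N_G[u]$ for $u \in X_i$, and in the 7-hyperantihole case, that $V(G) \setminus N_G[u] = X_{i-1} \cup X_{i+1}$ induces a complete graph) are accurate fillings-in of those details.
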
 
\begin{proof} 
Assume that $G$ does not admit a clique-cutset. Fix $u \in V(G)$; we must show that $G \setminus N_G(u)$ is chordal. By Theorem~\ref{decomp-thm-GT}, $G$ is either a ring, a complete graph, or a 7-hyperantihole. If $G$ is a ring, then the result follows from Lemma~\ref{ring-in-GT}, and if $G$ is a complete graph or a 7-hyperantihole, then the result is immediate. 
\end{proof} 

\begin{theorem} There exists an algorithm with the following specifications: 
\begin{itemize} 
\item Input: A weighted graph $(G,w)$; 
\item Output: Either a maximum weight stable set $S$ of $(G,w)$, or the true statement that $G \notin \mathcal{G}_{\text{T}}$; 
\item Running time: $O(n^3+n^2m)$. 
\end{itemize} 
\end{theorem}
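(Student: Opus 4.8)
The plan is to apply the clique-cutset decomposition machinery of Lemma~\ref{clique-cutset decomposition tree-stable} with a carefully chosen ``basic'' class $\mathcal{B}$, reducing the problem to graphs whose every open-neighborhood complement is chordal. Concretely, I would let $\mathcal{B}$ be the class of all graphs $H$ such that $H \setminus N_H(v)$ is chordal for every $v \in V(H)$. First I would check that $\mathcal{B}$ is hereditary: if $H \in \mathcal{B}$ and $H'$ is an induced subgraph of $H$, then for each $v \in V(H')$ the graph $H' \setminus N_{H'}(v)$ is (isomorphic to) an induced subgraph of $H \setminus N_H(v)$, which is chordal, and induced subgraphs of chordal graphs are chordal. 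Since $\mathcal{G}_{\text{T}}$ is also hereditary, and since Lemma~\ref{GT-nonneighborhood-chordal} guarantees that every $G \in \mathcal{G}_{\text{T}}$ either lies in $\mathcal{B}$ or admits a clique-cutset, the structural hypotheses of Lemma~\ref{clique-cutset decomposition tree-stable} are satisfied, and it remains only to supply a robust MWSS algorithm for $\mathcal{B}$.

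The core step is a robust $O(n^2+nm)$-time MWSS algorithm $\mathbf{A}$ for $\mathcal{B}$. Given $(H,w)$, I would first discard all vertices of nonpositive weight (they never help), so that $w$ becomes positive; if nothing remains, $\emptyset$ is optimal. Then, for each vertex $v$, I would form $H \setminus N_H(v)$ and test it for chordality using the $O(n+m)$-time algorithm of Lemma~\ref{lemma-chordal}. If some $H \setminus N_H(v)$ is non-chordal, then $H \notin \mathcal{B}$ and I report this. Otherwise every $H \setminus N_H[v] = (H \setminus N_H(v)) \setminus v$ is chordal, and I exploit the identity $\alpha(H,w) = \max_{v \in V(H)} \big( w(v) + \alpha(H \setminus N_H[v],w) \big)$, valid for any graph under positive weights: a maximum weight stable set is nonempty and contains some $v$, its remainder is a stable set of $H \setminus N_H[v]$, and conversely $\{v\}$ together with a maximum weight stable set of $H \setminus N_H[v]$ is stable in $H$. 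Computing each $\alpha(H \setminus N_H[v],w)$ together with a witnessing stable set via the chordal MWSS algorithm of Lemma~\ref{lemma-chordal} costs $O(n+m)$, so over all $v$ the total is $O(n^2+nm)$; returning $\{v^*\} \cup S_{v^*}$ for a maximizing $v^*$ yields a maximum weight stable set. Crucially, this output is correct whenever all neighborhood-complements are chordal, irrespective of whether $H$ actually lies in $\mathcal{B}$, which is exactly the robustness the reduction needs.

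Finally I would invoke Lemma~\ref{clique-cutset decomposition tree-stable} with this algorithm $\mathbf{A}$, taking $f(n,m) = c(n^2+nm)$ for a suitable constant $c$; this $f$ is a polynomial with nonnegative coefficients and zero constant term, hence superadditive, as the lemma demands. The lemma then produces an algorithm $\mathbf{B}$ that, on input $(G,w)$, either returns a maximum weight stable set of $(G,w)$ or correctly asserts $G \notin \mathcal{G}_{\text{T}}$, with running time $O\big(nf(n,m)+n^2+nm\big) = O(n^3+n^2m)$, which is precisely the bound claimed.

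I expect the main obstacle to be not algorithmic but rather the justification that $\mathbf{A}$'s guarantee is strong enough to feed Lemma~\ref{clique-cutset decomposition tree-stable}: the lemma calls $\mathbf{A}$ on induced subgraphs of the decomposition leaves, so it is essential that $\mathcal{B}$ be hereditary and that $\mathbf{A}$ behave correctly on \emph{every} member of $\mathcal{B}$ (and report non-membership otherwise), not merely on clique-cutset-free graphs. Reformulating ``clique-cutset-free graph in $\mathcal{G}_{\text{T}}$'' through the hereditary property ``all $H \setminus N_H(v)$ are chordal'' (supplied by Lemma~\ref{GT-nonneighborhood-chordal}) is what makes the pieces fit together, and verifying this reformulation carefully is where I would focus the write-up.
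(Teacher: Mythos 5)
Your proposal is correct and takes essentially the same route as the paper: the same hereditary class $\mathcal{B}$ of graphs in which every $G \setminus N_G(u)$ is chordal, the same use of Lemma~\ref{GT-nonneighborhood-chordal} to place $\mathcal{G}_{\text{T}}$ in the hypotheses of Lemma~\ref{clique-cutset decomposition tree-stable}, and the same $O(n^2+nm)$ leaf routine built on Lemma~\ref{lemma-chordal}. The only (immaterial) difference is in the inner step: the paper avoids the positive-weight preprocessing and the identity $\alpha(H,w)=\max_v\bigl(w(v)+\alpha(H\setminus N_H[v],w)\bigr)$ by simply computing a maximum weight stable set of each $G\setminus N_G(u)$ itself (which still contains $u$, so every nonempty stable set of $G$ survives in some $G\setminus N_G(u)$) and returning the heaviest, which works for arbitrary real weights.
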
 
\begin{proof} 
Let $\mathcal{B}$ be the class of all graphs $G$ such that for every vertex $u \in V(G)$, we have that $G \setminus N_G(u)$ is chordal. Clearly, $\mathcal{B}$ is a hereditary class, and by Lemma~\ref{GT-nonneighborhood-chordal}, every graph in $\mathcal{G}_{\text{T}}$ either belongs to $\mathcal{B}$ or admits a clique-cutset. In view of Lemma~\ref{clique-cutset decomposition tree-stable}, it now suffices to show that there exists an algorithm with the following specifications: 
\begin{itemize} 
\item Input: A weighted graph $(G,w)$; 
\item Output: Either a maximum weight stable set $S$ of $(G,w)$, or the true statement that $G \notin \mathcal{B}$; 
\item Running time: $O(n^2+nm)$. 
\end{itemize} 
Let $(G,w)$ be an input weighted graph. For each $u \in V(G)$, we form the graph $G_u = G \setminus N_G(u)$, we check whether $G_u$ is chordal, and if so, we compute a maximum weight stable set $S_u$ of $(G_u,w)$; by Lemma~\ref{lemma-chordal}, for each $u \in V(G)$ individually, these computations can be performed in $O(n+m)$ time, and for all $u \in V(G)$ together, they can be performed in $O(n^2+nm)$ time. If the algorithm determined that for some $u \in V(G)$, $G_u$ is not chordal, then we return the answer that $G \notin \mathcal{B}$ and stop. So assume that for each $u \in V(G)$, the algorithm found a maximum weight stable set $S_u$ of $(G_u,w)$. Clearly, $\alpha(G,w) = \max\{w(S_u) \mid u \in V(G)\}$. We now find a vertex $x \in V(G)$ such that $w(S_x) = \max\{w(S_u) \mid u \in V(G)\}$; this takes $O(n^2)$ time. We return $S_x$ and stop. Clearly, the algorithm is correct, and its running time is $O(n^2+nm)$. 
\end{proof}

\subsection{Class $\mathcal{G}_{\text{UT}}^{\text{cap-free}}$} 

In this subsection, we show that the recognition, optimal coloring, maximum weight clique, and maximum weight stable set problems can be solved in polynomial time for the class $\mathcal{G}_{\text{UT}}^{\text{cap-free}}$. 

Let $\mathcal{B}^{\text{H}}_{\text{C}}$ be the class of all graphs $G$ such that every anticomponent of $G$ is either a long hyperhole or a chordal graph. 

\begin{lemma} \label{decomp-GUTcap-BHC} The class $\mathcal{B}^{\text{H}}_{\text{C}}$ is hereditary. Furthermore, every graph in $\mathcal{G}_{\text{UT}}^{\text{cap-free}}$ either belongs to $\mathcal{B}^{\text{H}}_{\text{C}}$ or admits a clique-cutset. 
\end{lemma}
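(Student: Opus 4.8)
The plan is to dispatch the decomposition statement quickly and then concentrate on heredity, which is where the real work lies.

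For the decomposition, I would first verify the inclusion $\mathcal{B}_{\text{UT}}^{\text{cap-free}} \subseteq \mathcal{B}^{\text{H}}_{\text{C}}$ directly from the definitions. A graph in the first defining family of $\mathcal{B}_{\text{UT}}^{\text{cap-free}}$ has a single nontrivial anticomponent that is a hyperhole of length at least six (hence a long hyperhole), all its other anticomponents being trivial and therefore chordal; a graph in the second family has every anticomponent equal to a $5$-hyperhole (a long hyperhole) or a chordal cobipartite graph (chordal). In both cases every anticomponent is a long hyperhole or chordal, so the graph lies in $\mathcal{B}^{\text{H}}_{\text{C}}$. The decomposition half of the lemma then follows at once from Theorem~\ref{decomp-thm-GUTcap}: a graph in $\mathcal{G}_{\text{UT}}^{\text{cap-free}}$ either admits a clique-cutset or lies in $\mathcal{B}_{\text{UT}}^{\text{cap-free}} \subseteq \mathcal{B}^{\text{H}}_{\text{C}}$.

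For heredity, let $G \in \mathcal{B}^{\text{H}}_{\text{C}}$ and let $G' = G[S]$ be an induced subgraph. The first step is a structural reduction: every anticomponent of $G'$ is an anticomponent of $A[S \cap V(A)]$ for some anticomponent $A$ of $G$. Indeed, if $u,v$ lie in distinct anticomponents of $G$ then they lie in distinct components of $\overline{G}$, so no $\overline{G}$-path joins them; since $\overline{G'}$ is an induced subgraph of $\overline{G}$, no $\overline{G'}$-path joins them either, and hence $u,v$ lie in distinct anticomponents of $G'$. Thus each anticomponent $D$ of $G'$ is contained in a single $V(A) \cap S$, and a maximality check (any larger anticonnected set inside $A[S \cap V(A)]$ would also be anticonnected in $G'$) shows $D$ is an anticomponent of $A[S \cap V(A)]$. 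This reduces heredity to the case of a single anticomponent $A$ of $G$, which is either chordal or a long hyperhole.

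The chordal case is immediate: induced subgraphs of chordal graphs are chordal, and so are their anticomponents. For the long hyperhole case, write $A = Y_1,\dots,Y_k,Y_1$ with $k \geq 5$, and let $A'$ be an induced subgraph. If some $Y_i$ is disjoint from $S$, then $A'$ is an induced subgraph of $A \setminus Y_i$, which is chordal by Lemma~\ref{ring-in-GT}(c) (a hyperhole is a ring), so $A'$ and all its anticomponents are chordal. Otherwise every $Y_i$ survives and $A'$ is again a hyperhole of length $k \geq 5$ with good partition $(Y_1 \cap S,\dots,Y_k \cap S)$; the key point here, which I expect to be the main obstacle, is that a hyperhole of length at least five is anticonnected (unlike a $4$-hyperhole, which is cobipartite and splits into two anticomponents). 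Granting this, $A'$ is its own unique anticomponent and is a long hyperhole, completing the case. I would prove the anticonnectedness by examining $\overline{A'}$: for $k \geq 5$ each block is $\overline{A'}$-adjacent to the blocks at cyclic distance at least two, and these adjacencies already connect all $k$ blocks (and all vertices within each block, via a common neighbor block), so $\overline{A'}$ is connected. Assembling the reduction with the two cases yields $G' \in \mathcal{B}^{\text{H}}_{\text{C}}$, establishing heredity.
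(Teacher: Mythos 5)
Your proposal is correct and takes essentially the same route as the paper: the decomposition half is exactly the paper's argument (the inclusion $\mathcal{B}_{\text{UT}}^{\text{cap-free}} \subseteq \mathcal{B}^{\text{H}}_{\text{C}}$ combined with Theorem~\ref{decomp-thm-GUTcap}), and your heredity argument spells out what the paper dispatches in one sentence, namely that anticomponents of an induced subgraph arise from anticomponents of the original graph, and that every induced subgraph of a long hyperhole is a long hyperhole or chordal (your case analysis via Lemma~\ref{ring-in-GT}(c) is a correct expansion of this). The one detail you add beyond the paper's text---that a hyperhole of length at least five is anticonnected, so a surviving hyperhole is its own unique anticomponent---is accurate and is implicitly relied upon by the paper as well.
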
 
\begin{proof} 
Clearly, the class of chordal graphs is hereditary, and every induced subgraph of a long hyperhole is either a long hyperhole or a chordal graph; this implies that $\mathcal{B}^{\text{H}}_{\text{C}}$ is hereditary. Next, it is clear that $\mathcal{B}_{\text{UT}}^{\text{cap-free}} \subseteq \mathcal{B}^{\text{H}}_{\text{C}}$. This, together with Theorem~\ref{decomp-thm-GUTcap}, implies that every graph in $\mathcal{G}_{\text{UT}}^{\text{cap-free}}$ either belongs to $\mathcal{B}^{\text{H}}_{\text{C}}$ or admits a clique-cutset. 
\end{proof} 

\begin{lemma} \label{GUTcap-decomp-cor} Let $G$ be a graph, let $(T_G,r)$ be a clique-cutset decomposition tree of $G$, and let $\{G^u\}_{u \in V(T_G)}$ be the associated family of induced subgraphs of $G$. Then the following are equivalent: 
\begin{itemize} 
\item[(a)] $G \in \mathcal{G}_{\text{UT}}^{\text{cap-free}}$; 
\item[(b)] $G$ is ($K_{2,3}$, cap)-free, and all graphs in $\{G^u\}_{u \in \mathcal{L}(T_G,r)}$ belong to $\mathcal{B}^{\text{H}}_{\text{C}}$. 
\end{itemize} 
\end{lemma}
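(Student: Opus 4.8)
The plan is to prove the two implications separately, leaning entirely on the machinery already developed: Lemma~\ref{decomp-GUTcap-BHC}, Lemma~\ref{lemma-leaves-basic}, Lemma~\ref{lemma-no-clique-cut-ind-sg}, and the structural facts about Truemper configurations recorded in Lemma~\ref{rmrk-Truemper} and Lemma~\ref{ring-in-GT}(d). No new combinatorial work on holes or wheels should be needed; the content is in correctly routing these lemmas.

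For (a)$\Rightarrow$(b), I would first observe that $G \in \mathcal{G}_{\text{UT}}^{\text{cap-free}}$ means $G$ is (3PC, proper wheel, cap)-free, and since $K_{2,3}$ is a theta (hence a 3PC), $G$ is in particular $(K_{2,3},\text{cap})$-free; this gives the first half of (b). For the second half, I would recall from Lemma~\ref{decomp-GUTcap-BHC} that $\mathcal{B}^{\text{H}}_{\text{C}}$ is hereditary and that every graph in $\mathcal{G}_{\text{UT}}^{\text{cap-free}}$ either lies in $\mathcal{B}^{\text{H}}_{\text{C}}$ or admits a clique-cutset. Since $\mathcal{G}_{\text{UT}}^{\text{cap-free}}$ is itself hereditary, Lemma~\ref{lemma-leaves-basic} (applied with $\mathcal{B} = \mathcal{B}^{\text{H}}_{\text{C}}$ and $\mathcal{G} = \mathcal{G}_{\text{UT}}^{\text{cap-free}}$) immediately yields that every leaf graph $G^u$ belongs to $\mathcal{B}^{\text{H}}_{\text{C}}$, completing (b).

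For (b)$\Rightarrow$(a), the goal is to show $G$ is (3PC, proper wheel, cap)-free. Cap-freeness and $K_{2,3}$-freeness are given outright, so it remains to exclude every 3PC other than $K_{2,3}$ and every proper wheel; I would let $F$ denote any such configuration. By Lemma~\ref{rmrk-Truemper}, each such $F$ is anticonnected and admits no clique-cutset, so Lemma~\ref{lemma-no-clique-cut-ind-sg} reduces the task to showing that each leaf graph $G^u$ is $F$-free. Fixing a leaf $u$ and using that $F$ is anticonnected, Lemma~\ref{lemma-anticomp} further reduces this to showing that each anticomponent of $G^u$ is $F$-free. Since $G^u \in \mathcal{B}^{\text{H}}_{\text{C}}$ by hypothesis, each anticomponent is either chordal or a long hyperhole: a chordal graph has no holes and hence contains no Truemper configuration, while a long hyperhole is a long ring and is therefore (3PC, proper wheel, universal wheel)-free by Lemma~\ref{ring-in-GT}(d). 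In either case the anticomponent is $F$-free, so $G$ is $F$-free for every such $F$, and together with cap- and $K_{2,3}$-freeness this shows $G \in \mathcal{G}_{\text{UT}}^{\text{cap-free}}$.

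The only delicate point—more bookkeeping than genuine obstacle—is that the reduction to anticomponents via Lemma~\ref{lemma-anticomp} is valid only for \emph{anticonnected} configurations, which is exactly why $K_{2,3}$ must be handled by the separate explicit hypothesis in (b): by Lemma~\ref{rmrk-Truemper} it is the unique 3PC that fails to be anticonnected, and no proper wheel is non-anticonnected, so $K_{2,3}$ is the only configuration among 3PCs and proper wheels for which the anticomponent argument does not apply. I would make sure the writeup flags this explicitly so that the case split over $F$ is visibly exhaustive.
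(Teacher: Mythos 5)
Your proposal is correct and follows essentially the same route as the paper's proof: the forward direction via Lemma~\ref{decomp-GUTcap-BHC} (together with Lemma~\ref{lemma-leaves-basic}, which the paper invokes implicitly), and the converse by reducing to leaf graphs via Lemma~\ref{lemma-no-clique-cut-ind-sg} and then to anticomponents via anticonnectedness, finishing with the chordal/long-hyperhole dichotomy and Lemma~\ref{ring-in-GT}(d). Your explicit flagging of why $K_{2,3}$ must be handled by hypothesis---being the unique non-anticonnected configuration among 3PCs and proper wheels---is exactly the point the paper's proof also relies on.
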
 
\begin{proof} 
Clearly, every graph in $\mathcal{G}_{\text{UT}}^{\text{cap-free}}$ is ($K_{2,3}$, cap)-free. The fact that (a) implies (b) now follows from Lemma~\ref{decomp-GUTcap-BHC}. 

For the converse, we assume (b), and we prove (a). By (b), $G$ is cap-free, and so it suffices to show that $G$ is (3PC, proper wheel)-free. No 3PC and no proper wheel admits a clique-cutset, and so by Lemma~\ref{lemma-no-clique-cut-ind-sg}, it suffices to show that all graphs in the family $\{G^u\}_{u \in \mathcal{L}(T_G,r)}$ are (3PC, proper wheel)-free. Fix $u \in \mathcal{L}(T_G,r)$. Note that every 3PC other than $K_{2,3}$ is anticonnected, as is every proper wheel; since $G^u$ is $K_{2,3}$-free (because $G$ is), it now suffices to show that every anticomponent of $G^u$ is (3PC, proper wheel)-free. Let $H$ be an anticomponent of $G^u$. By (b), we have that $G^u \in \mathcal{B}_{\text{C}}^{\text{H}}$, and so by the definition of $\mathcal{B}_{\text{C}}^{\text{H}}$, $H$ is either a chordal graph or a long hyperhole. In the former case, it is clear that $H$ is (3PC, proper wheel)-free (this is because every 3PC and every wheel contains a hole, and by definition, chordal graphs contain no holes). So assume that $H$ is a hyperhole. Then $H$ is a ring, and so by Lemma~\ref{ring-in-GT}, $H$ is (3PC, proper wheel)-free. This proves (a). 
\end{proof} 

\begin{theorem} There exists an algorithm with the following specifications: 
\begin{itemize} 
\item Input: A graph $G$; 
\item Output: Either the true statement that $G \in \mathcal{G}_{\text{UT}}^{\text{cap-free}}$, or the true statement that $G \notin \mathcal{G}_{\text{UT}}^{\text{cap-free}}$; 
\item Running time: $O(n^5)$. 
\end{itemize} 
\end{theorem}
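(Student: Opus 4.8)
The plan is to decide membership in $\mathcal{G}_{\text{UT}}^{\text{cap-free}}$ by testing condition (b) of Lemma~\ref{GUTcap-decomp-cor}. That condition has two parts: a \emph{global} part, namely that $G$ is $(K_{2,3},\text{cap})$-free, and a \emph{local} part, namely that every leaf graph $G^u$ of a clique-cutset decomposition tree of $G$ belongs to $\mathcal{B}^{\text{H}}_{\text{C}}$. The two parts must be handled separately precisely because caps admit clique-cutsets (so, unlike for 3PCs and proper wheels, cap-freeness cannot be pushed down to the leaves). I would first compute, in $O(n^2+nm)$ time, a clique-cutset decomposition tree $(T_G,r)$ together with the family $\{G^u\}_{u \in V(T_G)}$, using the algorithm of~\cite{Tarjan}, and then carry out the two tests.

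For the local part, I would process each of the at most $n$ leaves $u$ as follows: compute the anticomponents of $G^u$ in $O(n^2)$ time (form $\overline{G^u}$, find its components by BFS, and complement back), and then test each anticomponent for being chordal (Lemma~\ref{lemma-chordal}) or a long hyperhole (Lemma~\ref{lemma-hyperhole-rec}), both in linear time. Since each leaf has at most $n$ vertices and there are at most $n$ leaves, this costs $O(n^3)$ in total. The $K_{2,3}$-freeness test is routine: for each of the $O(n^2)$ nonadjacent pairs $a,b$, I would check whether their common neighborhood contains a stable set of size three, which a brute-force search performs in $O(n^3)$ time per pair, for $O(n^5)$ overall. (Note that $K_{2,3}$ is not anticonnected, so this test genuinely has to be global rather than leaf-by-leaf.)

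The main obstacle is cap detection within the target bound. I would search over all apex triangles: for each edge $v_1v_2$ and each common neighbor $c$ of $v_1$ and $v_2$ (there are $O(nm)$ such triples), I would decide whether $G$ contains a hole $H$ through the edge $v_1v_2$ with $N_G(c)\cap V(H)=\{v_1,v_2\}$. Deleting $c$ together with $N_G(c)\setminus\{v_1,v_2\}$ and setting $L=N_G(v_2)\setminus N_G[v_1]$, $R=N_G(v_1)\setminus N_G[v_2]$, and $M=V(G)\setminus(N_G[v_1]\cup N_G[v_2])$ (all intersected with the surviving vertex set), such a hole exists exactly when there is an induced path with one endpoint in $L$, the other in $R$, and interior in $M$. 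This reduces to a single multi-source BFS from $L$ in the induced subgraph on $L\cup M\cup R$: a shortest $L$--$R$ path there is automatically induced, and its interior lies in $M$ (an interior $L$- or $R$-vertex would contradict minimality of the distance), so the path together with $v_1,v_2$ yields the desired hole, on which $c$ sees only $v_1,v_2$. Each BFS costs $O(n+m)$, so cap detection costs $O(nm(n+m))=O(n^5)$.

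Combining the pieces, the algorithm reports $G\in\mathcal{G}_{\text{UT}}^{\text{cap-free}}$ if and only if all three tests pass; correctness is immediate from Lemma~\ref{GUTcap-decomp-cor}, and the running time is dominated by the $K_{2,3}$- and cap-detection steps, giving $O(n^5)$. The subtle point to get right is the inducedness bookkeeping in cap detection, that is, checking that the shortest-path reduction captures exactly the holes through $v_1v_2$ on which $c$ attaches only at $v_1$ and $v_2$, with no unwanted chords to $v_1$ or $v_2$.
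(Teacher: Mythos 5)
Your proposal is correct, and at the top level it is the same proof as the paper's: both test condition (b) of Lemma~\ref{GUTcap-decomp-cor} by computing a clique-cutset decomposition tree in $O(n^2+nm)$ time via~\cite{Tarjan}, testing each anticomponent of each leaf graph for being chordal or a long hyperhole (Lemmas~\ref{lemma-chordal} and~\ref{lemma-hyperhole-rec}) in $O(n^3)$ total, and running the $(K_{2,3},\text{cap})$-freeness test globally, with the brute-force $K_{2,3}$ test costing $O(n^5)$. The one genuine difference is cap detection: the paper invokes the $O(n^5)$ algorithm of~\cite{CapEvenHoleFree} as a black box, whereas you supply a self-contained detector, enumerating the $O(nm)$ apex triples (edge $v_1v_2$ plus common neighbor $c$), deleting $c$ and $N_G(c)\setminus\{v_1,v_2\}$, and running a multi-source BFS from $L=N_G(v_2)\setminus N_G[v_1]$ to $R=N_G(v_1)\setminus N_G[v_2]$ inside the graph induced on $L\cup M\cup R$. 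This reduction is sound: common neighbors of $v_1,v_2$ are excluded from $L\cup M\cup R$, so any detected path closes with $v_1,v_2$ into a hole on which $c$ sees exactly $v_1,v_2$; conversely, every cap with apex $c$ over $v_1v_2$ survives the deletions; and the BFS path is induced with interior in $M$, since BFS levels forbid chords and an interior vertex in $L$ (level $0$) or in $R$ (earlier level) would contradict minimality. At $O(nm(n+m))\subseteq O(n^5)$ this makes the whole theorem self-contained, at the cost of re-proving a known subroutine. One parenthetical remark of yours is off, though harmlessly so: the $K_{2,3}$ test could in fact be pushed leaf-by-leaf, since $K_{2,3}$ admits no clique-cutset (Lemma~\ref{rmrk-Truemper}) and Lemma~\ref{lemma-no-clique-cut-ind-sg} applies; what forces an explicit $K_{2,3}$ test is that it cannot be delegated to the anticomponent-level test, because $K_{2,3}$ is the join of two edgeless (hence chordal) graphs and so belongs to $\mathcal{B}^{\text{H}}_{\text{C}}$ -- non-anticonnectedness is the right reason for that failure, not for globality over the decomposition tree.
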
 
\begin{proof} 
We test for (b) from Lemma~\ref{GUTcap-decomp-cor}. We first check in $O(n^5)$ time whether $G$ is ($K_{2,3}$, cap)-free (to test whether $G$ is $K_{2,3}$-free, we simply examine all five-tuples of vertices of $G$, and to check whether $G$ is cap-free, we use the $O(n^5)$ time algorithm from~\cite{CapEvenHoleFree}). If $G$ is not ($K_{2,3}$, cap)-free, then the algorithm returns the answer that $G \notin \mathcal{G}_{\text{UT}}^{\text{cap-free}}$ and stops. So assume that $G$ is ($K_{2,3}$, cap)-free. We now compute a clique-cutset decomposition tree $(T_G,r)$ of $G$, together with the associated family $\{G^u\}_{u \in V(T_G)}$ of induced subgraphs of $G$; this takes $O(n^2+nm)$ time. For each $u \in \mathcal{G}^u$, we proceed as follows. First, we compute the anticomponents $G_1,\dots,G_t$ of $G^u$ in $O(n^2)$ time; for each $i \in \{1,\dots,t\}$, set $n_i = |V(G_i)|$. For each $i \in \{1,\dots,t\}$, we test in $O(n_i^2)$ time whether $G_i$ is either a chordal graph or a long hyperhole (we use Lemmas~\ref{lemma-chordal} and~\ref{lemma-hyperhole-rec}); testing this for all anticomponents of $G^u$ together takes $O(\sum_{i=1}^t n_i^2)$ time, which is $O(n^2)$ time. Since $|\mathcal{L}(T_G,r)| \leq n$, performing this computation for all graphs in the family $\{G^u\}_{u \in \mathcal{L}(T_G,r)}$ takes $O(n^3)$ time. If for each $u \in \mathcal{L}(T_G,r)$, we determined that every anticomponent of $G^u$ is either a chordal graph or a long hyperhole, then (by the definition of $\mathcal{B}_{\text{C}}^{\text{H}}$) we have that every graph in the family $\{G^u\}_{u \in \mathcal{L}(T_G,r)}$ belongs to $\mathcal{B}_{\text{C}}^{\text{H}}$, and so by Lemma~\ref{GUTcap-decomp-cor}, we have that $G \in \mathcal{G}_{\text{UT}}^{\text{cap-free}}$, and we return this answer and stop. Otherwise, we return the answer that $G \notin \mathcal{G}_{\text{UT}}^{\text{cap-free}}$ and stop. Clearly, the algorithm is correct, and its running time is $O(n^5)$. 
\end{proof} 

\begin{theorem} There exists an algorithm with the following specifications: 
\begin{itemize}
\item Input: A graph $G$; 
\item Output: Either an optimal coloring of $G$, or the true statement that $G \notin \mathcal{G}_{\text{UT}}^{\text{cap-free}}$; 
\item Running time: $O(n^3)$. 
\end{itemize} 
\end{theorem}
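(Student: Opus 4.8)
The plan is to invoke the clique-cutset coloring reduction, Lemma~\ref{clique-cutset decomposition tree-coloring}, with $\mathcal{G}=\mathcal{G}_{\text{UT}}^{\text{cap-free}}$ and $\mathcal{B}=\mathcal{B}^{\text{H}}_{\text{C}}$. Lemma~\ref{decomp-GUTcap-BHC} supplies exactly the hypotheses that lemma requires: $\mathcal{B}^{\text{H}}_{\text{C}}$ is hereditary, and every graph in $\mathcal{G}_{\text{UT}}^{\text{cap-free}}$ either lies in $\mathcal{B}^{\text{H}}_{\text{C}}$ or admits a clique-cutset; moreover $\mathcal{G}_{\text{UT}}^{\text{cap-free}}$ is hereditary, being defined by forbidden induced subgraphs. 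Consequently it suffices to exhibit an algorithm $\mathbf{A}$ that, on input a graph $G$, either returns an optimal coloring of $G$ or correctly reports $G\notin\mathcal{B}^{\text{H}}_{\text{C}}$, and runs in time $f(n,m)$ for some nondecreasing $f$ with $O(nf(n,m)+n^2+nm)=O(n^3)$. Taking $f(n,m)=O(n^2)$ is enough, since then $nf(n,m)=O(n^3)$ and $nm=O(n^3)$.

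To build $\mathbf{A}$, I would first compute the anticomponents $G_1,\dots,G_t$ of the input graph in $O(n^2)$ time (form $\overline{G}$, find its components by BFS, and complement them back). For each $G_i$ I then check, using Lemma~\ref{lemma-chordal} and Lemma~\ref{lemma-hyperhole-rec}, whether $G_i$ is chordal or a long hyperhole. If some $G_i$ is neither, then by the definition of $\mathcal{B}^{\text{H}}_{\text{C}}$ we have $G\notin\mathcal{B}^{\text{H}}_{\text{C}}$, and $\mathbf{A}$ reports this. Otherwise I color each $G_i$ optimally: via Lemma~\ref{lemma-chordal} in the chordal case, and via Lemma~\ref{lemma-hyperhole-col} in the long-hyperhole case. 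Because the anticomponents partition $V(G)$, so that $\sum_i n_i=n$ and $\sum_i m_i\le m$, all of these per-anticomponent recognition and coloring steps together cost only $O(n+m)$; thus the running time of $\mathbf{A}$ is dominated by the $O(n^2)$ anticomponent computation, giving $f(n,m)=O(n^2)$ as desired.

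The one structural point is how to combine the colorings. Since distinct anticomponents of $G$ are pairwise complete to one another, any proper coloring of $G$ must use pairwise disjoint color sets on distinct anticomponents; hence $\chi(G)=\sum_{i=1}^{t}\chi(G_i)$, and the union of the optimal colorings of $G_1,\dots,G_t$, after relabeling so their palettes are disjoint, is an optimal coloring of $G$ (this is the coloring analogue of Lemma~\ref{lemma-chi-anticomp}). Relabeling and taking the union costs $O(n)$. Feeding $\mathbf{A}$ into Lemma~\ref{clique-cutset decomposition tree-coloring} then produces the desired algorithm $\mathbf{B}$, whose running time is $O(nf(n,m)+n^2+nm)=O(n^3)$.

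The argument is largely an assembly of machinery already established, so there is no deep obstacle; the point that needs care is the running-time bookkeeping. Unlike the other leaf-coloring routines in this paper, $\mathbf{A}$ cannot be pushed down to $O(n+m)$ time, because identifying the anticomponents requires forming the complement and hence $\Theta(n^2)$ work in the worst case; this is exactly why the overall bound for coloring $\mathcal{G}_{\text{UT}}^{\text{cap-free}}$ is $O(n^3)$ rather than $O(n^2+nm)$. As a sanity check one notes that the test ``chordal or long hyperhole'' genuinely exhausts the admissible anticomponents: a $4$-hyperhole is not anticonnected, so no hyperhole anticomponent can have length four, and the dichotomy in the definition of $\mathcal{B}^{\text{H}}_{\text{C}}$ is correctly decided by the two recognition lemmas.
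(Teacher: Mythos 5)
Your proposal is correct and takes essentially the same route as the paper's proof: both reduce the problem to Lemma~\ref{clique-cutset decomposition tree-coloring} via Lemma~\ref{decomp-GUTcap-BHC}, and both implement the leaf algorithm $\mathbf{A}$ by computing the anticomponents in $O(n^2)$ time, coloring each one via Lemmas~\ref{lemma-chordal} and~\ref{lemma-hyperhole-col}, and merging the colorings with pairwise disjoint palettes. Your slightly tighter $O(n+m)$ accounting of the per-anticomponent work and your explicit observation that a $4$-hyperhole is never anticonnected (so the chordal-or-hyperhole test correctly decides membership in $\mathcal{B}^{\text{H}}_{\text{C}}$) are fine refinements that do not change the argument.
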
 
\begin{proof} 
In view of Lemmas~\ref{clique-cutset decomposition tree-coloring} and~\ref{decomp-GUTcap-BHC}, it suffices to show that there exists an algorithm with the following specifications: 
\begin{itemize}
\item Input: A graph $G$; 
\item Output: Either an optimal coloring of $G$, or the true statement that $G \notin \mathcal{B}^{\text{H}}_{\text{C}}$; 
\item Running time: $O(n^2)$. 
\end{itemize} 
Let $G$ be an input graph. We begin by computing the anticomponents $G_1,\dots,G_t$ of $G$ in $O(n^2)$ time. For each $i \in \{1,\dots,t\}$, we set $n_i = |V(G_i)|$, and we proceed as follows. We first check whether $G_i$ is chordal, and if so, we compute an optimal coloring $c_i$ of $G_i$; by Lemma~\ref{lemma-chordal}, this can be done in $O(n_i^2)$ time. If $G_i$ is not chordal, then we call the algorithm from Lemma~\ref{lemma-hyperhole-col}, and we obtain either an optimal coloring $c_i$ of $G_i$, or the true statement that $G_i$ is not a hyperhole; this takes $O(n_i^2)$ time. If for some $i \in \{1,\dots,t\}$, we determined that $G_i$ is neither a chordal graph nor a hyperhole, then the algorithm returns the answer that $G \notin \mathcal{B}^{\text{H}}_{\text{C}}$ and stops. So assume that for each $i \in \{1,\dots,t\}$, the algorithm found an optimal coloring $c_i$ of $G_i$. We then rename the colors used by the colorings $c_1,\dots,c_t$ so that the color sets used by these colorings are pairwise disjoint, and then we let $c$ be the union of the resulting $t$ colorings. The algorithm now returns the coloring $c$ and stops. Clearly, the algorithm is correct, and its running time is $O(n^2+\sum_{i=1}^t n_i^2)$, which is $O(n^2)$. 
\end{proof} 

\begin{theorem} \label{GU-GUTcap-clique-stable-alg} There exists an algorithm with the following specifications: 
\begin{itemize} 
\item Input: A weighted graph $(G,w)$; 
\item Output: Either a maximum weight clique $C$ and a maximum weight stable set $S$ of $(G,w)$, or the true statement that $G \notin \mathcal{G}_{\text{UT}}^{\text{cap-free}}$; 
\item Running time: $O(n^3)$. 
\end{itemize} 
\end{theorem}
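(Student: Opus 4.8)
The plan is to reduce, via the clique-cutset decomposition machinery of subsection~\ref{sec:alg:Tarjan}, to solving both problems on the basic class $\mathcal{B}^{\text{H}}_{\text{C}}$. Recall from Lemma~\ref{decomp-GUTcap-BHC} that $\mathcal{B}^{\text{H}}_{\text{C}}$ is hereditary and that every graph in $\mathcal{G}_{\text{UT}}^{\text{cap-free}}$ either belongs to $\mathcal{B}^{\text{H}}_{\text{C}}$ or admits a clique-cutset; since $\mathcal{G}_{\text{UT}}^{\text{cap-free}}$ is itself hereditary, this is precisely the hypothesis needed to invoke Lemmas~\ref{clique-cutset decomposition tree-clique} and~\ref{clique-cutset decomposition tree-stable} with $\mathcal{B}=\mathcal{B}^{\text{H}}_{\text{C}}$ and $\mathcal{G}=\mathcal{G}_{\text{UT}}^{\text{cap-free}}$. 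Thus it suffices to exhibit a single robust algorithm $\mathbf{A}$ that, given a weighted graph $(G,w)$, either returns a maximum weight clique and a maximum weight stable set of $(G,w)$ or reports (truthfully) that $G\notin\mathcal{B}^{\text{H}}_{\text{C}}$, and runs in time bounded by a superadditive polynomial in $n$ and $m$.

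First I would describe $\mathbf{A}$. Compute the anticomponents $G_1,\dots,G_t$ of $G$ (by complementing, running BFS, and recomplementing) in $O(n^2)$ time. Because distinct anticomponents are complete to one another, a maximum weight clique of $G$ is exactly the union of maximum weight cliques of $G_1,\dots,G_t$, whereas every stable set of $G$ lies inside a single anticomponent, so a maximum weight stable set of $G$ is the heaviest among maximum weight stable sets of the $G_i$. For each $G_i$ I would run the algorithm of Lemma~\ref{lemma-hyperhole-clique-stable}: that algorithm first tests chordality (via Lemma~\ref{lemma-chordal}) and then the hyperhole condition, so it returns a maximum weight clique and stable set of $G_i$ whenever $G_i$ is chordal or a hyperhole, and otherwise correctly reports that $G_i$ is not a hyperhole. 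In the latter case $G_i$ is neither chordal nor a (long) hyperhole, so $G\notin\mathcal{B}^{\text{H}}_{\text{C}}$ and $\mathbf{A}$ reports this; note that an anticomponent that is a hyperhole is automatically long, since a $4$-hyperhole is not anticonnected. Each call costs $O(n_i+m_i)$, so the $t$ calls together cost $O(n+m)$, and $\mathbf{A}$ runs in $O(n^2)$ time overall, which is bounded by the superadditive polynomial $f(n,m)=n^2+n+m$.

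Finally I would assemble the pieces. Restricting the output of $\mathbf{A}$ to its clique component gives an algorithm meeting the hypotheses of Lemma~\ref{clique-cutset decomposition tree-clique}, which then yields a robust algorithm solving the MWCP for $\mathcal{G}_{\text{UT}}^{\text{cap-free}}$ in time $O(nf(n,m)+n^2+nm)=O(n^3)$; restricting $\mathbf{A}$ to its stable-set component and applying Lemma~\ref{clique-cutset decomposition tree-stable} (whose hypothesis that $f$ be a superadditive polynomial is satisfied by our $f$) yields a robust algorithm solving the MWSSP for $\mathcal{G}_{\text{UT}}^{\text{cap-free}}$, again in $O(n^3)$ time. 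Running both on the input $(G,w)$, returning the statement $G\notin\mathcal{G}_{\text{UT}}^{\text{cap-free}}$ if either of them does and otherwise returning the maximum weight clique and maximum weight stable set they produce, gives the required $O(n^3)$-time algorithm.

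The routine pieces (the reductions, chordal and hyperhole subroutines) are all in hand, so the only point demanding care is the running-time bookkeeping: I must verify that $\mathbf{A}$'s time bound is a superadditive polynomial so that Lemma~\ref{clique-cutset decomposition tree-stable} applies, and that both reductions stay within $O(n^3)$. This, together with the two elementary reductions along anticomponents (clique as a union, stable set as a maximum), is the main content; I do not anticipate a genuine obstacle beyond confirming these complexity estimates.
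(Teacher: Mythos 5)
Your proposal is correct and follows essentially the same route as the paper: reduce via Lemmas~\ref{clique-cutset decomposition tree-clique}, \ref{clique-cutset decomposition tree-stable}, and~\ref{decomp-GUTcap-BHC} to an $O(n^2)$-time routine on $\mathcal{B}^{\text{H}}_{\text{C}}$, handle each anticomponent with the chordal/hyperhole subroutines, take the union of cliques and the heaviest stable set, and verify the superadditive-polynomial time bound. The only (harmless) deviation is that you make a single call to Lemma~\ref{lemma-hyperhole-clique-stable} per anticomponent and infer ``neither chordal nor a hyperhole'' from a failure report --- which is sound but relies on the internal structure of that algorithm (its chordality test is applied after deleting nonpositive-weight vertices, so a failure implies non-chordality only because chordality is hereditary), whereas the paper tests chordality of $G_i$ directly first and avoids this inspection.
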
 
\begin{proof} 
In view of Lemmas~\ref{clique-cutset decomposition tree-clique},~\ref{clique-cutset decomposition tree-stable}, and~\ref{decomp-GUTcap-BHC}, it suffices to show that there exists an algorithm with the following specifications: 
\begin{itemize}
\item Input: A weighted graph $(G,w)$; 
\item Output: Either a maximum weight clique $C$ and a maximum weight stable set $S$ of $(G,w)$, or the true statement that $G \notin \mathcal{B}^{\text{H}}_{\text{C}}$; 
\item Running time: $O(n^2)$. 
\end{itemize} 
Let $(G,w)$ be an input weighted graph. We begin by computing the anticomponents $G_1,\dots,G_t$ of $G$ in $O(n^2)$ time. For each $i \in \{1,\dots,t\}$, we set $n_i = |V(G_i)|$. For each $i \in \{1,\dots,t\}$, we proceed as follows. We first check whether $G_i$ is chordal, and if so, we find a maximum weight clique $C_i$ and a maximum weight stable set $S_i$ of $(G_i,w)$; by Lemma~\ref{lemma-chordal}, this can be done in $O(n_i^2)$ time. If $G_i$ is not chordal, then we call the algorithm from Lemma~\ref{lemma-hyperhole-clique-stable}, and we obtain either a maximum weight clique $C_i$ and a maximum weight stable set $S_i$ of $(G_i,w)$, or the true statement that $G_i$ is not a hyperhole; this takes $O(n_i^2)$ time. If for some $i \in \{1,\dots,t\}$, we determined that $G_i$ is neither a chordal graph nor a hyperhole, then the algorithm returns the answer that $G \notin \mathcal{B}^{\text{H}}_{\text{C}}$ and stops. So assume that for each $i \in \{1,\dots,t\}$, the algorithm found a maximum weight clique $C_i$ and a maximum weight stable set $S_i$ of $(G_i,w)$. We then form the clique $C = C_1 \cup \dots \cup C_t$, and we find an index $j \in \{1,\dots,t\}$ such that $w(S_j) = \max\{w(S_i) \mid 1 \leq i \leq t\}$; clearly, this can be done in $O(n^2)$ time. The algorithm now returns the clique $C$ and the stable set $S_j$ and stops. Clearly, the algorithm is correct, and its running time is $O(n^2+\sum_{i=1}^t n_i^2)$, which is $O(n^2)$. 
\end{proof}

\section*{Acknowledgments} 
We would like to thank Haiko M\"uller for a number of helpful discussions.


\begin{thebibliography}{99}

\bibitem{ACTV15}
P.~Aboulker, P.~Charbit, N.~Trotignon, and K.~Vu\v{s}kovi\'c, \emph{Vertex elimination orderings for hereditary graph classes}, Discrete Mathematics 338 (2015), 825--834. 

\bibitem{alon87}
N.~Alon, D.~J.~Kleitman, M.~Saks, P.~Seymour, and C.~Thomassen, \emph{Subgraphs of large connectivity and chromatic number in graphs of large chromatic number}, Journal of Graph Theory 11 (1987), 367--371. 

\bibitem{CapEvenHoleFree} 
K.~Cameron, M.~V.~G.~da~Silva, S.~Huang, and K.~Vu\v{s}kovi\'{c}, \emph{Structure and algorithms for (cap, even hole)-free graphs}, to appear in Discrete Mathematics. 

\bibitem{MaxWeightStableSetInTree}
G.~H.~Chen, M.~T.~Kuo, and J.~P.~Sheu, \emph{An optimal time algorithm for finding a maximum weight independent set in a tree}, BIT 28 (2) (1988), 353--356. 

\bibitem{BergeRec} 
M.~Chudnovsky, G.~Cornu\'ejols, X.~Liu, P.~Seymour, and K.~Vu\v skovi\'c, \emph{Recognizing Berge Graphs}, Combinatorica 25 (2005), 143--187. 

\bibitem{substitution}
M.~Chudnovsky, I.~Penev, A.~D.~Scott, and N.~Trotignon, \emph{Substitution and $\chi$-boundedness}, Journal of Combinatorial Theory Series B 103 (5) (2013), 567--586.

\bibitem{SPGT}
M.~Chudnovsky, N.~Robertson, P.~Seymour, and R.~Thomas, \emph{The strong perfect graph theorem}, Annals of Mathematics 164 (1) (2006), 51--229.

\bibitem{univsign}
M.~Conforti, G.~Cornu\'ejols, A.~Kapoor, and K.~Vu\v skovi\'c, \emph{Universally signable graphs}, Combinatorica 17 (1) (1997), 67--77. 

\bibitem{Dirac61}
G.~A.~Dirac, \emph{On rigid circuit graphs}, Abhandlungen aus dem Mathematischen Seminar der Universit\"{a}t Hamburg 25 (1961), 71--76.

\bibitem{FrankChordalStable} 
A.~Frank, \emph{Some polynomial algorithms for certain graphs and hypergraphs}, Congressus Numerantium XV (1976), 211--226. 

\bibitem{Gya87}
A.~Gy\'{a}rf\'{a}s, \emph{Problems from the world surrounding perfect graphs}, Zastosowania Matematyki 19 (1987), 413--441.

\bibitem{WeaklyTriangulatedPerfect} 
R.~B.~Hayward,\emph{Weakly triangulated graphs}, Journal of Combinatorial Theory, Series B 39 (1985), 200-–209.

\bibitem{Kim} 
J.~H.~Kim, {\em The Ramsey number {$R(3, t)$} has order of magnitude $t^2 / \log t$}, Random Structures and Algorithms 7 (3) (1995), 173--208. 

\bibitem{KuhnOst} 
D.~K\"uhn and D.~Osthus, \emph{Induced subdivisions in $K_{s,s}$-free graphs of large average degree}, Combinatorica 24 (2004), 287--304. 


\bibitem{HyperholeColoring} 
L.~Narayanan and S.~M.~Shende, \emph{Static frequency assignment in cellular networks}, Algorithmica 29 (2001), 396--409. 

\bibitem{LongHoleDetection} 
S.~D.~Nikolopoulos and L.~Palios, \emph{Hole and antihole detection in graphs}, Proceedings of the fifteenth annual ACM-SIAM symposium on Discrete algorithms (2004), 850--859. 

\bibitem{small-cut} 
I.~Penev, S.~Thomass\'e, and N.~Trotignon, \emph{Isolating highly connected induced subgraphs}, SIAM Journal on Discrete Mathematics  30 (1) (2016), 592--619. 

\bibitem{Pol74}
S.~Poljak, \emph{A note on the stable sets and coloring of graphs}, Commentationes Mathematicae Universitatis Carolinae 15 (1974), 307--309.


\bibitem{RTL76}
D.~J.~Rose, R.~E.~Tarjan, and G.~S.~Lueker, \emph{Algorithmic aspects of vertex elimination on graphs}, SIAM Journal on Computing 5 (1976), 266--283.

\bibitem{Spinrad} 
J.~Spinrad, \emph{Efficient Graph Representations}, American Mathematical Society, 2003.


\bibitem{Tarjan} 
R.~Tarjan, \emph{Decomposition by clique separators}, Discrete Mathematics 55 (1985), 221--232. 

\bibitem{truemper}
K.~Truemper, \emph{Alpha-balanced graphs and matrices and GF(3)-representability of matroids}, Journal of Combinatorial Theory B 32 (1982), 112--139. 

\bibitem{Truemper-survey} 
K.~Vu\v{s}kovi\'c, \emph{The world of hereditary graph classes viewed through Truemper configurations}, Surveys in Combinatorics, London Mathematical Society Lecture Note Series 409, Cambridge University Press (2013), 265--325.

\bibitem{West} 
D.~West, \emph{Introduction to graph theory}, 2nd edition, Prentice Hall, 2001. 


\end{thebibliography}
\end{document}